\newcommand{\bb}{\mathbb}
\newcommand{\h}{\bb H}
\newcommand{\R}{\bb R}
\newcommand{\p}{\bb P}
\newtheorem{Theorem}{Theorem}
\newtheorem{Cor}[Theorem]{Corollary}
\newtheorem{Prop}[Theorem]{Proposition}
\newtheorem{lemma}[Theorem]{Lemma}
\newtheorem*{lemma*}{Lemma}
\newtheorem*{theorem*}{Theorem}
\theoremstyle{definition} 
\newtheorem{Def}{Definition}
\newtheorem{remark}{Remark}
\newtheorem{Example}{Example} 
\newtheorem{Non-example}{Non-example}
\numberwithin{equation}{section}
\numberwithin{Def}{section}
\numberwithin{Theorem}{section}
\begin{document}

\title{Currents with corners and Counting Weighted Triangulations}
\author{Tarik Aougab}
\author{Jayadev S. Athreya}
\address{Department of Mathematics, Haveford College, 370 Lancaster Ave
Haverford, PA 19041, USA}
\email{taougab@haverford.edu}
\address{Department of Mathematics, University of Washington, Padelford Hall, Seattle, WA 98195, USA}
\email{jathreya@uw.edu}
\maketitle

\begin{abstract}
Let $\Sigma$ be a closed orientable hyperbolic surface. We introduce the notion of a \textit{geodesic current with corners} on $\Sigma$, which behaves like a geodesic current away from certain singularities (the ``corners''). We topologize the space of all currents with corners and study its properties. In several ways that we make precise, the space of geodesic currents is ``at infinity'' in the space of currents with corners, in the sense that they arise as limits in which the corners are pushed out to infinity. We also emphasize the following analogy: currents are to closed curves, as currents with corners are to \textit{graphs} on $\Sigma$. 

We prove that the space of currents with corners shares many properties with the space of geodesic currents, although crucially, there is no canonical action of the mapping class group nor is there a continuous intersection form. To circumvent these difficulties, we focus on those currents with corners arising from harmonic maps of graphs into $\Sigma$. This leads to the space of \textit{marked harmonic currents with corners}, which admits a natural Borel action by the mapping class group, and an analog of Bonahon's~\cite{Bonahon} compactness criterion for sub-level sets of the intersection form against a filling current. 

As an application, we consider an analog of a curve counting problem on $\Sigma$ for triangulations. Fixing an embedding $\phi$ of a weighted graph $\Gamma$ into $\Sigma$ whose image $\phi(\Gamma)$ is a triangulation of $\Sigma$, let $N_{\phi}(L)$ denote the number of mapping classes $f$ so that a weighted-length minimizing representative in the homotopy class determined by $f \circ \phi$ has length at most $L$. In analogy with theorems of Mirzakhani~\cite{Mirzakhani}, Erlandsson-Souto~\cite{ErlandssonSouto}, and Rafi-Souto~\cite{RafiSouto}, we prove that $N_{\phi}(L)$ grows polynomially of degree $6g-6$ and the limit 
\[ \lim_{L \rightarrow \infty} \frac{N_{\phi}(L)}{L^{6g-6}}\]
exists and has an explicit interpretation depending on the geometry of $\Sigma$, the vector of weights, and the combinatorics of $\phi$ and $\Gamma$.  
\end{abstract}

\section{Introduction} 

\paragraph*{\bf Counting geodesics} Let $\Sigma$ be a closed orientable surface of genus at least $2$ and equip $\Sigma$ with an arbitrary complete hyperbolic metric. Classical work of Huber~\cite{Huber} and Margulis~\cite{Margulis} yields an asymptotic for the number of primitive closed geodesics on $\Sigma$ under a given length, $L$. Indeed, letting $N_{\Sigma}(L)$ denote the number of primitive closed geodesics with length at most $L$, one has
\begin{equation} \label{Huber}
N_{\Sigma}(L) \sim \frac{e^{L}}{L}.
\end{equation}
In a sense that can be made precise, as $L \rightarrow \infty$, ``most'' curves with length at most $L$ are not embedded. It took the revolutionary work of Mirzakhani~\cite{Mirzakhani}, building on work of Birman-Series~\cite{BirmanSeries}, to fully understand the asymptotics of the much more slowly growing number $\mathcal{S}_{\Sigma}(L)$ of \textit{simple} closed geodesics. In simplest terms, Mirzakhani~\cite{Mirzakhani} recasts the counting problem as a convergence statement for a family of measures on the space $\mathcal{ML}(\Sigma)$ of measured geodesic laminations; the asymptotics for $\mathcal{S}_{\Sigma}(L)$ are encoded by properties of the limiting measure, and the argument uses subtle geometric properties of the mapping class group action on $\mathcal{ML}(\Sigma)$ to identify this limit. 

\paragraph*{\bf Intersection patterns} In subsequent work, Mirzakhani~\cite{Mirzakhani2} generalized these results to obtain asymptotics for the number of closed geodesics with length at most $L$ and with some prescribed number of self-intersections, and these sorts of questions were explored further by Sapir~\cites{Sapir1, Sapir2}. This required a suite of new ideas because once the curves in question have self-crossings, there is no obvious way to relate the desired count to a measure on $\mathcal{ML}(\Sigma)$. Recent and beautiful work of Erlandsson-Souto~\cites{ErlandssonSouto, ErlandssonSoutobook} reproves these counting results in ways that are inspired by Mirzakhani's original techniques; to do this, they replace $\mathcal{ML}(\Sigma)$ with the space of \textit{geodesic currents} on $\Sigma$, and they invent new organizational tools (such as the notion of a \textit{radalla}) for packaging together large collections of closed curves, analogous to the multi-curves carried by a given train track. See in particular the new monograph~\cite{ErlandssonSoutobook} for an excellent exposition.

\medskip


\medskip

\paragraph*{\bf Counting triangulations} We aim to count \textit{weighted triangulations} on $\Sigma$, and inspired by Erlandsson-Souto~\cites{ErlandssonSouto, ErlandssonSoutobook} and follow-up work of Rafi-Souto~\cite{RafiSouto}, we achieve this by building a space analogous to the space of currents but in which (embeddings of) graphs play the role of closed curves, and we then convert the counting problem into a statement about convergence of measures on this space of \textit{currents with corners}. To state our main application, let $\Gamma = (V,E)$ be a finite, connected, simple graph equipped with a positive edge weighting $w: E \rightarrow \mathbb{R}_{> 0}$. Given an embedding $\phi: \Gamma \rightarrow \Sigma$, there exists a map $\tilde{\phi}$ in the homotopy class of $\phi$ which minimizes the \textit{weighted length} $\ell_{\Sigma}(\Gamma, [\phi])$ of $\Gamma$, defined as 
\begin{equation} \label{weighted length}
\ell_{\Sigma}(\Gamma, [\phi]) = \sum_{e \in E} w(e) \cdot \ell_{\Sigma} \tilde{\phi}(e).
\end{equation}
Moreover, in our setting where we will assume that $\phi$ induces a triangulation of $\Sigma$, one can show that the map $\tilde{\phi}$ is unique. We refer to the pair $(\Gamma, [\phi])$ as a \textit{marked weighted graph}, and we let $\ell_{\Sigma}(\Gamma, [\phi])$ be the \textit{weighted length} of the marked weighted graph $(\Gamma, [\phi])$. Then given any mapping class $f \in \mathcal{MCG}(\Sigma)$, one can consider the new marked weighted graph $(\Gamma, [f \circ \phi])$ and its associated weighted length. We prove: 

\begin{Theorem} \label{main theorem!} Let $(\Gamma, [\phi])$ be a marked weighted triangulation of $\Sigma$. Then 
\[ \lim_{L \rightarrow \infty} \frac{ \#(f \in \mathcal{MCG}(\Sigma): \ell_{\Sigma}(\Gamma, [f \circ \phi]) \leq L)}{L^{6g-6}} = \frac{n_{(\Gamma, [\phi])} \cdot m_{\Sigma}}{\textbf{m}}, \]
where 
\[ n_{(\Gamma, [\phi])} := \mu_{Th}\left( \left\{\lambda \in \mathcal{ML}(\Sigma): (\Gamma, [\phi]) \hspace{1 mm} \mbox{admits a representative whose (weighted) intersection with} \hspace{1 mm} \lambda \hspace{1 mm} \mbox{is at most} \hspace{1 mm} 1 \right\}\right), \]
and
\[ m_{\Sigma} = \mu_{Th}(\left\{\lambda \in \mathcal{ML}(\Sigma) : \ell_{\Sigma}(\lambda) \leq 1 \right\}),\]
where $\mu_{Th}$ denotes the Thurston measure on the space of measured laminations (see, for example, Masur~\cite{Masur85} or Penner-Harer~\cite{PennerHarer}).

Finally, $\textbf{m}$ is the total integral of $m_{\Sigma}$ over the moduli space $\mathcal{M}$ with respect to the Weil-Petersson volume form (see, for example~\cite{FarbMargalit} for the definition): 
\[ \textbf{m} = \int_{\mathcal{M}} m_{\Sigma} dvol_{WP}. \]
\end{Theorem}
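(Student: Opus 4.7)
The plan follows the template of Mirzakhani, Erlandsson-Souto, and Rafi-Souto: recast the count $N_\phi(L)$ as the mass of a fixed sub-level set under a family of counting measures on $\mathcal{MHC}(\Sigma)$, the space of marked harmonic currents with corners; prove weak convergence to a multiple of the Thurston measure; and then invoke a portmanteau argument for the indicator of the unit ball. Because the weighted length $\ell_\Sigma$ is positively homogeneous of degree one on $\mathcal{MHC}(\Sigma)$,
\[
\frac{N_\phi(L)}{L^{6g-6}} \;=\; \nu_L\bigl(\{c \in \mathcal{MHC}(\Sigma) : \ell_\Sigma(c) \le 1\}\bigr), \qquad \nu_L := \frac{1}{L^{6g-6}} \sum_{f \in \mathcal{MCG}(\Sigma)} \delta_{(1/L)\cdot (\Gamma,\,[f\circ\phi])},
\]
so the theorem reduces to identifying $\lim_{L \to \infty} \nu_L$.

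The first substantive step is to apply the paper's Bonahon-type compactness criterion to the filling harmonic current $\ell_\Sigma$, which shows that each set $\{c : \ell_\Sigma(c) \le R\}$ is compact in $\mathcal{MHC}(\Sigma)$ and so the family $\nu_L$ is tight on sub-level sets. Next, exploiting the guiding principle of the paper that $\mathcal{ML}(\Sigma)$ sits as the locus where corners have been pushed to infinity, one shows that rescaling $(\Gamma, [f \circ \phi])$ by $1/L$ and letting $L \to \infty$ drives all corner mass to infinity, forcing every weak-$\ast$ subsequential limit $\nu_\infty$ to be supported on $\mathcal{ML}(\Sigma)$. Since $\nu_L$ is $\mathcal{MCG}(\Sigma)$-equivariant by construction, $\nu_\infty$ is $\mathcal{MCG}$-invariant on $\mathcal{ML}(\Sigma)$, and Masur's ergodicity theorem forces $\nu_\infty = C_{(\Gamma,[\phi])} \cdot \mu_{Th}$ for some constant $C_{(\Gamma,[\phi])} \ge 0$.

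To identify the constant, I would imitate the moduli-space unfolding argument of Mirzakhani and Rafi-Souto. Integrating the convergence $\nu_L(\{\ell_\Sigma \le 1\}) \to C_{(\Gamma,[\phi])} \cdot m_\Sigma$ against the Weil--Petersson volume on $\mathcal{M}$ gives $C_{(\Gamma,[\phi])} \cdot \mathbf{m}$ on the right-hand side; on the left-hand side, Fubini together with the $\mathcal{MCG}$-invariance of $dvol_{WP}$ unfolds the sum over $\mathcal{MCG}$ into an integral over a fundamental domain in Teichm\"uller space. Carrying out this unfolding identifies the integral with $n_{(\Gamma,[\phi])}$, the Thurston volume of the set of laminations intersecting $(\Gamma, [\phi])$ at most once, yielding $C_{(\Gamma,[\phi])} = n_{(\Gamma,[\phi])}/\mathbf{m}$ and hence the stated formula. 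A subtle point is that $i(\lambda, (\Gamma, [\phi]))$ must be interpreted via the continuous length pairing between $\lambda$ and a metric representative of the marked graph, precisely because $\mathcal{MHC}(\Sigma)$ itself does not carry a continuous intersection form.

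The principal obstacle is the support step: verifying that every weak-$\ast$ limit of $\nu_L$ is carried by $\mathcal{ML}(\Sigma)$. Since $\mathcal{MHC}(\Sigma)$ admits neither a continuous intersection form nor a canonical continuous $\mathcal{MCG}$-action, the arguments of Bonahon and Erlandsson-Souto do not transfer verbatim; one needs quantitative control on the contribution of corners to $\ell_\Sigma$ under the $1/L$ rescaling, and the harmonicity hypothesis must be invoked to exclude pathological concentration of corner mass in the limit. A secondary difficulty is the moduli-space unfolding: because the $\mathcal{MCG}$-action on marked harmonic currents is only Borel, the Fubini interchange and the bookkeeping of marking data require care to ensure that the equivariance errors incurred are summable and do not corrupt the identification of the constant.
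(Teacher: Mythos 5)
Your outline reproduces the correct architecture (rescaled counting measures on the space of marked harmonic currents, tightness via the filling-current compactness criterion, support on $\mathcal{ML}(\Sigma)$, identification with a multiple of Thurston measure), but two of your central steps are asserted rather than proved, and both are precisely where the real difficulty lives. First, you write that since each $\nu_L$ is $\mathcal{MCG}(\Sigma)$-equivariant, any weak-$\ast$ limit $\nu_\infty$ is $\mathcal{MCG}$-invariant. This does not follow: the mapping class group acts on the space of marked harmonic $k$-currents only by Borel transformations, not by homeomorphisms, and for a merely Borel action the set of invariant measures need not be weak-$\ast$ closed (a one-point example with a discontinuous involution already defeats it). The paper has to establish invariance by hand, via a Portmanteau-type criterion: one shows that for the basis sets $U_\lambda(\epsilon)=\{j: h(j,\lambda)<\epsilon\}$, the limit measure assigns zero mass to $\partial U_\lambda(\epsilon)$, which in turn is proved by comparing $\nu^L_c$ with the curve-counting measures of Erlandsson--Souto through the bi-Lipschitz comparison $h(c,\cdot)\asymp i(\eta,\cdot)$ for a fixed filling curve $\eta$, and then using that Thurston measure gives no mass to level sets of intersection with a fixed lamination. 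Second, invariance alone does not force $\nu_\infty$ to be a multiple of $\mu_{Th}$; Masur's theorem only gives uniqueness within the Lebesgue class, and an a priori limit measure could, for example, charge laminations disjoint from a fixed simple closed curve. One needs the Lindenstrauss--Mirzakhani classification, together with a separate verification (again by comparison with the curve-counting measures) that $\nu_\infty$ assigns zero mass to $\{\lambda: i(\lambda,\gamma)=0\}$ for every simple closed curve $\gamma$.

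Beyond these, the step you flag as the ``principal obstacle''---that subsequential limits are supported on $\mathcal{ML}(\Sigma)$---is left unresolved in your proposal, whereas it has a concrete mechanism in the paper: by homogeneity, the $\nu^L_c$-average of the self-intersection $i(\lambda,\lambda)$ over the length-$T$ sublevel set scales like $i(c,c)/L^2$ and hence tends to $0$, and a separate lemma shows that vanishing limiting self-intersection forces the limit into $\mathcal{ML}(\Sigma)$; this is also exactly where the triangulation hypothesis enters, through de Verdi\`ere's theorem that harmonic triangulations are embedded (so $i(c,c)$ is finite and controlled). Finally, your identification of the constant by a fresh Weil--Petersson unfolding over moduli space is not what is needed and is too vague to carry out as stated: the constant $n_{(\Gamma,[\phi])}$ is a Thurston volume of a sublevel set of the \emph{homotopical} intersection with $c$, and the paper pins down $t$ not by unfolding but by evaluating the limit measure on $\{\lambda: h(\lambda,\gamma)\le 1\}$ for a multicurve $\gamma$ and comparing with the already-known asymptotics of Rafi--Souto and Erlandsson--Souto (which is where $\mathbf{m}$ and $m_\Sigma$ enter); the measurability and null-boundary facts for $h(\cdot,c)$ needed to apply Portmanteau there also require proof, since the intersection form on currents with corners is not continuous and not $\mathcal{MCG}$-invariant, which is why the homotopical intersection $h$ is introduced in the first place.
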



 \paragraph*{\bf Normalizations} Masur~\cite{Masur85} showed that the Thurston measure $\mu_{Th}$ is the unique mapping class group invariant measure on $\mathcal{ML}$  in the Lebesgue class, \emph{up to scaling}. For our results, the normalization of Thurston measure needs to be specified, since this normalization will affect the right hand side of Theorem~\ref{main theorem!} and Theorem~\ref{main theorem}, as any scaling factor would occur twice in the numerator and only once in the denominator. We will, following Rafi-Souto~\cite{RafiSouto}*{Remark, page 8}, use the measure defined by the symplectic structure on $\mathcal{ML}.$ For how this normalization is related to other natural normalizations, see the work of Arana-Herrera~\cite{AH}, or Monin-Telpukhovskiy~\cite{Monin}.

\paragraph*{\bf Analogies} Theorem \ref{main theorem!} resembles the analogous counts for closed curves and for geodesic currents: the limit converges to an expression involving a constant that depends on the Thurston measure of the set of measured laminations intersecting the initial choice of graph $(\Gamma, [\phi])$ at most once, the Thurston measure of the unit length ball in $\mathcal{ML}(\Sigma)$, and then a constant depending only on the topological type of $\Sigma$. Theorem \ref{main theorem!} is a direct analog of work of Rafi-Souto~\cite{RafiSouto}, which extends the theory of curve counting to \textit{current counting}-- counting in an orbit of a filling geodesic current. Indeed, our motivation for building the theory of currents with corners was to be able to implement their arguments, suitably modified, to prove Theorem \ref{main theorem!}.

\paragraph*{\bf Harmonic maps} A map $\phi$ of a weighted graph $\Gamma$ is said to be \textit{harmonic} if $\phi$ restricted to each edge is a constant speed geodesic segment, and for each vertex $v$ of $\Gamma$, one has the \emph{balance condition}
\begin{equation}\label{equation:balance} \sum_{e \sim v} w(e) \phi'_{e}(0) = 0, \end{equation}
where $\phi'_{e}(0)$ denotes the tangent vector in $T_{\phi(v)}\Sigma$ to the geodesic segment $\phi(e)$ at $v$. Completely analogous to the theory of harmonic maps between manifolds, harmonic graph maps are minimizers of the energy functional 
\[ \mathcal{E}(\phi) = \sum_{e \in E(\Gamma)} w(e) \cdot \int ||\phi'_{e}(t)||^{2} dt. \]
See, for example, Chen~\cite{Chen} for more applications of the study of harmonic maps whose domains are Riemannian simplicial complexes. In a homotopy class of maps such that the image of each component of $\Gamma$ is homotopically non-trivial and not homotopic to a closed curve, there exists a unique harmonic representative. Our methods allow us to count harmonic representatives with respect to length, and in fact we prove that the number of such graphs grows identically to the count in Theorem \ref{main theorem!}: 

\begin{Theorem} \label{harmonic count} Let $(\Gamma, [\phi])$ be as above, and let $\ell_{\Sigma}^{h}(\Gamma, [\phi])$ denote the weighted length of the unique harmonic representative in $[\phi]$. Then 
\[ \lim_{L \rightarrow \infty} \frac{ \#(f \in \mathcal{MCG}(\Sigma): \ell_{\Sigma}^{h}(\Gamma, [f \circ \phi]) \leq L)}{L^{6g-6}} = \frac{n_{(\Gamma, [\phi])} \cdot m_{\Sigma}}{\textbf{m}}. \]    
\end{Theorem}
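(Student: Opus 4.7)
The plan is to prove Theorem \ref{harmonic count} by running the same Mirzakhani--Erlandsson-Souto--Rafi-Souto strategy that underlies Theorem \ref{main theorem!}, but with the length-minimizing functional replaced by the harmonic length functional $\ell_\Sigma^h$. This is in fact the more natural statement for the framework advertised in the abstract, since each marked weighted graph $(\Gamma,[f\circ\phi])$ has a unique harmonic representative and this representative is precisely what is encoded by a point in the space of marked harmonic currents with corners.

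First I would identify the map $f\mapsto (\Gamma,[f\circ\phi])$ with an injection $f\mapsto \mu_f$ from $\mathcal{MCG}(\Sigma)$ into the space of marked harmonic currents with corners, under which $\ell_\Sigma^h(\Gamma,[f\circ\phi])$ agrees with the extension of the length functional to this space (a continuous, degree-one homogeneous function, since the harmonic length depends linearly on the weight vector). The count in the theorem becomes $\#\{f:\ell_\Sigma^h(\mu_f)\le L\}$. I would then form the normalized counting measures
\[ \nu_L \;=\; \frac{1}{L^{6g-6}}\sum_{f\in\mathcal{MCG}(\Sigma)} \delta_{\mu_f/L}, \]
and study their weak-$\ast$ accumulation points. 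Here the two key inputs from the body of the paper take over: the Bonahon-style compactness criterion for sub-level sets of intersection against a filling current gives precompactness of the family $\{\nu_L\}$, while the observation that geodesic currents sit ``at infinity'' in the space of currents with corners forces any accumulation point to be supported on $\mathcal{ML}(\Sigma)$ (rescaling by $1/L$ pushes the corners out to infinity). Since each $\nu_L$ is naturally $\mathcal{MCG}$-invariant after summation and the limit lives on $\mathcal{ML}(\Sigma)$, Masur's uniqueness theorem identifies every accumulation point as a scalar multiple of Thurston measure $\mu_{Th}$.

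To pin down the scalar and upgrade subsequential convergence to genuine convergence of $\nu_L$, I would test against indicator functions of sets of the form $\{\lambda\in\mathcal{ML}(\Sigma):i_w(\lambda,\mu)\le 1\}$ for $\mu$ a representative of $(\Gamma,[\phi])$; this produces the factor $n_{(\Gamma,[\phi])}$, while testing against the unit length ball $\{\ell_\Sigma\le 1\}$ produces $m_\Sigma$. Finally, to promote the asymptotic from a single fixed hyperbolic structure $\Sigma$ to the formula involving $\mathbf m = \int_{\mathcal M} m_\Sigma\,dvol_{WP}$, I would follow the Rafi-Souto averaging step verbatim, integrating the equidistribution statement over moduli space against Weil-Petersson volume; the normalization of Thurston measure induced by the symplectic form is preserved throughout, so no extra constants enter.

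The hard part will be step two of the equidistribution argument, namely verifying that the harmonic-current analog of the length functional and the ``intersection with a filling current'' quantity interact well enough for the Bonahon-style compactness criterion to apply to sub-level sets of $\ell_\Sigma^h$ on the space of \emph{marked} harmonic currents with corners (rather than just on the unmarked space, where there is no $\mathcal{MCG}$ action). In particular one must rule out escape of mass when rescaling harmonic currents with diverging corner positions, and show that the Borel $\mathcal{MCG}$-action on marked harmonic currents with corners behaves compatibly with the rescaling $\mu\mapsto\mu/L$ in the limit; both points are where the newly developed machinery of the paper, rather than off-the-shelf results for geodesic currents, must carry the argument.
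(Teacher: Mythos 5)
Your outline is, in substance, the paper's own strategy: Theorem \ref{harmonic count} is not proved by a separate argument but is extracted from the same family of rescaled orbit measures $\nu^{L}_{c}$ on the space of marked harmonic $k$-currents used for Theorem \ref{main theorem!}. Once one knows $\lim_{L}\nu^{L}_{c}=\frac{m_{c}}{\textbf{m}}\cdot\mu_{Th}$, the harmonic count is obtained by evaluating on $\left\{\eta:\ell_{\Sigma}(\eta)\le 1\right\}$ (for a marked harmonic current, $i(\cdot,\mathcal{L})$ \emph{is} the harmonic weighted length), while the length-minimizing count of Theorem \ref{main theorem!} comes from evaluating on $\left\{\eta: h(\eta,\mathcal{L})\le 1\right\}$; the two give the same constant because the limit is supported on $\mathcal{ML}(\Sigma)$, where $\ell_{\Sigma}=h(\cdot,\mathcal{L})$. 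So your plan is headed in the right direction, but as written it has a genuine gap at the identification step.

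Concretely: you identify accumulation points with multiples of Thurston measure via ``Masur's uniqueness theorem'' from $\mathcal{MCG}$-invariance alone. Masur's theorem only characterizes $\mu_{Th}$ within the Lebesgue measure class, and an accumulation point of the $\nu_{L}$ is not a priori in that class -- there are many other invariant measures on $\mathcal{ML}(\Sigma)$ (e.g.\ supported on orbit closures of multicurves), so invariance does not suffice. The paper instead uses the Lindenstrauss--Mirzakhani classification, which requires the additional verification that the limit gives zero mass to the set of laminations disjoint from any fixed simple closed curve; this is a real step (Lemma \ref{zero to curve missing sets}), proved by transferring the count to the orbit of a filling curve via Lemma \ref{replace with curve} and the Erlandsson--Souto asymptotics. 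Relatedly, the $\mathcal{MCG}$-invariance of the limit that you describe as ``natural'' is itself nontrivial here: the action on marked harmonic currents is only Borel, invariance does not pass to weak-$^{\ast}$ limits in general (Example \ref{Anush's example}), and the intersection form is neither continuous nor $\mathcal{MCG}$-invariant; the paper handles this with the Portmanteau-based criterion of Lemma \ref{general criterion for invariance} together with the boundary-null statement of Lemma \ref{zero to boundaries} for the sets $U_{\lambda}(\epsilon)$. You flag these points as ``the hard part'' to be carried by the machinery, but the proposal does not supply those arguments, and your assertion that the harmonic length functional is continuous on the marked space is false at classical currents -- which is exactly why the proof routes everything through $i(\cdot,\mathcal{L})$, admissible sets, and measure-zero-boundary lemmas rather than through continuity of length.
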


\paragraph*{\bf Why triangulations?} The reason we consider triangulations, as opposed to general embeddings of graphs, is that at some point in our arguments we we will require the fact, due to de Verdiere~\cite{deVerdiere} that \emph{harmonic} triangulations are always embeddings. Much of the machinery we build applies to arbitrary graphs, and there is reason to believe the main theorems hold more generally for any graph that fills the surface. We plan to address this in future work.

\paragraph*{\bf Harmonic triangulations and approximate length minimizers} As a corollary of Theorems \ref{main theorem!} and \ref{harmonic count}, we prove that ``most'' harmonic triangulations are close to being length-minimizers: 

\begin{Cor} \label{length minimizers vs harmonic} For any $\epsilon>0$, 
\[ \lim_{L \rightarrow \infty} \frac{\#\left( f \in \mathcal{MCG}: \frac{\ell_{\Sigma}^{h}(\Gamma, [f \circ \phi])}{ \ell_{\Sigma}(\Gamma, [f \circ \phi])} \leq 1+ \epsilon, \ell_{\Sigma}(\Gamma, [f \circ \phi])\le L  \right) }{\#\left( f \in \mathcal{MCG}: \ell_{\Sigma}(\Gamma, [f \circ \phi])\le L    \right) } = 1. \]

\end{Cor}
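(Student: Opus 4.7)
The strategy rests on two elementary facts. First, $\ell_\Sigma^h(\Gamma,[f\circ\phi]) \geq \ell_\Sigma(\Gamma,[f\circ\phi])$ for every $f$, since the harmonic representative is a competitor in the weighted length minimization. Second, Theorems~\ref{main theorem!} and~\ref{harmonic count} yield the \emph{same} leading coefficient $C := n_{(\Gamma,[\phi])}\, m_\Sigma / \mathbf{m}$. Writing $G_L := \{f : \ell_\Sigma(\Gamma,[f\circ\phi]) \leq L\}$ and $H_L := \{f : \ell_\Sigma^h(\Gamma,[f\circ\phi]) \leq L\}$, these imply $H_L \subseteq G_L$ and $|G_L \setminus H_L| = o(L^{6g-6})$; this $o(1)$ cancellation at every scale is the engine of the argument.

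The plan is to show that the bad set $B(L,\epsilon) := \{f \in G_L : \ell_\Sigma^h > (1+\epsilon)\ell_\Sigma\}$ has size $o(L^{6g-6})$, which together with $|G_L| \sim CL^{6g-6}$ yields the corollary. The pivotal inclusion
\[ B(L,\epsilon) \;\subseteq\; (G_L \setminus H_L) \;\cup\; B\bigl(L/(1+\epsilon),\, \epsilon\bigr) \]
is obtained by dichotomizing on whether $\ell_\Sigma(\Gamma,[f\circ\phi]) > L/(1+\epsilon)$: in that case $\ell_\Sigma^h > L$ puts $f \in G_L \setminus H_L$; otherwise $f$ remains bad at the smaller scale. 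Iterating $N$ times, with $L_k := L/(1+\epsilon)^k$, and using $B(L_N,\epsilon) \subseteq G_{L_N}$ yields
\[ B(L,\epsilon) \;\subseteq\; \bigcup_{k=0}^{N-1}\bigl(G_{L_k} \setminus H_{L_k}\bigr) \;\cup\; G_{L_N}. \]

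Given $\eta > 0$, choosing $N$ so that $2C(1+\epsilon)^{-N(6g-6)} < \eta$ handles the residual $|G_{L_N}|$, while the $o(M^{6g-6})$ decay of $|G_M| - |H_M|$ applied uniformly at the finitely many scales $L_0,\dots,L_{N-1}$ (automatic once $L$ is taken large after fixing $N$) combined with the geometric sum $\sum_k L_k^{6g-6} = O_\epsilon(L^{6g-6})$ controls the telescoping term by $O(\eta L^{6g-6})$. Dividing by $|G_L| \sim CL^{6g-6}$ and sending $\eta \to 0$ finishes the argument.

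The only delicate point is the order of limits: $N$ must be chosen \emph{before} $L$ is sent to infinity, so that the uniform $o(1)$ bound applies simultaneously at all $N$ dyadic scales. Conceptually, the corollary is a direct consequence of the sharp agreement of leading constants in Theorems~\ref{main theorem!} and~\ref{harmonic count}; had the two coefficients differed, a positive-density bad set would persist and the corollary would fail.
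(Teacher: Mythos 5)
Your proof is correct and is essentially the argument the paper intends but leaves implicit: Corollary \ref{length minimizers vs harmonic} is deduced from the fact that Theorems \ref{main theorem!} and \ref{harmonic count} have the \emph{same} leading constant, together with $\ell_\Sigma^h \geq \ell_\Sigma$. Your multi-scale inclusion $B(L,\epsilon)\subseteq (G_L\setminus H_L)\cup B\bigl(L/(1+\epsilon),\epsilon\bigr)$, iterated with $N$ fixed before $L\to\infty$, is the standard and correct way to turn that agreement of constants into the density statement.
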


\paragraph*{\bf Quasi-minimizers} The content of Corollary \ref{length minimizers vs harmonic} is that most harmonic triangulations are \emph{quasi-minimizers}, in the sense that the ratio of their length to the length-minimizing value in their homotopy class is nearly $1$, as opposed to simply saying that the ratio is uniformly bounded in terms of some constant depending only on $\Gamma$. Indeed, harmonic maps minimize a weighted $l^{2}$-norm. Since the weighted $l^{1}$-norm of an $n$-dimensional vector is bounded above and below by the corresponding $l^{2}$-norm up to a multiplicative error of $\sqrt{n}$, it is clear that a harmonic triangulation can not be too far from being a length minimizer, where ``too far'' is quantifiable as some increasing function of the total number of edges (see for example Lemma \ref{harmonic to length} in Section \ref{intersection} where this is made precise). 

\paragraph*{\bf Currents with corners} We approach Theorem \ref{main theorem!} by introducing a new space of \textit{currents with corners}, which we denote by $\mathcal{GCC}(\Sigma)$. A point $c \in \mathcal{GCC}(\Sigma)$ can be identified with a $\pi_{1}(\Sigma)$-invariant locally finite Radon measure on the space of geodesic \textit{segments} of the universal cover, $\mathbb{H}^{2}$. There is a variety of difficulties we must overcome in order to use $\mathcal{GCC}(\Sigma)$ in service of Theorem \ref{main theorem!}; principle among them is the fact that $\mathcal{GCC}(\Sigma)$ admits no obvious action by the mapping class group. Indeed, the action of $\mathcal{MCG}(\Sigma)$ on the space of classical geodesic currents comes from the fact that an isotopy class $[f]$ of homeomorphisms induces a well-defined homeomorphism $\tilde{f}$ of the boundary at infinity of the universal covering; since a current is a measure on the space of bi-infinite geodesics, the action of $[f]$ on a given current is then easy to define using the pushforward associated to $\tilde{f}$. On the other hand, the image under a mapping class of a finite length geodesic segment is not well-defined. 

\paragraph*{\bf Harmonic currents with corners} To circumvent this issue, we focus on a subspace $\mathcal{HC}(\Sigma) \subset \mathcal{GCC}(\Sigma)$ which we call the \textbf{space of harmonic currents with corners}. Loosely speaking, $\mathcal{HC}(\Sigma)$ consists of the currents with corners arising from harmonic maps of graphs into $\Sigma$, in the sense of Sunada~\cite{Sunada} and Kajigaya-Tanaka~\cite{KajigayaTanaka}. In fact, we specify even further to the space $\mathcal{HC}_{k}(\Sigma)$ consisting of such graphs with at most $k$ edges and we show that this space enjoys a variety of helpful topological properties; we summarize the most important such properties in the following theorem: 

\begin{Theorem} \label{properties of k-harmonic currents} The space $\mathcal{HC}_{k}(\Sigma)$ is closed in $\mathcal{GCC}(\Sigma)$. There is an intersection form $i( \cdot, \cdot )$ defined on the space of currents with corners, in analogy with Bonahon's~\cite{Bonahon} intersection form on the space of currents, and if $\mu \in \mathcal{HC}_{k}(\Sigma)$ is a filling harmonic current with corners, the set 
\[ C_{R}(\mu) := \left\{\nu \in \mathcal{HC}_{k} : i(\mu, \nu) \leq R \right\} \]
is pre-compact. Moreover, there is a dense subset $B \subset \mathcal{GC}(\Sigma)$ of the space of geodesic currents such that the function 
\[ i(\cdot, b) : \mathcal{HC}_{k} \rightarrow \mathbb{R}\]
is continuous for all $b \in B$. 
\end{Theorem}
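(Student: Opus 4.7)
The plan is to mirror Bonahon's development of the intersection form and his compactness criterion, adjusting for the new difficulties introduced by corners. Each of the four assertions will be handled in turn.

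Closedness of $\mathcal{HC}_k(\Sigma)$ in $\mathcal{GCC}(\Sigma)$ should follow because the balance condition (\ref{equation:balance}) is preserved under weak-$*$ limits. Given a weakly convergent sequence $\mu_n \to \mu$ of harmonic currents with corners, each supported on a weighted graph with at most $k$ edges, I would extract a subsequence along which the combinatorial type of the underlying graph stabilizes, noting that edges may collapse to a point (only decreasing the edge count). The lengths, weights, and initial tangent vectors of the surviving edges converge, and (\ref{equation:balance}) is a system of continuous equations in these data, hence passes to the limit.

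For the intersection form I would follow Bonahon's double-integral construction: let $\mathcal{DS} \subset \h^2 \times \h^2$ parametrize ordered pairs of geodesic segments meeting transversally at a single interior point of each, and set
\[ i(\mu, \nu) := (\tilde\mu \times \tilde\nu)(F), \]
where $F$ is a $\pi_1(\Sigma)$-fundamental domain for the diagonal action on $\mathcal{DS}$. This bilinear pairing is well-defined and finite for currents with corners, and restricts to Bonahon's form on $\mathcal{GC}(\Sigma)$. For pre-compactness of $C_R(\mu)$ when $\mu \in \mathcal{HC}_k(\Sigma)$ is filling, a filling current yields finitely many geodesic arcs that cross every segment transversally, so the bound $i(\mu, \nu) \leq R$ translates into uniform local mass bounds for $\nu$ on the space of segments. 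The bound of $k$ edges prevents $\nu$-mass from escaping by fragmenting into arbitrarily many short pieces, and Banach--Alaoglu yields a weakly convergent subsequence whose limit lies in $\mathcal{HC}_k(\Sigma)$ by the closedness just proved.

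The hardest step will be producing a dense subset $B \subset \mathcal{GC}(\Sigma)$ on which $i(\cdot, b)$ is continuous. In general, the form is only upper semicontinuous on $\mathcal{HC}_k$: a corner of $\nu_n$ may approach a geodesic $\gamma$ in the support of $b$, and an arc that narrowly missed $\gamma$ can cross it in the limit, or vice versa. I would take $B$ to consist of geodesic currents that assign zero mass to the $1$-dimensional family of geodesics passing through any fixed point of $\Sigma$. For such $b$, the discontinuity set of the transverse-crossing indicator on $\mathcal{DS}$ has measure zero against $\tilde\nu \times \tilde b$ in a fundamental domain, and a Portmanteau-type argument yields $i(\nu_n, b) \to i(\nu, b)$. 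Density should follow because weighted sums of distinct simple closed geodesics already lie in $B$ and are dense in $\mathcal{GC}(\Sigma)$. The main obstacle is verifying the measure-zero claim quantitatively when edges of $\nu_n$ degenerate to length zero near the support of $b$; this is precisely where the $k$-edge stratification, together with the harmonicity constraint ruling out certain collapses, seems essential.
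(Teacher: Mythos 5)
Your outline gets the definition of the form and the continuity mechanism roughly right, but two steps would fail as written, and one of them is exactly the novel point of the paper's proof. For pre-compactness, your claim that a filling $\mu\in\mathcal{HC}_{k}$ ``yields finitely many geodesic arcs that cross every segment transversally'' is false: filling only means positive intersection with every simple closed geodesic, so the complement of the support of $\mu$ is a union of disks, and a short segment $\delta$ lying in such a disk meets nothing in the support of $\mu$. For such $\delta$ the bound $i(\mu,\nu)\leq R$ gives no direct local mass bound on $\nu$, and this is where the paper has to use the harmonicity of $\nu$ itself, not just of $\mu$: if $\nu$ deposited enormous mass on an edge inside a complementary disk, the balance condition at its endpoints forces a comparably large weighted length onto an adjacent edge, and since $\nu$ has at most $k$ edges and its support cannot lie entirely in a disk, after at most $k$ such steps some edge of definite weighted length must cross $\mu$, contradicting $i(\mu,\nu)\leq R$. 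Your remark that the bound of $k$ edges prevents fragmentation addresses a different phenomenon and does not substitute for this propagation argument; only the easy case, where $\delta$ meets the support of $\mu$ and one bounds $\nu(U_{\delta})\leq R/\mu(U_{\eta})$, is covered by what you wrote.

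For the continuity statement, your set $B$ (currents assigning zero mass to the pencil $\mathcal{J}(p)$ of geodesics through each point) is precisely the paper's ``no bottleneck'' condition, and the Portmanteau-style argument is the right one; the paper additionally needs a bounded-total-mass hypothesis on the approximating graph currents and a separate compactness argument on $\mathbb{P}T\Sigma$ to control edges whose hyperbolic length tends to $0$ --- the obstacle you correctly flag. But your density justification is wrong: a weighted multicurve is an atomic current, and for $p$ on the geodesic the atom through $p$ gives $\mathcal{J}(p)$ positive mass, so multicurves do \emph{not} lie in $B$ (being without bottlenecks forces atomlessness, as the paper notes); density must be argued with non-atomic approximants instead. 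Two smaller points: your intersection form counts only interior crossings, whereas the paper's convention also counts shared endpoints (this is harmless here but is used later); and for closedness of $\mathcal{HC}_{k}$ your balance-passes-to-the-limit idea matches the paper's, except that weak-$*$ convergence does not hand you convergence of edge data directly --- one also needs that the stratum $\mathcal{CGCC}_{k}$ of currents with at most $k$ atoms is closed, which the paper proves by a flow-box argument, and one must allow limits in which all weights decay and the limit is a classical geodesic current (harmonic by convention).
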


\paragraph*{\bf Marked harmonic currents} By decorating each current in $\mathcal{HC}_{k}$ with a \textit{marking} that keeps track of how the underlying graph is mapped to $\Sigma$, we construct a new space $\widetilde{\mathcal{HC}}_{k}$ which we call the space of \textit{marked harmonic $k$-currents} -- see Section \ref{harmonic currents} for details. We show that this space admits a reasonably well-behaved action by the mapping class group: 

\begin{Theorem} \label{borel} There is a connected, open, and dense subset of $\widetilde{\mathcal{HC}}_{k}$ admitting an action of $\mathcal{MCG}(\Sigma)$ by homeomorphisms. 
\end{Theorem}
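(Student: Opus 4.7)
The plan is to let $\widetilde{\mathcal{HC}}_{k}^{\circ} \subset \widetilde{\mathcal{HC}}_{k}$ denote the subset of marked harmonic $k$-currents $(\mu,[\phi])$ whose marking has the property that each connected component of $\phi$ has image homotopically nontrivial and not freely homotopic to a closed curve. This is precisely the natural domain of the harmonic representative construction of Sunada~\cite{Sunada} and Kajigaya-Tanaka~\cite{KajigayaTanaka} quoted earlier in the introduction, and it is manifestly invariant under $\mathcal{MCG}(\Sigma)$ since any mapping class preserves each of these homotopical conditions.

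First I would prove $\widetilde{\mathcal{HC}}_{k}^{\circ}$ is open and dense. Openness should come from noting that the excluded conditions (a component being null-homotopic, or freely homotopic to a closed curve) are closed in $\widetilde{\mathcal{HC}}_{k}$: the marking records enough combinatorial data about the image graph in $\pi_1(\Sigma)$ that the bad conditions are cut out by equations between finitely many elements of $\pi_1(\Sigma)$, which survive passage to limits. Density follows by perturbing any bad component through an arbitrarily small isotopy so that it becomes essential and no longer homotopes into a closed curve.

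Next I would define the action: for $f \in \mathcal{MCG}(\Sigma)$ and $(\mu,[\phi]) \in \widetilde{\mathcal{HC}}_{k}^{\circ}$, the class $[f \circ \phi]$ still lies in $\widetilde{\mathcal{HC}}_{k}^{\circ}$ and therefore admits a unique harmonic representative $\widetilde{f \circ \phi}$, giving a new marked harmonic current $(\mu_{f \circ \phi},[f \circ \phi])$. The main technical step is continuity of the map ``marking $\mapsto$ harmonic current with corners''. If $(\mu_n,[\phi_n]) \to (\mu,[\phi])$ in $\widetilde{\mathcal{HC}}_{k}^{\circ}$, I would use the variational characterization of harmonic graph maps together with uniform energy bounds (the length along $[\phi_n]$ is controlled, hence so is the Dirichlet energy of the harmonic representative) to extract a limiting harmonic map; by uniqueness, this limit must be the harmonic representative of $[\phi]$, and then the pre-compactness provided by Theorem \ref{properties of k-harmonic currents} upgrades this pointwise convergence to convergence of the associated currents with corners in $\mathcal{GCC}(\Sigma)$.

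For connectedness, the space of markings satisfying the non-triviality and non-curve conditions is path-connected: one may isotope vertex images through $\Sigma$ and re-route edges while remaining in the good locus, since the forbidden conditions are codimension-at-least-one in the space of markings. Passing through harmonic representatives then yields a continuous path in $\widetilde{\mathcal{HC}}_{k}^{\circ}$. The principal obstacle is the continuity claim in the previous paragraph: controlling the positions and tangent data at the corners in a limit is precisely the most delicate point in the topology on $\mathcal{GCC}(\Sigma)$, and this is where the dense family $B \subset \mathcal{GC}(\Sigma)$ from Theorem \ref{properties of k-harmonic currents}, on which $i(\cdot,b)$ is continuous, together with the sub-level compactness of the intersection form against a filling harmonic current, will provide the necessary leverage.
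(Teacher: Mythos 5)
Your choice of subset and your density argument do not work. The ``good'' locus you propose is cut out by homotopical conditions on the components of the marking, but those conditions are locally constant in $\widetilde{\mathcal{HC}}_{k}$: the topology on the marking space does not mix different subgraphs of $\Delta_{n_{k}}$, and a small perturbation of a geodesic graph map does not change its homotopy class, so an ``arbitrarily small isotopy'' can never turn a null-homotopic or curve-like component into an essential one. Worse, the bad locus contains entire strata together with open neighborhoods: a component of the domain graph that is a cycle has $\pi_{1}\cong\mathbb{Z}$, so its image is \emph{always} freely homotopic to a closed curve, and such components do support genuine harmonic currents (a cycle with equal weights wrapped around a closed geodesic satisfies the balance condition at every vertex), with every nearby point of $\widetilde{\mathcal{HC}}_{k}$ having the same defect. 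Hence $\widetilde{\mathcal{HC}}_{k}\setminus\mathcal{N}_{k}$ is not dense, and your openness/density step collapses. The paper works instead with the complement of $\mathcal{GC}(\Sigma)$ (closed by Proposition \ref{currents embed}, with dense complement because $\widetilde{\mathcal{HC}}_{k}$ is by construction a closure of marked graph-current data, cf.\ Example \ref{any curve}), using the exclusion of $\mathcal{N}_{k}$ only to make the action well defined (Lemma \ref{mcg action}).

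The continuity step also has a genuine gap. Your compactness-plus-uniqueness sketch is the right germ---it is essentially what powers the paper's Lemma \ref{continuous projection}---but as stated it fails in two places. First, near $\mathcal{GC}(\Sigma)$, which your subset does not exclude, the uniform length/energy bounds you invoke are false: along a sequence of marked harmonic currents converging to a geodesic current the hyperbolic lengths blow up, the limit is not a graph map, and uniqueness of harmonic representatives gives no information; this is precisely why the paper proves only a Borel action on the full space (Theorem \ref{Borel action}) and obtains homeomorphisms only away from $\mathcal{GC}(\Sigma)$. Second, even at a simple marked harmonic current one must compare harmonic representatives across the varying classes $[f\circ\phi_{n}]$; the paper's mechanism is to realize $f$ by a diffeomorphism $\psi$ (which may exist only approximately, Remark \ref{approximate diffeo}), push forward, straighten edges to geodesics, bound the resulting harmonic deviation via geodesic-curvature and compactness estimates, and then apply the weak continuity of the projection $\Pi$ (Lemma \ref{continuous projection}), whose proof in turn needs the closedness of $\epsilon$-almost harmonic currents (Lemma \ref{almost harmonic currents are closed}) and the ghost-edge/vanishing-weight bookkeeping of Remark \ref{weights to 0}, since uniqueness of harmonic maps fails when weights degenerate. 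None of this is supplied by the intersection-form statements of Theorem \ref{properties of k-harmonic currents}, which play no role at this step (they are used later, in the counting argument); once the harmonic maps converge, convergence of the associated marked currents follows directly from the definition of the topology, so the appeal to $i(\cdot,b)$ and sub-level compactness is a misdirection rather than leverage.
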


\paragraph*{\bf Technical difficulties} The theory of currents with corners enjoys much of the same structure that comes from classical currents, but it also lacks some crucial elements, for instance:
\begin{itemize}
    \item The intersection form $i( \cdot, \cdot)$ lifts to a form on $\widetilde{\mathcal{HC}}_{k}$, but it is neither continuous nor invariant under the $\mathcal{MCG}$-action (see Example \ref{intersection not mcg invariant}). 
    \item The $\mathcal{MCG}$-action may not be continuous. 
\end{itemize} 

Our intent was to build a space of currents with corners whose behavior captured all features of the classical space of currents upon which the work of Rafi-Souto~\cite{RafiSouto} relies. However, because of these key differences, there are several places -- see for instance the end of Section \ref{intersection} and Section \ref{establishing invariance}-- where we have to work harder to achieve the main counting result. 

\subsection{Connections to other work} Recently, Erlandsson-Souto~\cite{ESTripods} have shown how to obtain counting asymptotics (as length increases) for immersed trivalent graphs in a hyperbolic surface where all angles at vertices are $2\pi/3$.  The following analogy describes how their result relates to ours (at least in spirit): counting all curves up to length $L$ is to counting the curves in a given mapping class group orbit up to length $L$, as this result is to our work.

\subsection{Outline}\label{sec:outline}

In Section \ref{currents with corners}, we introduce the space $\mathcal{GCC}(\Sigma)$ of currents with corners, topologize it, and explore some basic examples. We also introduce the intersection form mentioned in Theorem \ref{properties of k-harmonic currents} and study some of its properties. In Section \ref{harmonic currents}, we define the space $\mathcal{HC}(\Sigma)$ of \textit{harmonic} currents with corners and the subspace $\mathcal{HC}_{k}(\Sigma)$. We then prove Theorem \ref{properties of k-harmonic currents}, setting the stage for Theorem \ref{main theorem!}. In Finally, in Section \ref{main theorem}, we prove Theorem \ref{main theorem!}, following the outline of the main theorem of Rafi-Souto for counting in an orbit of a filling geodesic current.

\paragraph*{\bf Acknowledgements} T.A was supported by NSF Grant 1939936. J.S.A.
was partially supported by NSF grant DMS
2003528; the Pacific Institute for the Mathematical Sciences; the Royalty Research Fund
and the Victor Klee fund at the University of Washington; and this work was concluded
during his term as the Chaire Jean Morlet at the Centre International de Recherches
Mathematique-Luminy. The authors thank Ryokichi Tanaka for many helpful conversations and guidance through the literature on harmonic maps of graphs.


\section{Preliminaries} \label{prelim}

In all that proceeds, when we are discussing asymptotics, we will say\textit{``$f(n)$ is roughly $Kg(n)$''} to mean that $\lim_{n \rightarrow \infty} f(n)/g(n) = K$.

\subsection{Topologies on spaces of measures} Given a space $X$, a \textbf{Radon measure} (see, for example, Folland~\cite{Folland}*{page 212}) $\mu$ on $X$ is a Borel measure that is \textit{regular}, meaning that any open set $U$ can be inner-approximated by compact subsets, and any Borel set $A$ can be outer-approxiated by open sets: 

\[ \mu(U)= \sup \left( \mu(C): C \subset X, C \hspace{1 mm} \mbox{compact} \right); \]
\[ \mu(A) = \inf \left( \mu(U): A \subset U, U \hspace{1 mm} \mbox{open} \right). \]
Some authors also require Radon measures to be locally finite, meaning that every compact set has finite measure. 

\paragraph*{\bf Narrow topology} Let $\mathcal{R}(X)$ denote the set of all Radon measures on $X$. Then the \textbf{narrow topology} on $\mathcal{R}(X)$ is characterized by the convergence rule that $\lim_{n \rightarrow\infty} \mu_{n} \rightarrow \mu$ if and only if 
\begin{equation} \label{narrow convergence}
\lim_{n \rightarrow \infty} \int f d\mu_{n}  = \int f d\mu, 
\end{equation}
for all bounded real-valued continuous functions $f$ on $X$. 

\paragraph*{\bf Wide topology} The \textbf{wide topology} on $\mathcal{R}(X)$ is an \textit{a priori} weaker topology on $\mathcal{R}(X)$ in which $\mu_{n} \rightarrow \mu$ if and only if Equation \ref{narrow convergence} holds for all \textit{compactly supported} continuous functions $f$. We will refer to this as \textbf{weak-}$^{\ast}$ convergence. Let $\mathcal{P}(X) \subset \mathcal{R}(X)$ be the subspace of probability measures. Under certain assumptions on $X$, the weak and narrow topologies coincide for probability measures: 

\begin{lemma} \label{weak and narrow}
The weak and narrow topologies coincide on $\mathcal{P}(X)$ in the event that $X$ is locally compact Hausdorff. 
\end{lemma}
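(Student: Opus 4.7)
The plan is to show that on a locally compact Hausdorff space $X$, weak-$^{\ast}$ convergence of a sequence (or net) in $\mathcal{P}(X)$ to a limit that is again in $\mathcal{P}(X)$ automatically upgrades to narrow convergence. Since the narrow topology is \emph{a priori} stronger than the wide topology, this is all that is needed. The proof has two ingredients: an application of inner regularity for the limit measure $\mu$, which yields a tightness bound, and a standard approximation argument that replaces an arbitrary bounded continuous test function by a compactly supported one.

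First I would exploit local compactness and Hausdorffness together with the inner regularity of Radon measures. Given $\epsilon > 0$, choose a compact $K \subset X$ with $\mu(K) > 1 - \epsilon$, and then use the LCH version of Urysohn's lemma to produce a function $\phi \in C_{c}(X)$ with $0 \le \phi \le 1$ and $\phi \equiv 1$ on $K$. Because $\phi \in C_{c}(X)$, weak-$^{\ast}$ convergence gives $\int \phi \, d\mu_{n} \to \int \phi \, d\mu \ge 1 - \epsilon$. Since each $\mu_{n}$ is a probability measure, this rearranges to $\int (1 - \phi) \, d\mu_{n} < 2\epsilon$ for all sufficiently large $n$, which is a tightness estimate for the sequence.

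Next, for any bounded continuous $f$ with $\|f\|_{\infty} \le M$, I would decompose
\[ \int f \, d\mu_{n} - \int f \, d\mu \;=\; \int f(1-\phi) \, d\mu_{n} \;-\; \int f(1-\phi) \, d\mu \;+\; \Bigl(\int f\phi \, d\mu_{n} - \int f\phi \, d\mu\Bigr). \]
The last parenthesized term tends to zero because $f\phi \in C_{c}(X)$, so weak-$^{\ast}$ convergence applies directly. The first two terms are each bounded in absolute value by $M \int (1-\phi) d\mu_{n}$ and $M \int (1-\phi) d\mu$ respectively, which by the tightness estimate above are each at most $2M\epsilon$ for $n$ large. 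Since $\epsilon > 0$ was arbitrary, this yields $\int f \, d\mu_{n} \to \int f \, d\mu$, i.e.\ narrow convergence.

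The only place that genuinely uses the hypotheses is the construction of the cutoff $\phi$: local compactness is needed to find, around each point of $K$, a neighborhood with compact closure, and Hausdorffness is needed to separate $K$ from points outside, which together let Urysohn's lemma produce a compactly supported bump equal to $1$ on $K$. There is no real obstacle; the main thing to keep track of is that one must verify the conclusion for both sequences and nets if the topology is not first countable, but the argument above is written in a way that applies verbatim in the net formulation.
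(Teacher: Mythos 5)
Your argument is correct: inner regularity of the limit measure plus the locally compact Hausdorff form of Urysohn's lemma produces the cutoff $\phi \in C_{c}(X)$ and the tightness estimate $\int (1-\phi)\, d\mu_{n} < 2\epsilon$, and the decomposition $f = f\phi + f(1-\phi)$ then upgrades weak-$^{\ast}$ convergence to narrow convergence for probability measures, which is all that is needed since the narrow topology is finer. The paper states this lemma without proof, treating it as a standard fact, so there is nothing to compare against; your closing remark is the right point of care — since these topologies need not be first countable one must run the argument for nets rather than sequences to conclude the topologies coincide, and your argument does apply verbatim in that generality.
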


\paragraph*{\bf Portmanteau theorem} We emphasize a point of caution: in either the wide or narrow topologies, it is \textit{not} necessarily the case that the limit of measures $\mu_{n}(A)$ of some Borel set $A$ coincides with $\mu(A)$ when $\mu_{n} \rightarrow \mu$. However, in the narrow topology, this holds under an additional topological assumption on $A$ which we will describe in the following proposition-- using Lemma \ref{weak and narrow}, we state it in a context where it is also true for the wide topology -- this is the so-called \textit{Portmanteau theorem} (see, for example, Billingsley~\cite{Billingsley}*{\S 1.2}: 

\begin{Prop} \label{Portmanteau} If $(\nu_{n})_{n=1}^{\infty}$ is a sequence of probability measures on a locally compact Hausdorff space $X$ converging weak-$^{\ast}$ to $\nu$ and $A \subset X$ is any Borel set so that $\nu(\partial A)= 0$, then 
\[ \lim_{n \rightarrow \infty} \nu_{n}(A) = \nu(A).\]
Here, $\partial A$ denotes the \textit{topological boundary} of $A$: those points $x \in A$ for which any open neighborhood of $x$ intersects $A^{c}$. Moreover, 
\[ \liminf_{n \rightarrow \infty} \nu_{n}(U) \geq \nu(U)\]
for any open set $U$. Conversely, if $\lim_{n \rightarrow \infty} \nu_{n}(A) = \nu(A)$ for all Borel sets $A$ with $\nu(\partial(A)) = 0$, then $(\nu_{n})$ weak-$^{\ast}$ converges to $\nu$. 
\end{Prop}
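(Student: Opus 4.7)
The plan is to leverage Lemma~\ref{weak and narrow}, which identifies weak-$^*$ and narrow convergence on $\mathcal{P}(X)$, so that $\int f\,d\nu_n \to \int f\,d\nu$ is available for all bounded continuous $f$, not merely those of compact support. The entire argument then reduces to approximating indicators of open and closed sets by bounded continuous functions, which is where local compactness enters via Urysohn's lemma for LCH spaces.

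First I would establish the lower semicontinuity estimate for open sets. Given an open set $U$, inner regularity of $\nu$ produces compact sets $K_k \subset U$ with $\nu(K_k) \to \nu(U)$; Urysohn's lemma in the LCH setting then yields $f_k \in C_c(X)$ with $1_{K_k} \le f_k \le 1_U$. Since $\nu_n(U) \ge \int f_k\,d\nu_n$, passing to the limit in $n$ against the compactly supported test function $f_k$ gives
\[ \liminf_{n \to \infty} \nu_n(U) \;\ge\; \int f_k\,d\nu \;\ge\; \nu(K_k), \]
and letting $k \to \infty$ proves $\liminf_n \nu_n(U) \ge \nu(U)$. Passing to complements and using $\nu_n(X) = 1 = \nu(X)$ yields the dual inequality $\limsup_n \nu_n(F) \le \nu(F)$ for closed $F$. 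The first claim now follows by applying these bounds to $U = \operatorname{int}(A)$ and $F = \overline{A}$: since $F \setminus U = \partial A$ has $\nu$-measure zero, $\nu(U) = \nu(A) = \nu(F)$, and the resulting sandwich forces $\nu_n(A) \to \nu(A)$.

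For the converse, I would use the layer-cake formula. Assume $\nu_n(A) \to \nu(A)$ whenever $\nu(\partial A) = 0$, fix a bounded continuous $f$, and by linearity and rescaling reduce to the case $0 \le f \le 1$. Continuity forces $\partial\{f > t\} \subset \{f = t\}$, and since $\nu$ is finite the set of $t \in [0,1]$ with $\nu(\{f = t\}) > 0$ is countable. Hence for almost every $t$ the set $\{f > t\}$ satisfies the $\nu(\partial \cdot) = 0$ hypothesis, so $\nu_n(\{f > t\}) \to \nu(\{f > t\})$ pointwise a.e. in $t$; dominated convergence (bound $1$) applied to $\int f\,d\mu = \int_0^1 \mu(\{f > t\})\,dt$ yields $\int f\,d\nu_n \to \int f\,d\nu$. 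This is narrow convergence, hence weak-$^*$ convergence by Lemma~\ref{weak and narrow}.

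The main obstacle I anticipate is essentially bookkeeping: ensuring that in the possibly non-compact $X$ the cutoff functions $f_k$ can genuinely be chosen to lie in $C_c(X)$ (so the weak-$^*$ hypothesis, which tests only against compactly supported continuous functions, is directly applicable), and verifying that the layer-cake exchange of limits in the converse is justified --- both follow from the LCH structure plus finiteness of $\nu$, but they are the only places where the argument could slip.
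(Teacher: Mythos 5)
Your proposal is correct, but note that the paper does not actually prove Proposition~\ref{Portmanteau}: it is quoted as a classical fact with a citation to Billingsley's book, so there is no internal proof to compare against. Your argument is the standard textbook one, correctly adapted to the locally compact Hausdorff, weak-$^{\ast}$ setting: inner regularity of the Radon limit plus Urysohn's lemma gives the open-set lower bound $\liminf_n \nu_n(U) \ge \nu(U)$ using only compactly supported test functions, complementation gives the closed-set upper bound, and the sandwich $\operatorname{int}(A) \subset A \subset \overline{A}$ with $\nu(\partial A)=0$ yields the first claim; the converse via the layer-cake formula, the countability of the atoms of $t \mapsto \nu(\{f=t\})$, and dominated convergence is likewise standard and complete. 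One point worth making explicit (you use it silently in the complementation step, where you invoke $\nu_n(X)=1=\nu(X)$): the conclusion genuinely requires that the weak-$^{\ast}$ limit $\nu$ be a probability measure, since weak-$^{\ast}$ convergence alone allows mass to escape to infinity (e.g.\ $\delta_n \to 0$ on $\mathbb{R}$), in which case the closed-set bound and the first claim fail. This is exactly the role of Lemma~\ref{weak and narrow} in the paper's framing --- the proposition is stated for sequences in $\mathcal{P}(X)$ with limit in $\mathcal{P}(X)$, where wide and narrow convergence coincide --- so your reading matches the intended one, but the hypothesis deserves to be stated rather than assumed. Also, in the forward direction you do not actually need Lemma~\ref{weak and narrow} at all, since your test functions are already compactly supported; and in the converse, weak-$^{\ast}$ convergence follows from the narrow convergence you establish simply because $C_c(X) \subset C_b(X)$.
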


\subsection{Geodesic currents}\label{sec:currents}

Let $\h^{2}$ denote the hyperbolic plane. The space of \textit{oriented} bi-infinite geodesics in $\h^2$ -- which we denote by $\tilde{G}(\mathbb{H}^{2})$-- is topologically identified with 
\[ (S^{1} \times S^{1} \setminus \Delta), \]
where $\Delta$ is the diagonal in $S^1 \times S^1$. Indeed, any oriented geodesic is given by a pair of distinct points on the circle $S^1$ at infinity. The space of \textit{unoriented} bi-infinite geodesic segments-- denoted $G(\mathbb{H}^{2})$ -- is obtained from $\tilde{G}(\mathbb{H}^{2})$ after quotienting by the $\mathbb{Z}_{2}$ action coming from flipping the orientation. If $\gamma \in \tilde{G}(\mathbb{H}^{2})$, let $\overline{\gamma}$ denote its unoriented class in $G(\mathbb{H}^{2})$. 

\paragraph*{\bf Universal covers} Given a closed, oriented hyperbolic surface $\Sigma$, the universal cover $\tilde{\Sigma}$ is identified with $\mathbb{H}^{2}$ and we thus refer to $G(\tilde{\Sigma})$ as $G(\mathbb{H}^{2})$ equipped with the isometric action of $\pi_{1}(\Sigma)$ coming from deck transformations. Note that $G(\tilde{\Sigma})$ depends only on the algebra of $\pi_{1}(\Sigma)$ and not on particular geometric features of the metric.

\begin{Def}\label{def:current} (\textit{measures on the space of geodesics}) A \textbf{geodesic current} on $\Sigma$ is a locally finite Radon $\pi_{1}(\Sigma)$-invariant measure on $G(\tilde{\Sigma})$.
\end{Def}

\paragraph*{\bf Transverse measures} We can also define geodesic currents in a way that parallels the idea of a measured geodesic lamination, by considering transverse measures on the projectivized tangent bundle $\mathbb{P}T(\Sigma)$, the quotient of the unit tangent bundle by the antipodal map. The projectivized tangent bundle comes equipped with the \textit{geodesic foliation} $\mathcal{F}$, the leaves of which are trajectories under the geodesic flow on $T\Sigma$.

\begin{Def}  \label{bundle def} (\textit{transverse measures on the projectivized tangent bundle}) A \textbf{geodesic current} is a family of measures 
\[ \left\{ \mu_{\tau}: \tau \hspace{1 mm} \mbox{is a codimension-1 submanifold of } \mathbb{P}T\Sigma \hspace{1 mm} \mbox{transverse to the geodesic foliation} \right\},\]
satisfying the following invariance property. Given submanifolds $\tau_{1}, \tau_{2}$, points $x_{1} \in \tau_{1}, x_{2} \in \tau_{2}$ contained on the same leaf of $\mathcal{F}$, and a holonomy diffeomorphism $\phi: U_{1} \rightarrow U_{2}$ from a neighborhood $U_{1}$ of $x_{1}$ to $U_{2}$ of $x_{2}$, one has that $\phi_{\ast} \mu_{\tau_{1}} = \mu_{\tau_{2}}$. 
\end{Def}

\paragraph*{\bf The space of currents} We will denote the space of all geodesic currents by $\mathcal{GC}(\Sigma)$; since they arise as Radon measures on a space, $\mathcal{GC}$ comes equipped with a weak-$^{\ast}$ topology-- a sequence $(\mu_{n})$ converges to $\mu \in \mathcal{GC}(\Sigma)$ if for any compactly supported continuous function $f: G(\mathbb{H}^{2}) \rightarrow \mathbb{R}$, one has 
\[ \lim_{n \rightarrow \infty} \int f d\mu_{n} = \int f d\mu.\]

\paragraph*{\bf Oriented currents} There is a natural notion of an \textbf{oriented geodesic current}-- a $\pi_{1}(\Sigma)$-invariant locally finite Radon measure on $\tilde{G}(\h^{2})$, or alternatively, a transverse invariant measure to the geodesic foliation on $T^{1}\Sigma$.

\section{Geodesic Currents with corners} \label{currents with corners}

\paragraph*{\bf The space of segments} To define the space of currents with corners, we will first need to define-- and then topologize-- the space of \textit{geodesic segments} of $\mathbb{H}^{2}$. Because we will want the space of currents to live as a subspace of the space of currents with corners, it follows that we should allow geodesic segments to be bi-infinite. Identifying $\mathbb{H}^{2}$ with the open unit disk $\mathbb D \subset \mathbb{C}$ via the Poincare disk model, an oriented unparameterized geodesic segment is described uniquely by its endpoints $(x,y) \in \mathbb D \times \mathbb D$; we can parameterize semi- and bi-infinite geodesics by choosing pairs in $\overline{\mathbb D} \times \overline{\mathbb D}$. We therefore identify the \textit{space of unoriented geodesic segments} -- denoted $GS(\mathbb{H}^{2})$ with 
\[ (\overline{\mathbb D} \times \overline{\mathbb D} \setminus \Delta )/\mathbb{Z}_{2}, \]
where as in the previous section, the $\mathbb{Z}_{2}$ action reverses the orientation on a given segment. We will let $\widetilde{GS}(\h^{2})$ denote the \textit{space of oriented geodesic segments}. 

\paragraph*{\bf Plane bundles} It will at times be helpful to conceive of $GS(\mathbb{H}^{2})$ concretely as a plane bundle over $G(\mathbb{H}^{2})$, and so we briefly describe a more hands-on way of visualizing $GS(\mathbb{H}^{2})$ in this way. To do this, we will need a way of referring to a given segment along some bi-infinite geodesic $\gamma$; this is easy to do once one has chosen a parameterization of $\gamma$, but we will need a choice of parameterization that varies continuously from geodesic to geodesic. After again identifying $\mathbb{H}^{2}$ with $\mathbb D$ via the Poincare model, each $\gamma \in G(\mathbb{H}^{2})$ is either a diameter of the unit disk or a circular arc perpendicular to $\partial \mathbb D = S^{1} \subset \mathbb{C}$.

\begin{Def} The \textbf{center point} $c(\gamma)$ of $\gamma \in G(\mathbb{H}^{2})$ is the point along $\gamma$ subdividing it into two pieces of equal Euclidean arclength. 
\end{Def}

\paragraph*{\bf Fixing a parameterization} Note that the assignment sending a geodesic to its center point is continuous. Moreover, $c(\gamma)$ induces a preferred choice of parameterization for $\gamma$ once one also chooses one of two orientations: we simply parameterize a given oriented bi-infinite geodesic by hyperbolic arclength, so that time $t=0$ corresponds to $c(\gamma)$. This allows us to topologize the space of \textit{oriented} geodesic segments of $\mathbb{H}^{2}$ as 
\[ \tilde{G}(\mathbb{H}^{2}) \times \overline{\mathbb{R}}^{2}, \]
where $\overline{\mathbb{R}}$ denotes the two-point compactification of $\mathbb{R}$:
\[ \overline{\mathbb{R}} = [-\infty, \infty]. \]

\paragraph*{\bf Topologizing $GS(\mathbb H^2)$} This yields the following topological description for $GS(\mathbb{H}^{2})$:

\[GS(\mathbb{H}^{2}) \cong  \left(\tilde{G}(\mathbb{H}^{2}) \times \left\{ (a,b) : -\infty \leq a < b \leq \infty \right\} \right)/\sim, \]
where we identify $$(\gamma, (a,b)) \sim (\gamma', (c,d))$$ if $\overline{\gamma} = \overline{\gamma'}$ and $(c,d) = (-b,-a)$.  Indeed, $\rho \in GS(\mathbb{H}^{2})$ specifies a pair of unordered points $\left\{ x,y \right\}$ in $\overline{\mathbb D} \times \overline{\mathbb D}$; the unique unoriented bi-infinite geodesic containing both $x$ and $y$ specifies a point in $\gamma \in G(\mathbb{H}^{2})$, and then $c(\gamma)$ gives rise to an identification of $\rho$ with an equivalence class (containing two elements) of parameterized sub-intervals of $[-\infty, \infty]$.

\paragraph*{\bf A $\pi_1$-action} The natural isometric action of $\pi_{1}(\Sigma)$ on $\tilde{\Sigma} \cong \mathbb{H}^{2}$ by deck transformation induces an action of $\pi_{1}(\Sigma)$ on $GS(\mathbb{H}^{2})$; indeed, given $\rho \in \pi_{1}(\Sigma)$, the geodesic segment with endpoints $\left\{x, y \right\}$ is sent to the geodesic segment with endpoints $\left\{ \rho(x), \rho(y) \right\}$. We may thus refer to the space $GS(\tilde{\Sigma})$-- the space of unoriented geodesic segments in the universal cover of $\Sigma$, equipped with the natural $\pi_{1}(\Sigma)$ action described above. By $GS(\Sigma)$, we will mean the space obtained by quotienting $GS(\tilde{\Sigma})$ by the $\pi_{1}(\Sigma)$-action. 

\paragraph*{\bf Compactifying the tangent bundle} Let $T(\tilde{\Sigma})$ denote the tangent bundle to the universal cover of $\Sigma$. Let $T^{\infty}(\tilde{\Sigma})$ denote the space obtained from $T(\tilde{\Sigma})$ by compactifying each tangent space with a circle at infinity.  Let $T^{\infty}_{\Delta}\Sigma$ be the following ``diagonal subspace'' of the Whitney sum $T^{\infty}\Sigma \oplus T^{\infty}\Sigma$:
\[ T^{\infty}_{\Delta}\Sigma = \left\{ (x, v_{1}, v_{2}) : v_{1}, v_{2} \hspace{1 mm} \mbox{point in opposing directions} \right\}.  \] 
By convention, if either $v_{1}= 0$ or $v_{2}= 0$, we declare $v_{1}, v_{2}$ to be pointing in opposite directions. 

\paragraph*{\bf A projection map} There is then a continuous map 
\[ \rho: T^{\infty}_{\Delta}(\tilde{\Sigma})\setminus \textbf{0} \rightarrow GS(\tilde{\Sigma}), \]
where \textbf{0} denotes the $0$-section, defined as follows. A point $p \in T^{\infty}_{\Delta}(\tilde{\Sigma}) \setminus \textbf{0}$ consists of a triple $(x, v_{1}, v_{2})$ where $x \in \tilde{\Sigma}$, $v_{i}$ is a potentially infinite tangent vector, and $v_{1}, v_{2}$ are not both $0$. Then $\rho(p)$ will be the geodesic segment obtained by concatenating two geodesics that both start at $x$: one tangent to $v_{1}$ and with length $||v_{1}||$, and the other tangent to $v_{2}$ with length $||v_{2}||$. Then $\rho$ surjects onto $GS(\tilde{\Sigma})$. Note also that the $\pi_{1}(\Sigma)$ action on $\tilde{\Sigma}$ induces an action on $T^{\infty}_{\Delta}(\tilde{\Sigma})$, and that $\rho$ descends to a continuous surjection 
\[ \overline{\rho}: (T^{\infty}_{\Delta}(\Sigma)=:) T^{\infty}_{\Delta}(\tilde{\Sigma})/\pi_{1}(\Sigma) \rightarrow GS(\Sigma). \]
If we then let $T^{\infty}_{\Delta}(\Sigma)_{K}$ denote the set of all $(x, v_{1}, v_{2})$ with $x \in \Sigma$ and $||v_{1} - v_{2}|| \ge K$, $\overline{\rho}$ restricts to a map onto $GS(\Sigma)_{K}$, the set of geodesic segments of length at least $K$. Since $T^{\infty}_{\Delta}(\Sigma)_{K}$ is compact, it follows that $GS(\Sigma)_{K}$ is as well. 

\paragraph*{\bf Centered geodesic segments} One can also consider the space $\mathbb{P}T^{\infty}\tilde{\Sigma}$, obtained as a quotient of $T^{\infty}\Sigma$ by identifying points $(x,v)$ and $(x,-v)$. The presence of half-infinite geodesic rays in $GS(\h^{2})$ will sometimes unnecessarily complicate the theory, and so it will therefore be helpful to consider the space of \textit{centered} geodesic segments, defined as follows: 

\begin{Def} \label{centered segments} The space of \textbf{centered geodesic segments}, denoted $CS(\h^{2})$, is identified with $\mathbb{P}T^{\infty}\tilde{\Sigma}$. The point $(x,[v])$ corresponds to the geodesic segment centered at $x$ and of length $2||v||$.      
\end{Def}

\subsection*{Currents with corners}

As above, let $\Sigma$ be a closed orientable surface with genus at least $2$. We are now in a position to define currents with corners: 

\begin{Def} A \textbf{geodesic current with corners} on $\Sigma$ is a locally finite Radon $\pi_{1}(\Sigma)$-invariant measure on the space of geodesic segments $GS(\tilde{\Sigma})$. An \textbf{oriented geodesic current with corners} on $\Sigma$ is a locally finite Radon $\pi_{1}$-invariant measure on the space of oriented geodesic segments. A \textbf{centered geodesic current with corners} is a locally finite Radon $\pi_{1}$-invariant measure on $CS(\h^{2})$. 
\end{Def}

\paragraph*{\bf Examples} Let $\mathcal{GCC}(\Sigma)$ denote the set of all geodesic currents with corners on $\Sigma$; let $\mathcal{CGCC}(\Sigma)$ be the set of centered geodesic currents with corners. Before topologizing $\mathcal{GCC}(\Sigma)$, we first discuss some motivating examples.  

\begin{Example} \label{current is a current with corners} A(n oriented) geodesic current is both a(n oriented) current with corners and a centered current with corners, which assigns a measure of $0$ to every geodesic segment that is not bi-infinite. 
\end{Example}

\begin{Example} \label{corners on a curve} Let $\gamma$ be a geodesic in $\tilde{\Sigma}$ whose projection to $\Sigma$ is dense. There is then no clear way to attach a geodesic current to $\gamma$ in the same way one associates a counting measure to a geodesic whose projection is closed on $\Sigma$. However, one could consider a counting measure on the full lift of a short segment of the projection of $\gamma$ to $\Sigma$. The lift will be a $\pi_{1}(\Sigma)$-invariant collection of geodesic segments in $\h^{2}$, and by choosing the segment sufficiently small, one can guarantee that the measure is locally finite. The support of this current with corners will be a set of short segments in the lift of $\gamma$, based at a collection of $\pi_{1}(\Sigma)$ orbit points. By interpreting each segment as centered, one also gets a centered current with corners in this way. 
\end{Example} 

\begin{Example} \label{graph} Let $\Gamma$ be a finite weighted graph, and let $\phi: \Gamma \rightarrow \Sigma$ be an embedding of $\Gamma$ so that edges are sent to geodesic segments. The embedding lifts to a map $\tilde{\phi}: \Gamma \rightarrow \h^{2}$, and the image of this map can be naturally associated to a current with corners (or a centered current with corners) by considering the sum of counting measures on each edge, weighted by the respective edge weight. 
\end{Example}

\paragraph*{\bf Graph currents} By a \textit{graph current}, we will mean any (centered) current with corners arising from a graph on $\Sigma$ as in the previous example.

\begin{Def}\label{def:mass} The \textbf{mass} of a graph current is the sum, taken over all edges $e$ of $\Gamma$ of the weight assigned to $e$. 
\end{Def}

\begin{Non-example} Let $\Gamma$ now be an infinite weighted graph where each edge receives a positive weighting, and so that the set of weights has a positive greatest lower bound. Let $\phi: \Gamma \rightarrow \Sigma$ be an embedding of $\Gamma$ so that edges are sent to geodesic segments and so that the set of lengths of edges has a non-zero greatest lower bound and a finite least upper bound. One can then construct a $\pi_{1}(\Sigma)$-invariant measure on $GS(\tilde{\Sigma})$ as in Example \ref{graph} which will have as its support the set of all lifts of the edges in $\phi(\Gamma)$. However, this measure will not be locally finite. Indeed, since every edge in the image of $\phi$ has length at most some finite $K < \infty$ (and at least some $k >0$, there must be a (non-zero) geodesic segment $\gamma$ on $\Sigma$ which is a Gromov-Hausdorff accumulation point of the edges in $\phi(\Gamma)$ (see Tuzhilin~\cite{Tuzhilin} for the definition and an interesting discussion of the history of the Gromov-Hausdorff distance). It follows that any neighborhood of a lift of $\gamma$ to $\tilde{\Sigma}$ will be assigned infinite mass. 
\end{Non-example} 

\begin{Example} Let $\Gamma$ be a countably infinite star graph with one central vertex $v$ and edges of the form $(v, v_{i})$ for some countably infinite set of vertices $\left\{v_{1}, v_{2},... \right\}$. Fix some point $x \in \Sigma$, and choose an enumeration of the elements in $\pi_{1}(\Sigma, x)$. Let $\phi: \Gamma \rightarrow \Sigma$ be a map sending $v$ to $x$ and sending the edge $(v, v_{i})$ to the shortest loop in the $i^{th}$ class of $\pi_{1}(\Sigma, x)$. Consider then the measure on $GS(\tilde{\Sigma})$ which, given some $U \subset GS(\tilde{\Sigma})$, counts the number of lifts in $U$ of any edge in $\phi(\Gamma)$. Proper discontinuity of the $\pi_{1}(\Sigma)$ action implies that any such $U$ with compact closure receives finite mass. Note that the key difference between this and the Non-example above is that there is no finite least upper bound for the lengths of edges in $\Gamma$. 
\end{Example}

\subsection{The tangent bundle viewpoint}

As described in Section \ref{prelim}, a classical geodesic current can be interpreted as a transverse invariant measure to the geodesic foliation on $\mathbb{P}T\Sigma$. As we describe below, one can give an analogous formulation of geodesic currents with corners by replacing $\mathbb{P}T\Sigma$ with $T^{\infty}_{\Delta}\Sigma$. 

\paragraph*{\bf Positive subsets} Given $p= (x,v_{1}, v_{2}) \in T^{\infty}_{\Delta}\Sigma$, let the \textbf{length} of $p$ be the (potentially infinite) pair of magnitudes $\left( ||v_{1}||,  ||v_{2}|| \right)$. Given $A \subset T^{\infty}_{\Delta}\Sigma$, let $A_{>0} \subset A$ denote the subset of $A$ consisting of points with non-zero length. In the event that $A = A_{>0}$ we say that $A$ is \textbf{positive}.


\paragraph*{\bf Forgetful map} There is then a projection $\Pi$ from $A_{>0}$ to the projective tangent bundle $\mathbb{P}T\Sigma$ given by forgetting the length-- this is well defined because $v_{1}, v_{2}$ lie on the same line through $x$. We will also refer to a point $p \in T^{\infty}_{\Delta}\Sigma$ as being positive if its length vector is positive.

\paragraph*{\bf Field of geodesic segments} Then $T^{\infty}\Sigma \setminus \textbf{0}$ (where \textbf{0} denotes the $0$-section of the Whitney sum $T^{\infty}\Sigma \oplus T^{\infty}\Sigma$ ) comes equipped with a \textit{field of geodesic segments} $\mathcal{F}'$, defined to be the collection of maximal paths in $T^{\infty}_{\Delta}\Sigma$ with positive interior and whose projection to $\mathbb{P}T\Sigma$ lies in a leaf of the geodesic foliation $\mathcal{F}$. We will call each such path a \textit{leaf} of $\mathcal{F}'$.

\paragraph*{\bf Positive transversals} A codimension-1 submanifold $M$ of $T^{\infty}_{\Delta}\Sigma$ is said to be \textbf{transverse} to $\mathcal{F}'$ if $\Pi(M_{>0})$ is transverse to the geodesic foliation $\mathcal{F}$. The manifold $M$ is \textbf{positively transverse} to $\mathcal{F}'$ if $M$ is a  positive subset of $T^{\infty}_{\Delta} \Sigma$ and transverse to $\mathcal{F}'$ in the usual sense of the word, viewing $M$ as a submanifold of $T^{\infty}_{\Delta}(\Sigma) \setminus \textbf{0}$ and $\mathcal{F}'$ as a foliation on $T^{\infty}_{\Delta}(\Sigma) \setminus \textbf{0}$.

\begin{Def} \label{tangent bundle version} (compare with Definition \ref{bundle def}) A \textbf{geodesic current with corners} is a family of Radon measures 
\[ \left\{ \mu_{\tau}: \tau \subset T^{\infty}_{\Delta}\Sigma \hspace{1 mm} \mbox{is a codimension-1 submanifold transverse to } \mathcal{F}' \right\}, \]
satisfying the following invariance property: if $x,y$ are positive points on two submanifolds $\tau_{x}, \tau_{y}$ such that $\Pi(x),\Pi(y)$ are contained on the same leaf of $\mathcal{F}'$, and if $\psi: U_{x} \rightarrow U_{y}$ is a holonomy diffeomorphism from a positive neighborhood of $x$ to a positive neighborhood of $y$ defined by following the leaves of $\mathcal{F}'$ \textit{such that the trajectory of $U_{x}$ remains positively transverse to $\mathcal{F}'$}, then $\psi_{\ast}\tau_{x} = \tau_{y}$. 
\end{Def}


\paragraph*{\bf Classical currents} We can see that classical geodesic currents are currents with corners from this perspective as well. Note that the $\infty$-section of $T^{\infty}_{\Delta}\Sigma$ is naturally homeomorphic to $\mathbb{P}T\Sigma$. Thus if $\tau$ is a codimension-$1$ submanifold of $T^{\infty}_{\Delta}\Sigma$ transverse to $\mathcal{F}'$ and its intersection with the $\infty$-section is non-empty, it can be identified with a codimension-$1$ submanifold of $\mathbb{P}T\Sigma$ transverse to $\mathcal{F}$. In this way, any classical geodesic current is a geodesic current with corners.

\paragraph*{\bf Reconciling the perspectives} To see that these two definitions of currents with corners are indeed the same, consider the map 
\[ P: T_{\Delta}^{\infty}\tilde{\Sigma} \setminus \textbf{0}  \rightarrow GS(\tilde{\Sigma}) \times G(\h^{2}),\]
 sending $(x, v_{1}, v_{2})$ to $\rho(x, v_{1}, v_{2})$ in the first coordinate, and to the unique bi-infinite geodesic orthogonal to this segment and intersecting it at $x$ in the second coordinate.  Then $P$ is a homeomorphism onto its image, and the map from $T^{\infty}_{\Delta}\tilde{\Sigma}$ to $GS(\tilde{\Sigma})$ obtained by composing $P$ with the projection onto the first factor is a submersion whose fibers are-- after passing to closures-- precisely the leaves of $\mathcal{F}'$. It follows that transverse invariant Radon measures to (the lift of) $\mathcal{F}'$ that are invariant under $\pi_{1}(\Sigma)$ are in one-to-one correspondence with $\pi_{1}(\Sigma)$-invariant Radon measures on $GS(\tilde{\Sigma})$. 

\paragraph*{\bf Compactness issues} One key difference between this formulation and the analogous one in the setting of geodesic currents is that $T^{\infty}_{\Delta}\Sigma \setminus \textbf{0}$ is not compact while $\mathbb{P}T\Sigma$ is. This stems from the fact that points are not segments, and so we have to remove the $0$-section from $T^{\infty}_{\Delta}\Sigma$ to get a well-defined foliation $\mathcal{F}'$ everywhere. This issue manifests in Section \ref{intersection}, particularly in the second half of the proof of Lemma \ref{intersection function continuous}. This also parallels the fact that $GS(\Sigma)_{K}$ is compact for any $K >0$, but $GS(\Sigma)$ is not.

\paragraph*{\bf Centered currents} Finally, we note that the formulation for \textit{centered} currents with corners given above already utilizes a tangent bundle viewpoint because the space of centered geodesic segments is identified with $\mathbb{P}T^{\infty}\tilde{\Sigma}$.

\subsection{The topology on the space of currents with corners}

We use Bonahon's~\cite{Bonahon} original idea for constructing a topology on the space of currents, and we follow the exposition of Aramayona-Leininger~\cite{AL}. 

\paragraph*{\bf Segmented flow boxes} We define a \textbf{segmented flow box} to be the set of geodesic segments transverse to a given close-by pair of small geodesic arcs and with lengths varying in some open subset of $\mathbb{R}$. Formally, given $\epsilon_{1}, \epsilon_{2}>0$ and $x_{1}, x_{2} \in \overline{\mathbb{H}^{2}}$ distinct, a segmented flow box $B= B_{((x_{i}), \epsilon_{i})} \subset GS(\tilde{\Sigma})$ is a subset of $GS(\tilde{\Sigma})$ of the form 

\[ B = \left\{ \gamma \in GS(\tilde{\Sigma}) : \gamma \hspace{1 mm} \mbox{has one endpoint in} \hspace{1 mm} D_{\epsilon_{1}}(x_{1}) \hspace{1 mm} \mbox{and another in} \hspace{1 mm} D_{\epsilon_{2}}(x_{2}) \right\}, \]
where $D_{\epsilon_{1}}(x_{1})$ represents the open disk of \textit{Euclidean} radius $\epsilon_{1}$ about $x_{1}$ (note that this makes sense even in the event that $x_{1}$ sits in the boundary of $\mathbb{H}^{2}$) and where we assume that these disks have disjoint closures. The \textit{height} of such a segmented flow box $B$ is defined to be the (potentially infinite) hyperbolic distance between $D_{\epsilon_{1}}(x_{1})$ and $D_{\epsilon_{2}}(x_{2})$. 

\paragraph*{\bf Admissible flow boxes} A segmented flow box $B$ is said to be $\mu$-\textit{admissible} if $\mu$ assigns measure $0$ to $\partial(B)$, where 
\[\partial(B) = \left\{ \gamma \in B: \gamma \hspace{1 mm} \mbox{has at least one endpoint on either} \hspace{1 mm} \partial D_{\epsilon_{1}}(x_{1}) \hspace{1 mm} \mbox{or on} \hspace{1 mm} \partial D_{\epsilon_{2}}(x_{2}) \right\}. \] 

\paragraph*{\bf Width and height variation} Note that in the event that both $x_{1}, x_{2}$ are interior points of $\mathbb{H}^{2}$, $\partial(B)$ is topologically a $2$-dimensional torus; if exactly one point is in the interior, it's a cylinder, and otherwise it's topologically a disk. The \textbf{width} of a segmented flow box is the maximum of $\epsilon_{1}$ and $\epsilon_{2}$, and the \textbf{height variation} of a segmented flow box is the (potentially infinite) diameter of the infimal open subset in $\mathbb{R}$ in which the lengths vary.

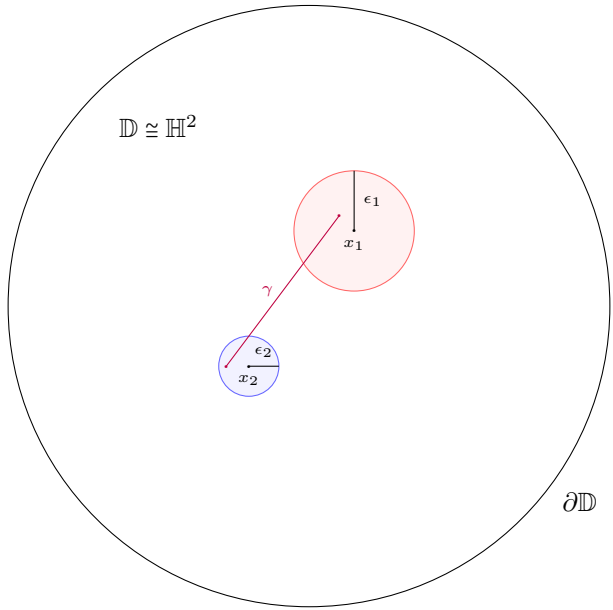
\begin{figure}\caption{An admissible flow box $B = B_{((x_{i}), \epsilon_{i})},$ with $x_i \in \mathbb H^2$, consists of the collection of \emph{hyperbolic} geodesics $\gamma$ connecting the $\epsilon_i$ neighborhoods of $x_i$, viewed in the disk model $\mathbb D$.  \medskip}\label{fig:flowbox}

\begin{tikzpicture}[scale=2.0]

\filldraw[color=black, fill=white] (0,0) circle (2);

\node at (-1, 1.2){$\mathbb D \cong \mathbb H^2$};

\node at (1.8, -1.3){$\partial \mathbb D$};

\filldraw[color=red!60, fill=red!5](0.3, 0.5) circle (0.4);

\filldraw[color=blue!60, fill=blue!5] (-0.4, -0.4) circle (0.2);

\draw (-0.4, -0.4)--(-0.2, -0.4);

\draw (0.3, 0.5)--(0.3, 0.9);

\node at (0.3, 0.5){.};
\node at (-0.4, -0.4){.};

\tkzLabelPoint[below](0.3, 0.5){\tiny $x_1$}
\tkzLabelPoint[left, below](-0.4, -0.4){\tiny $x_2$}

\tkzLabelPoint[above](-0.3, -0.4){\tiny $\epsilon_2$}
\tkzLabelPoint[right](0.3, 0.7){\tiny $\epsilon_1$}

\draw[purple](-0.55, -0.4)--(0.2, 0.6);

\node at (-0.175, 0.1)[purple, left]{\tiny $\gamma$};

\node[purple] at (-0.55, -0.4){.};
\node[purple] at (0.2, 0.6){.};

\end{tikzpicture}

\end{figure}


\paragraph*{\bf A neighborhood basis} Then the collection of sets of the form 

\[ U(\mu, \epsilon) = \left\{ \nu \in \mathcal{GCC}(\Sigma): \forall B \in \mathcal{B}, |\mu(B) - \nu(B)| < \epsilon \right\} \]
where $\epsilon$ ranges over all positive real numbers and $\mathcal{B}$ ranges over all finite collections of $\mu$-admissible segmented flow boxes, is a basis of neighborhoods for $\mu$. The importance of $\mu$-admissibility relates to the Portmanteau theorem for weak-$^{\ast}$ convergence (Proposition \ref{Portmanteau}): a sequence of currents $(\mu_{n})$ converging to $\mu$ satisfies
\begin{equation} \label{point of admissibility}
\lim_{n \rightarrow \infty} \mu_{n}(U_{\mu, \epsilon}) =  \mu(U_{\mu, \epsilon})
\end{equation}
when $U_{\mu, \epsilon}$ is $\mu$-admissible, in the event that 
\[ \sup_{n} \mu_{n}(U_{\mu, \epsilon}) < \infty. \]

\begin{remark} \label{weak star} Another way to topologize the space of currents with corners is via the weak$^{\ast}$ topology: a sequence $(\nu_{n})$ of currents converges to a current $\nu$ if and only if 
\[ \lim_{n \rightarrow \infty} \int f d\nu_{n} = \int f d\nu\]
for every compactly supported continuous function $f$ on the space of geodesic segments $GS(\mathbb{H}^{2})$. The following lemma establishes that this is equivalent to the topology generated by sets of the form $U(\mu, \epsilon)$ as described above:


\begin{lemma} \label{equivalence of topologies} Let $(\nu_{n})_{n=1}^{\infty}$ be a sequence of currents with corners. Then 
\[ \lim_{n \rightarrow \infty} \nu_{n}(A) = \nu(A)\]
for every Borel set $A$ with $\nu(\partial A) = 0$ if and only if $(\nu_{n})$ weak$^{\ast}$ converges to $\nu$. 
\end{lemma}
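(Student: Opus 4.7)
The plan is to reduce Lemma \ref{equivalence of topologies} to the standard Portmanteau theorem (Proposition \ref{Portmanteau}) by localizing to precompact subsets of $GS(\mathbb{H}^{2})$, where the currents $\nu$ and $\nu_{n}$ are finite rather than merely locally finite. Since weak-$^{\ast}$ convergence is tested only against compactly supported continuous functions, all the quantitative information lives on such precompact sets; and since $GS(\mathbb{H}^{2})$ is locally compact Hausdorff, we have the Urysohn and regularity tools needed to mimic the classical argument for probability measures.

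For the direction that weak-$^{\ast}$ convergence implies convergence on $\nu$-admissible Borel sets, I would fix $A$ with $\nu(\partial A) = 0$ and, by covering, reduce to the case that $A$ has compact closure. The identity $\nu(\mathrm{int}(A)) = \nu(A) = \nu(\overline{A})$ together with inner/outer regularity yields, for each $\epsilon > 0$, a closed set $C \subset \mathrm{int}(A)$ and a precompact open set $V \supset \overline{A}$ with $\nu(V) - \nu(C) < \epsilon$. Using Urysohn's lemma in a locally compact Hausdorff space, I produce compactly supported continuous functions $f_{-}, f_{+}$ satisfying
\[ \chi_{C} \leq f_{-} \leq \chi_{\mathrm{int}(A)} \leq \chi_{\overline{A}} \leq f_{+} \leq \chi_{V}. \]
Weak-$^{\ast}$ convergence then sandwiches $\liminf \nu_{n}(A)$ from below by $\int f_{-} d\nu \geq \nu(A) - \epsilon$ and $\limsup \nu_{n}(A)$ from above by $\int f_{+} d\nu \leq \nu(A) + \epsilon$. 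Letting $\epsilon \to 0$ gives the required convergence.

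For the converse, fix a compactly supported continuous function $f$ with support $K$ and choose a precompact open neighborhood $U$ of $K$ with $\nu(\partial U) = 0$. Such a $U$ exists because $\nu$ is finite on any compact neighborhood of $K$, so among the uncountable family of slight enlargements of $K$ only countably many boundary shells can carry positive $\nu$-mass. By hypothesis $\nu_{n}(U) \to \nu(U) < \infty$, so the restrictions $\nu_{n}|_{U}$ have uniformly bounded total mass. Next I approximate $f$ uniformly by simple functions $\sum_{i} c_{i} \chi_{A_{i}}$ with each $A_{i} \subset U$ satisfying $\nu(\partial A_{i}) = 0$; this is possible because the set of values $t \in \mathbb{R}$ with $\nu(\{f = t\}) > 0$ is countable, so the range of $f$ can be partitioned into small intervals whose endpoints avoid these bad values. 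Applying the hypothesis to each $A_{i}$ and summing, combined with the uniform bound on $\nu_{n}(U)$, delivers $\int f d\nu_{n} \to \int f d\nu$.

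The main obstacle is the careful avoidance of bad boundary values when constructing the precompact neighborhoods $U$ and the $\nu$-admissible sets $A_{i}$; without such care, the countable family of "bad scales" where the boundary of a natural choice might carry mass could obstruct a direct application of the hypothesis. This is not a deep issue but is the step that most requires the local-finiteness assumption on $\nu$ and the countable-additivity of the measure, both of which ensure that only countably many threshold values can be problematic and hence can be avoided.
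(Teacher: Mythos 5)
Your argument is correct in its two main components, but it takes a genuinely different route from the paper's. For the direction yielding weak-$^{\ast}$ convergence, the paper normalizes $\nu_{n}$ and $\nu$ on the support of $f$ and invokes the probability-measure Portmanteau theorem (Proposition \ref{Portmanteau}), handling a non-admissible support by an inner exhaustion by $\nu$-admissible segmented flow boxes together with a Moore--Osgood interchange of limits (and an outer approximation to get uniform mass bounds); for the other direction it squeezes $\nu_{n}(B)$ between integrals of Urysohn-type functions adapted to a nested outer family of flow boxes. You instead run the standard Portmanteau argument for Radon measures on a locally compact Hausdorff space: a Urysohn sandwich $\chi_{C}\leq f_{-}\leq \chi_{A}\leq f_{+}\leq \chi_{V}$ for the precompact case, and uniform simple-function approximation along $\nu$-null level sets combined with the uniform bound $\sup_{n}\nu_{n}(U)<\infty$ for the converse. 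This is cleaner and avoids the limit interchange entirely. One structural difference worth noting: the paper's forward implication assumes control of $\nu_{n}$ only on $\nu$-admissible segmented flow boxes (which is what the surrounding remark about the $U(\mu,\epsilon)$-basis actually needs), whereas you apply the hypothesis to more general null-boundary sets such as the enlarged neighborhood $U$ and the level sets $A_{i}$; this proves the lemma as literally stated, but it is a formally weaker implication than the one the paper uses to identify the two topologies.

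The one genuine gap is the opening step ``by covering, reduce to the case that $A$ has compact closure.'' No such reduction exists: for a non-precompact Borel set with $\nu$-null boundary, weak-$^{\ast}$ (vague) convergence gives no control over $\nu_{n}(A)$, because mass can escape every compact subset of $GS(\mathbb{H}^{2})$ (for instance, the $\pi_{1}$-invariant counting currents of segments shrinking to a point converge vaguely to the zero current while assigning mass at least $1$ to a fixed non-precompact open set with empty $\nu$-boundary), and a countable cover by precompact pieces cannot be summed without a tightness hypothesis you do not have. The implication in this direction is simply false for general Borel $A$, so you must restrict it to precompact sets (equivalently, to admissible segmented flow boxes and sets built from them). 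This restriction costs nothing: it is all that the equivalence of the flow-box topology with the weak-$^{\ast}$ topology requires, it is all the paper's own proof establishes (its statement is loose on the same point), and the rest of your sandwich argument goes through verbatim for precompact $A$.
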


\begin{proof} Let $f$ be a compactly supported continuous function and suppose that for every finite collection of $\nu$-admissible segmented flow boxes $U_{1},..., U_{k}$ and every $\epsilon>0$, there is $N$ so that for all $n>N$, $|\nu_{n}(U_{j}) - \nu(U_{j})| < \epsilon$. Assume first that $\mbox{support}(f)$ is $\nu$-admissible. Then we have that
\begin{equation} \label{volumes match}
 \lim_{n \rightarrow \infty} \nu_{n}(\mbox{support}(f)) = \nu(\mbox{support}(f)). 
 \end{equation}
Note that each $\nu_{n}$, as well as $\nu$, assigns finite measure to $\mbox{support}(f)$ because it's compact and because each measure is locally finite. Then let $\nu'_{n}$ (respectively $\nu'$) denote the measure obtained from $\nu_{n}$ (respectively $\nu$) by dividing by the $\nu_{n}$-volume (resp. the $\nu$-volume) of $\mbox{support}(f)$. Then by Proposition \ref{Portmanteau}, we have that 
\[ \lim_{n \rightarrow \infty} \int f d\nu'_{n} = \int f d\nu'.\]
Moreover, the left hand side can be rewritten as 
\[ \lim_{n \rightarrow \infty} \frac{1}{\nu_{n}(\mbox{support}(f))} \cdot \int f d \nu_{n}, \]
and the right hand side as 
\[ \frac{1}{\nu(\mbox{support}(f))} \cdot \int f d\nu.\]
So, the desired result follows from \ref{volumes match}. Now, in the event that $\mbox{support}(f)$ is not $\nu$-admissible, we note that it can be approximated from within by $\nu$-admissible sets. Indeed, by compactness, we can assume that $\mbox{support}(f)$ consists of (the closure of) a single segmented flow box $B$. Let $B'$ be obtained from $B$ by slightly decreasing the value of $\epsilon_{1}$, the radius of the first disk about one of the two defining points. Note that if $B'$ and $B''$ are two segmented flow boxes obtained from $B$ in this fashion, but associated to decreasing $\epsilon_{1}$ by different amounts, their topological boundaries are disjoint. Then Since $\nu(B)$ is finite, there can be at most countably many $B'$ obtained in this way which are not $\nu$-admissible, and thus there exists an exhaustion of $B$ by segmented flow boxes obtained by this process which are all $\nu$-admissible.

\begin{remark} \label{same from the outside} We note that the above argument applies just as well to approximating $B$ from above by a sequence of nested segmented flow boxes that are all $\nu$-admissible. For this reason, it follows that the $\nu_{n}$-mass of $B$ is uniformly bounded. 
\end{remark}

\noindent Using the above remark, we can then apply the above argument to each such segmented flow box to obtain that 
\[ \lim_{n \rightarrow \infty} \int_{B_{k}} f d \nu_{n} = \int_{B_{k}} f d \nu.\]
Then let $f_{k}$ be the function that equals $f$ on $B_{k}$ and which is supported on $B_{k}$. Since we can assume that $f$ vanishes on $\partial B$ (indeed, $B$ represents the support of $f$), $\{f_{k}\}_{k \geq 1}$ converges uniformly to $f$ on the closure $\overline{B}$ of $B$. Thus, 
\begin{align*} \lim_{n \rightarrow \infty}\int_{B} f d\nu_{n} &=  \lim_{n \rightarrow \infty} \lim_{k \rightarrow \infty} \int_{B_{k}} f d \nu_{n} \\
& = \lim_{n \rightarrow \infty} \lim_{k \rightarrow \infty} \int_{B} f_{k} d\nu_{n} \\
&= \lim_{k \rightarrow \infty} \lim_{n \rightarrow \infty} \int_{B} f_{k} d\nu_{n} \\
&= \lim_{k \rightarrow \infty} \int_{B} f_{k} d\nu \\ &= \int_{B} f d\nu, \end{align*}
where the last equality follows by Lebesgue dominated convergence (or alternatively, we can equate the left hand side with $\lim_{k \rightarrow \infty}\int_{B_{k}} f d\nu$ and apply the inner regularity property for Radon measures). To interchange the two limits in the above calculation, we are using Remark \ref{same from the outside} and uniform convergence of $(f_{k})$ to argue that if we write \[ a_{n,k} = \int f_{k} d\nu_{n}, \]
then there are constants $c_n$ so that \[\lim_{k \rightarrow \infty} a_{n,k} = c_n,\] where this convergence occurs \emph{uniformly} in $n$; from here, we apply the Moore-Osgood theorem, see, for example, Taylor~\cite{Taylor}*{pp. 139-40}.

\noindent\textbf{The converse.} In the other direction, assume that $$\lim_{n \rightarrow \infty} \int f d\nu_{n} = \int f d\nu$$ for all compactly supported continuous functions and let $B$ be a $\nu$-admissible segmented flow box. We need to show that given $\epsilon >0$, there is $N$ so that for all $n>N$, $$|\nu_{n}(B) - \nu(B)| < \epsilon.$$ To do this, approximate $B$ from above by a nested sequence of segmented flow boxes $B_{1} \supset B_{2} \supset...$ and define $f_{i}$ to be the compactly supported continuous function that is identically $1$ on $B_{i}$, which tapers down to $0$ within $B_{i+1}$, and which is identically $0$ outside of $B_{i+1}$. Then 
\[ \lim_{k \rightarrow \infty} \nu_{n} (B_{k}) = \nu_{n}(B),\]
by the Radon property. Moreover, $$\int f_{k} d\nu_{n} \le \nu_{n}(B_{k}) \le \int f_{k-1} d\nu_{n}.$$  Thus, the squeeze theorem gives us that 
\[ \lim_{n \rightarrow \infty} \nu_{n}(B_{k}) = \nu(B_{k}).\]
When $k$ is large, $\nu_{n}(B_{k})$ is very close to $\nu_{n}(B)$ and the right hand side is very close to $\nu(B)$ (this is again using the Radon property for both $\nu_{n}$ and $\nu$).

\end{proof}

\end{remark}

\paragraph*{\bf Entourages} Given a finite collection $\mathcal{C}= \left\{f_{1},..., f_{n} \right\}$ of continuous and compactly supported real-valued functions $f$ on $\mathcal{GCC}$, let $U_{\mathcal{C}, \epsilon} \subset \mathcal{GCC} \times \mathcal{GCC}$ be the collection 
\[ \left\{(\mu_{1},\mu_{2}): \left|\int f_{i} d\mu_{1} - \int f_{i} d\mu_{2}\right| <\epsilon, i=1,..., n \right\}.  \]
Then the set of all $U_{\mathcal{C}, \epsilon}$ as $\mathcal{C}$ ranges over all finite collections of segmented flow boxes and $\epsilon >0$ gives a \emph{basis of entourages} for a uniform structure on $\mathcal{GCC}$. In fact, for the same functional analytic reasons why the analogous statement holds for $\mathcal{GC}$ we have: 


\begin{Prop} \label{complete} The space $\mathcal{GCC}(\Sigma)$ is complete as a uniform space. 
\end{Prop}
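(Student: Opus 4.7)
The strategy is to reduce completeness to the Riesz representation theorem, using the fact that the uniform structure on $\mathcal{GCC}(\Sigma)$ defined via the entourages $U_{\mathcal{C}, \epsilon}$ is precisely the uniform structure of weak-$^\ast$ convergence against compactly supported continuous functions on $GS(\tilde{\Sigma})$. Lemma \ref{equivalence of topologies} has already identified this with the flow box topology, so it suffices to work purely with $C_c$-test functions.

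I would begin by taking a Cauchy net $(\mu_\alpha)$ in $\mathcal{GCC}(\Sigma)$. By the definition of the entourages $U_{\mathcal{C},\epsilon}$, for every $f \in C_c(GS(\tilde{\Sigma}))$ the net of real numbers $\int f\, d\mu_\alpha$ is Cauchy in $\mathbb{R}$, and hence converges to a limit which I call $\Lambda(f)$. The map $\Lambda : C_c(GS(\tilde{\Sigma})) \to \mathbb{R}$ is linear, and positive because $f \geq 0$ implies $\int f\, d\mu_\alpha \geq 0$ for every $\alpha$.

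Next, since $GS(\tilde{\Sigma})$ is locally compact Hausdorff (identified with an open subset of the $\mathbb{Z}_2$-quotient of $\overline{\mathbb{D}} \times \overline{\mathbb{D}} \setminus \Delta$), the Riesz representation theorem produces a unique locally finite Radon measure $\mu$ on $GS(\tilde{\Sigma})$ with $\Lambda(f) = \int f\, d\mu$ for all $f \in C_c$. To upgrade $\mu$ to a current with corners I need $\pi_1(\Sigma)$-invariance: for any deck transformation $\rho$ and any $f \in C_c$, invariance of each $\mu_\alpha$ gives $\int (f \circ \rho)\, d\mu_\alpha = \int f\, d\mu_\alpha$, and passing to the limit yields $\int (f \circ \rho)\, d\mu = \int f\, d\mu$, so $\rho_\ast \mu = \mu$. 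Thus $\mu \in \mathcal{GCC}(\Sigma)$.

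Finally, convergence $\mu_\alpha \to \mu$ in the uniform structure is built into the construction: for any finite collection $\mathcal{C} = \{f_1,\dots,f_n\}$ and any $\epsilon>0$, one chooses $\alpha_0$ so that $|\int f_i\, d\mu_\alpha - \Lambda(f_i)| < \epsilon$ for all $\alpha \geq \alpha_0$ and each $i$, which says precisely $(\mu_\alpha, \mu) \in U_{\mathcal{C},\epsilon}$. The only nontrivial ingredient is checking that the Riesz theorem applies to $GS(\tilde{\Sigma})$, and this is really a point-set topology check (local compactness and Hausdorffness of $GS(\tilde{\Sigma})$); the rest is the same standard functional-analytic argument that underlies completeness of the space $\mathcal{GC}(\Sigma)$ of classical geodesic currents alluded to in the statement.
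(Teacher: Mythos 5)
Your proof is correct and is essentially the argument the paper has in mind: the paper does not spell out a proof of Proposition \ref{complete}, asserting it holds ``for the same functional analytic reasons'' as completeness of $\mathcal{GC}(\Sigma)$, and your Cauchy-net/Riesz-representation argument (limit functional, Radon measure on the locally compact Hausdorff space $GS(\tilde{\Sigma})$, $\pi_1(\Sigma)$-invariance passing to the limit) is exactly that standard argument. The only cosmetic slip is calling $GS(\mathbb{H}^2)$ an open subset of $(\overline{\mathbb{D}}\times\overline{\mathbb{D}}\setminus\Delta)/\mathbb{Z}_2$ rather than the quotient itself, which does not affect the local compactness and Hausdorffness you need.
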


\paragraph*{\bf A topological embedding} To construct an analogous basis for a topology on $\mathcal{CGCC}(\Sigma)$, we proceed as follows: given $U \subset \overline{\mathbb{R}}$ open and $\epsilon>0$, consider \textit{centered} the segmented flow boxes $B_{x, U, \epsilon}$ defined by 
\[ B_{x, U, \epsilon} = \left\{ g \in CS(\tilde{\Sigma}): \mbox{the center of }g \hspace{1 mm} \mbox{lies within }\epsilon \hspace{1 mm} \mbox{of } x, \ell(g) \in U \right\}.  \]

\noindent As mentioned in Example \ref{current is a current with corners}, any geodesic current is naturally a current with corners. One therefore has the inclusion $\mathcal{GC}(\Sigma) \hookrightarrow \mathcal{GCC}(\Sigma)$; the following proposition states that this is a topological embedding: 

\begin{Prop} \label{currents embed} The inclusion map $\mathcal{GC}(\Sigma) \hookrightarrow \mathcal{GCC}(\Sigma)$ is a topological embedding, and the image of $\mathcal{GC}(\Sigma)$ is a closed subset of the codomain. The same holds for $\mathcal{GC}(\Sigma) \hookrightarrow \mathcal{CGCC}(\Sigma)$. 
\end{Prop}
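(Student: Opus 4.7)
The plan is to recognize this proposition as a general fact about Radon measures extended by zero along a closed embedding of locally compact Hausdorff spaces. The inclusion $G(\tilde{\Sigma}) \hookrightarrow GS(\tilde{\Sigma})$ identifies $G(\tilde{\Sigma})$, via $GS(\tilde{\Sigma}) \cong (\overline{\mathbb{D}} \times \overline{\mathbb{D}} \setminus \Delta)/\mathbb{Z}_{2}$, with the closed $\pi_{1}(\Sigma)$-invariant subset of unordered pairs both of whose coordinates lie on $\partial \mathbb{D}$. A classical current, viewed as a current with corners, is precisely the zero-extension across this subset. Analogously, for the centered version, $G(\tilde{\Sigma})$ sits inside $CS(\tilde{\Sigma}) = \mathbb{P}T^{\infty}\tilde{\Sigma}$ as the closed set of centered segments with infinite-length tangent vector. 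Having fixed this setup, I will work with the weak-$^{\ast}$ characterization of the topology on $\mathcal{GCC}$ furnished by Lemma \ref{equivalence of topologies}.

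Continuity of the inclusion is the easier half: if $\nu_{n} \to \nu$ in $\mathcal{GC}$ and $g: GS(\tilde{\Sigma}) \to \mathbb{R}$ is continuous and compactly supported, then $g|_{G(\tilde{\Sigma})}$ is continuous and compactly supported on $G(\tilde{\Sigma})$, so $\int g\, d\nu_{n} = \int g|_{G}\, d\nu_{n} \to \int g|_{G}\, d\nu = \int g\, d\nu$ by weak-$^{\ast}$ convergence on $\mathcal{GC}$. For continuity of the inverse on the image, take classical currents $\nu_{n},\nu$ with $\nu_{n} \to \nu$ in $\mathcal{GCC}$ and let $f: G(\tilde{\Sigma}) \to \mathbb{R}$ be continuous and compactly supported. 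I will produce a continuous extension $\tilde{f}: GS(\tilde{\Sigma}) \to \mathbb{R}$ via Tietze's theorem (since $G(\tilde{\Sigma})$ is closed in the locally compact Hausdorff space $GS(\tilde{\Sigma})$) and then multiply by a compactly supported bump function equal to $1$ on a neighborhood of $\mathrm{supp}(f)$ to guarantee compact support while preserving the values of $\tilde{f}$ where needed. Then $\int f\, d\nu_{n} = \int \tilde{f}\, d\nu_{n} \to \int \tilde{f}\, d\nu = \int f\, d\nu$ gives the desired convergence in $\mathcal{GC}$.

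For closedness, suppose $\nu_{n} \in \mathcal{GC}(\Sigma)$ converges to $\nu \in \mathcal{GCC}(\Sigma)$. I must show $\nu$ is supported on $G(\tilde{\Sigma})$. Since $GS(\tilde{\Sigma}) \setminus G(\tilde{\Sigma})$ is open, inner regularity reduces this to showing $\nu(K) = 0$ for each compact $K$ disjoint from $G(\tilde{\Sigma})$. Urysohn's lemma produces a compactly supported continuous $g: GS(\tilde{\Sigma}) \to [0,1]$ with $g \equiv 1$ on $K$ and $g \equiv 0$ on $G(\tilde{\Sigma})$. Then $\nu(K) \leq \int g\, d\nu = \lim_{n} \int g\, d\nu_{n} = 0$, since each $\nu_{n}$ is supported on $G(\tilde{\Sigma})$ where $g$ vanishes. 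The centered statement follows by the identical argument, with $CS(\tilde{\Sigma})$ replacing $GS(\tilde{\Sigma})$ and with $G(\tilde{\Sigma})$ embedded as the circle-at-infinity subset of $\mathbb{P}T^{\infty}\tilde{\Sigma}$.

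The main technical obstacle I anticipate is verifying the extension-and-truncation step cleanly in this context, which requires that $GS(\tilde{\Sigma})$ and $CS(\tilde{\Sigma})$ be locally compact Hausdorff (the former follows from compactness of $GS(\Sigma)_{K}$ for $K>0$ noted in the excerpt together with the product description, and the latter from the tangent bundle viewpoint). If one wishes to avoid Tietze altogether, an alternative is to work directly with the segmented flow box basis and observe that the classical flow boxes cutting out the topology on $\mathcal{GC}$ correspond exactly to segmented flow boxes with both defining points on $\partial \mathbb{D}$, while segmented flow boxes with an interior defining point and sufficiently small radius contain no bi-infinite geodesics and so are assigned measure zero by every classical current; admissibility then matches on the nose.
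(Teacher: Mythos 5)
Your argument for $\mathcal{GC}(\Sigma)\hookrightarrow\mathcal{GCC}(\Sigma)$ is correct, but it takes a genuinely different route from the paper. The paper works entirely with the flow-box basis: the embedding statement is a one-liner because a segmented flow box meets $G(\tilde{\Sigma})$ in a classical flow box, so the subspace topology on the image is generated by the classical basis; and closedness is proved by showing the complement is open -- if $\mu$ is not a geodesic current, some non-bi-infinite segment lies in its support, and a small segmented flow box $B$ around it (containing no bi-infinite geodesics) has $\mu(B)>0$ while every classical current gives it measure zero, so a basic neighborhood $U(\mu,\epsilon)$ with $\epsilon<\mu(B)$ misses $\mathcal{GC}(\Sigma)$. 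You instead run the general measure-theoretic argument for extension by zero along a closed subspace: restriction of test functions for forward continuity, Tietze extension cut off by a bump for continuity of the inverse, and Urysohn plus inner regularity for closedness. What your approach buys is generality and cleanliness -- it is really the statement that zero-extension along a closed subset of a locally compact metrizable space is a closed embedding for the vague topology -- at the cost of arguing with sequences through the weak-$^{\ast}$ characterization of Lemma \ref{equivalence of topologies}, so strictly you should either invoke metrizability/first countability of these spaces of measures or rephrase the steps with basic neighborhoods, whereas the paper's argument is neighborhood-based from the start; you should also record injectivity, which is immediate. One correction for the centered case: the infinite-length locus of $\mathbb{P}T^{\infty}\tilde{\Sigma}$ is a copy of $\mathbb{P}T\tilde{\Sigma}$, which fibers over $G(\tilde{\Sigma})$ by forgetting the footpoint, so it is not a homeomorphic copy of $G(\tilde{\Sigma})$; a geodesic current sits inside $\mathcal{CGCC}(\Sigma)$ as the associated flow-invariant measure (current crossed with arclength along leaves) supported on that locus, not as a zero-extension of a measure on $G(\tilde{\Sigma})$ itself. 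Your closedness and forward-continuity steps survive this correction verbatim, but inverse continuity then needs the equivalence between weak-$^{\ast}$ convergence of currents and of their associated flow-invariant measures (checked on product-type test functions over flow boxes); to be fair, the paper disposes of the centered case with the same one-line remark and is no more explicit on this point.
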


\begin{proof} The fact that the maps are topological embeddings follows immediately from the fact that the intersection of a (centered) segmented flow box with $G(\tilde{\Sigma})$ is a classical flow box. To show that $\mathcal{GC}(\Sigma)$ is closed in the space of currents with corners, consider some $\mu \in \mathcal{GCC}(\Sigma)$ which is not a geodesic current. It follows that there is some non-bi-infinite geodesic $\gamma \in GS(\tilde{\Sigma})$ which is in the support of $\mu$. Then $\mu$ will assign positive measure to any segmented flow box containing $\gamma$; on the other hand any geodesic current must assign $0$ weight to such a segmented flow box. 
\end{proof}

\paragraph*{\bf Convergence examples} We next explore some examples to help the reader get a feel for convergence in $\mathcal{GCC}(\Sigma)$.


\begin{Example} \label{Dehn twist} Fix $c \in \mathcal{GC}(\Sigma)$ a simple closed geodesic, and let $(\phi_{n}: \Gamma_{n} \rightarrow \Sigma)_{n=0}^{\infty}$ be a sequence of $\pi_{1}$-injective maps of a fixed underlying connected graph $\Gamma$ such that for each $n$ the weighted graph $\Gamma_n$ has underlying graph $\Gamma$ and each edge has weight $\frac{1}{n}$, and the maps $\phi_n$ into $\Sigma$ send edges to geodesics so that: 
\begin{enumerate}
\item there exists a single edge $e$ of $\Gamma$ so that $\phi_{0}(e)$ intersects $c$ transversely exactly once;
\item $\phi_{n} = T_{c} \circ \phi_{n-1}$ up to homotopy, where $T_{c}$ denotes the right Dehn twist about the curve $c$. Assume that $\phi_{n}(e') = \phi_{0}(e')$ for all $n$ and for any $e' \neq e$.
\end{enumerate}
Then if we let $\mu_{n} \in \mathcal{GCC}(\Sigma)$ denote the graph current associated to $\phi_{n}(\Gamma)$, we have 
\[ \lim_{n \rightarrow \infty} \frac{1}{n} \cdot \mu_{n} =  c. \]    

\paragraph*{\it Essential intersections} Indeed, the assumptions imply that $\phi_{0}(\Gamma)$ intersects $c$ essentially, and so therefore $\phi_{n}(\Gamma)$ intersects $c$ essentially and $\phi_{n}(e)$ winds around $c$ roughly $n$ times. Let $B$ be a segmented flow box containing a single bi-infinite lift $\rho$ of $c$, and let $B'$ be another segmented flow box containing no lifts of $c$. We want to show: 
\begin{enumerate}
    \item $\lim_{n\rightarrow \infty} \frac{1}{n} \cdot \mu_{n}(B) = 1$;
    \item  $\lim_{n \rightarrow \infty} \frac{1}{n} \cdot \mu_{n}(B') = 0$.
\end{enumerate}

\paragraph*{\it Length ratios} The length of $\phi_{n}(e)$ is roughly $\ell(c) \cdot n$; formally, one has 
\[ \lim_{n \rightarrow \infty} \frac{\ell(\phi_{n}(e))}{n} = \ell(c). \]


\paragraph*{\it Fellow-traveling} There is a lift of $\phi_{n}(e)$ that fellow-travels $\rho$ and remains in a bounded neighborhood of $\rho$ -- denote this lift by $\phi_{n}(e)_{\rho}$.  Abusing notation slightly, let $\rho \in \pi_{1}(\Sigma)$ be the hyperbolic element that translates by $\ell(c)$ along $\rho$. Then when $n$ is very large, it follows that 
\[ \rho^{k}(\phi_{n}(e)) \in B, \]
where $k$ ranges in a set of consecutive integers with size roughly $n$. (See figure) Since $\rho$ is the only lift of $c$ contained in $B$, we can assume that every other element in $\pi_{1}(\Sigma)$ moves $\phi_{n}(e)$ out of $B$. Therefore, the $\mu_{n}$-mass of $B$ is roughly $n/\ell(c)$, and so the $\mu_{n}$-mass of $B$ is roughly $1$, as desired. This verifies $(1)$. 

\paragraph*{\it Lifts of edges} For $(2)$, since $B'$ contains no lifts of $c$, the only potential source for $\mu_{n}$-mass comes from lifts of other edges $e' \neq e$. Thus the $\mu_{n}$-mass of $B'$ is bounded above by 
\[  w(\Gamma) \cdot |E(\Gamma)| \cdot \max_{e' \neq e} \#(\mbox{lifts of } e' \in B' ), \]
where $w(\Gamma)$ is the $\ell^{1}$-norm of the weight vector for $\Gamma$. Since each $e' \neq e$ is fixed by each $\phi_{n}$, the number of lifts of $e'$ contained in $B'$ is bounded independently of $n$, and therefore dividing this quantity by $n$ yields a quotient that goes to $0$ as $n \rightarrow \infty$, as desired.

\end{Example}

\begin{Example} \label{not converging to a classical current} Let $\gamma$ be a closed curve on $\Sigma$ that is geodesic everywhere except at one point, $p$. There is then a free homotopy from $\gamma$ to the geodesic representative in its homotopy class, and the homotopy can be chosen so that the curve $\gamma_{t}$ at time $t$ is geodesic everywhere except at the image $p_{t}$ of the point $p$, and the geodesic curvature at $p_{t}$ goes to $0$ as $t \rightarrow 1$. 
\paragraph*{\it Curves and currents} For each $t$, the curve $\gamma_{t}$ determines a current with corners $\Gamma_{t}$, defined as follows. Consider the full pre-image $\Lambda_{t}$ of $p_{t}$ under the universal covering in $\h^{2}$. The full pre-image of $\gamma_{t}$ is comprised of a $\pi_{1}(\Sigma)$-invariant collection $C_{t} \subset GS(\tilde{\Sigma})$ of geodesic segments running between pairs of points in $\Lambda_{t}$. Then $\Gamma_{t}$ is the counting measure on $C_{t}$. 

\paragraph*{\it Distinct limits} Let $\gamma_{1}$ denote the geodesic representative in the homotopy class of $\gamma$, and by a slight abuse of notation, let $\gamma_{1}$ also refer to the classical geodesic current associated to this closed geodesic. We observe that $$\lim_{t \rightarrow 1}\Gamma_{t} = \Gamma_1 \neq \gamma_1.$$ That is, it is \textbf{not} equal to $\gamma_{1}$, and is in fact equal to $\Gamma_{1}$. In particular, $\gamma_{1}$ and $\Gamma_{1}$ are very different as points in the space of currents with corners. Indeed, the support of the former consists entirely of bi-infinite geodesics, whereas the support of the latter consists of geodesic segments with finite length. 
\end{Example}

\paragraph*{\bf Convergence to a classical current} In Example \ref{not converging to a classical current}, we observed that a sequence of currents with corners did not converge to a classical current, even though when viewed as closed curves on $\Sigma$, the sequence $\{\gamma_{t}\}$ Gromov-Hausdorff converged to to the closed geodesic $\gamma_{1}$. The purpose of the next example is to demonstrate when a current with corners converges to a classical current. 

\begin{Example} \label{converging to a classical current} As in Example \ref{not converging to a classical current}, let $\gamma$ be a closed geodesic on $\Sigma$ and let $p \in \Sigma$ be a point on $\gamma$; abusing notation slightly, we identify $\gamma$ with an element of $\pi_{1}(\Sigma, p)$. Fix $i \in \mathbb{N}$ and let $P \subset \h^{2}$ denote the full pre-image of $p$. Let $\Gamma_{i}$ denote the current with corners that assigns $1$ to each path-lift (there is one for each $\tilde{p} \in P$) of $\gamma^{i}$. Then 
\[ \lim_{i \rightarrow \infty} \Gamma_{i} = \gamma, \]
as currents with corners. Indeed, letting $\tilde{\gamma} \in G(\h^{2})$ denote a bi-infinite geodesic in the pre-image of $\gamma$, a segmented flow box containing $\gamma$ consists of all sufficiently long geodesic segments of bi-infinite geodesics in a flow box containing $\gamma$. Thus, fixing any such segmented flow box $B$, for sufficiently large $i$, $\Gamma_{i}$ will assign mass to $B$. 

\paragraph*{\it Path lifts} Note that this construction did not rely on $p$ being on $\gamma$; we could have instead chosen $p \in \Sigma$ arbitrarily and considered path lifts of $\rho^{i}$ with $\rho \in \pi_{1}(S,p)$ freely homotopic to $\gamma$. As $i$ gets larger and larger, an infinite cyclic pre-image of $\rho^{i}$ is a quasi-geodesic of higher and higher quality (that is, closer and closer to an actual geodesic) that fellow-travels an axis for $\gamma$. 

\end{Example}


\subsection{Grading the space of currents} \label{kcorners}

In the following section, we will work in $\mathcal{CGCC}(\Sigma)$ and when relevant, we will explain why this choice is suitable. It will be convenient to pick out some manageable subspaces of $\mathcal{CGCC}(\Sigma)$; in practice, we will be concerned with an orbit of some finite graph under a group action by homeomorphisms, so it is natural to focus on smaller subspaces preserved by such an action, and whose topology is perhaps more well-behaved than that of the full space of currents with corners. 

\paragraph*{\bf The space of $k$-currents} With this in mind, fix some $k\in \mathbb{N}$; the \textbf{space of currents with $k$ corners} (or the \textbf{space of $k$-currents}), denoted $\mathcal{CGCC}_{k}(\Sigma)$, is the subspace of $\mathcal{CGCC}$ consisting of those currents expressible as the sum of a classical geodesic current and at most $k$ atoms (up to the action of $\pi_{1}(\Sigma)$) associated to finite length geodesic segments. We next show that $\mathcal{CGCC}_{k}(\Sigma)$ is a closed set: 

\begin{lemma} \label{k-currents are closed} The set of $k$-currents is closed in $\mathcal{CGCC}(\Sigma)$.
\end{lemma}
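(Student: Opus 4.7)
\emph{Proof proposal.} Let $(\mu_n) \subset \mathcal{CGCC}_k(\Sigma)$ converge to $\mu \in \mathcal{CGCC}(\Sigma)$. For each $n$, decompose $\mu_n = \nu_n + \sigma_n$, where $\nu_n \in \mathcal{GC}(\Sigma)$ is the classical-current part and $\sigma_n$ is a $\pi_1(\Sigma)$-invariant atomic measure supported on the finite-length stratum $F := \mathbb{P}T\tilde{\Sigma} \subset CS(\tilde{\Sigma}) = \mathbb{P}T^\infty \tilde{\Sigma}$, with at most $k$ distinct $\pi_1(\Sigma)$-orbits of atoms. Correspondingly, split $\mu = \nu + \sigma$ where $\nu := \mu|_{G(\tilde{\Sigma})}$ is the restriction to the $\infty$-section and $\sigma := \mu|_F$. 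Since the $\infty$-section is closed and $\pi_1(\Sigma)$-invariant and $\mu$ is locally finite and $\pi_1$-invariant, $\nu$ is automatically a classical geodesic current. The content of the proof is therefore to show that $\sigma$ is atomic with at most $k$ $\pi_1(\Sigma)$-orbits of atoms.

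The strategy is to suppose $\sigma$ violates this condition and derive a contradiction by producing $k+1$ pairwise \emph{$\pi_1$-disjoint} (meaning no $\pi_1$-translate of one meets another), $\mu$-admissible centered segmented flow boxes $B_1, \ldots, B_{k+1}$ contained in $F$ with $\mu(B_j) > 0$. There are two (non-exclusive) cases to consider. If $\sigma$ has at least $k+1$ distinct atomic $\pi_1$-orbits, I pick representatives $x_1, \ldots, x_{k+1}$ in distinct orbits and invoke the freeness and proper discontinuity of the $\pi_1(\Sigma)$-action on $\mathbb{P}T\tilde{\Sigma}$ (which follows from the corresponding property on $\tilde{\Sigma}$) to obtain small disjoint, pairwise $\pi_1$-disjoint neighborhoods, further shrinking so that each is $\mu$-admissible and contains no other atoms; then $\mu(B_j) \geq d_j > 0$ where $d_j$ is the weight of the $x_j$-orbit. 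If instead $\sigma$ has a nontrivial non-atomic part, pick $x \in F$ in the support of this part, and use the standard fact that a non-atomic Radon measure on a second-countable space admits arbitrary finite subdivisions into pieces of positive measure; working inside a single fundamental domain for $\pi_1(\Sigma)$ around $x$ yields the required pairwise $\pi_1$-disjoint admissible $B_j$'s. A combination of the two arguments (some boxes around atoms, the rest inside a non-atomic piece) handles the mixed case.

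Once such $B_1, \ldots, B_{k+1}$ are in hand, the contradiction is immediate. By $\mu$-admissibility and the characterization of convergence in Lemma \ref{equivalence of topologies}, $\mu_n(B_j) \to \mu(B_j) > 0$ for each $j$. Since $B_j \subset F$ and $\nu_n$ is supported on the $\infty$-section, $\nu_n(B_j) = 0$, so $\sigma_n(B_j) = \mu_n(B_j) \to \mu(B_j) > 0$; hence for all sufficiently large $n$ each $B_j$ contains an atom of $\sigma_n$. Because the $B_j$'s are pairwise $\pi_1$-disjoint, distinct $B_j$'s must be hit by atoms from distinct $\pi_1$-orbits, forcing $\sigma_n$ to have at least $k+1$ orbital atoms, which contradicts $\mu_n \in \mathcal{CGCC}_k(\Sigma)$.

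The main technical obstacle is the construction of the $B_j$'s with all three properties simultaneously: containment in $F$, pairwise $\pi_1$-disjointness, and $\mu$-admissibility. The restriction to $F$ is essential because it is precisely on this stratum that the $\pi_1$-action is properly discontinuous; on the $\infty$-section the analogous construction fails, which is consistent with the fact that the classical-current part of $\mu$ is allowed to carry arbitrary atoms (e.g., on closed geodesics). Admissibility is a mild genericity requirement that can be arranged by perturbing the radii and length-intervals defining each $B_j$, exactly as in the proof of Lemma \ref{equivalence of topologies}.
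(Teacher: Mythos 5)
Your proof is correct and follows essentially the same route as the paper's: both hinge on producing $k+1$ pairwise $\pi_{1}$-inequivalent, positive-$\mu$-mass flow boxes containing no bi-infinite geodesics, and then observing that no $k$-current can charge all of them simultaneously; you merely spell out the atomic/non-atomic case analysis behind the existence of these boxes, which the paper asserts briskly. The one cosmetic difference is that you argue sequential closedness, whereas the paper exhibits the basic neighborhood $U(\mu, \epsilon/2)$ determined by those admissible boxes as an open set disjoint from $\mathcal{CGCC}_{k}(\Sigma)$; since the flow-box topology is not obviously first countable, it is cleaner (and costs nothing, given your boxes) to phrase the final step that way rather than via sequences.
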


\begin{proof} Fix some $\mu \in\mathcal{CGCC}(\Sigma)$ in the complement of $\mathcal{CGCC}_{k}(\Sigma)$. Our goal is to exhibit an open neighborhood of $\mu$ separating it from everything in $\mathcal{CGCC}_{k}$. Let $\iota(\mu)$ denote the minimum integer $t$ such that $\mu$ is expressible as a sum of a classical geodesic current and $t$ $\pi_{1}$-inequivalent atoms associated to finite length geodesic segments; in the event that no such natural number exists, we set $\iota(\mu) = \infty$. 

Since $\mu \notin \mathcal{CGCC}_{k}(\Sigma)$, $\iota(\mu)$ must be strictly larger than $k$. Therefore, there must exist $(k+1)$ $\pi_{1}$-inequivalent segmented flow boxes $B_{1},..., B_{k+1}$ which are assigned positive mass by $\mu$ and which contain no bi-infinite geodesics. Set $\epsilon > 0$ equal to the minimum mass $\mu$ assigns to any of $B_{1},..., B_{k+1}$. Then by definition of the topology on $\mathcal{CGCC}$, the set of all currents assigning mass within $\epsilon/2$ of the $\mu$-mass of each $B_{i}$, constitutes an open set. On the other hand, any $\rho \in \mathcal{CGCC}_{k}$ must assign mass $0$ to at least one such $B_{i}$. 

\end{proof}

 \begin{remark} The definition of $k$-currents and the argument in Lemma \ref{k-currents are closed} is simpler because we chose to work with $\mathcal{CGCC}$ instead of $\mathcal{GCC}$. The prototypical example of a $k$-current will be a graph current coming from the piecewise geodesic map of a graph with at most $k$ edges into $\Sigma$. We would like the closure of the set of such currents to consist exactly of the union of those currents with the space of classical geodesic currents. However, in $\mathcal{GCC}(\Sigma)$, it is conceivable that a current with half-infinite geodesic rays in its support arises as the limit of graph currents with at most $k$ finite length edges.     
 \end{remark}


\section{Harmonic and marked currents} \label{harmonic currents} 

\paragraph*{\bf Mapping class group action} As we have seen in Section \ref{currents with corners}, the space $\mathcal{GCC}(\Sigma)$ is defined in a way that is fairly analogous to the classical space of currents, $\mathcal{GC}(\Sigma)$. However, there is one major draw-back to the space of currents with corners: there is no canonical action by the mapping class group. Indeed, any mapping class automatically induces a homeomorphism of the boundary at infinity of the universal cover $\h^{2}$, and therefore acts in a well-defined way on the space of bi-infinite geodesics $G(\h^{2})$. On the other hand, there is no obvious way to define an action of a mapping class on the space of geodesic \textit{segments}. To resolve this issue, we will focus on the subspace of $\mathcal{GCC}(\Sigma)$ consisting of graph currents that arise from maps of graphs into $\Sigma$ that minimize some notion of energy, and which are therefore uniquely determined in a given homotopy class of maps.

\begin{Def}  Let $\Gamma$ be a graph with unit length edges and non-negative weights $w_{e}$ associated to each edge $e$. An embedding $\phi : \Gamma \rightarrow \Sigma$ is called \textit{harmonic} if: 
\begin{enumerate}
\item For each edge $e$ of $\Gamma$, the image $\phi(e)$ of $e$ is a geodesic segment; 
\item The map $\phi_{e} : e \rightarrow \Sigma$ has constant speed equal to the length of the geodesic $\phi(e)$; 
\item At each vertex $v$ of $\Gamma$, one has 
\[ \sum_{e \sim v} w_{e} \phi_{e}'(0) = 0, \]
where $e \sim v$ means that $e$ begins at $v$. 
\end{enumerate}
\end{Def}

\paragraph*{\bf Existence} Kajigaya-Tanaka~\cite{KajigayaTanaka} prove that a harmonic embedding exists in each homotopy class of maps of $\Gamma$ into $\Sigma$; that it is unique so long as the image of the embedding is not contractible or freely homotopic to a geodesic; and that it minimizes the \textit{energy functional} $\mathcal{E}$, defined as 
\[ \mathcal{E}(\phi) = \sum_{e \in \Gamma} w_{e} \int ||\phi'_{e}(t)||^{2} dt. \]

\begin{Def} A \textbf{harmonic current with corners} is any current with corners arising as a graph current where the associated embedding into $\Sigma$ is a harmonic embedding. By convention, any geodesic current is also a harmonic current. 
\end{Def}

\begin{remark} A harmonic embedding that is not a geodesic current comes with an embedding, but a current with corners is not naturally associated to any map of a graph into the surface. However, one can characterize the property of being a harmonic current with corners in terms only of the \textit{image} of a harmonic embedding, and so the above definition is well-defined. Indeed, by a harmonic current with corners, we will mean any graph current associated to a possibly disconnected graph on $\Sigma$ with the following property: at any vertex $v$, let $u_{e_{1}}, ..., u_{e_{k}}$ denote the unit tangent vectors in $T_{v}\Sigma$ determined by the edges $e_{1},..., e_{k}$ incident to $v$. Then one has the balance condition

\[ \sum_{i= 1}^{k} w_{e_{i}} \ell(e_{i})  = 0. \]

\end{remark}

\paragraph*{\bf Harmonic k-currents} As a shorthand we will simply say $\textit{harmonic current}$ when referring to a harmonic current with corners.  Let $\mathcal{HC}$ denote the subset of the space of currents with corners consisting of harmonic currents. We will use $\mathcal{HC}$ to build a space on which $\mathcal{MCG}(\Sigma)$ acts. It will also be useful to consider the intersection $\mathcal{GCC}_{k} \cap \mathcal{HC}$, which we call the space of \textbf{harmonic k-currents} and which we denote by $\mathcal{HC}_{k}$. Note that the notion of a harmonic ($k$-)current applies equally well in the setting of $\mathcal{CGCC}(\Sigma)$.

\subsection{Marked currents} Since a harmonic map of a graph is unique up to homotopy, one might hope that $\mathcal{HC}$ admits a mapping class group action. However, a priori the data of a harmonic current doesn't uniquely specify a given map of a graph to $\Sigma$, so there isn't even a clear way to hone in on a fixed homotopy class. Said differently, there may be multiple maps of different graphs to $\Sigma$ that result in the same harmonic current, and so to develop a well-defined mapping class group action, we are led to the notion of a \textit{marked current}-- one that comes equipped with a choice of graph and a map of that graph to $\Sigma$. 

\begin{Def} A \textbf{simple marked current with corners} is a triple $(\Gamma, \phi: \Gamma \rightarrow \Sigma, \phi(\Gamma))$ of a simple weighted connected graph $\Gamma$, a map $\phi$ of $\Gamma$ into $\Sigma$ sending edges to geodesics, and the centered current with corners associated to the image $\phi(\Gamma)$, as in Example \ref{graph}. 
\end{Def}

\paragraph*{\bf Marking graphs and marking maps} The graph $\Gamma$ will be referred to as the \textbf{marking graph} (or \textbf{domain graph}) of the marked current, and $\phi$ is the \textbf{marking map}. We will denote the space of marked centered currents with corners by $\widetilde{\mathcal{CGCC}}(\Sigma)$; the subspace $\widetilde{\mathcal{CGCC}}_{k}(\Sigma)$ consists of those marked currents whose marking graph has at most $k$ edges.

\subsection{Topology on marked currents with corners}

In this section, we equip $\widetilde{\mathcal{CGCC}}_{k}(\Sigma)$ with a topology. The full space of marked currents with corners admits the filtration 
\[ \widetilde{\mathcal{CGCC}}_{1} \subset \widetilde{\mathcal{CGCC}}_{2} \subset... \]
While in practice we will only ever need to work with $\widetilde{\mathcal{CGCC}}_{k}(\Sigma)$ for some finite $k$, in principle one could equip the full space $\widetilde{\mathcal{CGCC}}$ with the associated direct limit topology. Fixing some $k \in \mathbb{N}$, let $n_{k}$ denote the smallest natural number so that $\binom{n_{k}}{2} \ge k$. Let $\Delta_{n_{k}}$ denote the complete graph on $n_{k}$ vertices and where each edge is identified with a unit length segment, and where we have arbitrarily assigned an orientation to each edge. Let $\mathcal{F}(\Delta_{n_{k}}, \Sigma)$ denote the space of maps from all subgraphs of $\Delta_{n_{k}}$ into $\Sigma$ sending edges to geodesic segments. Given such a map $\phi: \Gamma \rightarrow \Sigma$ of a subgraph $\Gamma$ of $\Delta_{n_{k}}$, let $\mathcal{U}_{\epsilon}(\phi)$ denote the set of maps $\psi$ of $\Gamma$ into $\Sigma$ such that for each edge $e$ of $\Gamma$, the Gromov-Hausdorff distance between $\phi(e)$ and $\psi(e)$ is less than $\epsilon$. Then the collection 
\[ \left\{\mathcal{U}_{\epsilon}(\phi): \phi \in \mathcal{F}(\Delta_{n_{k}}, \Sigma), \epsilon>0 \right\} \]
forms a basis for a (disconnected) topology on $\mathcal{F}(\Delta_{n_{k}}, \Sigma)$. 

\paragraph*{\bf A compactification} Since we will be interested in situations where a sequence of graphs converges to a classical geodesic current, we will want some compactification of $\mathcal{F}(\Delta_{n_{k}}, \Sigma)$ that allows for edges to be sent to infinitely long geodesics. To this end, let $\overline{\mathcal{F}}(\Delta_{n_{k}}, \Sigma)$ (or $\overline{\mathcal{F}}(\Delta_{n_{k}})$ for short) denote the set of maps $\phi$ of subgraphs of $\Delta_{n_{k}}$ to $\Sigma$ such that for each edge $e$, exactly one of the following options holds:
\begin{itemize}
    \item $e$ is sent to a geodesic segment and $\phi$ has speed $\ell(\phi(e))$ at its endpoints.
    \item $\phi$ sends the interior of $e$ to a bi-infinite geodesic $g$ such that if $e'$ is incident to the terminal vertex $v$ of $e$, and $v$ is also the terminal (respectively, initial) vertex of $e'$, then the interior of $e'$ is sent to a bi-infinite geodesic $g'$ which is forward (respectively, backwards) asymptotic to $g$. The same holds when $v$ is replaced with the initial vertex of $e$. 
\end{itemize}


\paragraph*{\bf Extending the compactification} We now describe how to extend the topology on $\mathcal{F}(\Delta_{n_{k}})$ to one on $\overline{\mathcal{F}}(\Delta_{n_{k}})$. Given $\phi \in \overline{\mathcal{F}}(\Delta_{n_{k}})$, fix points $x_{i}$ in the interior of $e_{i}$, where $e_{i}$ ranges over each edge of the domain graph $\Gamma$ of $\phi$, and fix also a finite $L \in \mathbb{R}_{>0}$ at most the length of the image of $\phi(e)$, and some $\epsilon>0$. Then let $\mathcal{V}_{\epsilon,L,(x_{i})}(\phi)$ denote the set of maps $\psi$ of $\Gamma$ such that the geodesic segment of length $L$ centered at $\psi(x_{i})$ and in the direction of $\psi(e_{i})$ is within Gromov-Hausdorff distance $\epsilon$ to the geodesic segment of length $L$ centered at $\phi(x_{i})$ and in the direction of $\phi(e_{i})$, for each $i$. The topology associated to the basis 
\[ \left\{\mathcal{V}_{\epsilon, L, (x_{i})})(\phi): \epsilon, 0< L \le \ell(\phi(e_{i})) , x_{i} \in e_{i} \right\} \]
is equivalent to the topology induced by the basis $\left\{\mathcal{U}_{\epsilon}(\phi) \right\}$ on $\mathcal{F}(\Delta_{n_{k}}, \Sigma)$, and it extends this topology to one defined on all of $\overline{\mathcal{F}}(\Delta_{n_{k}})$. Consider the space obtained from $\overline{\mathcal{F}}(\Delta_{n_{k}})$ by identifying all maps whose image agrees for each edge; abusing notation slightly, refer also to this space as $\overline{\mathcal{F}}(\Delta_{n_{k}})$. A sequence of maps $(\phi_{n})$ of a graph $\Gamma$ converges to a map $\phi$ of $\Gamma$ if for each edge $e$ of $\Gamma$, the following holds: given $\epsilon>0$, $x \in \phi(e)$, and some finite $L \le \ell(\phi(e))$, for all sufficiently large $n$ there is $y_{n} \in \phi_{n}(e)$ so that the Gromov-Hausdorff distance between the subsegments of $\phi(e)$ and $\phi_{n}(e)$ centered at $x$ and $y_{n}$ respectively is at most $\epsilon$.

\paragraph*{\bf Maps from subgraphs} Alternatively, we can view $\mathcal{F}(\Delta_{n_{k}})$ as the closed subspace of the space of continuous maps from any subgraph of $\Delta_{n_{k}}$ into $\h^{2}$ sending edges to geodesic segments, modulo the equivalence that two maps are considered the same when, for each edge, their images coincide up to the action of $\pi_{1}(\Sigma)$. Then $\overline{\mathcal{F}}(\Delta_{n_{k}}, \Sigma)$ is the compactification of this disjoint union of spaces of maps (all with the same target space being $\h^{2}$) arising by adding $\partial_{\infty}\h^{2}$ to the target space. We next consider the product space 
\[ \mathcal{P}_{k} = \mathbb{R}^{n_{k}}_{\ge 0} \times \mathcal{F}(\Delta_{n_{k}}, \Sigma) \times \mathcal{CGCC}_{k}(\Sigma). \]

\noindent Then let $\mathcal{Q}_{k}$ be the ``diagonal'' in $\mathcal{P}_{k}$ consisting of triples in which the third component is the image under the second component with respect to the choice of weights: 

\[ \mathcal{Q}_{k} = \left\{(w, \phi, \mu):  \mu \hspace{1 mm} \mbox{is the centered current associated to } \phi, \hspace{1 mm} \mbox{weighted by } w \right\}  \]
Note that $\mathcal{Q}_{k}$ contains the set of simple marked $k$-currents. Then the \textbf{space of marked $k$-currents}, denoted $\widetilde{\mathcal{CGCC}}_{k}(\Sigma)$, will be defined as the quotient of the closure $\overline{\mathcal{Q}_{k}}$ in \[\mathbb{R}^{n_{k}}_{\ge 0} \times \overline{\mathcal{F}}(\Delta_{n_{k}}, \Sigma) \times \mathcal{CGCC}_{k}(\Sigma)\]
obtained by identifying triples when the third component is the same geodesic current. 

\subsection{Observations on the topology}\label{sec:observations} We record a few observations about this topology: 

\medskip

\paragraph*{\bf Continuity at simple marked $k$-currents} The function sending a marked current $(w, \phi, \phi(\Delta_{n_{k}}))$ to the total (unweighted) hyperbolic length of $\phi$ is continuous at simple marked $k$-currents. Indeed, the set of simple marked currents is open in $\widetilde{\mathcal{CGCC}_{k}}$, so a small open neighborhood about a simple marked current $(w, \phi, \mu)$ in $\widetilde{\mathcal{CGCC}}_{k}$ consists entirely of other simple marked $k$-currents. Thus being nearby to $(w, \phi, \mu)$ requires in particular that the marking map of a given graph current is very nearby to $\phi$ in $\mathcal{F}(\Delta_{n_{k}}, \Sigma)$.  We note that the length function is \textit{not} continuous at a point of $\mathcal{GC}(\Sigma)$, if we define the length of a classical current to mean the intersection against the Liouville current (see Bonahon~\cite{Bonahon}*{Proposition 14}) $\mathcal{L}_{\Sigma}$. This is because-- as seen in Example \ref{converging to a classical current}-- a sequence of graph currents converging to a classical current will have the property that hyperbolic length goes to infinity.  

\paragraph*{\bf Ghost edges} Our choice of topology gives rise to a minor pathology that we will sometimes need to pay attention to: an element $(w, \phi, \mu)$ of $\widetilde{\mathcal{CGCC}}_{k}$ can have what we will call \textit{ghost edges}-- edges whose image under the marking map do not give rise to segments in the support of $\mu$. For example, the marking graph $\Gamma$ may have an edge $e$ which is sent to a non-zero finite length geodesic segment on $\Sigma$ by $\phi$ but so that the associated weight is $0$. One can also consider the possibility of bi-infinite ghost edges-- see the end of Remark \ref{weights to 0} below for more on this. Ghost edges contribute to hyperbolic length, but they have no impact on the projection of the marked current to $\mathcal{CGCC}$. Abusing notation slightly, henceforth by a ``simple marked $k$-current'' we will mean a marked graph current where we allow for the possibility of ghost edges.

\paragraph*{\bf Continuity of weighted length} The function sending a marked $k$-current to the $\ell^{1}$ norm of its weight vector is continuous at any simple marked $k$-current. This is not to be confused with the function sending a marked $k$-current to the total mass of its third component; indeed, this latter function is not continuous, owing to the possibility of edges that shrink to vertices in the limit. This latter function is however bounded over sequences $(\mu_{n})$ converging to some $\mu$, which will allow us to apply Proposition \ref{Portmanteau}. On the other hand, \textit{weighted length} -- the dot product of length and weight vectors-- is continuous along sequences where the total mass is bounded. That is, if $(\mu_{n}) \subset \widetilde{\mathcal{CGCC}}_{k}$ is a sequence of simple marked currents such that the total mass of $\mu_{n}$ is at most $M$ for each $n$ and $\lim_{n \rightarrow \infty} \mu_{n} = \mu$ for $\mu$ some simple marked current, then the limit of weighted lengths is the weighted length of $\mu$. 

\medskip
\noindent Since we will need to refer back to it, we promote the final observation to a remark:

\begin{remark} \label{weights to 0}
If $(x_{n}) = ((w_{n}, \phi_{n}, \mu_{n}))$ is a sequence in $\mathcal{Q}_{k}$ converging to a geodesic current, let $\overline{w}_{n}$ denote the projection of $w_{n}$ to the subspace corresponding to edges whose hyperbolic lengths do not converge to $0$ in the limit. Then we claim that $\lim_{n \rightarrow \infty} \overline{w}_{n} = 0$. Said differently, for each edge in the domain graph, either that edge shrinks to a point in the limit, or the associated sequence of weights converges to $0$. Indeed, let $g \in G(\tilde{\Sigma})$ be in the support of $\mu$. Then there must be some edge $e$ in the domain graph and a sequence of lifts $\tilde{e}_{1}, \tilde{e}_{2},...$ (where $\tilde{e}_{n}$ is a lift of the image of $e_{n}$ under the marking graph for $x_{n}$) so that 
\[ g= \lim_{n \rightarrow \infty} \tilde{e}_{n}. \]
Conversely, any edge in the domain graph which does not shrink to a point in the limit either (in the limit) gives rise to geodesics in the support of $\mu$, or gives rise to an infinite length ghost edge. In the latter case, of course the corresponding weights must go to $0$ so we will assume there are no such ghost edges. 

Now, if the corresponding sequence of weights do not converge to $0$, then $g$ must be an atom of $\mu$. Then the same argument used in Example \ref{converging to a classical current} contradicts local finiteness of $\mu$: the number of lifts of edges in the support of $\mu_{n}$ which land in a given segmented flow box neighborhood of an atom in the support of $\mu$ goes to infinity as $n$ does, and so if the weights do not go to $0$, any neighborhood of the atom has infinite $\mu$-mass.

This line of argument also gives information about the behavior of ghost edges with infinite hyperbolic length: if $(x_{n})= (w_{n}, \phi_{n}, \mu_{n})_{n}$ is a sequence of marked  currents converging to $x= (w, \phi, \mu)$ such that $x$ has an infinite length ghost edge $e$, then 
\[ \lim_{n \rightarrow \infty} w_{n}(e) \ell(\phi_{n}(e)) = 0,\]
where $w_{n}(e)$ denotes the weight assigned to $e$ by $w_{n}$. Indeed, by assumption, there is a $\mu$-admissible segmented flow box neighborhood $B$ of $\phi(e)$ (where, abusing notation slightly, we have identified $\phi(e)$ with one of its bi-infinite lifts to $\h^{2}$) which has null $\mu$-measure. Now, assuming that $e$ is the only edge that maps to its image under $\phi$, the $\mu_{n}$-mass of $B$ is equal to the product of $w_{n}$ and the number of lifts of $\phi_{n}(e)$ lying in $B$ (in the event that $\phi$ sends additional edges to $\phi(e)$, we will just apply the following argument to each edge individually and use the fact that there are only at most $k$ edges of the domain graph). Therefore, it must be that 
\[ \lim_{ n \rightarrow \infty} w_{n}(e) \cdot \#(\mbox{lifts of } \phi_{n}(e) \hspace{1 mm} \mbox{in } B) = 0.\]
Finally, the number of lifts of $\phi_{n}(e)$ lying in $B$ is at least on the order of $\ell(\phi_{n}(e))$; indeed, take some closed geodesic $\gamma$ admitting a lift to $\h^{2}$ lying in $B$. Then if $\rho \in B$ is some lift of $\phi_{n}(e)$, then by the same arguments used in Example \ref{converging to a classical current}, there will be on the order $\ell(\phi_{n}(e))/\ell(\gamma)$ lifts of $\phi_{n}(e)$ lying in $B$. 
\end{remark}

\subsection{Marked harmonic currents} Sitting inside of $\widetilde{\mathcal{CGCC}}_{k}$ is  $\widetilde{\mathcal{HC}}_{k}$, the \textbf{space of marked harmonic $k$-currents}, defined as the closure of the set of marked currents in $\mathcal{Q}_{k}$ for which the third component is harmonic. The next lemma characterizes $\mathcal{H}_{k}$ as a set of harmonic graph currents and geodesic currents:

\begin{lemma} \label{harmonic closed} The space $\widetilde{\mathcal{HC}}_{k}$ consists of points $x=(w, \phi, \mu)$ where either $x$ is a simple marked harmonic $k$-current, or $\mu \in \mathcal{GC}(\Sigma)$.  
\end{lemma}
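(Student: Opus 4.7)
My plan is to consider a convergent sequence $(x_n) = (w_n, \phi_n, \mu_n)$ of simple marked harmonic $k$-currents in $\mathcal{Q}_k$ with limit $x = (w, \phi, \mu)$, and to show that $x$ satisfies the stated dichotomy. By Lemma \ref{k-currents are closed}, $\mu$ is a $k$-current, so it decomposes as $\mu = \mu_0 + \sum_{i=1}^{j} a_i \delta_{g_i}$ with $\mu_0 \in \mathcal{GC}(\Sigma)$, each $g_i$ a finite-length geodesic segment, and $a_i > 0$. If $j = 0$, then $\mu = \mu_0 \in \mathcal{GC}(\Sigma)$ directly, yielding the second alternative. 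The content of the lemma lies in the case $j \geq 1$, where I must verify that $(w, \phi, \mu)$ defines a simple marked harmonic $k$-current.

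First I would classify the edges of the domain graph $\Gamma$ according to their asymptotic behavior along the sequence (after passing to a subsequence). Each edge $e$ falls into one of three types: \emph{collapsing edges}, with $\ell(\phi_n(e)) \to 0$; \emph{atomic edges}, with $\ell(\phi_n(e))$ bounded and bounded away from zero and $w_n(e)$ converging to a positive limit, which give rise to the finite-length atoms $a_i \delta_{g_i}$ of $\mu$; and \emph{diverging edges}, with $\ell(\phi_n(e)) \to \infty$. Remark \ref{weights to 0} implies that any diverging edge must have $w_n(e) \to 0$, since otherwise its bi-infinite limit would become an atom of $\mu$, violating local finiteness. Sharpening the same argument, the product $w_n(e) \ell(\phi_n(e))$ must also tend to zero for diverging edges, because a positive limit would again force the bi-infinite limit to be an atom of $\mu$, which is forbidden.

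Next I would verify the harmonic balance condition at each vertex $v$ of the limit marking graph by summing the pre-limit balance identities across the cluster of pre-limit vertices of $\Gamma$ that merge into $v$ along collapsing edges. Internal edges of such a cluster appear in the sum at both endpoints with tangent vectors of equal magnitude but opposite direction, and thus cancel. External edges contribute $w_n(e) \ell(\phi_n(e)) u_n(e)$ at their unique endpoint in the cluster: atomic-edge contributions converge continuously to $w(e) \ell(\phi(e)) u(e)$, while diverging-edge and finite-length ghost-edge contributions both vanish in the limit by the analysis above. The resulting identity $\sum_{e \sim v} w(e) \phi'_e(0) = 0$ at $v$ is precisely the harmonic balance for the limit marking $\phi$, confirming that $(w, \phi, \mu)$ is a simple marked harmonic $k$-current.

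The principal obstacle I anticipate is ruling out the mixed regime in which $\mu$ has both finite-length atoms and a nontrivial classical part $\mu_0$ produced by diverging edges whose products $w_n(e) \ell(\phi_n(e))$ tend to a positive limit rather than zero. Such diverging edges would contribute nonvanishing terms $c_e \cdot u_{\mathrm{div}}$ to the balance sum at adjacent vertices, yielding a limit object that is neither purely classical nor a simple marked harmonic $k$-current in the strict sense. Tightening the local-finiteness argument of Remark \ref{weights to 0} to enforce $w_n(e) \ell(\phi_n(e)) \to 0$ for every diverging edge -- so that the balance calculation at each vertex reduces cleanly to the atomic edges -- is the technical crux of the proof, and delivers the clean dichotomy asserted by the lemma.
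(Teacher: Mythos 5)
Your route is genuinely different from the paper's: the paper never classifies edges or sums balance identities over clusters of merging vertices; it argues that failing a balance equation at a corner is an open condition in $\mathcal{CGCC}_{k}$ (so the set of harmonic currents is closed there), and then invokes Example \ref{any curve} for the fact that every geodesic current actually arises in the closure. The difference in strategy would be fine, but your argument has a real gap at exactly the step you identify as the crux. The claim that every diverging edge satisfies $w_n(e)\,\ell(\phi_n(e)) \to 0$, justified by local finiteness (``a positive limit would force the bi-infinite limit to be an atom of $\mu$, which is forbidden''), is false as a general principle. A positive limit of $w_n(e)\ell(\phi_n(e))$ violates nothing: it simply deposits a nonzero contribution to the \emph{classical} part of the limit, typically a multiple of a closed-geodesic current, which is a perfectly legitimate element of $\mathcal{GC}(\Sigma)$ (atoms on bi-infinite geodesics are allowed there). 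This is precisely the mechanism of Examples \ref{Dehn twist}, \ref{Dehn twist 2} and \ref{any curve}; if your sharpened claim held in general, no nonzero geodesic current could ever arise as a limit of marked graph currents, contradicting Example \ref{any curve} and the lemma itself, which needs $\mathcal{GC}(\Sigma)$ to sit inside $\widetilde{\mathcal{HC}}_{k}$. The part of Remark \ref{weights to 0} you cite proves $w_n(e)\ell(\phi_n(e)) \to 0$ only for infinite-length \emph{ghost} edges, i.e.\ precisely when the limiting bi-infinite geodesic already has a $\mu$-null flow-box neighborhood; in the mixed regime that null-neighborhood statement is exactly what you would need to establish, so the appeal is circular.

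Consequently the mixed case --- atomic edges surviving with positive limiting weights while some diverging edge has $w_n\ell_n \to c>0$ and contributes a nontrivial classical part --- is not excluded by anything in your proposal: such a mixed current is a perfectly good element of $\mathcal{CGCC}_{k}$ by Lemma \ref{k-currents are closed}, so measure-theoretic local finiteness alone cannot rule it out. Whatever excludes it must use the harmonic structure of the approximating currents (this is the role of the paper's observation that the balance equations are closed conditions, so that a limit of harmonic currents cannot fail them), and your cluster-summing computation does not supply that input: it silently discards the possible nonvanishing term $c_e\,u_e$ coming from such a diverging edge, which is the very term at issue. Finally, the paper's proof also uses Example \ref{any curve} to see that geodesic currents genuinely occur in the closure, a point your write-up does not address.
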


\begin{proof} First, since the balance equations are a closed condition, $\mathcal{HC}_{k}$ is closed in $\mathcal{CGCC}_{k}$. Indeed, if $\mu \in \mathcal{CGCC}_{k}$ is not harmonic, it in particular can not be a geodesic current, and so it has a corner around which the associated balance equation does not hold. There are then segmented flow boxes-- one for each segment incident to this corner-- such that any $k$-current in the open set in $\mathcal{CGCC}_{k}$ corresponding to perturbing the mass slightly in each flow box still does not satisfy the balance equation at the corner. The lemma now follows from Example \ref{any curve} immediately below which shows that any geodesic current arises in the closure of the points in $\mathcal{Q}_{k}$ whose third component is harmonic. 
\end{proof}

%

\paragraph*{\bf A schematic} We use the following figure to summarize the conclusions reached regarding the structure of $\widetilde{\mathcal{HC}}_{k}$: the space is organized schematically like a star-shaped set with $\mathcal{GC}(\Sigma)$ in the center and with one branch for each subgraph of $\Delta_{n_{k}}$ representing the simple harmonic marked $k$-currents with that marking graph. As one approaches $\mathcal{GC}(\Sigma)$ from within a given branch, the weights of all edges that survive in the limit go to $0$; the total mass of the limiting geodesic current will depend on the specific weights occurring along the sequence and the rate at which they decay to $0$.

\paragraph*{\bf A mapping class group action} Let $\mathcal{N}_{k} \subset \widetilde{\mathcal{CGCC}}_{k}$ denote the set of all $k$-currents $(w, \phi, \mu)$ such that the domain graph has a component $C$ so that the image of the complement of all ghost edges under $\phi$ of $C$ is homotopic to a closed curve. Any $k$-current in the complement of $\mathcal{N}_{k}$ has the property that it is the unique harmonic current specified by its domain graph, weight vector, and marking map. We have therefore honed in on a space with a natural $\mathcal{MCG}$-action:

\begin{lemma} \label{mcg action} The mapping class group $\mathcal{MCG}(\Sigma)$ acts on $\widetilde{\mathcal{HC}}_{k} \setminus \mathcal{N}_{k}$ by composing the marking map with a mapping class, or by its standard action on the space of geodesic currents. 
\end{lemma}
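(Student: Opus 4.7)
The plan is to use Lemma~\ref{harmonic closed} to partition $\widetilde{\mathcal{HC}}_{k} \setminus \mathcal{N}_{k}$ into two pieces according to the third coordinate --- either $\mu$ is a classical geodesic current, or $(w,\phi,\mu)$ is a simple marked harmonic $k$-current --- and define the $\mathcal{MCG}(\Sigma)$-action separately on each piece, then verify compatibility.

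On the geodesic current stratum, nothing new is required: I would simply transport the classical mapping class group action on $\mathcal{GC}(\Sigma)$ (coming from the induced action on $\partial_\infty \mathbb{H}^2$ as recalled in Section~\ref{sec:currents}) through the inclusion $\mathcal{GC}(\Sigma) \hookrightarrow \widetilde{\mathcal{HC}}_k$. On the simple marked harmonic stratum, given $f \in \mathcal{MCG}(\Sigma)$ and $(w,\phi,\mu) \notin \mathcal{N}_k$, the plan is to pick any homeomorphism $F$ representing $f$, form $F \circ \phi$, and apply the Kajigaya--Tanaka existence/uniqueness theorem to produce the unique harmonic representative $\psi$ in $[F \circ \phi]$. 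The action is then declared to be $f \cdot (w,\phi,\mu) := (w, \psi, \mu_\psi)$, where $\mu_\psi$ is the associated centered harmonic graph current.

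The crucial observation for existence is that the complement of $\mathcal{N}_k$ is carved out exactly so that Kajigaya--Tanaka applies: no component of $\phi$ restricted to its non-ghost edges is contractible or freely homotopic to a closed curve, and since $F$ is a homeomorphism, post-composition preserves both conditions. Well-definedness of the action on isotopy classes (rather than homeomorphisms) follows because $F,F'$ isotopic implies $F\circ \phi$ and $F'\circ \phi$ are homotopic, and the Kajigaya--Tanaka harmonic representative is unique in a homotopy class. The group axioms reduce to two applications of the same uniqueness statement: $(fg)\cdot (w,\phi,\mu)$ and $f\cdot(g\cdot(w,\phi,\mu))$ are both harmonic representatives in $[(FG)\circ\phi]$, hence coincide. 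Invariance of the complement of $\mathcal{N}_k$ under this action is immediate since ``some component of $\phi$ is homotopic to a closed curve'' is a homotopy-class property preserved by post-composition with homeomorphisms.

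The main obstacle, to the extent there is one, is conceptual rather than technical: making sure the two cases fit together so that the result is an honest $\mathcal{MCG}(\Sigma)$-action on the union, and checking that ghost edges --- which carry weight zero but can have nontrivial image --- do not cause a component of $F \circ \phi$ to sneak into $\mathcal{N}_k$ after the action. Both points are handled by the definition of $\mathcal{N}_k$ in terms of the image after deleting ghost edges, together with the naturality of deleting ghost edges under post-composition by $F$. I would defer all continuity questions to Theorem~\ref{borel}; for the present lemma only the set-theoretic group action is asserted.
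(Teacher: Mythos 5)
Your proposal is correct and follows the same route as the paper: the paper's proof is a one-line appeal to the fact that harmonic maps are unique in a given homotopy class (Kajigaya--Tanaka), which is exactly the uniqueness statement you invoke for well-definedness and the group axioms. Your extra bookkeeping --- splitting off the geodesic-current stratum, checking invariance of the complement of $\mathcal{N}_{k}$, handling ghost edges, and deferring continuity to Theorem~\ref{borel} --- just makes explicit what the paper leaves implicit.
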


\begin{proof} The action is well-defined because harmonic maps are unique in a given homotopy class of embedding. 
\end{proof}

\noindent We also point out that $\widetilde{\mathcal{CGCC}}_{k}$ (and therefore $\widetilde{\mathcal{HC}}_{k}$) is Hausdorff: 

\begin{lemma} \label{Hausdorff} The space $\widetilde{\mathcal{CGCC}}_{k}$ is Hausdorff. 
\end{lemma}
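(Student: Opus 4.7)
The plan is to exploit the fact that the equivalence relation defining $\widetilde{\mathcal{CGCC}}_k$ identifies triples $(w,\phi,\mu)$ only when $\mu$ is a common geodesic current; consequently, the equivalence class of $(w,\phi,\mu)$ is a singleton whenever $\mu \notin \mathcal{GC}(\Sigma)$, and equals $\mathrm{pr}_3^{-1}(\{\mu\}) \cap \overline{\mathcal{Q}_k}$ otherwise. Write $q : \overline{\mathcal{Q}_k} \to \widetilde{\mathcal{CGCC}}_k$ for the quotient map, and let $\pi_3 : \widetilde{\mathcal{CGCC}}_k \to \mathcal{CGCC}_k(\Sigma)$ denote the induced third-coordinate projection, which is well-defined because the equivalence respects the third factor, and continuous because $\pi_3 \circ q$ is the ambient projection (so continuity follows from the universal property of the quotient).

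First I would verify Hausdorffness of the ambient product
\[
\mathbb{R}^{n_k}_{\ge 0} \times \overline{\mathcal{F}}(\Delta_{n_k}, \Sigma) \times \mathcal{CGCC}_k(\Sigma),
\]
so that $\overline{\mathcal{Q}_k}$ inherits Hausdorffness as a subspace. The first factor is obvious; the third is Hausdorff because distinct Radon measures can be separated by integration against some compactly supported continuous $f$, producing a separating basic entourage $U_{\mathcal{C},\epsilon}$; and for the second, two (equivalence classes of) maps differ on some edge, and one can then choose an interior basepoint and length window $L$ for which the basic neighborhoods $\mathcal{V}_{\epsilon,L,(x_i)}$ are disjoint.

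For the main argument, take distinct $[x_1],[x_2] \in \widetilde{\mathcal{CGCC}}_k$ with representatives $x_i = (w_i,\phi_i,\mu_i)$. If $\mu_1 \neq \mu_2$, Hausdorffness of $\mathcal{CGCC}_k(\Sigma)$ together with continuity of $\pi_3$ immediately furnishes disjoint open neighborhoods. Otherwise $\mu_1 = \mu_2 = \mu$, and distinctness of the classes forces $\mu \notin \mathcal{GC}(\Sigma)$. The set
\[
W := \pi_3^{-1}\bigl(\mathcal{CGCC}_k(\Sigma) \setminus \mathcal{GC}(\Sigma)\bigr)
\]
is open in $\widetilde{\mathcal{CGCC}}_k$ because $\mathcal{GC}(\Sigma)$ is closed in $\mathcal{CGCC}_k(\Sigma)$ by Proposition \ref{currents embed} combined with Lemma \ref{k-currents are closed}. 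Both $[x_1]$ and $[x_2]$ lie in $W$; on the saturated open set $q^{-1}(W)$ every equivalence class is a singleton, so $q$ restricts to a continuous bijection $q^{-1}(W) \to W$ which is a homeomorphism (the quotient topology on $W$ coincides with the subspace topology transferred from $q^{-1}(W)$). Since $q^{-1}(W) \subset \overline{\mathcal{Q}_k}$ inherits Hausdorffness, so does $W$, and we separate $[x_1]$ and $[x_2]$ inside $W$.

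The main obstacle is verifying Hausdorffness of $\overline{\mathcal{F}}(\Delta_{n_k}, \Sigma)$, whose topology blends finite-segment Gromov--Hausdorff comparisons with asymptotic matching at $\partial_\infty\mathbb{H}^2$ and quotients by the relation identifying maps that agree edge-by-edge. Two inequivalent maps must differ on some edge, and one needs a short case analysis: if both images of that edge are finite segments, a small $L$ and Gromov--Hausdorff separation suffices; if both are bi-infinite, one chooses $L$ large enough that the $\mathcal{V}_{\epsilon,L,(x_i)}$ windows witness different ideal endpoints; and if one is finite and the other infinite, one combines both strategies by picking $L$ above the finite length.
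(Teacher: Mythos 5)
Your proof is correct and takes essentially the same route as the paper's: split on whether the underlying third components agree, use continuity of the induced projection $\widetilde{\mathcal{CGCC}}_{k} \rightarrow \mathcal{CGCC}_{k}$ together with Hausdorffness of $\mathcal{CGCC}_{k}$ when they differ, and separate via the weight and map factors when they coincide (in which case the common current cannot be a geodesic current). Your restriction to the open saturated set $W$ over the complement of $\mathcal{GC}(\Sigma)$, where the quotient map is a bijection and hence a homeomorphism onto its image, is a slightly more careful treatment of the quotient-topology point than the paper gives, and your explicit remark that Hausdorffness of $\overline{\mathcal{F}}(\Delta_{n_{k}},\Sigma)$ is needed (and only sketched) matches an assumption the paper leaves implicit.
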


\begin{proof} The space $\widetilde{\mathcal{CGCC}}_{k}$ is obtained from $\overline{Q_{k}}$ by identifying triples when the third component is the same geodesic current. Let $[x], [y] \in \widetilde{\mathcal{CGCC}}_{k}$ be distinct. Then the third components of $[x], [y]$ are well-defined and without loss of generality correspond either to distinct graph currents or to distinct geodesic currents $\mu_{x}, \mu_{y}$. Indeed, if $\mu_{x}= \mu_{y}$, then necessarily $\mu_{x}$ is a graph current, but then $[x], [y]$ must disagree either in their weight vectors or as maps of graphs into $\Sigma$; in either case, they can be separated by open neighborhoods in $\widetilde{\mathcal{CGCC}}_{k}$. 

Let $U_{x}, U_{y} \subset \mathcal{CGCC}_{k}$ be disjoint open neighborhoods about $\mu_{x}, \mu_{y}$ respectively. Since the map sending a point $x \in \mathcal{Q}_{k}$ to its third component is continuous, so is the induced map from $\widetilde{\mathcal{CGCC}}_{k}$ to $\mathcal{CGCC}_{k}$, and therefore we can pull $U_{x}, U_{y}$ back to disjoint open neighborhoods about $[x],[y]$ respectively. 

\end{proof}

\paragraph*{\bf Convergence to a classical current} The following example shows how a sequence of marked harmonic graph currents can converge to a classical geodesic current: 

\begin{Example} \label{Dehn twist 2} As in Example \ref{Dehn twist}, fix a weighted graph $\Gamma$ and an embedding $\phi_{0}: \Gamma \rightarrow \Sigma$ sending edges to geodesic segments. We then have a marked harmonic current $\mu_{0}$ associated to this data. 

Assume that there is exactly one edge $e$ so that $\phi_{0}(e)$ intersects some simple closed geodesic $c$ on $\Sigma$ exactly once. Let $\mu_{n}$ be the marked harmonic current associated to the same weighted graph but marked with a map $\phi_{n}$ which, up to homotopy, is $T^{n}_{c} \circ \phi_{0}$. Just as in Example \ref{Dehn twist}, we have that 
\[ \lim_{n \rightarrow \infty} \frac{1}{n} \mu_{n} = c/\ell(c).\]

To see this, let $\gamma_{n}$ denote the harmonic current associated to $\mu_{n}$; that is, $\gamma_{n}$ is the object obtained from $\mu_{n}$ after forgetting the marking data. Then the length $\gamma_{n}$ goes to infinity linearly with $n$, and at least one edge of $\gamma_{n}$ fellow-travels $c$ and winds around it roughly $n$ times. On the other hand, unlike in Example \ref{Dehn twist}, it is impossible that only one edge is getting longer; indeed, the balance equations preclude this. However, it is easy to see that if the length of an edge goes to infinity, that edge must fellow-travel $c$ for almost all of its length. From here, the same arguments used in Example \ref{Dehn twist} prove the limiting statement.

\end{Example}

\begin{Example} \label{any curve} Let $\gamma \in \mathcal{GC}(\Sigma)$ be any closed curve and fix $\phi_{0}: \Gamma \rightarrow \Sigma$ such that some edge $e$ of a weighted graph $\Gamma$ is sent by $\phi_{0}$ to a loop homotopic to $\gamma$. Then let $\mu_{n}$ be the marked harmonic current associated to the map $\phi_{n}$ which, up to homotopy, equals $\phi_{0}$ on the complement of $e$ and which sends $e$ to a loop that (up to homotopy) traverses $\gamma$ $n$ times. Then arguing similarly to Example \ref{Dehn twist 2}, one has
\[ \lim_{n \rightarrow \infty} \frac{1}{n} \mu_{n} = \gamma/\ell(\gamma). \]
Multiplying each $\mu_{n}$ by an arbitrary scalar produces an arbitrary multiple of $\gamma$ in the limit.

 Since curve multiples are dense in $\mathcal{GC}(\Sigma)$, we deduce that every point of $\mathcal{GC}(\Sigma)$ is an accumulation point of each component of $\mathcal{Q}_{k} \cap \widetilde{\mathcal{HC}}_{k}$. In particular, $\mathcal{GC}(\Sigma)$ is contained in $\widetilde{\mathcal{HC}}_{k}$, and the choice of topology on $\widetilde{\mathcal{HC}}_{k}$ together with Proposition \ref{currents embed} implies that $\mathcal{GC}(\Sigma) \hookrightarrow \widetilde{\mathcal{HC}}_{k}$ is a topological embedding.
\end{Example}

There is a projection map 
\[ \Pi: \widetilde{\mathcal{CGCC}}_{k} \setminus \mathcal{N}_{k} \rightarrow \widetilde{\mathcal{HC}_{k}} \] sending a marked current $\mu$ with at most $k$-corners to the harmonic representative in its homotopy class. In the event that $\mu$ has ghost edges, the third component of $\Pi(\mu)$ is the harmonic graph current associated to the marking map restricted to those edges of the domain graph that receive positive weights. Unfortunately, this map is not continuous, as the following examples demonstrate. 

\begin{Example} \label{snapping back a univalent edge} 
Suppose that $\Gamma$ is a length one path consisting of one edge and two vertices. Fix some closed geodesic $\gamma$ on $\Sigma$. There is then a sequence of graph currents $(\Gamma_{i})_{i=1}^{\infty}$ so that the support of $\Gamma_{i}$ is a proper geodesic subsegment of $\gamma$, and such that the union of the supports over all $\Gamma_{i}$ is the whole of $\Gamma$. Then the harmonic representative of each $\Gamma_{i}$ is the empty current; on the other hand, $\lim_{i \rightarrow \infty} \Gamma_{i}$ is the graph current with one edge and one vertex such that the loop edge is the geodesic representative for  $\gamma$. 
\end{Example}

\begin{Example} \label{being far from harmonic} Let $G_{1}$ be an abstract graph, and let $\Gamma_{1}$ represent the marked harmonic current associated to $G_{1}$. For concreteness, suppose some vertex $v$ of $G_{1}$ has degree $6$ and that the star of $v$ embeds in $\Sigma$ under the harmonic map. Let $G_{2}$ be the abstract graph obtained from $G_{1}$ by ``cutting the star at $v$ in half'' with respect to some cyclic ordering on the edges incident to $v$-- see Figure~\ref{fig:splitting}. One can then define $\Gamma_{2}$ to be a graph current in such a way that it is very close in the topology of $\mathcal{GCC}_{k}$ to the harmonic current $\Gamma_{1}$. However, the one balance equation at $v$ in $\Gamma_{1}$ now corresponds to two separate balance equations associated to two distinct vertices in $\Gamma_{2}$; the fact that $\Gamma_{2}$ is nearby $\Gamma_{1}$ only guarantees that the sum of both balance equations is very nearly $0$, but each equation may be far from satisfied on its own-- see Figure~\ref{fig:splitting}. 

\end{Example}


\begin{figure}[h!]
    \centering
    \begin{tikzpicture}
        \draw(0,0)--(0:1);
        \draw(0,0)--(60:1);
          \draw(0,0)--(120:1);
            \draw(0,0)--(180:1);
            \draw(0,0)--(240:1);
            \draw(0,0)--(300:1);
            \node at (0,0){$\circ$};
\draw[red, ->](2,0)--(4,0);
    \tikzset{shift={(6,0)}}
\draw(0,0)--(120:1);
            \draw(0,0)--(180:1);
            \draw(0,0)--(240:1);
\node at (0,0){$\circ$};
\draw(0,0)--(2,0);
    \tikzset{shift={(2,0)}}
    \node at (0,0){$\circ$};
\draw(0,0)--(0:1);
        \draw(0,0)--(60:1);
          \draw(0,0)--(300:1);

    \end{tikzpicture}
    \caption{Splitting can change the balance equations.}
    \label{fig:splitting}
\end{figure}
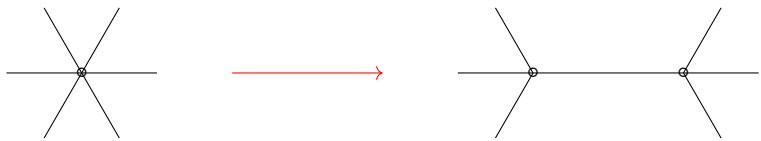

What both Examples \ref{snapping back a univalent edge} and \ref{being far from harmonic} demonstrate is that discontinuities of the projection map $\Pi$ can arise due to the existence of graph currents that are very far from being harmonic. The content of Lemma \ref{continuous projection} below is that $\Pi$ satisfies a weak form of continuity along sequences of $k$-currents that converge to a harmonic current. To state it, we first need to introduce the notion of \textit{harmonic deviation} which quantifies how far away a geodesic graph is from being harmonic: 

\begin{Def} \label{harmonic deviation} Let $\mu = (\phi: G \rightarrow \Sigma, \phi(G))$ be a marked graph current on $\Sigma$ for $G = (V = \left\{v_{1},..., v_{n} \right\},E)$ some abstract graph. Then the \textbf{harmonic deviation} of $\mu$ is the $\ell^{1}$ norm of the vector whose $i^{th}$ component is 
\[ \sum_{e \sim v_{i}} w_{e} \phi'_{e}(v_{i}), \]
where $\phi'_{e}(v_{i})$ denotes the vector tangent to $\phi_{e}$ at the vertex $v_{i}$, pointing away from $v_{i}$. Given $v_{i} \in V$, by the $v_{i}$-\textbf{balance value} (or simply the \textbf{balance value} when $v_{i}$ is implicit), we will mean the value of the above expression. The vector whose components equal the balance values will be referred to as the \textbf{balance vector}. 
\end{Def}
\paragraph*{\bf Almost harmonic currents} Note that the harmonic deviation of $\mu$ vanishes precisely when it is harmonic. Given some $\epsilon \ge 0$, we will say that $\mu$ is $\epsilon$-\textbf{almost harmonic} when its harmonic deviation is at most $\epsilon$. We will let $\widetilde{\mathcal{CGCC}^{\epsilon}}_{k}(\Sigma) \subset \widetilde{\mathcal{CGCC}_{k}}$ denote the set of $\epsilon$-almost harmonic $k$-currents. We observe that the set of $\epsilon$-almost harmonic currents is closed: 

\begin{lemma} \label{almost harmonic currents are closed} The set $\widetilde{\mathcal{CGCC}^{\epsilon}}_{k}(\Sigma)$ is closed in $\widetilde{\mathcal{CGCC}_{k}}$.
\end{lemma}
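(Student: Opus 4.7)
The plan is to realize $\widetilde{\mathcal{CGCC}^{\epsilon}}_{k}(\Sigma)$ as a sublevel set of the harmonic deviation function and to show this function is continuous along convergent sequences in $\widetilde{\mathcal{CGCC}_{k}}$. First I will take a convergent sequence $x_{n} = (w_{n}, \phi_{n}, \mu_{n}) \to x$ of $\epsilon$-almost harmonic $k$-currents and split into two cases, based on the description of points in $\widetilde{\mathcal{CGCC}_{k}}$: either the third component of $x$ is a classical geodesic current, or $x$ is represented by a simple marked $k$-current $(w,\phi,\mu)$. In the former case there is nothing to prove, since classical geodesic currents are harmonic by convention and thus have deviation $0 \le \epsilon$. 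For the latter case, pass to a subsequence so that all $x_{n}$ share a common marking subgraph $\Gamma \subseteq \Delta_{n_{k}}$ (possible since $\Delta_{n_{k}}$ has only finitely many subgraphs).

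The heart of the argument is to show that for each vertex $v_{i}$ of $\Gamma$, the balance vector $B_{v_{i}}(x_{n}) = \sum_{e \sim v_{i}} w_{n,e}\, \phi'_{n,e}(v_{i})$ converges to $B_{v_{i}}(x)$, where $\phi'_{e}(v_{i})$ is the velocity vector of magnitude $\ell(\phi(e))$ matching the constant-speed parameterization used in equation \eqref{equation:balance}. I will analyze the contribution of each edge $e$ incident to $v_{i}$ in three subcases. (i) If $\phi(e)$ has finite positive length in the limit, Gromov--Hausdorff convergence of $\phi_{n}(e) \to \phi(e)$ gives convergence of both the length and the unit tangent direction at $v_{i}$, and combined with $w_{n,e} \to w_{e}$ the edgewise contribution converges. (ii) If $\phi_{n}(e)$ collapses to a point, the factor $\ell(\phi_{n}(e))\to 0$ annihilates the contribution regardless of oscillation in direction, since $(w_{n,e})$ is bounded. (iii) If $\phi_{n}(e)$ becomes bi-infinite, then $e$ is a bi-infinite ghost edge in $x$, and a lift-counting argument parallel to Remark \ref{weights to 0}---using local finiteness of $\mu$ together with the observation that a long geodesic segment contributes on the order of $\ell$ lifts to any $\mu$-admissible segmented flow box near $\phi(e)$---forces $w_{n,e}\,\ell(\phi_{n}(e)) \to 0$. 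Summing over incident edges at each vertex and taking the $\ell^{1}$ norm, the harmonic deviation is continuous along the sequence, and the bound $\le \epsilon$ passes to the limit.

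The main obstacle is subcase (iii): one must rule out that a nontrivial weight survives on an edge whose length diverges in the limit, using local finiteness against the proliferation of $\pi_{1}$-translate lifts. This is precisely the kind of argument already carried out in Remark \ref{weights to 0}, so the work lies in adapting that reasoning to the present setting rather than in a new idea. Subcase (ii) is comparatively routine but crucially depends on the convention that $\phi'_{e}(v_{i})$ has magnitude $\ell(\phi(e))$ rather than being a unit tangent vector; with unit tangents the tangent direction could oscillate as the edge shrinks, and closedness would fail.
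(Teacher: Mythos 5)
Your route is genuinely different from the paper's. The paper proves the complement is open: given $\mu$ that is not $\epsilon$-almost harmonic, it selects segmented flow boxes around the finite-length atoms, notes that currents assigning nearly the same mass to these boxes still violate the balance condition there, and then intersects with a neighborhood controlling weighted length so that currents with fewer or extra edges cannot sneak below the threshold. You instead prove sequential closedness by showing the harmonic deviation is continuous along convergent sequences, analyzing the balance vector vertexwise and edgewise. Your analytic core is sound and in some ways more explicit than the paper's: case (i) is the routine part, case (ii) correctly exploits the convention that $\phi'_{e}(v_{i})$ has magnitude $\ell(\phi(e))$ (you are right that with unit tangents the statement would fail), and case (iii) correctly identifies the essential input, namely the $w_{n}(e)\,\ell(\phi_{n}(e))\to 0$ phenomenon of Remark \ref{weights to 0}, which is exactly the mechanism the paper relies on elsewhere.

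Two gaps remain. First, your dichotomy of limit points is not exhaustive. A point of $\widetilde{\mathcal{CGCC}}_{k}$, i.e.\ of $\overline{\mathcal{Q}_{k}}$, can have third component a nontrivial sum of finite-length atoms and a classical geodesic current: take one edge fixed with weight $1$ and another edge wrapped $n$ times around a closed geodesic with weight $1/n$; the limit is neither a classical geodesic current nor a simple marked $k$-current (even allowing ghost edges, its third component is not the graph current associated to $(w,\phi)$). Your subcases (ii) and (iii) contain exactly the tools needed for this mixed case --- the limit deviation only sees the surviving positive-weight finite edges, collapsing edges die by the length factor, exploding edges die by Remark \ref{weights to 0} --- but as written the proof skips these points, and you should also say what ``$\epsilon$-almost harmonic'' means for them (deviation computed from the marking data, ghost and bi-infinite edges contributing zero). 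Second, sequential closedness only yields closedness in a first-countable space. $\widetilde{\mathcal{CGCC}}_{k}$ is a quotient of a subspace of a product involving a space of Radon measures, and first countability of this quotient is not established in the paper; moreover you tacitly replace convergence of classes $[x_{n}]\to[x]$ by convergence of the representing triples $(w_{n},\phi_{n},\mu_{n})$, which deserves a sentence (it is fine near simple marked currents, where no identification occurs, but that is part of the point). Either upgrade your argument to nets, justify metrizability, or do what the paper does and exhibit, for each point outside the set, a basic open neighborhood missing the set --- your edgewise estimates can be recycled almost verbatim for that purpose.
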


\begin{proof}  Suppose $\mu \in \widetilde{\mathcal{CGCC}_{k}}$ is not $\epsilon$-almost harmonic. Then one can select segmented flow boxes $B_{1},..., B_{j}$ (for $j \leq k$) so that each $B_{i}$ contains exactly one finite length atom of $\mu$. There is then some $\delta$ sufficiently small so that the set of (marked) $k$-currents which assign a mass of within $\delta$ of the $\mu$-mass of each $B_{i}$ has the property that its restriction to the union of the $B_{i}$'s is also not $\epsilon$-almost harmonic- denote by $V$ the open set of all such $k$-currents. We can then intersect $V$ with a sufficiently small open neighborhood $U$ around $\mu$ such that every $k$-current in $U$ has weighted length extremely close to that of $\mu$, and such that:
\begin{itemize}
\item a $k$-current in $U \cap V$ can not possibly have fewer edges than $\mu$;
\item if a $k$-current $\rho \in U \cap V$ has more edges than $\mu$, the additional edges contribute so negligibly to the weighted length that they can not possibly alter the balance vector enough to push the $\ell^{1}$ norm below $\epsilon$.
\end{itemize}
\end{proof}

\paragraph*{\bf The continuity of $\Pi$} We are now ready to state the key lemma regarding continuity of $\Pi$:

\begin{lemma} \label{continuous projection} Let $(\mu_{i})_{i=1}^{\infty}$ be a sequence of marked $k$-currents converging to some marked harmonic simple $k$-current, $\mu$. Let $\epsilon_{i}$ denote the harmonic deviation of $\mu_{i}$, and suppose further that $\lim_{i \rightarrow \infty} \epsilon_{i} = 0$. Then 
\[ \lim_{i \rightarrow \infty} \Pi(\mu_{i}) = \mu.\]
\end{lemma}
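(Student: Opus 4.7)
The plan is to combine continuity of the energy minimizer under perturbation of weights with an argument, leveraging the harmonic deviation hypothesis, that rules out the homotopy-class pathologies exhibited by Examples \ref{snapping back a univalent edge} and \ref{being far from harmonic}. First I would unpack what convergence in $\widetilde{\mathcal{CGCC}}_{k}$ means at a simple point. Since $\mu$ is simple and the set of simple marked currents is open, the $\mu_{i}$ are also simple for large $i$. Writing $\mu = (w,\phi,\phi(\Gamma))$ and $\mu_{i} = (w_{i},\phi_{i},\phi_{i}(\Gamma_{i}))$, the definition of the topology forces the marking graphs $\Gamma_{i}$ to stabilize (up to relabeling) to a subgraph of $\Gamma$, the weights to satisfy $w_{i} \to w$, and the marking maps $\phi_{i}$ to converge to $\phi$ edge by edge in the Gromov-Hausdorff sense. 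Ghost edges, discussed in Remark \ref{weights to 0}, can be tracked separately since $\Pi$ simply discards the zero-weight edges before taking the harmonic representative.

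The crux will be to show that for all large $i$ the marking map $\phi_{i}$ lies in the same homotopy class as $\phi$. The examples referenced in the statement demonstrate that this can fail when the harmonic deviation stays bounded away from zero: a collapsing edge can change a null-homotopic path into a non-trivial loop in the limit (Example \ref{snapping back a univalent edge}), and splitting a balanced vertex can produce two new vertices whose balance conditions remain severely unsatisfied (Example \ref{being far from harmonic}). In both cases one computes directly that $\epsilon_{i}$ is bounded below uniformly, so the hypothesis $\epsilon_{i} \to 0$ forbids them. More generally, I would argue by contradiction: if $[\phi_{i}] \neq [\phi]$ along a subsequence, then either an edge of $\Gamma_{i}$ must degenerate to a point while its image carries a non-trivial loop (forcing an unbalanced pair of unit tangent vectors at some vertex), or some vertex undergoes a star-split of the sort in Example \ref{being far from harmonic} (forcing unbalanced tangent summands at one of the resulting vertices). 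Either scenario yields a definite lower bound on harmonic deviation, contradicting $\epsilon_{i}\to 0$. I expect this step to be the main obstacle, since it requires classifying the ways in which convergence in $\widetilde{\mathcal{CGCC}}_{k}$ can change the combinatorial/homotopical type of the marking map.

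Once the homotopy class is pinned, $\Pi(\mu_{i})$ is the unique minimizer of the energy functional $\mathcal{E}_{w_{i}}(\psi) = \sum_{e} w_{i}(e) \int \|\psi'_{e}(t)\|^{2}\,dt$ over the class $[\phi]$, while $\mu = \Pi(\mu)$ is the unique minimizer for weight vector $w$. Since $w_{i} \to w$, the functionals $\mathcal{E}_{w_{i}}$ converge to $\mathcal{E}_{w}$ on bounded-energy sets. Harmonic representatives in a fixed homotopy class have uniformly bounded total length (bounded, for instance, by the length of any fixed piecewise-geodesic representative in the class), so their marking maps lie in a pre-compact subset of $\overline{\mathcal{F}}(\Delta_{n_{k}})$. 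Extracting a subsequential limit, applying lower semicontinuity of the energy, and invoking the uniqueness result of Kajigaya-Tanaka, every subsequential limit of $\Pi(\mu_{i})$ must coincide with $\Pi(\mu) = \mu$, yielding the desired convergence.
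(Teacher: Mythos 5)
Your step (3) is sound as far as it goes, and in the no-ghost-edge case it is a legitimate alternative to the paper's route: the paper proves a quantitative statement (uniformly over weight vectors bounded below, $\epsilon$-almost harmonic maps in $T(L)$ are Gromov--Hausdorff close to the harmonic representative, via compactness of $T(L)$, Lemma \ref{almost harmonic currents are closed}, and uniqueness) and applies it to $\mu_i$ itself, whereas you would prove continuity of $w\mapsto H_{[\phi]}(w)$ variationally and apply it to $\Pi(\mu_i)$. The genuine gap is the ghost-edge case, which the lemma does cover (recall the convention that a ``simple marked $k$-current'' may have ghost edges) and which occupies the entire second half of the paper's proof. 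Your one-sentence dismissal fails on three counts: (i) if $\mu$ has an infinite-length ghost edge, the marking maps $\phi_i$ have unbounded hyperbolic length, so the bounded-length compactness your argument rests on is gone; (ii) the limit weight vector $w$ then has zero entries, so both the step ``bounded energy implies bounded length'' and the Kajigaya--Tanaka uniqueness you invoke break down (uniqueness can fail when some weights vanish, as the paper notes inside its proof); (iii) $\Pi(\mu_i)$ does not discard anything: the nearly-ghost edges of $\mu_i$ carry small positive weight, appear in the harmonic representative, and may have length tending to infinity, so one must show that their contribution to the limit \emph{as a current} vanishes. That is exactly what the estimates of Remark \ref{weights to 0} (weight times length tends to $0$, while the number of lifts meeting a fixed segmented flow box grows like the length) are used for in the paper, and nothing in your proposal replaces them.

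The step you yourself flag as the crux -- pinning the homotopy class -- is also only sketched, and the proposed dichotomy (an edge collapsing onto a nontrivial loop, or a vertex star-split) does not classify the possible jumps. Convergence in $\overline{\mathcal{F}}(\Delta_{n_{k}})$ only controls Hausdorff distance of edge images in $\Sigma$, so an edge whose image runs along a closed geodesic could change its winding while staying Hausdorff-close, with no edge or vertex degenerating at all; to rule this out one should use the convergence of the weights and of the underlying currents (a $\mu$-admissible flow box around a lift of $\phi(e)$ must receive mass close to $w(e)$, which long wrapped lifts do not supply), rather than the harmonic deviation. Your reading of the examples is also off: in Example \ref{snapping back a univalent edge} the domain is a tree, so the homotopy class of the marking map never changes; what fails there is that the limit is not harmonic (the univalent vertices are unbalanced), so it is excluded by the hypothesis on $\mu$, and the deviations $\epsilon_i$ are bounded below. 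So the deviation hypothesis is not obviously the right tool for the class-pinning step, and in any case you have not carried that step out; as written, the proposal establishes the conclusion only in the no-ghost-edge case and modulo this unproved claim.
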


Before formally proving Lemma \ref{continuous projection}, we first give a brief sketch of the idea to aid in exposition: 
\begin{enumerate}
\item For a fixed graph $G$, the space of geodesic maps $\mathcal M(G, \Sigma, L)$ of $G$ into $\Sigma$ with weighted length at most some $L>0$, is compact.
\item We will use the compactness of $\mathcal M(G, \Sigma, L)$ to show, in several steps, that when the harmonic deviation of a marked current is small, it must be nearby (as a map) to the harmonic representative in its homotopy class. 
\item We will repeatedly use the fact that for a fixed positive weight vector $w$ and homotopy class $[\phi]$ that is non-trivial on each component of the domain graph, there is a unique harmonic representative. 
\item Some bookkeeping will arise to deal with the possibility of ghost edges. They need to be treated somewhat separately since uniqueness of harmonic maps can fail when some of the weights are $0$.  
\end{enumerate}

\begin{proof} 

Assume first that the limit $k$-current $\mu$ has no ghost edges. We can also assume the existence of some $M>0$ so that the total weight of each $\mu_{i}$ is at most $M$, and that $\mu_{i}$ has no ghost edges. Moreover, by using Proposition \ref{currents embed}, we can assume that no $\mu_{i}$ is a geodesic current and therefore that the total hyperbolic length of each $\mu_{i}$ is at most some $L>0$. Finally, we can assume that each $\mu_{i}$ has the same marking graph, which is equal to the marking graph of $\mu$ and which we will denote by $\Gamma$.

Fix a non-trivial homotopy class of an embedding $[\phi]: \Gamma \rightarrow \Sigma$, and for $L >0$ let $T(\Sigma, [\phi], L)$ denote the set of maps $\psi \in [\phi]$ in this homotopy class satisfying the following properties:

\begin{enumerate}
\item Each edge is sent to a geodesic segment.
\item Both the total hyperbolic length of the image graph and the $\ell^{1}$-norm of the weight vector is at most $L$.
\end{enumerate}

Then by our assumptions above, there is an $L>0$ so that the marking map of $\mu_{i}$ is in $ T(\Sigma, [\phi], L)$. Note that this relies on the assumption of no ghost edges; indeed, if $\mu$ has an infinite length ghost edge, there will be no uniform upper bound on the hyperbolic length of the image of each marking map. Since $\Sigma$ and $[\phi]$ will be constant during the course of the argument, we will denote $T(\Sigma, [\phi], L)$ by $T(L)$ for short.

We use the Gromov-Hausdorff distance arising from the ambient metric on $\Sigma$ to equip $T(L)$ with a topology: two maps are close when the sum of Gromov-Hausdorff distances, taken over all edges of $\Gamma$, is small, and the distance between weight vectors is small. With respect to this topology, $T(L)$ is compact. 

Now, fix a positive weight vector $w \in \mathbb{R_+}^{n_{k}}$ with $\ell^{1}$-norm at most $L$. We claim that given $\delta>0$, there is an $\epsilon>0$ so that for any map $\phi$ in $T(L)$ with weight vector $w$ that is $\epsilon$-almost harmonic, then for each edge $e$ of $G$, $\phi(e)$ is within Gromov-Hausdorff distance $\delta$ from $h(e)$, where $h$ denotes the harmonic representative in the homotopy class of $\phi$ relative to the weight vector $w$. To prove this, we argue by contradiction: if not, then there is some $\delta$ and a sequence of maps $\{\psi_{i}\}_{i=1}^{\infty} \subset T(L)$ of graphs which remain at a Gromov-Hausdorff distance of at least $\delta$ from the harmonic representative (all with respect to $w$), but so that the image of $\psi_{i}$ is $\epsilon_{i}$-almost harmonic with $\lim_{i \rightarrow \infty} \epsilon_{i} = 0$.

 Let $h: \Delta_{n_{k}} \rightarrow \Sigma$ be the unique harmonic representative in the homotopy class of $\phi$, and (passing to a subsequence if necessary) let $e$ be an edge of the domain graph $\Gamma$ so that $\psi_{i}(e)$ remains at least Gromov-Hausdorff distance $\delta$ from $h(e)$. Since the image of $e$ is determined by $\phi$ and the image of its endpoints $v_{1}, v_{2}$, there must be some $\delta'$ so that (passing once more to a subsequence if necessary) $\psi_{i}(v_{1})$ is at least distance $\delta'$ from $h(v_{1})$. 

Consider the closed subspace $\tau$ of maps in $T(L)$ sending $v_{1}$ outside of the open ball of radius $\delta'$ about $h(v_{1})$. By compactness, there must exist a limiting map $\psi: \Gamma \rightarrow \Sigma$ which is in $\tau$, and furthermore, Lemma \ref{almost harmonic currents are closed} implies that this map is harmonic. This contradicts the uniqueness of harmonic maps in a given homotopy class. Note that this relies on the positivity of $w$: if some weights in $w$ are $0$, there may be multiple harmonic maps in a fixed homotopy class. 

We point out an important consequence of this claim: let $D_{L,M} \subset [0, L]^{n_{k}}$ be the set of positive weight vectors $w$ so that the corresponding harmonic map $H$ lies in $T(M)$. Then the map sending a weight vector $w \in D_{L,M}$ to the corresponding harmonic map in $T(M)$ is continuous. Indeed, if not, there is some sequence $(w_{i})$ of weight vectors converging to $w$, but such that for some vertex $v$ of the domain graph, $H(w_{i})(v)$ stays some definite distance away from $H(w)(v)$. However, we claim that the map $H(w_{i})$ is necessarily $\epsilon_{i}$-almost harmonic with respect to the weight vector $w$, for some sequence $(\epsilon_{i})$ going to $0$ as $i \rightarrow \infty$. If so, this is a contradiction by what we have just shown above. To see the claim, note that $H(w_{i})$ has harmonic deviation $0$ with respect to the $w_{i}$ weights; since the lengths of all edges of $H(w_{i})$ lie in the compact set $[0,M]$, the harmonic deviation of $H(w_{i})$ with respect to the $w$ weights gets arbitrarily small as $i \rightarrow \infty$, as desired.

Now, let 
\[ \kappa_{\delta}: (0,L]^{n_{k}} \rightarrow \mathbb{R} \]
be the map sending a given weight vector $w \in \mathbb{R}^{n_{k}}$ to the supremal $\epsilon$ so that any map in $T(L)$ that is $\epsilon$-almost harmonic with weight vector $w$ has the property that the image of each edge of the domain graph lies within Gromov-Hausdorff distance $\delta$ from the image of that edge under the harmonic map given by $[\phi]$ and $w$. Then given any $\delta'>0$, the restriction of $\kappa_{\delta}$ to $[\delta',L]^{n_{k}}$ has a non-zero infimum, for if not, there is a sequence of maps $(\psi_{i})$ in $T(L)$ so that 
\begin{itemize}
\item $\psi_{i}$ is $\epsilon_{i}$-almost harmonic and $\lim_{i \rightarrow \infty} \epsilon_{i} = 0$;
\item Each component of the weight vector $w_{i}$ is at least $\delta'$;
\item for some vertex $v$, $\psi_{i}(v)$ is distance at least $\delta$ from $h_{i}(v)$, where $h_{i}$ is the harmonic map relative to the weight vector $w_{i}$ for $\psi_{i}$. 
\end{itemize}
Then again by compactness, after passing to a subsequence if necessary there is some limiting map $\psi \in T(L)$ that is harmonic, and each $w_{i}$ (and of course also $w$) lies in $D_{L',L}$ for some fixed $L'$ depending only on $L$ and the number of edges. 

Since the map sending a weight vector in $D_{L',L}$ to the harmonic representative is continuous, it follows that $\psi$ has some definite distance from the corresponding harmonic map specified by its weight vector, which again contradicts uniqueness of harmonic maps. 

We deduce that given $\delta_{1}, \delta_{2} >0$, there is an $\epsilon>0$ so that for any map $\phi$ in $T(\Sigma, L)$ that is $\epsilon$-almost harmonic and such that each component of the weight vector is at least $\delta_{1}$, then for each edge $e$ of the domain graph, $\phi(e)$ is within Gromov-Hausdorff distance $\delta_{2}$ from $h(e)$, where $h$ is the corresponding harmonic map. This concludes the proof in the case that $\mu$ is a marked graph current with no ghost edges, because it implies that $\lim_{i \rightarrow \infty} \Pi(\mu_{i})$ and $\lim_{i \rightarrow \infty} \mu_{i}$ coincide. 

It remains to consider the possibility of ghost edges. Let $\Gamma$ denote the marking graph of $\mu$, and $\phi$ the marking map. Then if $\mu$ has ghost edges, let $\Gamma' \subset \Gamma$ be the subgraph consisting of the complement of all ghost edges, and let $\phi'$ be the restriction of $\phi$ to $\Gamma'$. Then using Remark \ref{weights to 0}, the marked currents $\mu_{i}'= (w'_{i}, \phi'_{i}, c_{i}')$ (where $w'_{i}$ denotes the weight subvector of $w_{i}$ with components corresponding to the edges in $\Gamma'$ and where $c_{i}'$ is the graph current so that $(w_{i}', \phi_{i}',c_{i}') \in \mathcal{Q}_{k}$) still satisfy the main hypothesis of the lemma: as $n \rightarrow \infty$, the harmonic deviations go to $0$. This is obvious when the ghost edges are finite length, and follows from the second part of Remark \ref{weights to 0} when there are infinite length ghost edges. 

It then follows-- exactly by the argument used above under the assumption of no ghost edges since there is now some uniform $L$ so that the hyperbolic length of each $(w_{i}', \phi_{i}', c_{i}')$ is at most $L$-- that 
\[ \lim_{i \rightarrow \infty} \Pi(\mu_{i}') = \mu',\]
where $\mu'$ is the marked harmonic current with domain graph $\Gamma'$ and marking map $\phi'$. 

Let $\mu_{i}''$ be the marked current with domain graph $\Gamma'$ and third component equal to the current with corners whose support consists of the image of all edges in $\Gamma'$ \textit{under the harmonic map of the full graph} $\Gamma$ (in the homotopy class of $\phi_{n}$). In other words, the third component of $\mu_{i}''$ is the graph current obtained by removing the atoms from the harmonic graph current $\Pi(\mu_{i})$ corresponding to edges that get longer and longer as $i$ goes to infinity. 

Then, again by the second part of Remark \ref{weights to 0}, the harmonic deviation of $\mu_{i}''$ goes to $0$ with $i$, and so again by our arguments above, given any $\delta>0$ there is $N$ so that each edge of $\mu_{i}''$ is within Gromov-Hausdorff distance at most $\delta$ from the corresponding edge of its harmonic representative, which is $\Pi(\mu_{i}')$ (for all $i>N$). It follows that 
\[ \lim_{i \rightarrow \infty} \mu''_{i} = \lim_{i \rightarrow \infty} \Pi(\mu_{i}'') = \mu'.\]
Since when $i$ is large, the full mass of $\Pi(\mu_{i})$ is very nearly the mass of $\mu''_{i}$, this implies the desired result, namely that $\lim_{i \rightarrow \infty} \Pi(\mu_{i}) = \mu$.

\end{proof}


\section{Borel action of the mapping class group on the space of (marked) harmonic currents}

\paragraph*{\bf Bounded weights} Given $M >0$, let $\widetilde{\mathcal{HC}}_{k}(\Sigma, M) \subset \widetilde{\mathcal{HC}}_{k}(\Sigma) \setminus \mathcal{N}_{k}$ be the subspace consisting of all geodesic currents and all harmonic graph $k$-currents such that each edge has mass at most $M$. Note that the mapping class group action preserves each $\widetilde{\mathcal{HC}}_{k}(\Sigma, M)$, and in this section, we will prove that induced action on this subspace is Borel:

\begin{Theorem} \label{Borel action} The mapping class group $\mathcal{MCG}(\Sigma)$ acts by Borel transformations on $\widetilde{\mathcal{HC}}_{k}(\Sigma, M)$. 
\end{Theorem}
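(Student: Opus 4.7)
The plan is to exhibit a countable Borel partition of $\widetilde{\mathcal{HC}}_k(\Sigma, M)$ on which each mapping class $f \in \mathcal{MCG}(\Sigma)$ restricts to a continuous map; since a map that is continuous on each piece of a countable Borel partition is Borel measurable, this will establish the theorem.

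First, I would split the space as
\[ \widetilde{\mathcal{HC}}_k(\Sigma, M) = \mathcal{GC}(\Sigma) \sqcup Y, \]
where $Y$ consists of the simple marked harmonic $k$-currents. The subspace $\mathcal{GC}(\Sigma)$ is closed by Proposition~\ref{currents embed} and $Y$ is open. I would then decompose $Y$ first by the (finitely many) possible marking graphs $\Gamma \subset \Delta_{n_k}$ and then by the (countably many) homotopy classes $[\phi]$ of maps $\Gamma \to \Sigma$; within a fixed marking graph, the homotopy class of the marking map is locally constant (small perturbations give homotopic maps), so each piece $X_{\Gamma, [\phi]}$ is open in its $\Gamma$-stratum and therefore Borel in the ambient space. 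To accommodate ghost edges, I would finally refine each piece according to the support $S \subset E(\Gamma)$ of the weight vector (the set of edges with strictly positive weight). The resulting refinement $\{X_{\Gamma, [\phi], S}\}$ is a countable Borel partition of $Y$, and together with $\mathcal{GC}(\Sigma)$ it partitions $\widetilde{\mathcal{HC}}_k(\Sigma, M)$.

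Next, I would verify continuity of $f$ on each piece. On $\mathcal{GC}(\Sigma)$, this is just the classical continuous $\mathcal{MCG}$-action on geodesic currents. On a piece $X_{\Gamma, [\phi], S}$, the exclusion of $\mathcal{N}_k$ together with the positivity of the weights on $S$ guarantees (by Kajigaya--Tanaka) that the harmonic representative in the homotopy class $[\phi]$, restricted to the subgraph spanned by $S$, is unique. Consequently the piece is parameterized homeomorphically by its weight vector. Since the homotopy class $[f \circ \phi]$ depends only on $f$ and $[\phi]$ (and not on the individual point of the piece), the action of $f$ sends the piece to $X_{\Gamma, [f \circ \phi], S}$ by the map which preserves the weight vector and replaces the marking map by the harmonic representative of $[f \circ \phi]$ with those weights. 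The weight-to-harmonic-representative assignment is continuous by the compactness/uniqueness argument established in the proof of Lemma~\ref{continuous projection} (specifically, the claim there that the map from the positive cone of weights to the space of harmonic maps is continuous), so $f$ is continuous on this piece.

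The main obstacle will be ensuring that the harmonic-straightening step truly yields a continuous map on each piece, particularly near the boundary of the weight cone where weights approach zero and uniqueness of harmonic representatives can fail. Refining by the weight support $S$ isolates exactly the regime in which the positivity hypothesis of Lemma~\ref{continuous projection} is valid for the sub-marking-graph spanned by $S$, so one never has to deal with non-unique harmonic representatives while inside a single piece; discontinuities that might occur as a sequence leaves a piece (by having some weight cross zero, or by having weights decay to give a geodesic current) are absorbed into the Borel structure of the partition. Gluing the continuous restrictions over the countable Borel partition then yields that every $f \in \mathcal{MCG}(\Sigma)$ acts on $\widetilde{\mathcal{HC}}_k(\Sigma, M)$ by a Borel transformation.
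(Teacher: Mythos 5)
Your overall reduction is sound: a map that is continuous on each member of a countable Borel partition is Borel, and splitting off the closed subset $\mathcal{GC}(\Sigma)$ (Proposition~\ref{currents embed}) matches the paper's first step. However, two of your claims about the remaining pieces have genuine gaps, and both are concentrated at ghost edges. First, the assertion that each piece $X_{\Gamma,[\phi],S}$ ``is parameterized homeomorphically by its weight vector'' fails when $S$ is a proper subset of $E(\Gamma)$: a point of $\widetilde{\mathcal{HC}}_{k}(\Sigma,M)$ also records the marking map on the zero-weight edges, and that geometric data is not determined by $w$; moreover, for such points ``the harmonic representative of $[f\circ\phi]$ with those weights'' is not unique (uniqueness of harmonic maps fails when some weights vanish, as the paper itself notes inside the proof of Lemma~\ref{continuous projection}), so the piecewise formula you give for the action is not even well-defined there without an extra convention whose continuity you would then have to prove. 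Second, the Borel-ness of your pieces rests on the claim that the homotopy class of the marking map is locally constant. In the topology actually used (Gromov-Hausdorff closeness of edge images in the $\overline{\mathcal{F}}$-coordinate, together with closeness of weights and of the currents), closeness of images does not pin down the homotopy class of an edge path: a zero- or tiny-weight edge whose image nearly closes up can be perturbed so as to wrap a different number of times while remaining image-close and current-close, so exactly at ghost edges the local-constancy claim needs a genuine argument. Finally, even on pieces with full support you invoke the continuity of $w\mapsto H(w)$ from the proof of Lemma~\ref{continuous projection}, which is stated on $D_{L,M}$ and hence presupposes a uniform bound on the hyperbolic length of the harmonic representatives in $[f\circ\phi]$ over your entire weight set; that bound is plausible for a fixed homotopy class but is not established, and you also need to upgrade convergence of the maps in $T(L)$ to convergence of the associated marked currents in $\widetilde{\mathcal{CGCC}}_{k}$.

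For contrast, the paper avoids any decomposition by homotopy class: it reduces to showing that $f$ is continuous off $\mathcal{GC}(\Sigma)$, and proves this by representing $f$ by a diffeomorphism $\psi$ (or an approximation, Remark~\ref{approximate diffeo}), pushing an arbitrary convergent sequence $\mu^{i}\to\mu$ forward by $\psi$, straightening, and using a geodesic-curvature estimate to show the straightened currents $\mu^{i}_{\psi}$ are $\epsilon_{i}$-almost harmonic with $\epsilon_{i}\to 0$ and converge to $f(\mu)$; the sequence form of Lemma~\ref{continuous projection} then gives $f(\mu^{i})=\Pi(\mu^{i}_{\psi})\to f(\mu)$. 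Because $\psi$ is an actual homeomorphism of $\Sigma$, the pushforward of $\mu^{i}$ lies in the class $[f\circ\phi_{i}]$ no matter what $\phi_{i}$ is, so no local constancy of homotopy classes is needed, and the ghost-edge bookkeeping is absorbed into Lemma~\ref{continuous projection} via Remark~\ref{weights to 0}. To salvage your route you would need to (a) prove your pieces are Borel without the local-constancy shortcut (or index them only by the restriction of the homotopy class to the support subgraph), (b) fix and justify a convention for the action on ghost edges, and (c) supply the uniform length bound needed to place yourself in $D_{L,M}$ --- at which point you would essentially be re-deriving the content that Lemma~\ref{continuous projection} already packages for the paper's argument.
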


Since the full space $\widetilde{\mathcal{HC}}$ is topologized with the final topology with respect to the directed system of inclusions $\widetilde{\mathcal{HC}}_{0} \subset \widetilde{\mathcal{HC}}_{1} \subset...$, it follows from the definition of the direct limit topology that the $\mathcal{MCG}$-action on $\widetilde{\mathcal{HC}}(\Sigma, M)$-- the space of harmonic marked currents with any number of edges and where each edge has mass at most $M$ --is Borel as well. This observation won't be necessary for our counting theorems.

\begin{proof}  Let $U \subset \widetilde{\mathcal{HC}}_{k}(\Sigma, M)$ be open; by replacing $f \in \mathcal{MCG}(\Sigma)$ with its inverse, it suffices to prove that $f(U)$ is a Borel subset. Decompose $U$ as $U = (U \cap \mathcal{GC}(\Sigma)) \sqcup (U \cap \mathcal{GC}(\Sigma)^{c})$. Since the space of currents is closed, both of these subsets are Borel. By the definition of the topology on $\widetilde{\mathcal{HC}}_{k}$ and Proposition \ref{currents embed}, the first of these is an open set in $\mathcal{GC}(\Sigma)$, and the second is open in the full space $\widetilde{\mathcal{HC}}_{k}(\Sigma, M)$. Since the mapping class group action preserves the space of currents, one trivially has 
\[ f(U) = f(U \cap \mathcal{GC}(\Sigma)) \sqcup f(U \cap \mathcal{GC}(\Sigma)^{c}).\]
Since the mapping class group acts by homeomorphisms on $\mathcal{GC}(\Sigma)$, it suffices to prove that the restriction of $f$ to the complement of $\mathcal{GC}(\Sigma)$ is continuous. In fact, what we will show in the course of this proof is that each mapping class $f$ acts as a homeomorphism on $\mathcal{GC}(\Sigma)$ and $\mathcal{GC}(\Sigma)^c$, but we can have issues with length functions if we have a sequence of graph currents approximating a geodesic current.

To this end, fix some $\mu \in \widetilde{\mathcal{HC}}_{k}(\Sigma, M)$ \textit{not} a geodesic current; that is, $\mu$ is a marked harmonic current associated to some graph $\Gamma$ and some harmonic map $\phi: \Gamma \rightarrow \Sigma$ so that $\Gamma$ has at most $k$ edges, each of which has mass at most $M$. Given $f \in \mathcal{MCG}(\Sigma)$, let $\psi: \Sigma \rightarrow \Sigma$ be a diffeomorphism representing $f$, and which sends $\mu$ to $f(\mu)$. 


\begin{remark} \label{approximate diffeo} A priori, it is possible that no such $\psi$ exists. For example, the harmonic map in the homotopy class corresponding to $f \circ \phi$ might identify edges of $\phi(\Gamma)$. In this case, we can find a diffeomorphism $\psi$ that approximates the map sending $\mu$ to $f(\mu)$ with arbitrary precision, for example with respect to the topology on $T_{\Gamma}(\Sigma, f \circ \phi, L)$ where $L$ is chosen to be sufficiently larger than the weighted length of $f(\mu)$. 

Another potential issue comes from Reidemeister moves (see, for example~\cite{FarbMargalit}*{\S 5.3}): perhaps the harmonic representative in $[f\circ \phi]$ is obtained by first sliding an edge past a vertex and then applying a homeomorphism. Since our argument will not depend on the original current with corners $\mu$ being harmonic, in the event that the harmonic representative in $[f \circ \phi]$ is related not to (the image of) $\mu$ by a homeomorphism but to some other graph $\mu'$ obtained from $\mu$ by Reidemeister moves, we begin the argument by replacing $\mu$ with (the non-harmonic) marked current with corners $\mu'$. The Reidemeister move can be executed simultaneously for all harmonic currents in a small neighborhood $U$ of $\mu$ to obtain a small neighborhood $U'$ of $\mu'$. 
\end{remark}

Given any marked harmonic current $\mu'$, by $\mu'_{\psi}$ we will mean the geodesic current with corners obtained from $\psi(\mu')$ by straightening the image of each edge to a geodesic segment. We claim that given $\epsilon >0$, there exists an open neighborhood $V$ about $\mu$ so that for any $\mu' \in V$, $\mu'_{\psi}$ is $\epsilon$-almost harmonic. Assuming this claim and an additional assumption that will arise momentarily and that we will justify, we finish the proof: take any sequence of harmonic $k$-currents $\mu^{1}, \mu^{2},...$ converging to $\mu$. The claim implies that the sequence $\mu^{1}_{\psi}, \mu^{2}_{\psi}...$ has the property that the corresponding sequence of harmonic deviations $\epsilon_{1}, \epsilon_{2}...$ converges to $0$. Assume also that 
\[ \lim_{i \rightarrow \infty} \mu^{i}_{\psi} = \mu_{\psi} = f(\mu). \]
Then Lemma~\ref{continuous projection} applies and gives us that 
\[ \lim_{i \rightarrow \infty} \Pi(\mu^{i}_{\psi}) = \lim_{i \rightarrow \infty} f(\mu^{i}) = f(\mu), \]
as desired. 

It therefore suffices to prove:
\begin{itemize}
    \item $\mu^{i}_{\psi}$ is $\epsilon_{i}$-almost harmonic, where $\lim_{i \rightarrow \infty} \epsilon_{i} = 0$;
    \item $\lim_{i \rightarrow \infty} \mu^{i}_{\psi} = \mu_{\psi}$. 
\end{itemize}

Since weighted length varies continuously on $\widetilde{\mathcal{HC}}_{k}$, we can start by choosing a neighborhood $W$ of $\mu$ so that any $\mu' \in W$ has weighted length within some small $\delta$ of $L$, the weighted length of $\mu$. In fact, using Remark \ref{weights to 0}, we can choose $W$ sufficiently small so that for any such $\mu'$, there is a subset of its edges $\mathcal{E}$ accounting for all but $\delta$ of the total weighted length of $\mu'$ having the property that each $e \in \mathcal{E}$ lies in the $\delta$-neighborhood of some edge of $\mu$ (with respect to the hyperbolic metric on $\Sigma$). 

Given $\mu'$, each edge $e$ of its marking graph corresponds to a map $\mu'_{e}: [0,1] \rightarrow \Sigma$ of a unit interval into $\Sigma$ whose image is geodesic. One then gets a new map $\psi \circ \mu'_{e}: [0,1] \rightarrow \Sigma$ by composing $\mu'_{e}$ with $\psi$. 
Using smoothness of $\psi$ and the compactness of the space of smooth maps from the unit interval to $\Sigma$ and with derivative everywhere uniformly bounded above, we can in fact choose $W$ so that the integral of geodesic curvature $k_{g}$ is at most $\delta$: 
\[ \int_{0}^{1} k_{g}(\psi \circ \mu'_{e}(t)) dt < \delta, \forall \mu' \in W. \]
For any vertex $v$ of some $\mu'$, note that the $\psi$-pushforwards of the (appropriately weighted) tangent vectors to edges incident to $v$ are very close to balancing, because they balance for $\psi(\mu)$. However, $\psi(\mu')$ need not even be a geodesic current with corners because $\psi \circ \mu'_{e}$ needn't be geodesic. On the other hand, by selecting $\mu' \in W$, we guarantee that upon straightening each of the $\psi$-images of edges to geodesic segments, the change in angles between edges incident to any vertex is bounded above by a function of $\delta$ that goes to $0$ as $\delta$ does. Therefore, given $\epsilon>0$, by choosing $\delta$ sufficiently small, we guarantee that $\mu'_{\psi}$ is $\epsilon$-almost harmonic. We also guarantee that $\psi(\mu^{i})$ is $\delta_{i}$-Gromov-Hausdorff close to the geodesic segment obtained by straightening each edge, where $\lim_{i \rightarrow \infty} \delta_{i} = 0$. Since, by continuity of $\psi$, the image of an edge  in $\mu^{i}$ under $\psi$ is necessarily Gromov-Hausdorff close to an edge of $\mu$ when $i$ is large, it follows that $\mu^{i}_{\psi}$ is nearby $\mu_{\psi}$, as desired.

\end{proof}


\section{An intersection form for currents with corners} \label{intersection}

\subsection{The intersection form} \label{subsec:intersection}

A key tool in the study of geodesic currents is the bilinear continuous intersection form introduced by Bonahon~\cite{Bonahon}, and which generalizes the notion of geometric intersection number for pairs of curves (or for pairs of measured geodesic laminations). We introduce a sort of intersection form on the space $\mathcal{GCC}(\Sigma)$ by naively following Bonahon's~\cite{Bonahon} construction-- we will see that it enjoys several analogous properties to the classical form on the space of currents, but not all of them. 

\begin{Def} Let $\widetilde{\mathcal{D}} \subset GS(\h^{2}) \times GS(\h^{2})$ denote the set of pairs of geodesic segments which intersect transversely. By convention, we consider a pair of geodesic segments to intersect transversely if they share an endpoint in $\h^{2}$. A geodesic segment is also said to intersect a bi-infinite geodesic $g$ transversely when one of it endpoints lies on $g$. Since the $\pi_{1}(\Sigma)$ action sends transversely intersecting segments to transversely intersecting segments, it preserves $\widetilde{\mathcal{D}}$ and we can then consider the quotient
\begin{equation}\label{eq:intersection}\mathcal{D} = \widetilde{\mathcal{D}}/\pi_{1}(\Sigma). \end{equation}

\paragraph*{\bf Invariance} The $\pi_{1}(\Sigma)$-invariance of currents with corners $\mu, \nu$ implies that $\mu \times \nu$ descends to a measure on $\mathcal{D}$. We then define $i(\mu, \nu)$ as 
\[ i(\mu, \nu) = \int_{\mathcal{D}} d\mu \times d\nu. \]
\end{Def}

\noindent We observe that this definition makes sense both in $\mathcal{CGCC}$ and in $\widetilde{\mathcal{CGCC}}$; in the latter, the marking simply has no role in the computation of the intersection number (although it will come into play later in this section when we introduce a related intersection form on the space of marked currents with corners).

\begin{remark} \label{self-intersect} We point out a key subtlety with this definition that distinguishes it from the classical construction of an intersection form on $\mathcal{GC}(\Sigma)$. Fix a simple closed geodesic, $c$, on $\Sigma$ and place a vertex $v$ somewhere along it. Then the set of lifts of $c$ to $\h^{2}$ decorated by the set of lifts of $c$ is a current with corners $\mu$ as in Example \ref{corners on a curve}: its support consists of all path-lifts of $c$ starting and ending at lifts of $v$. Let $\tilde{c}_{1}, \tilde{c}_{2}$ be a pair of lifts of $c$ sharing an endpoint (which is necessarily in the lift of $v$); it follows that $\tilde{c}_{1}, \tilde{c}_{2}$ are related by a generator in the infinite cyclic subgroup of $\pi_{1}(\Sigma)$, generated by some element in the conjugacy class corresponding to $c$. On the other hand, by convention, $(\tilde{c}_{1}, \tilde{c}_{2}) \in \widetilde{\mathcal{D}}$ because, as we emphasize again, \emph{ sharing an endpoint counts as a transverse intersection}. Since these lifts lie over the same geodesic segment on $\Sigma$, it follows that $i(\mu, \mu) \neq 0$, even though the support of $\mu$ descends to a (pointed) simple closed geodesic.     
\end{remark}

\paragraph*{\bf Intersecting graph and geodesic currents} The following example explores intersection between a graph current and a classical geodesic current:

\begin{Example} \label{not simple} Fix some graph current $\mu$ associated to some embedding of a weighted graph $\Gamma$, and let $\mathcal{L}_{\Sigma} \in \mathcal{GC}(\Sigma)$ denote the Liouville current for $\Sigma$. Then 
\[ i(\mu, \mathcal{L}_{\Sigma}) = \sum_{e \in E(\Gamma)} w(e) \cdot \ell(e). \]
This follows from the description of the Liouville measure $\mathcal{L}$ on $G(\h^{2})$ found in Section 4 of Aramayona-Leininger~\cite{AL}*{\S 4}. Indeed, the set of bi-infinite geodesics  $E_{\delta}$ transverse to a given unit-speed geodesic segment $\delta: (-\epsilon, \epsilon) \rightarrow \h^{2}$ can be parameterized by the angle $\theta$ of intersection and the time of intersection. With respect to these coordinates, 
\begin{equation} \label{Liouville formula}
\mathcal{L}|_{E_{\delta}}= \frac{1}{2} \sin(\theta) d\theta dt. 
\end{equation}

It then follows that $\mathcal{L}(E_{\delta})= \ell(\delta)$. And by definition $i(\mu, \mathcal{L}_{\Sigma})$ is the sum-- over each edge $e$ of $\Gamma$-- of the product of $\mathcal{L}(\tilde{e})$ (where $\tilde{e}$ is some lift of $e$ to $\h^{2}$) and $w(e)$. 
    
\end{Example}

\paragraph*{\bf Discontinuity} Note that, unlike in the classical setting, this intersection form is not continuous. Even its restrictions to a $1$-variable function where we keep one argument fixed need not be continuous, as the following example demonstrates: 

\begin{Example} \label{not continuous current}
Let $\mu \in \mathcal{HC}_{k}$ be a graph current and consider the function 
\[ i(\cdot, \mu): \mathcal{GC}(\Sigma) \rightarrow \mathbb{R}. \]
 Let $c \in \mathcal{GC}(\Sigma)$ and let $p \in \h^{2}$ be a corner of $\mu$ such that, in the cyclic order of edges incident to $p$, there exists a pair of consecutive edges of $\mu$ that meet $p$ with angle $\pi$. Suppose further that $c$ has an atom passing through $p$. There may then be a sequence of geodesic currents $\left\{c_{i} \right\}$ converging to $c$ so that the corresponding sequence of atoms do not intersect any edge of $\mu$ transversely. In this case, we have that 

 $$i(\mu, c) - i(\mu, c_i) > \epsilon >0,$$ that is $i(\mu, c)$ will be a definite amount larger than $i(\mu, c_{i})$ for all $i$.

\end{Example}

Example \ref{not continuous current} could be generalized to involve a current $c$ with no atom through $p$, but which assigns positive measure to the set of bi-infinite geodesics passing through $p$. In more detail, define $\mathcal{J}(p) \subset G(\h^{2})$ by
 \[ \mathcal{J}(p) = \left\{\gamma \in G(\h^{2}): \gamma \hspace{1 mm} \mbox{passes through } p \right\}. \]
 Note that $\mathcal{J}(p)$ is compact; this follows from the fact that $\mathcal{J}(p_{1})$ is homeomorphic to $\mathcal{J}(p_{2})$ for any $p_{1}, p_{2} \in \h^{2}$ and that $\mathcal{J}_{0}$ (where $0$ denotes the origin of the unit Disk) is clearly a copy of $S^{1}$. 

Motivated by Example \ref{not continuous current}, we say that a current $c \in \mathcal{GC}(\Sigma)$ is \textbf{without bottlenecks} if it assigns measure $0$ to each $\mathcal{J}(p)$; the terminology comes from visualizing $\mathcal{J}(p)$ as a very ``narrow'' collection of geodesics that pass through a given point. If $c$ is not without bottlenecks, we will refer to $p \in \h^{2}$ as a \textbf{bottleneck point} for $c$ if $c(\mathcal{J}(p)) \neq 0$. In Example \ref{not continuous current}, we could just as well have considered the function 
 \[ i(c, \cdot): \mathcal{HC}_{k} \rightarrow \mathbb{R} \] 
 for some $c \in \mathcal{GC}(\Sigma)$. Then one can imagine a sequence of graph currents $(\mu_{i})$ converging to a graph current $\mu$ with a vertex $p$ so that $c(\mathcal{J}(p))\neq 0$, and so that the vertices of each $\mu_{i}$ are never bottleneck points of $c$. 

Note also that Equation \ref{Liouville formula} implies that $\mathcal{L}_{\Sigma}$ is without bottlenecks, and that being without bottlenecks implies having no atoms. The converse holds for measured laminations. Example \ref{not continuous current} motivates the idea that if we restrict to currents without bottlenecks, the intersection form may behave continuously-- the following lemma formalizes this: 

\begin{lemma} \label{intersection function continuous} Suppose $\mu \in \mathcal{GC}(\Sigma)$ is without bottlenecks. Suppose also that $(\nu_{n})$ is a sequence of marked graph $k$-currents converging to $\nu$, so that the supremum, taken over all $n$, of the total mass of $\nu_{n}$ is finite. Then  $$\lim_{n \rightarrow \infty} i(\nu_{n}, \mu) = i(\nu, \mu).$$ 
\end{lemma}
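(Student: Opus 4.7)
The plan is to reduce $i(\nu_n, \mu)$ to a sum of $\mu$-measures of ``crossing sets'' of individual segments and then to use the without-bottlenecks hypothesis on $\mu$ to show each such measure is continuous under Gromov--Hausdorff limits of segments. Concretely, writing $\nu_n = \sum_{j=1}^{m_n} w_{n,j}\, \delta_{\tilde e_{n,j}}^{\pi_1}$ as a finite sum of weighted $\pi_1$-invariant counting measures on orbits of representative segments $\tilde e_{n,j} \in GS(\tilde \Sigma)$, Fubini and the $\pi_1(\Sigma)$-invariance of $\mu$ give
\[
i(\nu_n, \mu) \;=\; \sum_{j=1}^{m_n} w_{n,j}\, \mu(E_{\tilde e_{n,j}}),
\]
where $E_{\tilde e} \subset G(\h^2)$ denotes the set of bi-infinite geodesics crossing $\tilde e$ transversely (with endpoint coincidences counted as transverse). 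An analogous identity holds for $\nu$ on its atomic part, and Bonahon's classical intersection form handles any classical part $\nu^{\mathrm{gc}}$ of $\nu$.

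The key step I would prove is the following continuity claim: if $\tilde e_n \to \tilde e$ in $GS(\h^2)$ with $\tilde e$ having endpoints in $\h^2$, then $\mu(E_{\tilde e_n}) \to \mu(E_{\tilde e})$. The topological boundary $\partial E_{\tilde e}$ in $G(\h^2)$ is contained in $\mathcal{J}(p_1) \cup \mathcal{J}(p_2)$ for $p_1, p_2$ the endpoints of $\tilde e$, together with a null set of geodesics tangent to $\tilde e$. The without-bottlenecks hypothesis gives $\mu(\mathcal{J}(p_i)) = 0$, hence $\mu(\partial E_{\tilde e}) = 0$, and so $\mathbb{1}_{E_{\tilde e_n}} \to \mathbb{1}_{E_{\tilde e}}$ pointwise $\mu$-a.e. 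Since the $\tilde e_n$ eventually sit in a fixed compact region of $\h^2$, these indicators are uniformly dominated by the indicator of a set of finite $\mu$-mass, and dominated convergence gives the claim.

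I would then invoke Remark \ref{weights to 0} to classify the atoms of $\nu_n$ into three regimes as $n \to \infty$: (i) the segments $\tilde e_{n,j}$ converge to a finite-length segment appearing in $\nu$, with weights converging to a positive limit; (ii) the segment shrinks to a point (ghost edges) or the weight decays to $0$ while the length stays bounded; (iii) the segment length tends to infinity and the weights decay so that the combined atomic measures converge weak-$^\ast$ to the classical part $\nu^{\mathrm{gc}}$ of $\nu$. Regime-(i) atoms give the correct contributions by the continuity claim. Regime-(ii) atoms contribute $o(1)$, since $\mu(E_{\tilde e_{n,j}})$ stays uniformly bounded on compact segments while either the weight or the length decays. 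Regime-(iii) atoms collectively contribute $i(\nu^{\mathrm{gc}}, \mu)$: one matches the weak-$^\ast$ limit of these long-segment atomic pieces with $\nu^{\mathrm{gc}}$ and invokes the Bonahon continuity of the classical intersection form.

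The main obstacle is regime (iii): the individual atoms are not converging as segments, yet their weighted contributions must assemble into the intersection with a classical current. The uniform total-mass bound on $\nu_n$ is essential — by the second half of Remark \ref{weights to 0}, $w_{n,j}\,\ell(\tilde e_{n,j})$ is controlled, preventing runaway mass — and a Liouville-type asymptotic $\mu(E_{\tilde e}) \sim C\,\ell(\tilde e)$ for long $\tilde e$ (with endpoint corrections vanishing by the without-bottlenecks hypothesis, via the same boundary-measure-zero reasoning as above) matches $\sum_{\text{regime (iii)}} w_{n,j} \mu(E_{\tilde e_{n,j}})$ with $i(\nu^{\mathrm{gc}}, \mu)$ in the limit. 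Summing the contributions from the three regimes then yields $i(\nu_n, \mu) \to i(\nu, \mu)$, as desired.
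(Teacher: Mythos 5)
Your setup (writing $i(\nu_n,\mu)=\sum_j w_{n,j}\,\mu(E_{\tilde e_{n,j}})$ and using the no-bottleneck hypothesis to kill the boundary sets $\mathcal{J}(p_1)\cup\mathcal{J}(p_2)$) matches the paper's treatment of the case where the limit $\nu$ is a graph current, and regime (i) is fine. But regime (iii) — the part of the lemma where the graph currents converge to a classical current — contains a genuine gap. Your matching of $\sum_{\text{(iii)}} w_{n,j}\,\mu(E_{\tilde e_{n,j}})$ with $i(\nu^{\mathrm{gc}},\mu)$ rests on a ``Liouville-type asymptotic $\mu(E_{\tilde e})\sim C\,\ell(\tilde e)$ for long $\tilde e$.'' No such asymptotic holds for a general current $\mu$ without bottlenecks: the formula $\mu(E_{\tilde e})\approx\ell(\tilde e)$ is special to the Liouville current (Example \ref{not simple}), whereas for, say, an atomless filling measured lamination the crossing mass of a long segment depends heavily on its direction and position (a segment fellow-traveling a leaf crosses far less per unit length than a transverse one), so there is no constant $C$ to extract. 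Nor can you simply ``invoke the Bonahon continuity of the classical intersection form'': the long-segment atomic pieces of $\nu_n$ are not classical currents, and the convergence takes place in $\mathcal{GCC}(\Sigma)$, where the intersection form is not continuous — that is exactly the difficulty the lemma is meant to overcome, so appealing to the classical continuity is circular here.

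The paper closes this case by redoing Bonahon's continuity argument at an atomless second argument: given $\epsilon>0$ it produces $K>0$ and an open set $U$ containing a neighborhood of the diagonal in $GS(\Sigma)\times GS(\Sigma)$ and all pairs of segments of length $<K$, whose contribution to both $i(\nu_n,\mu)$ and $i(\nu,\mu)$ is at most $\epsilon$ (this is where the functions $\Phi_\mu$ and $j_\mu$ and compactness of $\mathbb{P}T\Sigma$ enter, again using no bottlenecks); it then covers the compact complement $\mathcal{D}\setminus U$ by finitely many products $B_i\times B_j'$ of transverse admissible flow boxes, subdivides the $B_j'$ so each has small $\mu$-mass (possible because no bottlenecks implies $\mu$ has no atoms), and uses weak-$^{\ast}$ convergence of $\nu_n$ on admissible boxes. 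You would need an argument of this kind in place of the asymptotic. A smaller, fillable issue: in regime (ii), when an edge shrinks to a point while its weight stays bounded away from $0$, uniform boundedness of $\mu(E_{\tilde e_{n,j}})$ is not enough; you need that short segments have uniformly small crossing measure over all positions and directions, which is again the $j_\mu$/compactness argument rather than the dominated-convergence claim you prove for fixed finite limits.
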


\begin{proof} Assume first that $\nu \in \widetilde{\mathcal{CGCC}}_{k}$ is a graph current. Let $(\nu_{n})$ be a sequence of graph currents converging to $\nu$. The intersection $i(\nu, \mu)$ can be concretely computed as follows: let $\tilde{e}_{1},\ldots, \tilde{e}_{j}$ (for some j $\le k$) be lifts of the edges $e_{1},\ldots, e_{j}$ of the graph on $\Sigma$ to $\h^{2}$ associated to $\nu$; then 
\[ i(\nu, \mu) = \sum_{m=1}^{j} \nu(\tilde{e}_{m}) \cdot \mu (\mathcal I (\tilde{e}_m))\] where, by slight abuse of notation,  \[ \mathcal I (\tilde{e}_m)= \left\{\gamma \in G(\h^{2}): \gamma \hspace{1 mm} \mbox{intersects } \tilde{e}_{m} \right\} = \bigcup_{p \in \tilde{e}_m} \mathcal{I}(p). \]

Let $\left\{B^{(n)}_{m}\right\}_{n=1}^{\infty}$ be a nested sequence of segmented flow boxes intersecting in $\tilde{e}_{m}$ and let $\{\epsilon_{n}\}_{n=1}^{\infty}$ be a sequence of positive real numbers converging to $0$. By passing to a subsequence and relabeling as necessary, we can assume that 
\[ \left|\nu_{n}\left(B^{(n)}_{m}\right)- \nu(\tilde{e}_{m})\right| < \epsilon_{n}, m=1,\ldots,j. \]
Now, let $T^{(n)}_{m} \subset G(\h^{2})$ denote the set of bi-infinite geodesics that transversely intersect either $\tilde{e}_{m}$, or at least one edge of $\nu_{n}$ in $B^{(n)}_{m}$, but not both. It suffices to prove that
\begin{enumerate}
\item \begin{equation} \label{limit to 0}
 \lim_{n \rightarrow \infty} \mu\left(T^{(n)}_{m}\right) = 0 , m=1,\ldots, j. 
 \end{equation}
\item Let $\left\{E_{n} \right\}$ be a sequence of edges with $E_{n}$ in the support of $\nu_{n}$ such that the hyperbolic lengths of $E_{n}$ converge to $0$. Then so does the contribution to $i(\nu_{n}, \mu)$ coming from $E_{n}$. 
\end{enumerate}
Indeed, the difference between $\lim_{n \rightarrow \infty}i(\nu_{n}, \mu)$ and $i(\nu, \mu)$ comes either from geodesics in $\bigcap_{n=1}^{\infty}T^{(n)}_{m}$ in the support of $\mu$, or from geodesics in the support of $\mu$ that intersect edges in each $\nu_{n}$ which collapse to points in the limiting current $\nu$. 
The intersection $\bigcap_{n=1}^{\infty}T^{(n)}_{m}$ is contained in the set
\[ A^{m}:=\mathcal{J}(p^{m}_{1}) \cap \mathcal{J}(p^{m}_{2}),\]
where $p^{m}_{1}, p^{m}_{2}$ are the endpoints of $\tilde{e}_{m}$.

Now, by way of contradiction, assume that \ref{limit to 0} does not hold; then there is some $\epsilon>0$ and subsets $S^{(n)}_{m} \subset T^{(n)}_{m}$ (after possibly passing to a subsequence and relabeling) so that
\begin{itemize}
\item $S^{(n)}_{m} \cap T^{(n+1)}_{m} = \emptyset$ (and so in particular, $S^{(n)}_{m} \cap S^{(n+1)}_{m} = \emptyset$);
\item $\mu\left(S^{(n)}_{m}\right) > \epsilon$.
\end{itemize}
Note that this uses the no-bottleneck assumption of $\mu$: if some neighborhood of $A^{m}$ has $\mu$-measure at least $\epsilon$, then we must be able to find a flow box nearby to $A^{m}$ and not containing $A^{m}$ with definite $\mu$-mass at least $\epsilon$. When $n$ is large, we can choose $S^{(n)}_{m}$ to be such a flow box. If the complement of $S^{(n)}_{m}$ in $A^{m}$ has $\mu$-mass less than $\epsilon$, it follows that the $\mu$-mass of $A^{m}$ is also less than $\epsilon$; thus if \ref{limit to 0} does not hold we can iterate this procedure and choose some $S^{(n+1)}_{m}$ satisfying the desired properties. 

It follows that every open neighborhood of $A^{m}$ has infinite $\mu$-mass, contradicting local finiteness. Therefore \ref{limit to 0} holds, as desired.

 It remains to consider contributions to $i(\mu, \nu_{n})$ coming from edges of $\nu_{n}$ with small hyperbolic length. We claim that given any $\epsilon>0$ there is $K>0$ so that when $n$ is large, the contribution to $i(\nu_{n}, \mu)$ coming from the complement of $GS(\Sigma)_{K} \times GS(\Sigma)_{K}$ is at most $\epsilon$. Note that this automatically holds if the hyperbolic length of any geodesic segment in the support of some $\nu_{n}$ is bounded uniformly from below. 

To prove the claim, let $\mathbb{P}T^{\infty}\tilde{\Sigma}$ denote the quotient of $T^{\infty}\tilde{\Sigma}$ by the antipodal map on each fiber. A point of $\mathbb{P}T^{\infty}\tilde{\Sigma}$ is specified by choosing some $p=(x,[v])$ in the projectivized unit tangent bundle $\mathbb{P}T\tilde{\Sigma}$ and some $\lambda \in [0, \infty]$. As in Section \ref{currents with corners}, we will use \textbf{0} to denote the $0$-section of $\mathbb{P}T^{\infty}\tilde{\Sigma}$. Let $\eta_{\lambda}$ be the map 
\[ \eta: \mathbb{P}T^{\infty}\tilde{\Sigma} \setminus \textbf{0}  \rightarrow  GS(\tilde{\Sigma})\]
which associates to, $p=(x, [v])$, the geodesic segment $\eta(p, \lambda)$ centered at $x$, in the direction of $[v]$, and with hyperbolic length $\lambda$. Then $\eta_{\lambda}$ descends to a map on $$\mathbb{P}T^{\infty}\Sigma \setminus \textbf{0} := \left(\mathbb{P}T^{\infty}\tilde{\Sigma} \setminus \textbf{0} \right)/\pi_{1}(\Sigma) $$ which we denote by
\[ \overline{\eta}_{\lambda}:  \mathbb{P}T^{\infty}\Sigma \setminus \textbf{0}  \longrightarrow GS(\Sigma). \]

Define the function \[\Phi_{\mu}: \mathbb{P}T^{\infty}(\Sigma) \setminus \textbf{0} \rightarrow [0, \infty] \]
by 
\[ \Phi_{\mu}(p, \lambda) = \mu \left(g \in G(\h^{2}): g \hspace{1 mm} \mbox{intersects } \widetilde{\overline{\eta}_{\lambda}(p)} \right),  \]
where $\widetilde{\overline{\eta}_{\lambda}(p)}$ is an arbitrary choice of lift of $\overline{\eta}_{\lambda}(p)$ to $\tilde{\Sigma}$. In words, $\Phi_{\mu}$ sends a geodesic segment on $\Sigma$ to the $\mu$-measure of the set of bi-infinite geodesics intersecting some lift of it to $\tilde{\Sigma}$. 

Finally, let 
\[ j_{\mu}: \mathbb{P}T\Sigma \rightarrow (0,\infty] \]
denote the supremum of the set of $\lambda \in (0, \infty]$ so that 
\[ \Phi_{\mu}(p, \lambda) < \frac{\epsilon}{2 \cdot M},\]
where $M$ is an upper bound for the total mass of each $\nu_{n}$. Then well-definedness and continuity of $j_{\mu}$ follows from the assumption that $\mu$ is without bottlenecks. Compactness of $\mathbb{P}T\Sigma$ implies there is an attained minimum value $K$; then as desired, the total contribution to $i(\nu_{n}, \mu)$ coming from all edges of $\nu_{n}$ with length less than $K$, is at most $\epsilon$. 

Therefore, the contribution to $i(\nu_{n}, \mu)$ coming from edges of $\nu_{n}$ with hyperbolic length going to $0$ as $n \rightarrow \infty$, goes to $0$ as well. This completes the argument under the assumption that $\nu$ is a graph current. 

It remains to consider the case that $\nu \in \widetilde{\mathcal{CGCC}}_{k}$ is a  geodesic current and that $(\nu_{n})$ is a sequence of graph currents converging to $\nu$. We note that Bonahon's original proof of continuity for the intersection form $i(\cdot, \cdot)$ on $\mathcal{GC}(\Sigma)$ simplifies greatly at points $(\mu, \nu)$ for which at least one of $\mu$, $\nu$ has no atoms. Our argument is inspired by this observation and we follow the outline of Bonahon's proof~\cite{Bonahon1}*{\S 4.2} in this setting.

Given $\epsilon>0$, we claim that it suffices to find $K>0$, $N \in \mathbb{N}$, and an open set $U$ containing \begin{enumerate} \item a neighborhood of the diagonal $\Delta$ in $GS(\Sigma) \times GS(\Sigma)$ and 
\item all pairs $(\gamma_{1}, \gamma_{2}) \in GS(\Sigma) \times GS(\Sigma)$ such that each $\gamma_{i}$ has hyperbolic length less than $K$, such that the contribution of $U$ to both $i(\nu_{n}, \mu)$ and $i(\nu, \mu)$ is at most $\epsilon$ for all $n \ge N$. 
\end{enumerate}

Indeed, the set $\mathcal{D} \setminus U$ is compact, and so it can be covered with finitely many sets of the form $B_{i} \times B'_{j}$ where:
\begin{itemize}
    \item $B_{i}$ (resp. $B'_{j}$) is $\nu$-admissible (resp. $\mu$-admissible).
    \item $B_{i}$ is transverse to $B'_{j}$, meaning that every geodesic segment in $B_{i}$ intersects every geodesic segment in $B'_{j}$.
    \item The total contribution to $i(\mu, \nu)$ coming from $\mathcal{D} \setminus U$ is within $\epsilon$ of $\sum_{i,j}\mu(B_{i}) \cdot \nu(B'_{j})$. 
\end{itemize}
The definition of weak$^{\ast}$-convergence then allows us to find $N$ so that for all $n\ge N$, the contribution to $i(\nu_{n}, \mu)$ coming from $\mathcal{D} \setminus U$ is within $\epsilon$ of $\sum_{i,j}\mu(B_{i}) \cdot \nu(B'_{j})$, whence it follows that $i(\nu_{n}, \mu)$ is within $4\epsilon$ of $i(\nu, \mu)$. 

It remains to construct $U$. Recall that above, we demonstrated that there is $K>0$ so that when $n$ is large, the contribution to $i(\nu_{n}, \mu)$ coming from the complement of $GS(\Sigma)_{K} \times GS(\Sigma)_{K}$ is at most $\epsilon/2$. Set $V$ equal to the complement of $GS(\Sigma)_{K} \times GS(\Sigma)_{K}$. 

Now, cover $GS(\Sigma)_{K} \times GS(\Sigma)_{K}$ with finitely many segmented flow boxes $\mathcal{B}_{1},..., \mathcal{B}_{r}$. Subdivide each flow box into smaller flow boxes $(\mathcal{B}'_{i})$ with disjoint interiors so that 
\[ \mu(\mathcal{B}'_{i}) < \frac{\epsilon}{ 2 \sum_{j} \nu(\mathcal{B}_{j}) }.  \]
Note that the existence of such a subdivision relies on the assumption that $\mu$ has no atoms. It follows that the total contribution to $i(\nu_{n}, \mu)$ of $\mathcal{B}'_{i} \times \mathcal{B}'_{i}$ is, when $n$ is large, very nearly at most
\[\frac{\epsilon}{2\sum_{j} \nu\left(\mathcal{B}_{j}\right) } \cdot \nu\left(\mathcal{B}'_{i}\right). \]
Therefore, the total contribution to either $i(\nu_{n}, \mu)$ or to $i(\nu, \mu)$ of the union, taken over each $\mathcal{B}'_{i} \times \mathcal{B}'_{i}$, will be at most $\epsilon/2$. Then let 
\[ U = V \cup \bigcup_{i} \mathcal{B'}_{i} \times \mathcal{B}'_{i}. \]
 
\end{proof}

\begin{remark} The assumption of no atoms is not always necessary for sequences $(\nu_{n})$ that approach a geodesic current. To see this, suppose that $(\nu_{n})$ is as in Example \ref{any curve} so that $\lim_{n \rightarrow \infty} \nu_{n} = \nu \in \mathcal{GC}(\Sigma)$ is a closed curve, and suppose $\mu \in \mathcal{GC}$ is another closed curve. Let $e$ be the edge of the domain graph $\Gamma$ which is sent--by the marking map of $\nu_{n}$-- to a loop traversing (the underlying closed curve equal to the support of) $\nu$ $n$ times. It follows that a lift of $e$ to $\tilde{\Sigma}$ will intersect $n \cdot i(\nu, \mu)$ geodesics in the support of $\mu$. Since $\nu_{n}$ is multiplied by $1/n$, the intersection converges to $i(\nu, \mu)$ in the limit, as desired. 
\end{remark}

The above discussion suggests that we can not expect the continuity of a function of the form $i_{\mu}: \mathcal{ML}(\Sigma) \rightarrow \mathbb{R}$ given by intersecting against some graph current $\mu$. However, the following lemma states that at the very least, this function is measurable with respect to Thurston measure:

\begin{lemma} Consider the function $i_{\gamma}: \mathcal{ML}(\Sigma) \rightarrow \mathbb{R}$ defined by $i_{\gamma}(\lambda) = i(\gamma, \lambda)$ where $\gamma$ is some graph current. Then $i_{\gamma}$ is measurable with respect to the Thurston measure. 
\end{lemma}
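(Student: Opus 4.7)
The plan is to express $i_\gamma$ as a finite linear combination of evaluation functionals $\lambda\mapsto\lambda(A)$ on Borel subsets $A\subset G(\h^2)$, and then to show each evaluation functional is Borel measurable on $\mathcal{ML}(\Sigma)$. Since a Borel measurable function is automatically measurable with respect to the completion of any Borel measure, in particular with respect to $\mu_{Th}$, this will suffice.

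First I will unpack the intersection integral $i(\gamma,\lambda)=\int_{\mathcal D} d\gamma\times d\lambda$ when $\gamma$ is the graph current associated to a weighted graph with edges $e_1,\ldots,e_m$ and weights $w_1,\ldots,w_m$. Fixing a lift $\tilde e_j\subset\h^2$ of each $e_j$ supplies a fundamental domain for the $\pi_1(\Sigma)$-action on the first factor of $\widetilde{\mathcal D}$; using $\pi_1$-invariance of $\lambda$ to collapse the second factor, the same calculation as in Example~\ref{not simple} gives the identity
\[
i_\gamma(\lambda) \;=\; \sum_{j=1}^{m} w_j\cdot \lambda\bigl(\mathcal I(\tilde e_j)\bigr),
\]
where $\mathcal I(\tilde e_j)\subset G(\h^2)$ is the set of bi-infinite geodesics that transversely intersect $\tilde e_j$, with endpoint intersections included by the convention of Section~\ref{subsec:intersection}. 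Verifying this identity carefully is the main step that requires attention; thereafter the argument is routine.

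Next I will decompose $\mathcal I(\tilde e_j)=\mathcal I^{\circ}(\tilde e_j)\sqcup\mathcal J(p_j^1)\sqcup\mathcal J(p_j^2)$, where $p_j^1,p_j^2$ are the endpoints of $\tilde e_j$ and $\mathcal I^{\circ}(\tilde e_j)$ is the subset of bi-infinite geodesics crossing the \emph{open} interior of $\tilde e_j$. The set $\mathcal I^{\circ}(\tilde e_j)$ is open in $G(\h^2)$, while each $\mathcal J(p_j^k)$ is a compact circle and hence closed. Since $\lambda$ is a Radon measure, the Riesz representation theorem lets me write
\[
\lambda\bigl(\mathcal I^{\circ}(\tilde e_j)\bigr)=\sup\Bigl\{\int f\,d\lambda\;:\;f\in C_c(\mathcal I^{\circ}(\tilde e_j)),\ 0\le f\le 1\Bigr\}
\]
as a supremum of weak$^*$-continuous functionals of $\lambda$, and analogously
\[
\lambda\bigl(\mathcal J(p_j^k)\bigr)=\inf\Bigl\{\int g\,d\lambda\;:\;g\in C_c(G(\h^2)),\ g\ge\chi_{\mathcal J(p_j^k)}\Bigr\}
\]
as an infimum of weak$^*$-continuous functionals, where compactness of $\mathcal J(p_j^k)$ guarantees the family on the right is nonempty.

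The first of these exhibits $\lambda\mapsto\lambda(\mathcal I^{\circ}(\tilde e_j))$ as lower semi-continuous, and the second exhibits $\lambda\mapsto\lambda(\mathcal J(p_j^k))$ as upper semi-continuous on $\mathcal{ML}(\Sigma)$. Both classes of functions are Borel measurable, and a finite weighted sum of Borel measurable functions is Borel measurable, so $i_\gamma$ is Borel, hence $\mu_{Th}$-measurable. The only place that needs genuine care is the reduction to the evaluation formula in the first displayed equation; the semi-continuity input is a standard Portmanteau/Riesz fact once the set-theoretic decomposition of $\mathcal I(\tilde e_j)$ into its open and compact parts is in hand.
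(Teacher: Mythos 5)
Your proposal is correct, and it takes a genuinely different route from the paper's. The paper proves measurability by showing each sublevel set $i_{\gamma}^{-1}([0,L))$ is open in $\mathcal{ML}(\Sigma)$: if no leaf of $\lambda$ meets a vertex of $\gamma$, continuity of the geodesic representative (e.g.\ in train-track coordinates) yields a neighborhood of $\lambda$ with the same intersection pattern, and if some leaf does pass through a vertex, small perturbations can only decrease the intersection; hence $i_{\gamma}$ is upper semi-continuous, which already gives Borel measurability. You instead expand $i_{\gamma}(\lambda)=\sum_{j} w_{j}\,\lambda(\mathcal{I}(\tilde{e}_{j}))$ via a fundamental domain for the diagonal $\pi_{1}(\Sigma)$-action (this is legitimate, and is exactly the formula the paper itself uses in the proof of Lemma~\ref{intersection function continuous} and in Example~\ref{not simple}), split each $\mathcal{I}(\tilde{e}_{j})$ into an open part and the compact circles $\mathcal{J}(p_{j}^{k})$, and use Radon regularity to exhibit the open-set evaluation as a supremum of weak$^{\ast}$-continuous functionals (lower semi-continuous) and the compact-set evaluations as infima of such functionals (upper semi-continuous). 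Your route is more measure-theoretic: it avoids any appeal to continuity of geodesic representatives and records finer structure (a positive combination of one lsc and two usc functionals per edge), while the paper's argument is shorter and gives the stronger conclusion that $i_{\gamma}$ itself is upper semi-continuous. Both arguments implicitly use that the standard topology on $\mathcal{ML}(\Sigma)$ agrees with the subspace weak$^{\ast}$ topology from $\mathcal{GC}(\Sigma)$.

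One small correction, which does not affect the conclusion: the union $\mathcal{I}(\tilde{e}_{j})=\mathcal{I}^{\circ}(\tilde{e}_{j})\cup\mathcal{J}(p_{j}^{1})\cup\mathcal{J}(p_{j}^{2})$ is not disjoint. The bi-infinite geodesic $g_{e_{j}}$ containing $\tilde{e}_{j}$ lies in both $\mathcal{J}(p_{j}^{1})$ and $\mathcal{J}(p_{j}^{2})$ (and, by the convention recalled in Remark~\ref{self-intersect}, it does belong to $\mathcal{I}(\tilde{e}_{j})$), and a lamination can charge it, for instance when the edge runs along a closed leaf. So the additive identity for $\lambda(\mathcal{I}(\tilde{e}_{j}))$ needs the inclusion--exclusion correction $-\lambda(\{g_{e_{j}}\})$. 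Since a singleton is compact, this correction term is again upper semi-continuous, hence Borel, and your argument goes through verbatim after this adjustment. (Note also that $g_{e_{j}}$ must be excluded from $\mathcal{I}^{\circ}(\tilde{e}_{j})$ for that set to be open, as nearby geodesics may miss the segment entirely; with the transverse-crossing reading of your definition this is automatic.)
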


\begin{proof}  Fix $L$ and consider $i_{\gamma}^{-1}([0,L))$. Let $\lambda \in \mathcal{ML}(\Sigma)$ be in this pre-image. Assume first that no leaf of $\lambda$ intersects a vertex of $\gamma$. Then since the function sending a measured lamination to its geodesic representative on $\Sigma$ is continuous (say, for instance, with respect to the piecewise linear structure on $\mathcal{ML}(\Sigma)$ induced by train-track coordinates), we can then find a small neighborhood $U$ of $\lambda$ such that no leaf of any lamination in $\mu$ intersects a vertex of $\gamma$ either. It follows that $U$ can be chosen sufficiently small so that it lies entirely in $i_{\gamma}^{-1}([0,L))$. 

On the other hand, if some leaf of $\lambda$ intersects a vertex of $\gamma$, then arbitrary small perturbations of $\lambda$ may yield laminations that miss this vertex, but this will only reduce intersection number and so again we can find some neighborhood $U$ of $\lambda$ contained in $i_{\gamma}^{-1}([0,L))$. Therefore the pre-image of an open set $(L_{1}, L_{2})$ is the complement of an open set inside of another and is thus measurable. 
\end{proof} 

Finally, we observe that the vanishing of \textit{self}-intersection in a limit implies that the limit is a lamination: 

\begin{lemma} \label{limit 0 implies lamination} Let $(\mu_{j})$ be a sequence of marked harmonic $k$-currents converging to $\mu$ as $j \rightarrow \infty$. Suppose that 
\[ \lim_{j \rightarrow \infty} i(\mu_{j}, \mu_{j}) = 0.\]
Then $\mu \in \mathcal{ML}(\Sigma)$
\end{lemma}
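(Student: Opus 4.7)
The plan is to reduce to Bonahon's classical fact that any geodesic current with zero self-intersection is a measured lamination \cite{Bonahon}. I will first prove $i(\mu,\mu)=0$ by a lower semicontinuity argument adapted to the flow-box topology of Section \ref{currents with corners}, and then use the harmonic balance condition to rule out the possibility that $\mu$ carries any corners; this places $\mu$ in $\mathcal{GC}(\Sigma)$, from which Bonahon's theorem concludes.

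For the first step, I will observe that for any finite family $\{(B_s,B_s')\}$ of pairs of $\mu$-admissible segmented flow boxes such that every pair in $B_s\times B_s'$ lies in $\widetilde{\mathcal{D}}$ and such that the products $B_s\times B_s'$ descend injectively and pairwise disjointly to $\mathcal{D}$, one has
\[
\sum_s (\mu_j\times\mu_j)(B_s\times B_s')\;=\;\sum_s \mu_j(B_s)\,\mu_j(B_s')\;\le\;i(\mu_j,\mu_j).
\]
Portmanteau-type convergence (Proposition \ref{Portmanteau}, applied via Lemma \ref{equivalence of topologies}) then gives $\mu_j(B_s)\mu_j(B_s')\to \mu(B_s)\mu(B_s')$ for each $s$, and passing to the limit yields $\sum_s(\mu\times\mu)(B_s\times B_s')=0$. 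Inner regularity of the Radon measure $\mu\times\mu$ on $\mathcal{D}$, combined with the ability to approximate $\mathcal{D}$ from below by finite disjoint unions of such products of $\mu$-admissible transverse flow boxes, will then force $i(\mu,\mu)=0$.

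With $i(\mu,\mu)=0$ in hand, I will argue $\mu\in\mathcal{GC}(\Sigma)$ using Lemma \ref{harmonic closed}. Otherwise $\mu$ is a simple marked harmonic $k$-current; if its domain graph has any non-ghost vertex $v$, the balance condition with positive weights and nonzero tangent vectors rules out $v$ being univalent, so $v$ has at least two non-ghost incident edges $e_1,e_2$ of weights $w_1,w_2>0$. Picking $\tilde v\in\h^2$ above $v$ and lifts $\tilde e_1,\tilde e_2$ incident to $\tilde v$, the convention that shared endpoints count as transverse intersections (Remark \ref{self-intersect}) puts $(\tilde e_1,\tilde e_2)\in\widetilde{\mathcal{D}}$ and contributes the positive mass $w_1w_2$ to $i(\mu,\mu)$, contradicting $i(\mu,\mu)=0$. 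Therefore $\mu\in\mathcal{GC}(\Sigma)$, and Bonahon's theorem \cite{Bonahon} yields $\mu\in\mathcal{ML}(\Sigma)$.

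The main obstacle will be the first step: since the intersection form on $\mathcal{GCC}(\Sigma)$ is not continuous (see Example \ref{not continuous current}), I cannot interchange limit and integration directly. The key observation is that only lower semicontinuity at $\mu$ is required, and this follows from weak-$*$ convergence on products of $\mu$-admissible transverse segmented flow boxes together with inner regularity of $\mu\times\mu$ on $\mathcal{D}$.
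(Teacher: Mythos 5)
Your overall architecture (show $i(\mu,\mu)=0$, rule out corners, invoke Bonahon for geodesic currents) differs from the paper, which never establishes $i(\mu,\mu)=0$ directly; instead it uses the marked topology to argue that a nonzero graph-current limit would force $i(\mu_j,\mu_j)$ to stay bounded away from $0$, and then handles the geodesic-current case with a single pair of transverse admissible flow boxes. Unfortunately, your route has a genuine gap in its first step. The lower semicontinuity you set up only applies to products $B_s\times B_s'$ that are entirely contained in $\widetilde{\mathcal{D}}$, and such products only exist around pairs of segments that cross transversely at interior points: the convention that a shared endpoint (or an endpoint lying on the other segment) counts as a transverse intersection is \emph{not} an open condition, since arbitrarily small perturbations pull the segments apart. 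Hence pairs in $\mathcal{D}$ that meet only at a corner admit no product flow-box neighborhood inside $\widetilde{\mathcal{D}}$, and your claim that $\mathcal{D}$ can be approximated from below by finite disjoint unions of products of $\mu$-admissible transverse boxes fails precisely on that set. For an embedded (e.g.\ triangulation) graph current the \emph{entire} self-intersection comes from these corner pairs (Remark \ref{self-intersect}), so your argument only shows that the ``genuinely crossing'' part of $\mu\times\mu$ vanishes -- which is automatic for an embedded graph current and yields no contradiction. Since your second step derives its contradiction exactly from the corner contribution $w_1w_2$ to $i(\mu,\mu)$, the case you must exclude (that $\mu$ is a nonzero graph current) is not excluded.

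The repair is essentially the paper's argument: use that convergence in $\widetilde{\mathcal{HC}}_k$ is convergence of the \emph{marked} data, so if $\mu$ were a simple marked graph current with a vertex carrying two edges of positive weight, the weight vectors and marking maps of $\mu_j$ converge to those of $\mu$, and therefore the corner contribution to $i(\mu_j,\mu_j)$ converges to the (positive) corner contribution of $\mu$, contradicting $i(\mu_j,\mu_j)\to 0$ (with the ghost-edge caveat of Remark \ref{ghost edge annoyance}, which your univalent-vertex remark only partially addresses). Your final step is fine: once $\mu\in\mathcal{GC}(\Sigma)$, your semicontinuity argument does apply to transverse pairs of bi-infinite geodesics, giving vanishing Bonahon self-intersection, and Bonahon's theorem (equivalently, the paper's two-transverse-boxes contradiction) then places $\mu$ in $\mathcal{ML}(\Sigma)$.
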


\begin{proof} We first point out that $\mu$ can not be a graph current (unless it's the $0$ current in which case it is also a measured lamination). Indeed, if $\mu$ is a graph current, the weight vectors $(w_{j})$ have to converge to the weight vector $w$ of $\mu$, and if $w$ is not the $0$-vector (see Remark \ref{ghost edge annoyance} below for a minor caveat), there is a definite positive contribution to $i(\mu, \mu)$ coming from edges incident at the same vertex. Then $i(\mu_{j}, \mu_{j})$ receives very close to this same contribution when $j$ is large, and therefore the limiting intersection number can not be $0$. 

\begin{remark} \label{ghost edge annoyance} Another possibility is that, in the limit, there is only one edge incident to any given vertex with positive weight, and that all other edges incident to that vertex are ghost edges. In this situation, the limiting current is actually just the $0$ current because the harmonic representative will contract all non-ghost edges to points.
\end{remark}

Therefore, we can assume that $\mu \in \mathcal{GC}(\Sigma)$. In that case, in the event that $\mu \notin \mathcal{ML}(\Sigma)$, we can consider transverse $\mu$-admissible flow boxes $B_{1}, B_{2}$ (meaning that every geodesic segment in $B_{1}$ is transverse to every geodesic segment in $B_{2}$ and visa versa) which are both assigned positive mass by $\mu$. Then $\mu_{j}$ assigns definitely positive mass to each of $B_{1}, B_{2}$ for large $j$, contradicting $i(\mu_{j},\mu_{j}) \rightarrow 0$.

\end{proof}

\subsection{A compactness criterion}

We say that a geodesic current with corners is \textbf{filling} if it has positive intersection number with every simple closed geodesic. The following compactness criterion will be crucial for our main counting theorem:

\begin{Theorem} \label{compact} Let $\mu \in \mathcal{HC}$ be a filling current and let $R \geq 0$. Then the set 
\[ C_{R}(\mu) = \left\{ \nu \in \mathcal{HC}: i(\mu, \nu) \leq R \right\} \]
is pre-compact. 
\end{Theorem}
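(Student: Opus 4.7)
The plan is to adapt Bonahon's classical compactness argument \cite{Bonahon} to currents with corners. First, I would exploit the filling hypothesis to extract a geometric diameter bound: since $i(\mu, c) > 0$ for every simple closed geodesic $c$, the union of geodesic segments in $\text{supp}(\mu)$ projected to $\Sigma$ must fill the surface in the sense that every complementary region is a topological disk of bounded diameter. Consequently, there is a constant $D = D(\mu) > 0$ such that every geodesic segment on $\Sigma$ of hyperbolic length greater than $D$ transversely crosses some segment in $\text{supp}(\mu)$.

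Next, I would leverage this crossing property to bound the $\nu$-mass of long-segment flow boxes. Given a segmented flow box $B$ whose segments all have hyperbolic length at least $D + 1$, each segment $\gamma \in B$ transversely intersects $\text{supp}(\mu)$. By compactness of the closure of $B$ (intersected with a fundamental domain for $\pi_1(\Sigma)$) and lower semicontinuity of intersection number on this set, I can partition $B$ into finitely many sub-boxes $B_1, \ldots, B_N$ on each of which every segment intersects some fixed edge in $\text{supp}(\mu)$ with multiplicity at least $1$. This produces a constant $C_B > 0$ such that $\nu(B) \leq C_B \cdot i(\mu, \nu) \leq C_B R$ for every $\nu \in C_R(\mu)$.

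To pass from these long-segment flow box bounds to uniform bounds on every compact subset $K \subset GS(\tilde{\Sigma})/\pi_1$, I would cover $K$ (which already has a uniform lower bound on segment length since $GS(\Sigma)_K$ is compact by the discussion following the definition of $GS(\Sigma)_K$) by finitely many segmented flow boxes to which the previous step applies, possibly after refining via inner regularity so that each box is $\nu$-admissible in the limiting sense. This yields a uniform bound $\nu(K) \leq C_K R$ for all $\nu \in C_R(\mu)$. By the version of the Banach-Alaoglu theorem for locally finite Radon measures on a locally compact Hausdorff space, together with the equivalence of topologies established in Lemma \ref{equivalence of topologies}, any sequence in $C_R(\mu)$ has a weak-$^{\ast}$ convergent subsequence in $\mathcal{GCC}(\Sigma)$.

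The main obstacle is the first step, namely verifying that the constant $C_B$ can indeed be chosen uniformly over $\nu$. Segments may intersect $\text{supp}(\mu)$ very tangentially or near vertices of the underlying graph of $\mu$, where the intersection form may be discontinuous (as noted in Example \ref{not continuous current}). To circumvent this, I would approximate the intersection from below using $\mu$-admissible segmented flow boxes transverse to $B$, a procedure made possible by the fact that Radon measures can be approximated from within by sets of continuity. A secondary obstacle is confirming that the limit of a sequence in $C_R(\mu) \subset \mathcal{HC}$ actually lies in $\mathcal{HC}$; here I would invoke the closedness of $\mathcal{HC}_k$ in $\mathcal{GCC}(\Sigma)$ (Lemma \ref{k-currents are closed} and the closedness established in Lemma \ref{harmonic closed}) after observing that the intersection bound also controls the number of corners of $\nu$ via self-intersection contributions at vertices, so the sequence remains in some $\mathcal{HC}_k$.
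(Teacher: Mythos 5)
There is a genuine gap, and it sits exactly at the step you pass over most quickly: bounding the $\nu$-mass of compact sets of \emph{short} segments. Your crossing constant $D(\mu)$ only guarantees that segments of hyperbolic length greater than $D$ meet $\operatorname{supp}(\mu)$, and your flow-box estimate $\nu(B)\le C_B\, i(\mu,\nu)$ therefore only applies to boxes of long segments. But a compact subset $K$ of the space of geodesic segments need not have its lengths bounded below by $D$: compactness only keeps $K$ away from the diagonal, so the lengths are bounded below by some small $\epsilon_0>0$ which can be far less than $D$. (Your parenthetical appeal to the compactness of $GS(\Sigma)_K$ conflates your compact set $K$ with the set of segments of length at least $K$; these are different objects.) In particular, $K$ may consist entirely of short segments lying inside a single complementary disk of $\mu$, which intersect $\operatorname{supp}(\mu)$ not at all, and for such $K$ your argument produces no bound on $\nu(K)$ whatsoever. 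This is not a removable technicality: if $\nu$ were allowed to be an arbitrary current with corners, the statement would be false --- a single atom of enormous weight on a tiny segment inside a complementary region of $\mu$ has $i(\mu,\nu)=0\le R$ but unbounded mass. So any correct proof must use the harmonicity of $\nu$ in the mass-bounding step, and your proposal never does (you invoke harmonicity only at the end, for a closedness claim that the theorem, which asserts only pre-compactness, does not require).

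The paper's proof follows your outline for segments $\delta$ that do meet the lift of $\mu$ (there $\nu(U_\delta)\le R/\mu(U_\eta)$ for a suitable box $U_\eta$ around a segment $\eta$ of $\operatorname{supp}(\mu)$ crossing $\delta$), but handles the complementary case with the balance condition: if $\nu\in\mathcal{HC}_k$ puts very large mass on an edge $e_0$ inside a complementary disk of $\mu$ (which is simply connected and of diameter at most $D$ because $\mu$ is filling), then the balance equation at an endpoint of $e_0$ forces some incident edge to carry weighted length smaller only by a controlled factor; iterating at most $k$ times, and using that a harmonic graph current cannot be supported entirely inside a disk, one reaches an edge of weighted length at least $2DWR$ that must cross edges of $\mu$, and counting those crossings gives $i(\mu,\nu)>R$, a contradiction. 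This propagation argument is the missing idea in your proposal; your stated ``main obstacle'' (tangential intersections and discontinuity of $i$ as in Example \ref{not continuous current}) is comparatively benign, since the estimate needed is only a lower bound on $i(\mu,\nu)$ by a fixed product of box masses, which the product-measure definition of $i$ in \eqref{eq:intersection} gives directly without any continuity of the form.
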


\begin{proof} We follow the general outline of Bonahon's original proof~\cite{Bonahon}, as exposited by Aramayona-Leininger~\cite{AL}*{Theorem 4.7}; for clarity, we use the same notation found there when possible. To begin, let $$\Omega = C_{c}(GS(\mathbb{H}^{2}), \mathbb{R}_{\geq 0})$$ be the space of compactly supported continuous and nonnegative real-valued functions on the space of geodesic segments. There is then a map $\Phi: \mathcal{HC} \rightarrow \mathbb{R}^{\Omega}$ defined by 
\[ \nu \mapsto \left( \int_{GS(\mathbb{H}^{2})} f d\nu \right)_{f \in \Omega}, \]
given some $\nu \in \mathcal{HC}$. We claim that when the codomain is equipped with the product topology, $\Phi$ is a topological embedding. To see this, first note that if $\nu, \nu' \in \mathcal{HC}$ are distinct harmonic currents, then there is some geodesic segment $\gamma$ so that $\nu(\gamma) \neq \nu'(\gamma)$. Then letting $\tilde{\gamma}$ denote the full lift of $\gamma$ to $\mathbb{H}^{2}$ and given some compact $A \subset \tilde{\gamma}$ and any $\epsilon>0$, there is $f \in \Omega$ with $|f- \nu(\gamma) \cdot \chi_{A}|_{L^{1}} <\epsilon$, where $\chi_{A}$ denotes the indicator function for $A$. Then for sufficiently small choice of $\epsilon$, we must have that $\Phi(\nu) \neq \Phi(\nu')$ in the $f$-component. This proves that $\Phi$ is injective. 

\paragraph*{\bf Continuity} For continuity, assume that $\left\{\nu_{n} \right\}_{n=1}^{\infty}$ is some sequence of harmonic currents satisfying $$\lim_{n \rightarrow \infty} \nu_{n} = \nu.$$ Then for any $f \in \Omega$, we must have 
\[ \lim_{n \rightarrow \infty} \int f d\nu_{n} = \int f d\nu\]
by definition of the weak$^{\ast}$ topology, whence it follows that $$\lim_{n \rightarrow \infty} \Phi(\nu_{n})= \Phi(\nu),$$ as desired. Finally, continuity of $\Phi^{-1}$ again follows immediately from the definition of the weak$^{\ast}$ topology. 

\paragraph*{\bf Compactness} It therefore suffices to prove compactness of $\Phi(C_{R}(\mu))$. Using the Tychonoff theorem, our goal then becomes to prove the compactness of $\pi_{f}(\Phi(C_{R}(\mu)))$, where $\pi_{f}$ denotes the projection of $\mathbb{R}^{\omega}$ on to the $f$-component.  Now, for each $\nu \in \mathcal{HC}$ and each $f \in \Omega$, 
\[ \int f d\nu \leq \mbox{max}(f) \cdot \nu(\mbox{support}(f)), \]
and thus it suffices to prove that 
\[ \left\{ \nu(\mbox{support}(f)) | \nu \in C_{R}(\mu) \right\} \]
is bounded in terms only of $f$. Since the support of $f$ is compact, this amounts to proving that for $K \subset GS(\mathbb{H}^{2})$ compact,
\[ \left\{ \nu(K) | \nu \in C_{R}(\mu) \right\} \]
is bounded in terms only of $K$. 

\paragraph*{\bf Bounded neighborhoods} For this, we claim that it suffices to prove that for every $\delta \in GS(\mathbb{H}^{2})$, there is a neighborhood $U_{\delta}$ such that $\left\{\nu(U_{\delta}) : \nu \in C_{R}(\mu) \right\}$ is bounded. Indeed, since $K$ is compact, if the previous claim is verified, we  can cover $K$ by finitely many such neighborhoods and apply the claim to each.  If $\delta$ intersects the lift of $\mu$ to $\mathbb{H}^{2}$, then let $\eta \in GS(\mathbb{H}^{2})$ be a specific segment in the support of $\mu$ intersecting $\delta$. There are then neighborhoods $U_{\eta}$ of $\eta$ and $U_{\delta}$ of $\delta$ so that $U_{\eta} \times U_{\delta} \subset \mathcal{D}$, and so that $U_{\eta} \times U_{\delta}$ projects homeomorphically into $GS(\Sigma) \times GS(\Sigma)$. It then follows that, as desired:
 
 \[ i(\mu, \nu) \geq (\nu \times \mu)(U_{\delta} \times U_{\eta})= \nu(U_{\delta}) \cdot \mu(U_{\eta}),  \]
 and since $\eta$ is in the support of $\mu$, $\mu(U_{\eta}) \neq 0$ and we obtain 
 \[ \nu(U_{\delta}) \leq \frac{R}{\mu(U_{\eta})}. \]
 
\paragraph*{\bf Complementary components} On the other hand, if $\delta$ does not intersect the lift of $\mu$ to $\mathbb{H}^{2}$, it lies in a complementary component which is necessarily simply connected since $\mu$ is filling. Then if some $\nu \in C_{R}$ assigns a very large mass to a small neighborhood of $\delta$, it follows that $\nu$ has an edge with a large $\nu$-mass and which lies in the same complementary component. Then, the balance equations imply that a large $\nu$-mass has to be visible along the boundary of that complementary component, and this will contribute to the intersection number between $\mu$ and $\nu$. More formally, letting $e_{0}$ denote the aforementioned edge of $\nu$ with large $\nu$-mass, assume that 
\[ \nu(e_{0}) \geq \frac{k^{k+1} \cdot 2D \cdot W \cdot R}{\ell(\delta)}, \]
where $D$ is an upper bound on the diameter of every complementary component of $\mu$, and where $W$ is an upper bound on the reciprocal of the weight of any edge of $\mu$. 
Since the length of $e_{0}$ must be close to the length of $\delta$, the weighted length of $e_{0}$ is bounded below by $k^{k} \cdot R$. It follows that at least one edge incident to an endpoint of $e_{0}$-- call it $e_{1}$--  must have weighted length at least $k^{k-1} \cdot R$. We now iterate this argument, replacing $e_{0}$ with $e_{1}$ and applying the balance condition at the other endpoint of $e_{1}$; since there are only at most $k$ edges of $\nu$ and since $\nu$ is harmonic (and therefore its support can not descend to a set of edges lying entirely in a disk of $\Sigma$), we eventually arrive at some edge $e_{n}$ of $\nu$ with weighted length at least $2D \cdot W \cdot R$ and which crosses edges of $\mu$. Now, let $w= w(e_{n})$ denote the weight of $e_{n}$; then 
\[ \ell(e_{n}) \ge \frac{2D\cdot R \cdot W}{w}.\]
It follows that $e_{n}$ intersects at least $(2R \cdot W)/w$ edges of $\mu$; each intersection contributes at least $w/W$ to the total intersection number $i(\mu, \nu)$ and so $i(\mu, \nu) \ge 2R> R$. 
\end{proof}

\begin{remark} \label{compactness for marked k currents} We note that Theorem \ref{compact} and Lemma \ref{intersection function continuous} implies that if $\mu \in \mathcal{GC}(\Sigma)$ is filling and without bottlenecks, then 
\[ \widetilde{C_{R}(\mu)}_{M, N} = \left\{ \nu \in \widetilde{\mathcal{HC}}_{k}: i(\mu, \nu) \leq R, \nu(e) \leq M \forall \hspace{1 mm}  \mbox{atoms } e \hspace{1 mm} \mbox{of } \nu, \frac{\max(\nu(e): e= \hspace{1 mm} \mbox{atom})}{\min(\nu(e): e= \hspace{1 mm} \mbox{atom})}  \leq N \right\}  \]
is compact, where $i(\mu, \nu)$ is defined to be the intersection between $\mu$ and the unmarked harmonic current obtained from $\nu$ by forgetting the marking. To see this, note first that $\widetilde{C_{R}(\mu)}_{M,N}$ is closed; this follows from the choice of topology on $\widetilde{\mathcal{HC}}_{k}$ and Lemma \ref{intersection function continuous}. Indeed, the set of marked harmonic $k$-currents with an (inclusive) upper bound on the mass of any atom is closed, and Lemma \ref{intersection function continuous} implies that the function $i(\mu, \cdot)$ is continuous on this set. Therefore, $\widetilde{C_{R}(\mu)}_{M, N}$ is a closed set contained in the product of three pre-compact sets: the closed ``half'' of the cube in $\mathbb{R}^{n{k}}$ with side lengths $M$ consisting of points where the ratio between any two components is at most $N$, $\overline{\mathcal{F}}(\Delta_{n_{k}})$, and (by Theorem \ref{compact}), the set of harmonic $k$-currents whose intersection with $\mu$ is at most $R$. 

\end{remark}



\subsection{Homotopical intersection} \label{homotopical instersection}

A major drawback of the intersection number $i( \cdot, \cdot)$ is that it is not $\mathcal{MCG}(\Sigma)$-invariant, as the following example demonstrates:

\begin{figure}[h!]
\includegraphics[width=0.4\textwidth]{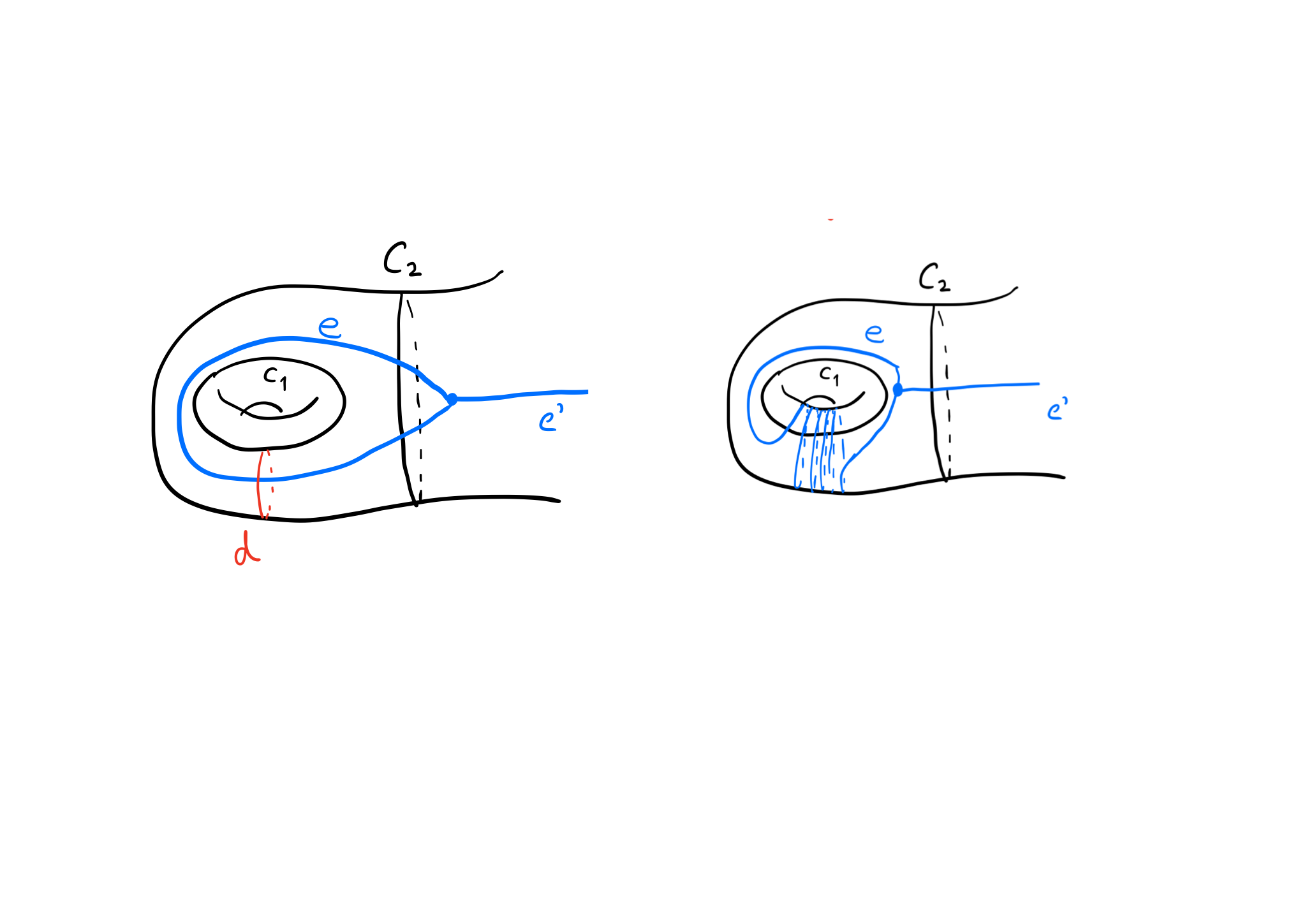}\caption{ If $w(e') >> w(e)$, the harmonic map may stretch $e$ over $c_2$ in order to minimize the length of $e'$. After twisting $e$ about $d$ sufficiently many ($n>>1$) times, the harmonic map will send $e$ very close to the geodesic representative for the twisted curve $T_d^n(c_1),$ so $$i(\mu, c_2) \neq i(\mu_n, c_2).$$}\label{fig:notmcginvariant}
\end{figure}

\begin{Example} \label{intersection not mcg invariant} Let $c_{1}, c_{2}$ be a pair of disjoint simple closed geodesics on $\Sigma$. Let $([\phi] :\Gamma \rightarrow \Sigma, \mu) \in \widetilde{\mathcal{HC}}$ be a marked harmonic current whose marking graph $\Gamma$ contains a self-loop $e$ that maps to a closed curve freely homotopic to $c_{1}$. Letting $e'$ be some edge incident to $e$, we can choose $\phi, \Gamma$, and the weighting on $\Gamma$ in such a way so that the image of $\Gamma$ intersects $c_{2}$ twice and is disjoint from $c_{1}$ (see figure). For example, this will happen if the weight on $e'$ is much larger than all other weights; in this case, the harmonic representative will make $e'$ very short and will send $e$ to a loop (inessentially) crossing $c_{2}$. 

Now, let $\mu_{n}$ be the marked harmonic current obtained from $\mu$ by composing the marking map with $T^{n}_{d}$, where $d$ is a simple closed curve disjoint from $c_{2}$ and intersecting $c_{1}$ once. For $n$ very large, the harmonic map will send $e$ to a loop that is very nearly the geodesic representative of $c_{1}$, and so for such $n$, the image of $e'$ will intersect $c_{2}$ and not the image of $e$. The weights on $e,e'$ can be chosen in such a way so that the above happens and so that $w(e') >> w(e)$ (in particular, $w(e') > 2w(e)$), and thus $i(\mu_{n}, c_{2}) \neq i(\mu, c_{2})$, see Figure~\ref{fig:notmcginvariant}. 
    
\end{Example}

For this reason, we need to define a different intersection form- the \textit{homotopical intersection number}, which will, by construction, be mapping class group invariant: 

\begin{Def} \label{homotopical} Let $c_{1}, c_{2} \in \widetilde{\mathcal{HC}}$. If $c_{1}, c_{2}$ are both classical geodesic currents, define their homotopical intersection number $h(c_{1}, c_{2})$ to be equal to their classical intersection number. If $c_{1}$ is a marked current with corners and $c_{2}$ is a classical current, define $h(c_{1}, c_{2})$ to be the minimum intersection number taken over the entire homotopy class of $c_{1}$. If both $c_{1}, c_{2}$ are marked currents with corners, define $h(c_{1}, c_{2})$ to be the double minimum intersection number, taken over all homotopy representatives of both $c_{1}, c_{2}$:

\[ h(c_{1}, c_{2}) =   \begin{cases} 
      i(c_{1}, c_{2}) & c_{1}, c_{2} \in \mathcal{GC}(\Sigma) \\
      \min_{c'_{1} \sim c_{1}} i(c'_{1}, c_{2}) & c_{1} \notin \mathcal{GC}(\Sigma), c_{2} \in \mathcal{GC}(\Sigma) \\
      \min_{c'_{1} \sim c_{1}, c'_{2}\sim c_{2}} i(c'_{1}, c'_{2}) & c_{1}, c_{2} \notin \mathcal{GC}(\Sigma)
   \end{cases}
\]
\end{Def}

Again, we emphasize that by construction, homotopical intersection number is invariant under the $\mathcal{MCG}(\Sigma)$ action, and homogeneous in both coordinates.

\subsection{Key properties so far established} \label{properties}

The point of this subsection is to summarize the properties of the space of marked harmonic $k$-currents (and the space of harmonic currents) that we will need and that we have so far verified: 

\begin{enumerate}
\item If $\mu$ is a marked harmonic $k$-current with corners that is filling, then 
 \[ \left\{ \nu \in \widetilde{\mathcal{HC}}_{k}: i(\mu, \nu) \leq R, \nu(e) \leq M \forall \hspace{1 mm}  \mbox{atoms } e \hspace{1 mm} \mbox{of } \nu, \frac{\max(\nu(e): e= \hspace{1 mm} \mbox{atom})}{\min(\nu(e): e= \hspace{1 mm} \mbox{atom})}  \leq N \right\} \] 
 is compact (Theorem \ref{compact} and Remark \ref{compactness for marked k currents}). 
 \item $\widetilde{\mathcal{HC}}_{k}$ is locally compact Hausdorff (Lemma \ref{Hausdorff}, Lemma \ref{intersection function continuous}, Remark \ref{compactness for marked k currents}).
\item The mapping class group acts on $\widetilde{\mathcal{HC}}$ by Borel transformations (Lemma \ref{mcg action} and Theorem \ref{Borel action}). 
\item The homotopical intersection number is homogeneous and invariant under the $\mathcal{MCG}(\Sigma)$-action on the space of marked harmonic currents (Definition \ref{homotopical}).  
\end{enumerate}

Before launching into the proof of the main counting result in the next section, we establish a few additional facts that we will need: 

\begin{lemma} \label{boundary zero} Let $c$ be a filling marked harmonic $k$-current with corners. Then 
\[ \mu_{Th}( \lambda \in \mathcal{GC}(\Sigma) : h(\lambda, c) = 1) = 0, \]
where $\mu_{Th}$ denotes the push-forward of the Thurston measure to the space of classical currents $\mathcal{GC}(\Sigma)$. 
\end{lemma}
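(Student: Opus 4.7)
The plan is to exploit the homogeneity of both $h(\cdot, c)$ and the Thurston measure $\mu_{Th}$ to reduce the statement to showing that the unit ball $B_1 := \{\lambda \in \mathcal{ML}(\Sigma) : h(\lambda, c) \leq 1\}$ has finite Thurston measure, and then invoke a standard continuity trick.

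First I would verify that $h(\cdot, c) : \mathcal{ML}(\Sigma) \to \mathbb{R}_{\geq 0}$ is homogeneous of degree $1$, i.e.\ $h(t\lambda, c) = t \cdot h(\lambda, c)$ for every $t>0$. This follows directly from Definition~\ref{homotopical}, since $h(\lambda,c)$ is an infimum of classical intersection numbers $i(c', \lambda)$ over $c' \sim c$ and the classical intersection form is bi-homogeneous. I would then recall that the Thurston measure is homogeneous of degree $6g-6$: $\mu_{Th}(tA) = t^{6g-6}\mu_{Th}(A)$ for every Borel $A \subset \mathcal{ML}(\Sigma)$ and $t > 0$.

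The core step is to show that $B_1$ has finite Thurston measure. Since $c$ is filling, $c' \in [c]$ cannot be disjoint from any nonzero classical current; in particular, $h(\lambda, c) > 0$ for every nonzero $\lambda \in \mathcal{ML}(\Sigma)$. I would upgrade this to a quantitative bound of the form
\[ h(\lambda, c) \geq \kappa \cdot i(\lambda, \gamma_0) \]
for some fixed filling multicurve $\gamma_0$ and some $\kappa > 0$. This would give $B_1 \subseteq \{\lambda : i(\lambda, \gamma_0) \leq 1/\kappa\}$, whose Thurston measure is finite by classical results of Masur~\cite{Masur85}. To obtain the bound, I would descend both $h(\cdot, c)$ and $i(\cdot, \gamma_0)$ to the projective space $\mathbb{P}\mathcal{ML}(\Sigma)$---which is compact---and use the fact that $i(\cdot, \gamma_0)$ is continuous and strictly positive, while $h(\cdot, c)$ is strictly positive and admits enough continuity to be bounded below on the compact quotient.

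Having established $\mu_{Th}(B_1) < \infty$, homogeneity gives $B_r = r B_1$ and thus $\mu_{Th}(B_r) = r^{6g-6}\mu_{Th}(B_1)$, a function of $r$ that is continuous. Hence
\[ \mu_{Th}\bigl(\{\lambda : h(\lambda, c) = 1\}\bigr) \;=\; \mu_{Th}(B_1) \;-\; \lim_{r \to 1^-} \mu_{Th}(B_r) \;=\; 0. \]
The main obstacle I anticipate is the lower bound $h(\lambda, c) \geq \kappa \cdot i(\lambda, \gamma_0)$, because $h(\cdot, c)$, being an infimum over a non-compact family of representatives of $[c]$, is \emph{a priori} only upper semi-continuous. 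The challenge is to restrict the defining infimum to a suitably compact sub-family of representatives (e.g.\ taut geodesic representatives whose vertex positions are controlled by $\lambda$), so that the ratio $h(\lambda, c)/i(\lambda, \gamma_0)$ descends to a function on $\mathbb{P}\mathcal{ML}(\Sigma)$ which attains a positive minimum.
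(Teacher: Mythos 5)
Your overall skeleton --- homogeneity of $h(\cdot,c)$ and of $\mu_{Th}$, finiteness of the unit ball $B_1=\{\lambda : h(\lambda,c)\le 1\}$, and then $\mu_{Th}(\{h=1\})=\mu_{Th}(B_1)-\lim_{r\to 1^-}r^{6g-6}\mu_{Th}(B_1)=0$ --- is sound, and your concluding step is, if anything, cleaner than the paper's, which instead notes that the level set $A=\{h(\cdot,c)=1\}$ is disjoint from all of its nontrivial dilates $L\cdot A$ and derives a contradiction with outer regularity of the (Radon) Thurston measure if $\mu_{Th}(A)>0$. The genuine gap is the one you flag yourself: the finiteness of $\mu_{Th}(B_1)$, which you reduce to a lower bound $h(\lambda,c)\ge\kappa\, i(\lambda,\gamma_0)$ but do not prove. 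The route you sketch --- descend to $\mathbb{P}\mathcal{ML}(\Sigma)$ and use compactness --- does not close this: $h(\cdot,c)$ is only upper semi-continuous, and a positive upper semi-continuous function on a compact space need not be bounded below away from $0$, nor does the ratio $h(\lambda,c)/i(\lambda,\gamma_0)$ obviously descend with enough regularity to attain a positive minimum. So an additional idea is required, and compactness of $\mathbb{P}\mathcal{ML}(\Sigma)$ alone cannot supply it.

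The missing input is exactly what the paper provides, and it is already in its toolkit. The paper's proof bounds $i(\lambda,c)$ above in terms of $h(\lambda,c)$ combinatorially: the representative of $[c]$ realizing $h(\lambda,c)$ differs from the harmonic representative by finitely many moves in which a vertex of $c$ slides across strands of $\lambda$, the number of moves being bounded by the number of edges of $c$ and each move changing the intersection number by an amount controlled by the weights of $c$. Hence $h(\lambda,c)\le 1$ forces $i(\lambda,c)\le R(c)$ for a constant depending only on $c$, and since $c$ is filling this sublevel set is pre-compact (Theorem \ref{compact}, used as in Remark \ref{compactness for marked k currents}), so $B_1$ has finite Thurston measure. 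Alternatively, your desired inequality is essentially the lower bound of Lemma \ref{replace with curve} (see also Remark \ref{compactness with h}), whose proof runs a filling multicurve $\rho_c$ along the graph $c$ with bounded multiplicity to get $i(\rho_c,\lambda)\lesssim h(\lambda,c)$; taking $\gamma_0=\rho_c$ gives exactly your bound, after which $B_1\subset\{\lambda: i(\lambda,\gamma_0)\le 1/\kappa\}$ has finite measure by Masur. Finally, your argument tacitly uses measurability of $B_1$; this follows from upper semi-continuity of $h(\cdot,c)$, which the paper records separately in Remark \ref{measurable}.
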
 

\begin{proof} We first claim that the pre-image of $(0,1]$ under $h( \cdot, c)$ in the set of classical currents is pre-compact. Indeed, if $h(c, \lambda) \leq 1$, then $i(c, \lambda)$ is bounded above in terms only of $c$: there is a homotopy taking the representative of $c$ minimizing intersection with $\lambda$ to the harmonic representative for $c$ which is expressible as a concatenation of finitely many moves in which a vertex of $c$ slides over strands of $\lambda$. The number of required moves is bounded by the number of edges in $c$, and each move can increase intersection by at most the difference between the smallest and largest atoms of $c$.

Therefore, this set $A$ has some finite Thurston measure. Note that $A \cap (L \cdot A)= \emptyset$ for all $L$, and that $\mu_{Th}(L \cdot A) = L^{6g-6} \mu_{Th}(A)$ by standard properties of the Thurston measure. Since the Thurston measure is in the Lebesgue class, it's Radon, which means that the measure of a compact set can be approximated from above by open subsets. But, if $\mu_{Th}(A) \neq 0$, the measure of any open subset containing it would have to be infinite because it will contain a set that can be disjointly partitioned into uncountably infinitely many dilates of $A$ by scalars all of size at least $1$.  
\end{proof}

\begin{remark} \label{measurable} The above proof assumes that $A$ is measurable at all. For this, we briefly note that the function $h( \cdot, c) : \mathcal{ML}(\Sigma) \rightarrow \mathbb{R}$ is measurable. Indeed, fix a measured lamination $\lambda_{1}$ and consider $\lambda_{2}$ very close by. Using train track coordinates, there should be some train track $\tau$ carrying both, and transverse measures induced on $\tau$ by $\lambda_{1}, \lambda_{2}$ are very close to one another. Thus, any homotopy of $c$ yields intersection numbers with $\lambda_{1}, \lambda_{2}$ that are close together, which implies that $h(c, \lambda_{2})$ is at most some small $\epsilon$ plus $h(c, \lambda_{1})$. Thus the function $h (\cdot, c)$ is upper semi-continuous and therefore measurable. A very similar argument shows that the set 
\[ U_{\lambda}(\epsilon):= \left\{ c \in \widetilde{\mathcal{HC}}_{k}(\Sigma): h(c, \lambda)< \epsilon \right\}\]
is open. We record this as a lemma for future reference: 

\begin{lemma} \label{open sets intersection lamination} The set 
\[ U_{\lambda}(\epsilon):= \left\{ c \in \widetilde{\mathcal{HC}}_{k}(\Sigma): h(c, \lambda)< \epsilon \right\}\]
it open, and so in particular $U_{\lambda}(\epsilon) \cap \mathcal{ML}(\Sigma)$ is open in the space of measured laminations.  
\end{lemma}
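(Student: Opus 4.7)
The plan is to show that $h(\cdot, \lambda) : \widetilde{\mathcal{HC}}_k(\Sigma) \to \mathbb{R}_{\geq 0}$ is upper semi-continuous, which immediately yields that $U_\lambda(\epsilon) = h(\cdot, \lambda)^{-1}([0,\epsilon))$ is open. Fix $c \in U_\lambda(\epsilon)$ and $\eta > 0$ with $h(c, \lambda) + 2\eta < \epsilon$. By Definition \ref{homotopical} one may choose a representative $c_0$ in the homotopy class of $c$ with $i(c_0, \lambda) < h(c, \lambda) + \eta$. The goal is then to produce a neighborhood $V$ of $c$ such that every $c' \in V$ admits a representative $c_0'$ in its homotopy class with $i(c_0', \lambda) < h(c, \lambda) + 2\eta$.

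First consider the case where $c = (w, \phi, \mu)$ is a marked graph current with marking graph $\Gamma$. By the topology on $\mathcal{F}(\Delta_{n_k}, \Sigma)$ defined in Section \ref{harmonic currents}, sufficiently close geodesic maps of $\Gamma$ into $\Sigma$ are freely homotopic, so there is a neighborhood $V$ of $c$ in which every $c' = (w', \phi', \mu')$ has $\phi' \sim \phi$. Since $\phi_0 \sim \phi \sim \phi'$, the same geodesic map $\phi_0$ reweighted by $w'$ is itself a homotopy representative of $c'$, with weighted intersection $\sum_{e \in E(\Gamma)} w'(e) \cdot i(\phi_0(e), \lambda)$ against $\lambda$. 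This is a continuous function of $w'$, so shrinking $V$ to force $\|w' - w\|_1$ small bounds it by $i(c_0, \lambda) + \eta < h(c, \lambda) + 2\eta$, giving $h(c', \lambda) < \epsilon$.

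Next consider the case where $c \in \mathcal{GC}(\Sigma)$, so $h(c, \lambda) = i(c, \lambda)$. For nearby $c' \in \mathcal{GC}(\Sigma)$ one invokes Bonahon's continuity of the intersection form on $\mathcal{GC}(\Sigma) \times \mathcal{GC}(\Sigma)$ to obtain $h(c', \lambda) = i(c', \lambda)$ close to $i(c, \lambda)$. For nearby $c'$ that are marked graph currents, Remark \ref{weights to 0} dictates the structure of a convergent sequence to $c$: edges whose lengths stay bounded below either collapse to points or accumulate onto geodesics in the support of $c$, while edges that remain long have weights tending to zero and lifts accumulating onto geodesics of $c$. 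Using a train-track neighborhood carrying $\lambda$ to detect intersections, one trims the long, low-weight edges by an isotopy of $\Sigma$ that straightens them along the leaves of $\lambda$ as $c'$ approaches $c$, producing homotopy representatives whose weighted intersection with $\lambda$ is arbitrarily close to $i(c, \lambda)$.

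The main obstacle is this second case when $c$ has atomic leaves---most notably when $c$ is a weighted simple closed geodesic that happens to be a leaf of $\lambda$---since the intersection form $i(\cdot, \lambda)$ is genuinely discontinuous across the transition from graph currents to classical currents, as illustrated in Example \ref{not continuous current} and not directly covered by Lemma \ref{intersection function continuous}. The saving point is that the spurious intersections accounting for this discontinuity arise from endpoints of edges of $c'$ that land on leaves of $\lambda$, and any such endpoint can be pushed off by an arbitrarily small homotopy, contributing nothing to $h(c', \lambda)$. Making this quantitative by controlling the surgery using train-track coordinates and the length-versus-weight bounds of Remark \ref{weights to 0} is what ultimately closes the argument.
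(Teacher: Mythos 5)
Your overall strategy---upper semicontinuity of $h(\cdot,\lambda)$, obtained by transporting a near-minimizing representative of $c$ to all nearby marked currents---is exactly the argument the paper intends (the paper only gestures at it in Remark \ref{measurable}, asserting that ``a very similar argument'' to the train-track one gives openness), and your first case, where $c$ is a simple marked graph $k$-current, is essentially that argument. The problem is your second case. When $c\in\mathcal{GC}(\Sigma)$ and the nearby points $c'$ are marked graph currents, what you offer is only a sketch: the ``trimming'' homotopy that is supposed to straighten the long, low-weight edges along the leaves of $\lambda$ is never constructed, and you concede yourself that making this quantitative ``is what ultimately closes the argument.'' As written this is a genuine gap: you have correctly located the hard point (the failure of continuity of $i(\cdot,\lambda)$ across the transition from graph currents to classical currents, cf.\ Example \ref{not continuous current}), but you have not resolved it, and the resolution is the only nontrivial content of this case.

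Note also that no bespoke surgery is needed there: since $h(c',\lambda)\le i(c',\lambda)$ by definition, it suffices to bound the ordinary intersection, and for $\lambda$ without bottlenecks---in particular for any atomless measured lamination, which covers the minimal filling laminations for which the lemma is actually invoked in Section \ref{establishing invariance}---Lemma \ref{intersection function continuous} already yields $i(c',\lambda)\to i(c,\lambda)=h(c,\lambda)$ along marked graph currents converging to $c$, provided you arrange the bounded-total-mass hypothesis (e.g.\ by working in $\widetilde{\mathcal{HC}}_{k}(M,N)$, as the paper does in its applications, or by shrinking the neighborhood). If $\lambda$ has atoms, a separate estimate is required; your observation that endpoint intersections can be pushed off by a small homotopy is the right idea, but it only treats the vertex contributions, whose total is controlled by the weights tending to $0$ as in Remark \ref{weights to 0}, and this must be said precisely rather than deferred. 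Finally, in your first case you should also account for nearby marked currents whose domain graphs contain extra edges (of small weight or small length) absent from $\Gamma$: for these the marking maps are not literally homotopic maps of the same graph, and the extra edges' contribution to the intersection with $\lambda$ must be estimated rather than absorbed into the claim $\phi'\sim\phi$.
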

\end{remark}

In general, $h$ can be much less than $i$; the purpose of the next lemma is to demonstrate that when pairing a harmonic current with corners against the Liouville current, the two are comparable because being harmonic implies not being too far from length-minimizing:

\begin{lemma} \label{harmonic to length} Suppose $\nu \in \widetilde{\mathcal{HC}}_{k}$ is a harmonic graph $k$-current with all weights at most $M$ and so that the ratio of largest to smallest positive weights is at most $N$. Then 
\[ i(\nu, \mathcal{L}) \leq D \cdot h(\nu, \mathcal{L}), \]
where $D$ is a constant depending only on $k,M,$ and $N$. 
\end{lemma}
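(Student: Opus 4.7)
The plan is to use the energy-minimizing property of harmonic maps recalled from Kajigaya--Tanaka~\cite{KajigayaTanaka}, combined with Cauchy--Schwarz, to show that the weighted $\ell^1$-length of the harmonic representative is within a uniform factor of the weighted $\ell^1$-length of the actual length minimizer in its homotopy class. This is exactly the heuristic spelled out under ``Quasi-minimizers'' in the introduction.

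First I would identify both sides of the desired inequality as weighted lengths. By Example~\ref{not simple}, for any graph current coming from a marking $\phi : \Gamma \rightarrow \Sigma$, one has $i(\cdot, \mathcal{L}) = L(\phi) := \sum_{e} w(e)\,\ell(\phi(e))$. Letting $\phi_h$ be the harmonic marking map associated with $\nu$ and $\phi_\ell$ a weighted-length minimizer in the homotopy class of $\phi_h$ with edges sent to geodesic segments (existence follows by a standard compactness argument), Definition~\ref{homotopical} then gives $i(\nu, \mathcal{L}) = L(\phi_h)$ and $h(\nu, \mathcal{L}) = L(\phi_\ell)$.

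Next I would reparameterize $\phi_\ell$ at constant speed on each edge, so that the energy functional $\mathcal{E}(\phi) = \sum_{e} w(e)\int \|\phi'_e(t)\|^2\,dt$ evaluates to $\sum_{e} w(e)\,\ell(\phi_\ell(e))^2$. Since $\phi_h$ minimizes $\mathcal{E}$ in its homotopy class, $\mathcal{E}(\phi_h) \leq \mathcal{E}(\phi_\ell)$. Cauchy--Schwarz applied to the vectors $(\sqrt{w(e)})_e$ and $(\sqrt{w(e)}\,\ell(\phi_h(e)))_e$ yields
\[ L(\phi_h)^2 \leq W \cdot \mathcal{E}(\phi_h) \leq W \cdot \mathcal{E}(\phi_\ell), \qquad W := \sum_{e} w(e). \]
On the other hand, the crude estimate $\ell(\phi_\ell(e_0)) \leq L(\phi_\ell)/w(e_0)$ for each individual edge $e_0$ gives $\max_{e}\ell(\phi_\ell(e)) \leq L(\phi_\ell)/w_{\min}$, hence
\[ \mathcal{E}(\phi_\ell) \leq \Bigl(\max_{e}\ell(\phi_\ell(e))\Bigr) L(\phi_\ell) \leq L(\phi_\ell)^2/w_{\min}. \]
Combining with $W \leq k\, w_{\max}$ and $w_{\min} \geq w_{\max}/N$ gives $W/w_{\min} \leq kN$, and hence $L(\phi_h) \leq \sqrt{kN}\,L(\phi_\ell)$, so any $D \geq \sqrt{kN}$ will do (the dependence on $M$ claimed in the statement is actually redundant by homogeneity of both sides in the weights).

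The argument is computationally routine, so I do not expect a genuine obstacle. The one thing to check carefully is that the straightened, constant-speed length minimizer $\phi_\ell$ is a valid competitor in the minimization defining the harmonic map $\phi_h$; this is immediate, since straightening each edge to a geodesic and reparameterizing it at constant speed preserves the homotopy class of the marking map, so $\phi_\ell$ really does compete against $\phi_h$ for the energy $\mathcal{E}$.
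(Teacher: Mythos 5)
Your proof is correct and follows essentially the same route as the paper: both arguments rest on the facts that the harmonic map minimizes the weighted $\ell^{2}$-energy $\mathcal{E}$ against the straightened length-minimizer in its homotopy class, and that weighted $\ell^{1}$- and $\ell^{2}$-type quantities are comparable in terms of $k$ and the weight ratios, the paper running this as a contradiction on the unweighted lengths with crude largest/smallest-weight bounds. Your direct Cauchy--Schwarz bookkeeping is slightly cleaner, yielding the sharper constant $D=\sqrt{kN}$ (versus the paper's $N\sqrt{Nk}$), and your observation that scale-invariance in the weights makes the dependence on $M$ redundant is accurate.
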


\begin{proof} The one-sentence idea is that $\nu$ minimizes a weighted $l^{2}$-norm; since weighted $l^{1}$ and $l^{2}$ norms are comparable in terms of the weight vector and the dimension, we get a contradiction if some other marked graph in the same homotopy class has a much smaller weighted $l^{1}$-norm than $\nu$. We spell this out formally as follows:

The energy of $\nu$ is the sum, taken over all edges $e$ of $\Gamma$ (the marking graph of $\nu$), of the product of $w(e)$ and the hyperbolic length of $\phi(e)$ where $\phi$ is the marking map of $\nu$:
\begin{equation} \label{energy}
\mathcal{E}(\nu) = \sum_{e \in E(\Gamma)} w(e) \cdot \ell^{2}(\phi(e)). 
\end{equation}
Let $m$ denote the smallest weight of $\nu$ and let $L$ denote the total unweighted hyperbolic length of $\nu$, i.e., 
\[ L = \sum_{e \in E(\Gamma)}  \ell_{\Sigma}(\phi(e)). \]
Then using the standard inequality relating $l^{1}$ and $l^{2}$ norms, \ref{energy} is at least
\[ m \cdot \frac{L^{2}}{k}. \]
Now, let $\nu'$ be a current with corners whose marking graph is $\Gamma$, whose marking map is homotopic to $\phi$, and so that $h(\nu, \mathcal{L}) = i(\nu', \mathcal{L})$. Let $L'$ be the unweighted hyperbolic length of $\nu'$ and suppose that $L' < L/(\sqrt{Nk})$. Then the energy of $\nu'$ is largest if all of its length is concentrated on the edge with largest weight, and therefore 
\[ \mathcal{E}(\nu') \leq M(L')^{2} < M \frac{L^{2}}{Nk} < \mathcal{E}(\nu), \]
 contradicting the assumption that $\nu$ is harmonic. Therefore, $L'$ is at least $L/(\sqrt{Nk})$, and the \textit{weighted} length of $\nu'$ is smallest when all of its unweighted length concentrates on the edge of smallest weight; thus
 \[ i(\nu', \mathcal{L}) \geq \frac{m \cdot L}{\sqrt{Nk}}.\]
 On the other hand, the weighted length of $\nu$ is largest if all of its unweighted length concentrates on the edge with largest weight, and so 
 \[ i(\nu, \mathcal{L}) \leq M \cdot L, \]
 whence it follows that 
 \[ i(\nu, \mathcal{L}) \leq \frac{M \sqrt{Nk}}{m} i(\nu', \mathcal{L}) = N \sqrt{Nk}\cdot i(\nu', \mathcal{L}). \]
\end{proof}

We will also need to know that the length of a classical current is uniformly comparable to its homotopical intersection number with a fixed filling harmonic current:

\begin{lemma} \label{replace with curve} Let $d$ be a marked filling harmonic current and $\gamma$ a filling curve. Then there is some $K \ge 1$ depending only on $d$ and $\gamma$ so that 
\[ \frac{1}{K}  \cdot h(\gamma, c) \leq h(c, d) \leq K  \cdot h(\gamma, c)\]
for any filling classical current $c$. 
\end{lemma}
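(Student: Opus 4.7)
The plan is a compactness and homogeneity argument. Since $h(\gamma, \cdot) = i(\gamma, \cdot)$ and $h(\cdot, d)$ are non-negative and homogeneous of degree one in the variable $c$, it suffices to establish uniform upper and lower bounds for $h(c, d)$ on the set $S := \{c \in \mathcal{GC}(\Sigma) : i(\gamma, c) = 1\}$, which is weak-$^{\ast}$ compact by Bonahon's classical compactness criterion applied to the filling current $\gamma$.

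For the upper bound, let $d_0$ denote the unique harmonic representative of $d$, with marking graph of edges $e_1, \ldots, e_k$ and weights $w_1, \ldots, w_k$. Then $h(c, d) \le i(c, d_0) = \sum_j w_j \, c(T_{e_j})$, where $T_{e_j}$ is the compact subset of $G(\Sigma)$ consisting of bi-infinite geodesics crossing a chosen lift of $e_j$. Since each $T_{e_j}$ is closed, $c \mapsto c(T_{e_j})$ is upper semi-continuous with respect to the weak-$^{\ast}$ topology; being USC on the compact set $S$, each such function attains its maximum $M_j$. Summing yields $h(c, d) \le K_1 := \sum_j w_j M_j$ on $S$, as desired.

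For the lower bound we argue by contradiction: assume there is a sequence $c_n \in S$ with $h(c_n, d) \to 0$. Pass to a subsequence so that $c_n \to c_\infty \in S$, and note that $c_\infty$ is a nonzero filling classical current. For each $n$, select $\phi_n' \sim \phi$ whose associated marked graph current $d_n' = (w, \phi_n', \phi_n'(\Gamma))$ satisfies $i(c_n, d_n') < 2 h(c_n, d) \to 0$, and let $L_n := i(\mathcal{L}, d_n')$ be its weighted hyperbolic length. We split into two cases. If $(L_n)$ is bounded, then by Remark \ref{compactness for marked k currents} a subsequence of $(d_n')$ converges in $\widetilde{\mathcal{HC}}_k$ to some $d_\infty'$ still in the homotopy class of $d$, and the topological filling property (which is preserved under homotopy of the marking map because $\pi_1$-surjectivity is) is inherited by $d_\infty'$. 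Taking limits in the intersection form yields $i(c_\infty, d_\infty') = 0$, contradicting the fact that a filling classical current pairs positively with any nonzero graph current obtained from a homotopy representative of a filling graph. If instead $L_n \to \infty$, the normalized sequence $\hat{d}_n' := d_n'/L_n$ has vanishing edge-weights $w/L_n \to 0$, so by Remark \ref{weights to 0} any subsequential limit $\lambda$ is a classical geodesic current; Lemma \ref{intersection function continuous} applied to the no-bottleneck Liouville current gives $i(\mathcal{L}, \lambda) = \lim_n i(\mathcal{L}, \hat{d}_n') = 1$, so $\lambda \ne 0$. Again taking limits, $i(c_\infty, \lambda) = \lim_n L_n^{-1} i(c_n, d_n') = 0$, contradicting $c_\infty$ filling and $\lambda \ne 0$ by Bonahon's theorem.

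The main obstacle is the passage to limits in computing $i(c_\infty, d_\infty')$ and $i(c_\infty, \lambda)$, since Lemma \ref{intersection function continuous} only gives continuity in one variable at a time and requires the classical current to have no bottlenecks. The plan is to handle this by approximating $c_\infty$ by a sequence of filling no-bottleneck classical currents (which are dense in $\mathcal{GC}(\Sigma)$), applying Lemma \ref{intersection function continuous} to each approximation, and controlling the resulting error using the uniform mass bound $\|w\|_1$ on $d_n'$ (respectively $\|w\|_1/L_n$ on $\hat{d}_n'$) together with the bilinearity of $i(\cdot, \cdot)$, via the estimate
\[ |i(c_n, d_n') - i(c_\infty, d_n')| \le \|d_n'\| \cdot \sup_B |c_n(B) - c_\infty(B)|, \]
taken over a suitable finite family of admissible flow boxes. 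Once this joint-continuity step is established, we obtain $0 < m \le h(c, d) \le M$ uniformly on $S$, and homogeneity upgrades these bounds to the desired comparison $K^{-1} \cdot h(\gamma, c) \le h(c, d) \le K \cdot h(\gamma, c)$ for all filling classical currents, with $K$ depending only on $d$ and $\gamma$.
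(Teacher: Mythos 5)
Your upper bound is fine (and is a clean alternative to the paper's): restricting to the compact set $S=\{c: i(\gamma,c)=1\}$ and using upper semicontinuity of $c\mapsto c(T_{e_j})$ for the compact sets $T_{e_j}$ does give a uniform bound on $h(c,d)\le i(c,d_0)$ there, and homogeneity upgrades it. The lower bound, however, has a genuine gap, and it is exactly where the compactness strategy is fragile. Your contradiction in both cases rests on the assertion that $c_\infty$ is a \emph{filling} classical current, but being filling is not a closed condition: a sequence of filling currents $c_n\in S$ can converge weak-$^{\ast}$ to, say, a simple closed geodesic $\alpha$ (suitably scaled). In that scenario your case-2 conclusion ``$i(c_\infty,\lambda)=0$ contradicts $c_\infty$ filling and $\lambda\neq 0$'' evaporates, since $c_\infty=\alpha$ and $\lambda$ could well be proportional to $\alpha$ (the near-minimizing representatives $d_n'$ wrapping ever more around $\alpha$), and then $i(c_\infty,\lambda)=0$ is no contradiction at all. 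Case 1 has the same defect: the ``fact'' that a homotopy representative of a filling graph pairs positively with $c_\infty$ is only clear if $c_\infty$ is filling, or else requires a separate topological argument (e.g.\ that the $1$-skeleton of a triangulation is $\pi_1$-surjective, so no representative can be pushed off the projection of $\mathrm{supp}(c_\infty)$), which you do not supply. On top of this, the joint-continuity step you flag as ``the main obstacle'' is not resolved by approximating $c_\infty$ by no-bottleneck currents: the error term $|i(c_n,d_n')-i(c_\infty,d_n')|$ involves a family of flow boxes that moves with $d_n'$ (and, before normalization, edges of unbounded length), and weak-$^{\ast}$ convergence of $c_n$ gives no uniform control over such a varying family; Lemma \ref{intersection function continuous} only handles one variable at a time, with the classical current fixed and without bottlenecks.

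The paper avoids all of this with a direct, quantitative argument that never passes to limits of the $c$-variable. For the upper bound it uses $h(c,d)\le i(c,d)$ together with the fact that complementary regions of the harmonic embedding of $d$ have bounded diameter, so $i(c,d)$ is comparable to $\ell_\Sigma(c)=i(c,\mathcal{L}_\Sigma)$, hence to $i(\gamma,c)$ (with a short curve-approximation step to handle atoms through vertices). For the lower bound it produces a filling multicurve $\rho_d$ carried on the graph $d$, traversing each edge a bounded number of times; then \emph{every} homotopy representative $d'$ of $d$ carries a corresponding representative of $\rho_d$, so $i(d',c)$ is bounded below by a fixed multiple of $i(\rho_d,c)$, which in turn is comparable to $i(\gamma,c)$. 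This yields the uniform lower bound for $h(c,d)=\min_{d'}i(d',c)$ directly, which is precisely the statement your limiting argument is trying to reach but cannot, because the degenerate scenario above is not excluded. To salvage your approach you would need either the carried-multicurve estimate (at which point compactness is unnecessary) or a proof that $i(c_\infty,\cdot)$ pairs positively with every limit of normalized representatives of $d$, valid for arbitrary nonzero $c_\infty\in S$, not just filling ones.
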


\begin{proof} Since both $c$ and $\gamma$ are classical currents, $h(c, \gamma)= i(c, \gamma)$ by definition, and $i(c, \gamma)$ is uniformly comparable to $\ell_{\Sigma}(c)$, the length of $c$ on $\Sigma$, i.e., the intersection number between $c$ and the Liouville current for $\Sigma$ (and this comparability is in terms only of constants depending on $\gamma$). To bound $h(c,d)$ from above, note first that by definition, $h(c,d) \leq i(c,d)$, and $i(c, d)$ is uniformly comparable to $\ell_{\Sigma}(c)$ when $c$ is a (multiple of a) curve because each complementary region of the harmonic embedding for $d$ into $\Sigma$ has bounded diameter, and the number of intersection points per unit of length is bounded above in terms of the maximum degree (and in particular the number of edges) of $d$. Finally, $\ell_{\Sigma}(c)$ is uniformly comparable to $i(\gamma, c)= h(\gamma, c)$ in terms only of a constant depending on $\gamma$, by the same argument. This concludes the upper bound on $h(c,d)$ when $c$ is a curve multiple. If $c$ is not a curve multiple, consider a sequence $\left\{c_{j} \right\}$ of curves converging to $c$. By continuity of the intersection form on the space of classical currents, 
\[ h(\gamma, c_{j})= i(\gamma, c_{j}) \longrightarrow i(\gamma, c) =h(\gamma, c).\]
On the other hand, it may not be the case that $i(c_{j}, d) \rightarrow i(c,d)$, if there is an atom in the support of $c$ (but not in the support of any of the $c_{j}$'s) that intersects a vertex of $d$. But then a small homotopy of $d$ intersects the atom in the interior of an edge and so $h(c,d) \leq \lim_{j \rightarrow \infty} i(c_{j}, d)$. This concludes the proof of the upper bound.

For the lower bound, we note that there is $T>0$ and a filling multi-curve $\rho_{d}$, both depending only on $d$, such that that there is a representative of $\rho_{d}$ lying entirely on $d$ so that $\rho_{d}$ does not traverse any edge of $d$ more than $T$ times. Thus if $c$ is any classical current, 
\[ i(\rho_{d}, c) \leq \frac{1}{T} h(c,d). \]
This is because $i(\rho_{d}, c)$ can be calculated by taking the minimum of $i(\rho'_{d}, c)$, where $\rho'_{d}$ is any representative of $\rho_{d}$ up to homotopy (and this minimum is then achieved by the geodesic representative for $\rho_{d}$). On the other hand, there is $T'>0$ so that 
\[ i(\rho_{d}, c) \geq \frac{1}{T'} i(\gamma, c), \]
with $T'$ depending only on $\rho_{d}$ and $\gamma$. This concludes the proof of the lower bound.

\end{proof}

\begin{remark} \label{compactness with h} The proof of the upper bound of Lemma \ref{replace with curve} could be tweaked to show that $i(c,d) \leq K' \cdot i(\gamma, c)$ for some potentially larger $K'$. Indeed, one can choose $K= K'$ if $\lim_{j \rightarrow \infty} i(c_{j}, d) = i(c,d)$, but if not, the discrepancy is controlled by the $l^{1}$ norm of the vector of weights for the underlying graph of $d$. Indeed, if $c$ contains an atom that intersects a vertex, $i(c,d)$ could be larger than $\lim_{j \rightarrow \infty} i(c_{j}, d)$ because it includes mass coming from other edges incident to that vertex and which is missed by each curve in the sequence $\left\{c_{j} \right\}$, but then the difference is bounded above by the sum of all of the weights of other edges incident to that vertex. 

This observation, coupled with the lower bound of Lemma \ref{replace with curve}, implies there is some $D >0$ depending only on $d$ so that 
\[ h(c,d) \geq \frac{1}{D} i(c,d), \]
for $c$ any classical current.

\end{remark}


\section{The main counting result} \label{main theorem}

Before launching into the proof, state the main counting theorem using the terminology developed in previous sections and in a way that implies Theorem \ref{main theorem!}:

\begin{Theorem} \label{main} Let $c \in \widetilde{\mathcal{HC}}_{k} \setminus \mathcal{N}_{k}$ be a marked harmonic current corresponding to a triangulation with no ghost edges. Then 

\[ \lim_{L \rightarrow \infty} \frac{ \# (\phi \in \mathcal{MCG}(\Sigma) : \ell_{\Sigma}(\phi(c)) \leq L ) }{L^{6g-6}} = \frac{n_{c} m_{\Sigma}}{\textbf{m}}, \]
where $\ell_{\Sigma}(\phi(c))$ denotes the minimum length representative of $\phi(c)$, 
\[ n_{c} = \mu_{Th}(\lambda \in \mathcal{ML}(\Sigma) : h(c, \sigma) \leq 1), m_{\Sigma} = \mu_{Th}( \lambda \in \mathcal{ML}(\Sigma) : \ell_{\Sigma}(\lambda) \leq 1 ), \]
and
\[ \textbf{m} = \int_{\mathcal{M}}m_{\Sigma} \mbox{dvol}_{WP}. \] 
\end{Theorem}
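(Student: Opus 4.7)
The plan is to adapt the Rafi-Souto counting machinery to the setting of marked harmonic currents with corners, exploiting the structural results collected in Section \ref{properties} together with the new intersection estimates (Lemmas \ref{intersection function continuous}, \ref{replace with curve}, \ref{harmonic to length}). For each $L > 0$, I would introduce the locally finite Radon measure
\[ \mu_L = \frac{1}{L^{6g-6}} \sum_{\phi \in \mathcal{MCG}(\Sigma)} \delta_{\phi(c)/L} \]
on $\widetilde{\mathcal{HC}}_k$, where $\phi(c)/L$ denotes the marked current obtained from $\phi(c)$ by scaling its weight vector by $1/L$; both the weighted length of a marked current and the intersection form against a fixed current scale linearly under this operation. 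The key bookkeeping identity is that evaluating $\mu_L$ on the set $B = \{\mu : \ell_\Sigma(\mu) \leq 1\}$ recovers exactly the normalized count $\#\{\phi : \ell_\Sigma(\phi(c)) \leq L\}/L^{6g-6}$, so the theorem reduces to proving weak-$^\ast$ convergence of $\mu_L$ and a careful evaluation on $B$.

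First I would establish a polynomial upper bound on $\mu_L(K)$ for $K$ a length-bounded compact subset of $\widetilde{\mathcal{HC}}_k$ of the form considered in Remark \ref{compactness for marked k currents}. Combining the compactness criterion of Theorem \ref{compact} with Lemma \ref{replace with curve}, which makes homotopical intersection against a fixed filling curve comparable to hyperbolic length on $\widetilde{\mathcal{HC}}_k$, yields precompactness of the family $(\mu_L)$ in the weak-$^\ast$ topology on locally finite Radon measures on $\widetilde{\mathcal{HC}}_k$. Extracting a subsequential limit $\bar{\mu}$, I would then show $\bar{\mu}$ is supported on $\mathcal{ML}(\Sigma)$: scaling a marked harmonic $k$-current by $1/L$ forces its self-intersection to decay like $1/L^2$, so any atom of $\bar{\mu}$ in $\widetilde{\mathcal{HC}}_k \setminus \mathcal{ML}(\Sigma)$ is ruled out by Lemma \ref{limit 0 implies lamination}, together with a small truncation argument to control the short-edge contributions.

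Each $\mu_L$ is $\mathcal{MCG}$-invariant by reindexing the orbit, so the limit $\bar{\mu}$ is also $\mathcal{MCG}$-invariant as a Borel measure, and since $\mathcal{MCG}(\Sigma)$ acts by homeomorphisms on $\mathcal{ML}(\Sigma) \hookrightarrow \widetilde{\mathcal{HC}}_k$ (Proposition \ref{currents embed} and Theorem \ref{Borel action}), Masur's~\cite{Masur85} uniqueness theorem identifies $\bar{\mu}$ as a positive multiple $C \cdot \mu_{Th}$ of the Thurston measure. To identify $C$ I would follow the Rafi-Souto computation: evaluate $\mu_L$ against the test set $A_c = \{\lambda \in \mathcal{ML} : h(c,\lambda) \leq 1\}$, which is measurable with open interior (Lemma \ref{open sets intersection lamination}) and has a $\mu_{Th}$-null boundary by Lemma \ref{boundary zero}, hence is well-suited to a Portmanteau-type application. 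Swapping the roles of $c$ and $\lambda$ in the double sum and integrating over moduli space with respect to the Weil-Petersson measure gives the normalization $C = m_\Sigma/\textbf{m}$, just as in~\cite{RafiSouto}; the $\mu_{Th}$-mass of $A_c$ equals $n_c$, yielding $\bar{\mu}(A_c) = C \cdot n_c = n_c m_\Sigma/\textbf{m}$.

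The main obstacle, and the place where we must work harder than in the classical setting, will be applying the convergence to the specific test set $B = \{\mu : \ell_\Sigma(\mu) \leq 1\}$: the length function is not continuous at points of $\mathcal{GC}(\Sigma)$ (Section \ref{sec:observations}) and $i(\cdot, \mathcal{L}_\Sigma)$ is not jointly continuous on $\widetilde{\mathcal{HC}}_k$. To push through, I would use that $\mathcal{L}_\Sigma$ has no bottlenecks (Equation \ref{Liouville formula}), invoke Lemma \ref{intersection function continuous} to get continuity of $i(\cdot, \mathcal{L}_\Sigma)$ along the sequences that matter, and use Lemma \ref{harmonic to length} to replace weighted length of a harmonic current by the comparable quantity $h(\cdot, \mathcal{L}_\Sigma)$ (which is $\mathcal{MCG}$-invariant). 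An application of Lemma \ref{boundary zero} with $\mathcal{L}_\Sigma$ in place of $c$ then ensures $\mu_{Th}(\partial B) = 0$, and a Portmanteau argument gives $\mu_L(B) \to C \cdot m_\Sigma = n_c m_\Sigma/\textbf{m}$, which is exactly the desired asymptotic. The subtle bookkeeping to control ghost edges under scaling (Remark \ref{weights to 0}) and to rule out loss of mass to infinity inside $\widetilde{\mathcal{HC}}_k \setminus \mathcal{ML}(\Sigma)$ is where the bulk of the technical effort beyond Rafi-Souto will concentrate.
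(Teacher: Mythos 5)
Your overall scaffolding (rescaled orbit measures, precompactness via Theorem \ref{compact} and Lemma \ref{replace with curve}, support on $\mathcal{ML}(\Sigma)$ via the $1/L^{2}$ decay of self-intersection, identification of the limit as a multiple of $\mu_{Th}$, and evaluation on the unit-length set using Lemma \ref{harmonic to length} and the bottleneck-free continuity of Lemma \ref{intersection function continuous}) matches the paper's strategy. But there is a genuine gap at the step you dispose of in one clause: ``Each $\mu_L$ is $\mathcal{MCG}$-invariant by reindexing the orbit, so the limit $\bar{\mu}$ is also $\mathcal{MCG}$-invariant.'' The action of $\mathcal{MCG}(\Sigma)$ on $\widetilde{\mathcal{HC}}_{k}$ is only Borel, not continuous, and for merely Borel actions weak-$^{\ast}$ limits of invariant measures need not be invariant -- this is exactly the point of Example \ref{Anush's example}. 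Nor does it help that the action on $\mathcal{ML}(\Sigma)$ is by homeomorphisms: the approximating measures $\mu_L$ are supported on the orbit of the graph current $c$, i.e.\ entirely off $\mathcal{ML}(\Sigma)$, so you cannot restrict the convergence to $\mathcal{ML}(\Sigma)$ to transfer invariance. The paper has to work for this: it proves a Portmanteau-type transfer criterion (Lemma \ref{general criterion for invariance}) and then shows, via comparison with curve-counting measures and Erlandsson--Souto's convergence result, that the boundaries of the basis sets $U_{\lambda}(\epsilon)=\{j: h(j,\lambda)<\epsilon\}$ are $\nu_c$-null (Lemma \ref{zero to boundaries}); only then does invariance of the limit follow. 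Your proposal contains no substitute for this argument.

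A second, related gap is the appeal to Masur's theorem to identify $\bar{\mu}$ with a multiple of $\mu_{Th}$. Masur's uniqueness statement concerns invariant measures \emph{in the Lebesgue class}, and an a priori arbitrary $\mathcal{MCG}$-invariant limit measure on $\mathcal{ML}(\Sigma)$ need not lie in that class (it could, for instance, charge the locus of non-filling laminations). The paper instead invokes the Lindenstrauss--Mirzakhani characterization and must verify the extra hypothesis that the limit assigns zero mass to the set of laminations disjoint from any fixed simple closed curve; this is the content of Lemma \ref{zero to curve missing sets}, whose proof again routes through the comparison with the curve-counting measures $\nu^{L}_{\eta}$. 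Finally, your normalization bookkeeping is internally inconsistent (you write $C=m_{\Sigma}/\textbf{m}$ at one point and use $C=n_{c}/\textbf{m}$ at another); the correct identification, obtained in the paper via Lemma \ref{ES combined} together with Rafi--Souto's Theorem 3.1 (which is where the Weil--Petersson integral $\textbf{m}$ actually enters), is $\bar{\mu}=\frac{n_{c}}{\textbf{m}}\,\mu_{Th}$, after which evaluating on the unit-length ball gives $n_{c}m_{\Sigma}/\textbf{m}$.
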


We follow the strategy of Rafi-Souto~\cite{RafiSouto} to prove Theorem \ref{main}. In some places, their arguments will apply in our setting without alteration. There will however be some important subtleties and departures and we will make clear when those occur. 

First, for $k\in \mathbb{N}$ equal to the number of edges of the domain graph $G$ of $c$, define the measure $\nu^{L_{c}}$ on $\widetilde{\mathcal{HC}}_{k}$ via
\[ \nu^{L}_{c} = \frac{1}{L^{6g-6}} \sum_{\phi \in \mathcal{MCG}(\Sigma)} \delta_{\frac{1}{L} \cdot \phi(c)}. \]

Let $\widetilde{\mathcal{HC}}_{k}(M,N)$ refer to the set of all marked harmonic $k$-currents that are either geodesic currents, or simple marked graph currents with all weights at most $M$ and so that the maximum ratio between any two positive weights is at most $N$.  Then there is $N,M$ so that $\phi(c) \in \widetilde{\mathcal{HC}}_{k}(M,N)$ for all $\phi \in \mathcal{MCG}(\Sigma)$, and so that $t \cdot \phi(c) \in \widetilde{\mathcal{HC}}_{k}(M,N)$ where $0  \leq t \leq 1$ is a scalar.

Our goal will be to show that the family of measures $(\nu^{L}_{c})$ has a well-defined limit on $\widetilde{\mathcal{HC}}_{k}(M,N)$ as $L \rightarrow \infty$, and to identify this limit. 

We first show that there are accumulation points of the family, and that any such measure is automatically positive:

\begin{lemma}  \label{accumulation points} The set of measures $(\nu^{L}_{c})_{L}$ is precompact in the space of Radon measures on $\widetilde{\mathcal{HC}}_{k}(M,N)$. Moreover, each accumulation point is locally finite and positive.
\end{lemma}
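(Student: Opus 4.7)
The plan is to prove precompactness by establishing a uniform upper bound $\nu^L_c(K) \leq C_K$ on each compact set $K$, from which local finiteness of accumulation points is automatic, and then to exhibit a single compact set $K_0$ with a uniform lower bound $\nu^L_c(K_0) \geq c_0 > 0$ that rules out the zero measure as an accumulation point.

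For the upper bound, fix a compact $K \subset \widetilde{\mathcal{HC}}_k(M, N)$. By Remark~\ref{compactness for marked k currents}, $K$ is contained in a sublevel set $\widetilde{C_R(\mathcal{L}_\Sigma)}_{M,N}$ of the Liouville current for some $R$. If $\tfrac{1}{L}\phi(c) \in K$, then $i(\mathcal{L}_\Sigma, \phi(c)) \leq RL$; by Example~\ref{not simple} this is the weighted hyperbolic length of the harmonic representative $\phi(c)$, and since $h \leq i$ we also get $h(\mathcal{L}_\Sigma, \phi(c)) \leq RL$. Applying $\mathcal{MCG}$-invariance of $h$ and Lemma~\ref{replace with curve} with a fixed filling simple closed curve $d$, this translates to $h(c, \phi^{-1} d) \leq R'L$ for some constant $R' = R'(R, d, c)$. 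Consequently the set of admissible $\phi$ injects into $\mathcal{MCG}\cdot d \cap L \cdot \{\mu \in \mathcal{GC}(\Sigma) : h(c, \mu) \leq R'\}$. Since $c$ is filling, the sublevel set $\{\mu \in \mathcal{ML}(\Sigma) : h(c, \mu) \leq R'\}$ has finite Thurston measure (by the argument in the proof of Lemma~\ref{boundary zero}), so the classical Thurston-measure framework of Mirzakhani, Erlandsson--Souto and Rafi--Souto bounds the orbit count by $O(L^{6g-6})$, yielding $\nu^L_c(K) \leq C_K$.

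Local finiteness of any accumulation point $\nu = \lim_n \nu^{L_n}_c$ is then immediate, since $\nu(K) \leq \liminf_n \nu^{L_n}_c(K) \leq C_K$ for every compact $K$. For non-triviality, keep the filling curve $d$ and fix $0 < \alpha < \beta$, and set $K_0 = \{\nu' \in \widetilde{\mathcal{HC}}_k(M,N) : \alpha \leq h(\nu', d) \leq \beta\}$, which is compact by Lemma~\ref{replace with curve} combined with Remark~\ref{compactness for marked k currents}. Rescaling exactly as above, $\nu^L_c(K_0)$ equals $L^{-(6g-6)}$ times the count of $\mathcal{MCG}$-orbit points of $d$ inside the $L$-dilate of an annulus $A = \{\mu \in \mathcal{GC}(\Sigma) : \alpha' \leq h(c, \mu) \leq \beta'\}$ for appropriate $\alpha', \beta'$. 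Lemma~\ref{open sets intersection lamination} implies $A \cap \mathcal{ML}(\Sigma)$ is open and, being the difference of two nested finite-Thurston-measure sublevel sets of differing measure, has positive Thurston measure. The matching lower bound of order $L^{6g-6}$ on the orbit count then follows from the same Thurston-measure framework, and a Portmanteau argument (Proposition~\ref{Portmanteau}) applied to the interior of $K_0$ yields $\nu(K_0) \geq c_0 > 0$.

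The principal obstacle is establishing the matching two-sided orbit-count asymptotic $\#\{\phi \in \mathcal{MCG} : h(c, \phi \cdot d) \leq RL\} \asymp L^{6g-6}$ without circularly invoking Theorem~\ref{main}. The key reduction is Lemma~\ref{replace with curve}, which replaces the homotopical intersection $h(c, \cdot)$ by the classical intersection with a fixed filling simple closed curve up to uniform multiplicative constants; this turns the count into a standard $\mathcal{MCG}$-orbit count of simple closed curves against a fixed filling classical current, to which Mirzakhani's theorem and its generalizations apply directly. The upper semicontinuity of $h(c, \cdot)$ from Remark~\ref{measurable} and the vanishing boundary property from Lemma~\ref{boundary zero} are exactly what is needed to ensure that the Thurston-measure estimates transfer cleanly.
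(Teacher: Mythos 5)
Your upper bound and the precompactness it yields are essentially the paper's own argument: you use $h \le i$, homogeneity and $\mathcal{MCG}$-invariance of $h$, and Lemma \ref{replace with curve} to turn the count of $\phi$ with $\tfrac1L\phi(c)$ in a length-bounded (hence compact) set into a classical orbit-counting problem, and then quote the Mirzakhani/Erlandsson--Souto/Rafi--Souto asymptotics. The paper lands directly on $\#\{\phi:\ell_\Sigma(\phi(\gamma))\le KTL\}$ rather than on orbit points of a curve $d$ in a sublevel set of $h(c,\cdot)$; that difference is cosmetic, although note that no \emph{simple} closed curve fills a closed surface (take $d$ a filling non-simple curve or multicurve, as in Lemma \ref{replace with curve}), and your passage from $h(\phi^{-1}\mathcal{L},c)\le RL$ to $h(c,\phi^{-1}d)\le R'L$ silently uses a second comparison $i(\cdot,d)\lesssim i(\cdot,\mathcal{L})$ in addition to Lemma \ref{replace with curve}.

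The positivity step, however, has genuine gaps. First, compactness of $K_0=\{\nu' : \alpha\le h(\nu',d)\le\beta\}$ does not follow from what you cite: Remark \ref{compactness for marked k currents} requires the pairing current to be filling \emph{and without bottlenecks}, and the curve $d$ has bottleneck points (it charges $\mathcal{J}(p)$ for $p$ on the curve); and Lemma \ref{replace with curve} compares $h(\lambda,c)$ with $i(\gamma,\lambda)$ for a \emph{fixed} harmonic current and varying classical $\lambda$, with a constant depending on that harmonic current, so it cannot uniformly control the harmonic currents varying over $K_0$. In fact a bound on $h(\nu',d)$ does not bound the unweighted length of $\nu'$ (all weights may be small); what rescues compactness is that it bounds the minimal \emph{weighted} length of the class (a crossing-number versus complementary-disk-diameter argument for the filling $d$), after which Lemma \ref{harmonic to length} --- the quasi-minimizing property of harmonic currents --- bounds the weighted length of the harmonic representative, placing $K_0$ inside a Liouville sublevel set where Remark \ref{compactness for marked k currents} does apply. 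This is precisely the ingredient the paper's proof uses (Lemma \ref{harmonic to length} together with the lower bound in Lemma \ref{replace with curve}) and which your sketch omits. Second, to lower-bound the orbit count of $d$ in your annulus you need an \emph{open} set of positive Thurston measure inside it; Lemma \ref{open sets intersection lamination} is cited with the arguments of $h$ in reversed roles, and $h(c,\cdot)$ is only known to be upper semicontinuous (Remark \ref{measurable}), so $\{\alpha'<h(c,\cdot)<\beta'\}$ is not known to be open --- one should instead work with the open sublevel set $\{h(c,\cdot)<\beta'\}$ together with Lemma \ref{boundary zero}. Third, the closing Portmanteau step points the wrong way: $\liminf_n\nu_n(U)\ge\nu(U)$ for open $U$ bounds $\nu$ from above by the liminf of masses; to convert $\liminf_L\nu^L_c(K_0)\ge c_0$ into positivity of the accumulation point you need the complementary inequality on compact sets, or an enlargement of $K_0$ with $\nu$-null boundary and the first part of Proposition \ref{Portmanteau}.
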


\begin{proof} 

Fix a filling curve $\gamma$ as in Lemma \ref{replace with curve} and let $K$ be as in the statement of that lemma. As in Rafi-Souto~\cite{RafiSouto}, we will show that

\[ \limsup_{L \rightarrow \infty} \nu^{L}_{c}\left( \left\{ \lambda \in \widetilde{\mathcal{HC}}_{k}(M,N) : \ell_{\Sigma}(\lambda) \leq T \right\} \right) < \infty, \]

for all $T>0$. To see why this implies the lemma, note first that the above set is compact in $\widetilde{\mathcal{HC}}_{k}$ by Theorem \ref{compact} (and in particular Remark \ref{compactness for marked k currents}) since the Liouville current is filling and without bottlenecks. Since the space of probability measures is compact in the weak-$^{\ast}$ topology on a compact space, establishing the above allows one to identify each $\nu^{L}_{c}$ with a probability measure (after a rescaling) on a compact space. One then obtains accumulation points by exhausting $\widetilde{\mathcal{HC}}_{k}(M,N)$ with larger and larger compact sets associated to a sequence $T_{1}, T_{2}...$ of length thresholds going to infinity. We now compute: 

\[\nu^{L}_{c}\left( \left\{ \lambda \in \widetilde{\mathcal{HC}}_{k}(M, N) : \ell_{\Sigma}(\lambda) \leq T \right\} \right) \leq \nu^{L}_{c}\left( \left\{ \lambda \in \widetilde{\mathcal{HC}}_{k}(M,N) : h(\lambda, \mathcal{L})  \leq T \right\} \right),\]
since $i(\cdot, \cdot) \ge h(\cdot, \cdot).$
In turn, we have 
\[\nu^{L}_{c}\left( \left\{ \lambda \in \widetilde{\mathcal{HC}}_{k}(M,N) : h(\lambda, \mathcal{L})  \leq T \right\} \right) = \frac{1}{L^{6g-6}} \cdot \# \left( \phi \in \mathcal{MCG}(\Sigma): h \left( \frac{1}{L} \cdot \phi(c), \mathcal{L} \right) \leq  T \right) \]
by definition. By homogeneity of $h$, the right hand side is 
\[ \frac{1}{L^{6g-6}} \cdot  \# \left( \phi \in \mathcal{MCG}(\Sigma): h(\mathcal{L}_{\Sigma}, \phi(c)) \leq  T \cdot L \right), \]
 Since $h$ is $\mathcal{MCG}$-invariant, this is equal to 
\[ \frac{1}{L^{6g-6}} \cdot  \# \left( \phi \in \mathcal{MCG}(\Sigma): h(\phi^{-1}\mathcal{L}, c) \leq T \cdot L \right). \]
Applying Lemma \ref{replace with curve} gives that this is at most 
\begin{align*} \frac{1}{L^{6g-6}} \cdot  \# \left( \phi \in \mathcal{MCG}(\Sigma): h(\phi^{-1}\mathcal{L}, \gamma) \leq K \cdot T \cdot L \right) &=  \frac{1}{L^{6g-6}} \cdot  \# \left( \phi \in \mathcal{MCG}(\Sigma): h(\mathcal{L}, \phi(\gamma)) \leq K \cdot T \cdot L \right) \\
& = \frac{1}{L^{6g-6}} \cdot  \# \left( \phi \in \mathcal{MCG}(\Sigma): i(\mathcal{L}, \phi(\gamma)) \leq K \cdot T \cdot L \right) \\
& =  \frac{1}{L^{6g-6}} \cdot  \# \left( \phi \in \mathcal{MCG}(\Sigma): \ell_{\Sigma}(\phi(\gamma)) \leq K \cdot T \cdot L \right). \end{align*}

In~\cite{RafiSouto}*{\S 3}, the limit of the last quantity as $L \rightarrow \infty$ is shown to exist and depend only on $\Sigma, K, T$, and $\gamma$. Thus, for each $T$, the function $L \rightarrow \nu_{c}^{L} \left( \left\{ \lambda \in \widetilde{\mathcal{HC}}_{k}(M,N) : \ell_{\Sigma}(\lambda) \leq T \right\} \right)$ is bounded, as desired. Since the space of probability measures is compact, the accumulation points are Radon and thus locally finite. 

To show that accumulation points are positive, we first use Lemma \ref{harmonic to length} to establish 
\[ \nu^{L}_{c}\left( \left\{ \lambda \in \widetilde{\mathcal{HC}}_{k}(M,N) : \ell_{\Sigma}(\lambda) \leq T \right\} \right) \ge  \nu^{L}_{c}\left( \left\{ \lambda \in \widetilde{\mathcal{HC}}_{k}(M,N) : h(\lambda, \mathcal{L}) \leq T/D \right\} \right). \]

From here, a completely symmetric calculation using the lower bound in Lemma \ref{replace with curve} gives

\[ \nu^{L}_{c} \left( \left\{ \lambda \in \widetilde{\mathcal{HC}}_{k}(M,N): \ell_{\Sigma}(\lambda) \leq T \right\} \right) \geq \frac{1}{L^{6g-6}} \cdot \# \left(\phi \in \mathcal{MCG}(\Sigma): \ell_{\Sigma}(\phi(\gamma)) \leq \frac{L}{C\cdot D} \right), \]
which again, by the arguments in~\cite{RafiSouto}*{\S 3}, approaches a positive limit as $L \rightarrow \infty$.

\end{proof}

\begin{remark} \label{subsequence} We remark that Lemma \ref{accumulation points} proves more than the existence of accumulation points: the family $(\nu^{L}_{c})_{L}$ is pre-compact in the space of Radon measures on $\widetilde{\mathcal{HC}}_{k}(M,N)$ and thus for every sequence $(L_{n})$ with $L_{n} \rightarrow \infty$, there is a subsequence $(L_{n_{i}})$ so that $\lim_{i \rightarrow \infty}\nu^{L_{n_{i}}}_{c}$ exists. 
\end{remark}

Eventually, we will want to show that an accumulation point $\nu_{c}$ is $\mathcal{MCG}$-invariant (see Section \ref{establishing invariance}). However, a priori, $\nu_{c}$ may contain points of $\mathcal{N}_{k}$ in its support, and so it is not altogether clear how to make sense of $\mathcal{MCG}$-invariance. In the next lemma, we address this by showing that any accumulation point $\nu$ is supported on $\mathcal{ML}(\Sigma)$. We can therefore interpret any such $\nu$ as a measure on $\widetilde{\mathcal{HC}}_{k}(M,N) \setminus \mathcal{N}_{k}$ where the action is well-defined. 

\begin{lemma} \label{lamination} Any accumulation point $\nu$ of the family $(\nu_{c}^{L})_{L}$ is supported on the space of measured laminations in $\widetilde{\mathcal{HC}}_{k}(M,N)$. 
\end{lemma}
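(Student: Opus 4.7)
The plan is to exploit the fact that the self-intersection $i(\phi(c), \phi(c))$ is uniformly bounded in $\phi \in \mathcal{MCG}(\Sigma)$, so that after rescaling by $1/L$ the self-intersections of the atoms supporting $\nu_c^L$ tend to zero. Combined with Lemma~\ref{limit 0 implies lamination}, this will force any weak-$*$ accumulation point $\nu$ to be supported on $\mathcal{ML}(\Sigma)$.

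First I would establish a uniform bound $i(\phi(c), \phi(c)) \leq C$ for all $\phi \in \mathcal{MCG}(\Sigma)$, with $C$ depending only on $c$. Since $c$ corresponds to a triangulation, de Verdiere's theorem (invoked in the ``Why triangulations?'' paragraph of the introduction) implies that the harmonic representative of any translate $\phi(c)$ is itself an embedded triangulation, with the same underlying graph $\Gamma$ and the same edge weights as $c$. Transverse intersections among lifts of edges then occur only at lifts of vertices (using the convention of Remark~\ref{self-intersect} that shared endpoints count as transverse intersections), and the total contribution to $i(\phi(c),\phi(c))$ is bounded by a quantity depending only on the combinatorics of $\Gamma$ and the weight vector $w$. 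Bilinearity of the intersection form under rescaling then gives $i(\tfrac{1}{L}\phi(c), \tfrac{1}{L}\phi(c)) \leq C/L^2$ for every $\phi$.

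Next I would reduce to precompact open sets and derive a contradiction. Note first that $\mathcal{ML}(\Sigma)$ is closed in $\widetilde{\mathcal{HC}}_k(M,N)$: the set $\mathcal{GC}(\Sigma)$ is closed there by Proposition~\ref{currents embed} and Example~\ref{any curve}, and $\mathcal{ML}(\Sigma)$ is closed in $\mathcal{GC}(\Sigma)$. Since $\nu$ is Radon (Lemma~\ref{accumulation points}) and the ambient space is locally compact Hausdorff, it suffices to show $\nu(U) = 0$ for every precompact open $U$ whose closure is disjoint from $\mathcal{ML}(\Sigma)$. Suppose to the contrary that $\nu(U) > 0$. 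Pick a subsequence along which $\nu_c^{L_{n_i}} \to \nu$ (Remark~\ref{subsequence}); by the open-set half of Portmanteau (Proposition~\ref{Portmanteau}), $\liminf_i \nu_c^{L_{n_i}}(U) \geq \nu(U) > 0$, so for infinitely many $i$ there exists $\phi_i$ with $x_i := \tfrac{1}{L_{n_i}}\phi_i(c) \in U$. Compactness of $\overline{U}$ yields a convergent subsequence $x_i \to x_\infty \in \overline{U}$, and the first step forces $i(x_i, x_i) \leq C/L_{n_i}^2 \to 0$. Lemma~\ref{limit 0 implies lamination} then places $x_\infty \in \mathcal{ML}(\Sigma)$, contradicting $\overline{U} \cap \mathcal{ML}(\Sigma) = \emptyset$.

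The main obstacle I expect is the first step. Converting the intuition that ``an embedded triangulation has bounded self-intersection'' into an honest uniform bound over the entire $\mathcal{MCG}$-orbit requires careful bookkeeping of how the form of Section~\ref{subsec:intersection} accounts for incidences at shared endpoints (cf.\ Remark~\ref{self-intersect}), since these latter are essentially where all of the self-intersection of an embedded triangulation lives; one also needs the de Verdiere embeddedness conclusion to survive under every mapping class. The reduction and the Portmanteau contradiction are soft by comparison and should go through without surprises.
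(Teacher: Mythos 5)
Your proposal is correct, and it rests on the same two pillars as the paper's own proof: de Verdiere embeddedness of harmonic triangulations, which makes $i(\phi(c),\phi(c))$ independent of $\phi$ (the paper in fact uses the equality $i(\phi(c),\phi(c))=i(c,c)$, since for an embedded triangulation the self-intersection is determined by the combinatorics and the weight vector alone), together with homogeneity, Lemma~\ref{limit 0 implies lamination}, and the open-set half of Proposition~\ref{Portmanteau}. Where you genuinely differ is the endgame. The paper integrates self-intersection against $\nu^L_c$ over the compact sets $\left\{\ell_\Sigma \le T\right\}$, uses the mass bounds from Lemma~\ref{accumulation points} to conclude that the $\nu^L_c$-average self-intersection is $O(1/L^2)$, and then runs a Markov-type estimate on a neighborhood $U$ of a non-lamination point on which $i(x,x)\ge\epsilon$ (that pointwise lower bound is where the paper invokes Lemma~\ref{limit 0 implies lamination}); letting $T\to\infty$ finishes. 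You instead apply the Portmanteau lower bound directly to a precompact open $U$ with closure disjoint from $\mathcal{ML}(\Sigma)$, extract a convergent subsequence of atoms $x_i=\frac{1}{L_{n_i}}\phi_i(c)\in U$, and feed it to Lemma~\ref{limit 0 implies lamination}. Your route avoids the integral computation and the lower mass bound, but it leans on local compactness of $\widetilde{\mathcal{HC}}_k(M,N)$ (to produce precompact neighborhoods whose closures miss $\mathcal{ML}$) and on sequential compactness of $\overline{U}$; within the paper these are only available through Theorem~\ref{compact} and Remark~\ref{compactness for marked k currents}, so in practice one realizes $\overline{U}$ inside a sub-level set of $i(\cdot,\mathcal{L})$, which is essentially the length truncation the paper performs explicitly. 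Your uniform bound $i(\phi(c),\phi(c))\le C$ via vertex incidences is exactly the right bookkeeping and is even slightly weaker than what is true (equality), so the step you flagged as the main obstacle is sound.
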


\begin{proof} We will prove that if $U \subset \widetilde{\mathcal{HC}}_{k}(M,N)$ is a small open neighborhood of some $c \notin \mathcal{ML}$ and disjoint from $\mathcal{ML}$, then $\nu_{c}(U)= 0$ for $\nu_{c}$ any limiting measure of the family $\left\{\nu^{L}_{c} \right\}$. By Lemma \ref{limit 0 implies lamination} and since $\mathcal{ML}$ is closed, we can assume there is $\epsilon >0$ so that for all $x \in U$, $i(x,x) \ge \epsilon$. Indeed, if every open neighborhood of $c$ contains a measured lamination in its closure, $c$ is a limit point of $\mathcal{ML}$. 

The $\nu_{c}^{L}$-measure of the subset of $U$ consisting of currents with length at most $T$ is given by
\[ \nu_{c}^{L}\left( j \in U: \ell_{\Sigma}(j) \leq T \right) = \frac{1}{L^{6g-6}} \cdot \# \left( \phi \in \mathcal{MCG}(\Sigma): \phi(c) \in U, \ell_{\Sigma}(\phi(c)) \leq T\cdot L \right). \]
Consider the integral of self-intersection with respect to $\nu^{L}_{c}$:
\begin{align*} \int_{\left\{ \lambda \in \widetilde{\mathcal{HC}}_{k}(M,N) : \ell_{\Sigma}(\lambda) \leq T \right\} } i(\lambda, \lambda) d\nu^{L}_{c}(\lambda) &= \frac{1}{L^{6g-6}}\sum_{\phi \in \mathcal{MCG}(\Sigma), \ell_{\Sigma}(\phi(c)) \leq T\cdot L} i \left( \frac{\phi(c)}{L}, \frac{\phi(c)}{L} \right)\\ &=  \frac{1}{L^{6g-6}} \sum_{\phi \in \mathcal{MCG}(\Sigma), \ell_{\Sigma}(\phi(c)) \leq T \cdot L} \frac{i(c, c)}{L^{2}}, \end{align*}
 Here, we have used homogeneity of $i(\cdot, \cdot)$ and the fact due to de Verdiere~\cite{deVerdiere} that harmonic triangulations are always embeddings.   

This is in turn equal to 
\[ \frac{ \#(\phi \in \mathcal{MCG}(\Sigma): \ell_{\Sigma}(\phi(c)) \leq L \cdot T)}{L^{6g-6}} \cdot \frac{ i(c,c)}{L^{2}} = \nu^{L}_{c}\left( \left\{ \lambda \in \widetilde{\mathcal{HC}}_{k}(M): \ell_{\Sigma}(\lambda) \leq T \right\} \right) \cdot \frac{ i(c,c)}{L^{2}}. \] 

The proof of Lemma \ref{accumulation points} implies that $\nu^{L}_{c}\left( \left\{ \lambda \in \widetilde{\mathcal{HC}}_{k}(M): \ell_{\Sigma}(\lambda) \leq T \right\} \right)$ is bounded both above and below independently of $L$. Dividing by this quantity gives the average of self-intersection with respect to $\nu^{L}_{c}$, and this average goes to $0$ as $L \rightarrow \infty$ owing to the $L^{2}$ term in the denominator. 

Since the $\nu^{L}_{c}$-average self-intersection goes to $0$ as $L \rightarrow \infty$, it follows that a vanishingly small proportion of the total $\nu^{L}_{c}$ mass can lie in $U$. Since the total $\nu^{L}_{c}$ mass is bounded independently of $L$, this implies that
\[ \liminf_{L \rightarrow \infty} \nu^{L}_{c}(\left\{\lambda \in U : \ell_{\Sigma}(\lambda)\leq T \right\} ) = 0,\]
whence it follows that $\nu_{c}(U)= 0$ by the second part of Proposition \ref{Portmanteau}, and applying the entire argument to a sequence $T_{1}< T_{2}< ...$ of length thresholds going to infinity.

\end{proof}
 Before moving on, we establish a notational abbreviation: henceforth, by $\widetilde{\mathcal{HC}}$, we will mean $\widetilde{\mathcal{HC}}_{k}(M,N)$.

\subsection{Mapping class group invariance of the limiting measure} \label{establishing invariance}

Since the $\mathcal{MCG}(\Sigma)$-action on $\widetilde{\mathcal{HC}}$ may not be continuous, it is not automatically the case that $\nu_{c}$ is $\mathcal{MCG}(\Sigma)$-invariant even though each $\nu^{L}$ clearly is. Indeed, as the following simple example demonstrates -- shown to us by Anush Tserunyan~\cite{Tserunyan}-- if a group $G$ admits a Borel action on some space $X$, it needn't be true that the set of $G$-invariant measures is closed in the weak$^{\ast}$-topology:

\begin{Example} \label{Anush's example} Let $X = \left\{1/n: n \in \mathbb{N} \right\} \cup \left\{0 \right\}$, equipped with the usual subspace topology, and let $\mu_{n}$ be the atomic measure assigning mass $1$ to $\frac{1}{n}$. Finally, let $T: X \rightarrow X$ be the identity for each $1/n \neq 1$, and define $T(1)= 0, T(0)= 1$. Note that $\mu_{n}$ is $T$-invariant for each $n$. On the other hand, $\lim_{n \rightarrow \infty} \mu_{n}$ is the atomic measure assigning mass $1$ to $0$, and this is not $T$-invariant. 
\end{Example}

To verify $\mathcal{MCG}(\Sigma)$-invariance, we will use Proposition \ref{Portmanteau}. First, we notice how Proposition \ref{Portmanteau} interacts with Example \ref{Anush's example}: if $U \subset X$ is any open neighborhood of $0$, then the limiting measure gives non-zero mass to the topological boundary of $T(U)$. This motivates the following lemma: 

\begin{lemma} \label{general criterion for invariance} Let $T: X \rightarrow X$ be Borel and suppose that $\lim_{n \rightarrow \infty} \mu_{n} = \mu$ in the sense of weak$^{\ast}$-convergence, and so that $\mu_{i}$ is a Radon probability measure and $T$-invariant for each $i$. Suppose further that $A \subset X$ is open and that $T(A)$ can be outer-approximated by open sets $\left\{U_{i} \right\}$ so that $\mu(\partial U_{i})= \mu(\partial A) = 0$. Then 
\[ \mu(A) = \mu(T(A)). \]
\end{lemma}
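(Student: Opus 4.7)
The plan is to pass to the limit in the identity $\mu_n(A) = \mu_n(T(A))$, which holds for every $n$ by $T$-invariance of $\mu_n$ together with the bijectivity of $T$ (so that $T_*\mu_n = \mu_n$ is equivalent to $\mu_n(T(B)) = \mu_n(B)$ for Borel $B$). By Portmanteau (Proposition~\ref{Portmanteau}) applied to the open set $A$ with $\mu(\partial A) = 0$, we immediately obtain $\mu_n(A) \to \mu(A)$, and hence $\mu_n(T(A)) \to \mu(A)$ as well. The substance of the lemma is to identify this common limit with $\mu(T(A))$.

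For the upper bound, fix $i$: Portmanteau applied to the open set $U_i$ (with $\mu(\partial U_i) = 0$) gives $\mu_n(U_i) \to \mu(U_i)$. Since $T(A) \subset U_i$, we have $\mu_n(T(A)) \leq \mu_n(U_i)$, so $\limsup_n \mu_n(T(A)) \leq \mu(U_i)$; the outer-approximation property $\mu(U_i) \to \mu(T(A))$ then yields $\limsup_n \mu_n(T(A)) \leq \mu(T(A))$. Combining with $\mu_n(T(A)) \to \mu(A)$ gives one inequality, $\mu(A) \leq \mu(T(A))$.

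The matching lower bound $\liminf_n \mu_n(T(A)) \geq \mu(T(A))$ will be the main obstacle, and the plan is to obtain it by upgrading $T(A)$ to a full $\mu$-continuity set, i.e.\ showing $\mu(\partial T(A)) = 0$. One half of this comes directly from the outer approximation: since $\overline{T(A)} \subset \overline{U_i}$ and $\mu(\overline{U_i}) = \mu(U_i) + \mu(\partial U_i) = \mu(U_i) \to \mu(T(A))$, we conclude $\mu(\overline{T(A)}) = \mu(T(A))$, hence $\mu(\overline{T(A)} \setminus T(A)) = 0$. The remaining half, $\mu(T(A) \setminus T(A)^{\circ}) = 0$, is the genuinely delicate point. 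The plan to handle it is to run a complementary inner-approximation argument on $T(A)^{c} = T(A^{c})$ using the closed sets $U_i^{c}$, which satisfy $\mu(\partial U_i^c) = 0$ and $\mu(U_i^c) \to \mu(T(A)^c)$; combined with the Radon regularity of $\mu$ and $T$-invariance applied to the closed set $A^c$ (where $\mu(\partial A^c) = \mu(\partial A) = 0$ and Portmanteau still applies), this will pin down $\mu(T(A)^c)$ and in turn force $\mu(\partial T(A)) = 0$.

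Once $\mu(\partial T(A)) = 0$ is in hand, Portmanteau applied directly to $T(A)$ gives $\mu_n(T(A)) \to \mu(T(A))$, and comparing with the earlier $\mu_n(T(A)) \to \mu(A)$ closes the argument. The takeaway is that the role of the hypothesis is precisely to rule out the Anush-style pathology in which $\partial T(A)$ carries positive mass for the limiting $\mu$ even though $\partial A$ does not; the outer approximation by $\mu$-continuity boxes is what transports the vanishing-boundary condition across the (a priori discontinuous) transformation $T$.
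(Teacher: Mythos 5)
Your opening and your upper bound are fine, and they coincide with the sound half of the paper's own argument: Portmanteau on $A$ gives $\mu_n(A)\to\mu(A)$, invariance (plus bijectivity of $T$, which you rightly flag and which holds in the intended application) gives $\mu_n(A)=\mu_n(T(A))$, and comparison with the $\mu$-continuity sets $U_i$ gives $\mu(A)\le\mu(T(A))$. Likewise your derivation of $\mu\bigl(\overline{T(A)}\setminus T(A)\bigr)=0$ from the outer approximation is correct. The gap is the other half of your plan: the claim $\mu\bigl(T(A)\setminus T(A)^{\circ}\bigr)=0$ is only announced ("this will pin down \dots and in turn force \dots"), and it cannot be deduced from the stated hypotheses. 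The paper's own Example \ref{Anush's example} shows this concretely: take $A=\{1\}$, which is open with $\partial A=\emptyset$, so $T(A)=\{0\}$, and take $U_i=\{0\}\cup\{1/m:m\ge i\}$. These are clopen (so $\mu(\partial U_i)=0$), decrease to $T(A)$, and satisfy $\mu(U_i)=1=\mu(T(A))$ for $\mu=\delta_0$, so every hypothesis of the lemma holds; yet $\partial T(A)=\{0\}$ carries full $\mu$-mass, and indeed $\mu(A)=0\neq 1=\mu(T(A))$. So no argument from these hypotheses alone can produce $\mu(\partial T(A))=0$, nor the lower bound $\liminf_n\mu_n(T(A))\ge\mu(T(A))$ that you would extract from it.

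What is missing is precisely that liminf inequality, and in the setting where the lemma is actually applied it comes for free: there $T(A)=f(U_\lambda(\epsilon))$ is \emph{open}, so the second assertion of Proposition \ref{Portmanteau} gives $\liminf_n\mu_n(T(A))\ge\mu(T(A))$ directly, which combined with $\mu_n(T(A))\to\mu(A)$ and your upper bound closes the proof in one line. For comparison, the paper's written proof gets the lower bound by interchanging the $i$ and $n$ limits in $\lim_i\lim_n\mu_n(U_i)$, and that interchange is exactly what fails in the example above; so your inner/outer symmetrization is not a repair of that step but a restatement of the same difficulty. The honest fix, for you as for the paper, is to add the extra input actually available in the application (openness of $T(A)$, or equivalently the Portmanteau liminf bound for it) as a hypothesis, after which your argument goes through without the detour through $\mu(\partial T(A))=0$.
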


\begin{proof} Since $\mu(\partial A)= 0$, Proposition \ref{Portmanteau} implies that 
\[ \mu(A) = \lim_{n \rightarrow \infty} \mu_{n}(A). \]
Since each $\mu_{n}$ is $T$-invariant, this in turn is equal to $\lim_{n \rightarrow \infty} \mu_{n}(T(A))$. Since $\mu_{i}$ is Radon, this can be rewritten as the double limit 
\[ \lim_{i \rightarrow \infty} \lim_{n \rightarrow \infty} \mu_{n}(U_{i}). \]
Finally, since $\mu(\partial U_{i})= 0$, again by Proposition \ref{Portmanteau}, this is equal to 
\[ \lim_{i \rightarrow \infty} \mu(U_{i}) = \mu(T(A)), \]
where the last equality is because $\mu$ is Radon. 
\end{proof}

Therefore, to prove $\mathcal{MCG}$-invariance of $\nu_{c}$, it will suffice to find an abundance of open sets whose images under the mapping class group action can be outer approximated by sets whose topological boundaries are $\nu_{c}$-null. Towards this goal, given $\lambda \in \mathcal{ML}(\Sigma)$ a minimal filling lamination, we focus on sets of the form 
\[U_{\lambda}(\epsilon):= \left\{ j \in \widetilde{\mathcal{HC}}: h(j, \lambda)< \epsilon \right\}. \]
The intersection of all such sets with $\mathcal{ML}(\Sigma)$ form a base for the standard topology, and moreover, the collection of these open sets is preserved by the $\mathcal{MCG}(\Sigma)$ action. Using Lemma \ref{open sets intersection lamination}, the topological boundary $\partial U_{\lambda}(\epsilon)$ can be described as 
\[ \partial U_{\lambda}(\epsilon)= \left\{ j \in \widetilde{\mathcal{HC}}: h(j, \lambda)= \epsilon \right\}. \]

\begin{lemma} \label{zero to boundaries} For any $L_{n} \rightarrow \infty$ such that $\lim_{n \rightarrow \infty} \nu^{L_{n}}_{c} = \nu_{c}$ exists, one has 
\[ \nu_{c} \left( \partial U_{\lambda}(\epsilon) \right) = 0 \]
for every $\epsilon>0$ and every minimal filling lamination $\lambda$. 
\end{lemma}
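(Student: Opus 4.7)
The plan is to use the scaling action $R_t \colon \mu \mapsto t\mu$ to relate $\nu_c$ to a companion accumulation point, and then exploit that every locally finite Radon measure on the real line has only countably many atoms, so that we can always ``dodge'' the level set $\{i(\cdot, \lambda) = \epsilon\}$ by an appropriate rescaling.

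First, by Lemma \ref{lamination}, $\nu_c$ is supported on $\mathcal{ML}(\Sigma) \subset \widetilde{\mathcal{HC}}$, and on this subspace the homotopical intersection $h(\cdot, \lambda)$ coincides with the continuous classical intersection number $i(\cdot, \lambda)$. Hence it suffices to show
\[
\nu_c(\{\mu \in \mathcal{ML}(\Sigma) : i(\mu, \lambda) = \epsilon\}) = 0.
\]

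For $t > 0$, the scaling $R_t$ is a homeomorphism on the ambient space, and a direct computation from the definition of $\nu^L_c$ gives the identity $(R_t)_\ast \nu^L_c = t^{-(6g-6)}\, \nu^{L/t}_c$. Since $(R_t)_\ast$ is weak-$\ast$ continuous and $\nu^{L_n}_c \to \nu_c$, the left-hand side (with $L = L_n$) converges to $(R_t)_\ast \nu_c$; hence $\nu^{L_n/t}_c$ also converges, and its limit
\[
\tilde{\nu} \;:=\; \lim_{n \to \infty} \nu^{L_n/t}_c \;=\; t^{6g-6}\,(R_t)_\ast \nu_c
\]
is another accumulation point of the family $\{\nu^L_c\}_L$ (as $L_n/t \to \infty$). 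By Lemmas \ref{accumulation points} and \ref{lamination}, $\tilde{\nu}$ is a locally finite Radon measure supported on $\mathcal{ML}(\Sigma)$.

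The pushforward $i(\cdot, \lambda)_\ast \tilde{\nu}$ is then a Radon measure on $[0, \infty)$, hence has only countably many atoms; in particular we may choose $t > 0$ with $t\epsilon$ not an atom, so that $\tilde{\nu}(\{i(\cdot, \lambda) = t\epsilon\}) = 0$. Combining this with the scaling identity and the set-theoretic equality $R_t^{-1}(\{i(\cdot, \lambda) = t\epsilon\}) = \{i(\cdot, \lambda) = \epsilon\}$ yields
\[
\nu_c(\{i(\cdot, \lambda) = \epsilon\}) \;=\; (R_t)_\ast \nu_c(\{i(\cdot, \lambda) = t\epsilon\}) \;=\; t^{-(6g-6)}\,\tilde{\nu}(\{i(\cdot, \lambda) = t\epsilon\}) \;=\; 0,
\]
as desired.

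The main technical point I expect to require care is the scaling identity $(R_t)_\ast \nu^L_c = t^{-(6g-6)}\,\nu^{L/t}_c$ at the level of marked harmonic currents: one must verify that $R_t$ is a well-defined homeomorphism of an ambient space containing the $L_n$- and $L_n/t$-rescaled orbits of $c$, that it is compatible with the marking data (so that rescaling the marked current of $\phi(c)/L$ genuinely yields the marked current of $\phi(c)/(L/t)$ up to relabelling), and that its action preserves local finiteness and commutes with the $\mathcal{MCG}$-action appearing in the definition of $\nu^L_c$. Once the scaling and its weak-$\ast$ continuity are in hand, the countable-atoms argument above completes the proof without any further counting asymptotics.
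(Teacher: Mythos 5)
There is a genuine gap: the atom-dodging step is circular. Your auxiliary measure $\tilde{\nu}$ is not a fixed measure from which you may independently choose $t$; it is defined as $\tilde{\nu}=\tilde{\nu}_{t}=t^{6g-6}(R_{t})_{\ast}\nu_{c}$, so it changes with $t$. By homogeneity of $i(\cdot,\lambda)$ one computes
\[
\bigl(i(\cdot,\lambda)_{\ast}\tilde{\nu}_{t}\bigr)(\{t\epsilon\})
= t^{6g-6}\,(R_{t})_{\ast}\nu_{c}\bigl(\{i(\cdot,\lambda)=t\epsilon\}\bigr)
= t^{6g-6}\,\nu_{c}\bigl(\{i(\cdot,\lambda)=\epsilon\}\bigr),
\]
so the condition ``$t\epsilon$ is not an atom of $i(\cdot,\lambda)_{\ast}\tilde{\nu}_{t}$'' is independent of $t$ and is exactly the conclusion you are trying to prove: the set of bad levels rescales along with you, and no choice of $t$ can dodge it. The scaling identity $(R_{t})_{\ast}\nu^{L}_{c}=t^{-(6g-6)}\nu^{L/t}_{c}$ is correct, but it only tells you that $t^{6g-6}(R_{t})_{\ast}\nu_{c}$ is \emph{some} accumulation point of the family along the sequence $L_{n}/t$; it does not give you a single $t$-independent measure, nor the dilation-invariance $\nu_{c}(tA)=t^{6g-6}\nu_{c}(A)$ of $\nu_{c}$ itself, which would require knowing that the limit along $L_{n}/t$ agrees with the limit along $L_{n}$ — precisely the uniqueness that is only established at the very end of the argument. (There is also the secondary issue, which you flag, that $R_{t}$ with $t>1$ need not preserve $\widetilde{\mathcal{HC}}_{k}(M,N)$ because of the weight bound $M$.)

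Note moreover that even a non-circular version of the countable-atoms idea can at best yield $\nu_{c}(\partial U_{\lambda}(\epsilon))=0$ for all but countably many $\epsilon$, whereas the lemma — and its role in proving $\mathcal{MCG}$-invariance via Lemma \ref{general criterion for invariance}, where the sets $U_{\lambda}(\epsilon)$ are permuted by the action — requires it for \emph{every} $\epsilon>0$ and every minimal filling $\lambda$. The paper gets the full statement by a different mechanism: using Lemma \ref{replace with curve} to dominate the count for $c$ by the count for a filling \emph{curve} $\eta$, invoking Erlandsson--Souto's result that $\nu^{L}_{\eta}$ converges to a multiple of the Thurston measure, and then using Lemma \ref{boundary zero} (Thurston measure of any level set $\{i(\cdot,\lambda)=\mathrm{const}\}$ vanishes) together with Proposition \ref{Portmanteau} and a $\delta\to 0$ limit. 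Some comparison with an already-identified limiting measure of this kind, or an independent proof of dilation-invariance of $\nu_{c}$, seems unavoidable; your proposal as written does not supply either.
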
 

\begin{proof} Our argument is inspired by~\cite{RafiSouto}*{Lemma 4.4} and our strategy follows theirs very closely. First observe that by Lemma \ref{replace with curve}, there is some $K$ and some filling curve $\eta$ on $\Sigma$ so that 
\[ \frac{1}{K} \cdot \ell_{\Sigma}(\lambda) \leq h(c, \lambda) \leq K \ell_{\Sigma}(\lambda), \]
and 
\[ \frac{1}{K} \cdot i(\lambda, \eta) \leq h(c, \lambda) \leq K \cdot i(\lambda, \eta)\]
for all classical currents $\lambda$ (simply apply Lemma \ref{replace with curve} twice-- once for the Liouville current and once for $\eta$-- and choose the larger of the two associated constants).

It suffices to prove that for all $T >0$, 
\[ \lim_{\delta \rightarrow 0} \limsup_{L \rightarrow \infty}\nu^{L}_{c} \left( \left\{ j \in \widetilde{\mathcal{HC}} : |i(j, \lambda) - \epsilon| \le \delta \cdot T  \hspace{2 mm} \mbox{and} \hspace{2 mm} \ell_{\Sigma}(j) \leq T \right\} \right) = 0. \]
Here we are using Proposition \ref{Portmanteau}, Lemma \ref{intersection function continuous} and the fact that $\lambda$ is without bottlenecks (which implies that the set of harmonic currents whose intersection with $\lambda$ lies in $(\epsilon- \delta \cdot T, \epsilon + \delta \cdot T)$ is open) to argue that if the above holds, the $\nu_{c}$ measure of the set of currents $j \in \widetilde{\mathcal{HC}}$ satisfying $i(\lambda, j) = \epsilon$, is null. Then we use Lemma \ref{lamination} to replace $i$ with $h$ and obtain the desired result.

Fixing $T, \epsilon>0$, one has that 
\begin{align*} & \nu^{L}_{c} \left( \left\{ j \in \widetilde{\mathcal{HC}}_{k} : |i(j, \lambda) - \epsilon| \leq \delta \cdot T  \hspace{2 mm} \mbox{and} \hspace{2 mm} \ell_{\Sigma}(j) \leq T \right\} \right)  
\\ &= \frac{1}{L^{6g-6}} \cdot \#\left( \phi \in \mathcal{MCG}(\Sigma) : |i(\phi(c), \lambda)- \epsilon \cdot L| \leq \delta \cdot T \cdot L \hspace{2 mm} \mbox{and} \hspace{2 mm} \ell_{\Sigma}(\phi(c)) \leq T \cdot L \right) \\ &\leq \frac{1}{L^{6g-6}} \cdot \#\left( \phi \in \mathcal{MCG}(\Sigma) : |i(\phi(\eta), \lambda)- \epsilon \cdot L \cdot K| \leq \delta \cdot T \cdot L \cdot K \hspace{2 mm} \mbox{and} \hspace{2 mm} \ell_{\Sigma}(\phi(\eta)) \leq T \cdot L \cdot K \right) \\ &= \nu^{L}_{\eta} \left( \left\{ \rho \in \widetilde{\mathcal{HC}}: |i(\rho, \lambda) - \epsilon \cdot K| \leq \delta \cdot T \cdot K \hspace{2 mm} \mbox{and} \hspace{2 mm} \ell_{\Sigma}(\rho) \leq T \cdot K \right\} \right) \\
&= \nu^{L}_{\eta} \left( \left\{ \rho \in \mathcal{C}(\Sigma) : |i(\rho, \lambda)- \epsilon \cdot K| \leq \delta \cdot T \cdot K \hspace{2 mm} \mbox{and} \hspace{2 mm} \ell_{\Sigma}(\rho) \leq T \cdot K \right\} \right), \end{align*}

By~\cite{ErlandssonSouto}*{Proposition 4.1}, we have a $t>0$ such that $$\lim_{L \rightarrow \infty} \nu_L(\eta) = t\mu_{Th}(\eta).$$ Thus, using Proposition \ref{Portmanteau} and basic properties of the Thurston measure yields
\begin{align*} & \limsup_{L \rightarrow \infty} \nu^{L}_{c}  \left( \left\{ \rho \in \widetilde{\mathcal{HC}} : |h(\rho, \lambda) - \epsilon| \leq \delta \cdot T  \hspace{2 mm} \mbox{and} \hspace{2 mm} \ell_{\Sigma}(\rho) \leq T  \right\} \right) \\ & \leq t \cdot \mu_{Th} \left( \left\{ \rho \in \mathcal{ML} : |i(\lambda, \rho) - \epsilon \cdot K| \leq \delta \cdot T \cdot K \hspace{2 mm} \mbox{and} \hspace{2 mm} \ell_{\Sigma}(\rho) \leq T \cdot K \right\} \right).\end{align*}
Since, by Lemma \ref{boundary zero}, the Thurston measure assigns $0$ to every set of laminations intersecting a given lamination a fixed amount, the lemma follows by letting $\delta \rightarrow 0$. 
\end{proof}

We are now ready to prove that $\nu_{c}$ must be $\mathcal{MCG}$-invariant. Since the collection of sets of the form $U_{\lambda}(\epsilon)$, when intersected with $\mathcal{ML}$, form a base for the topology on $\mathcal{ML}(\Sigma)$, and $\nu_{c}$ is supported on $\mathcal{ML}$, it suffices to prove that 
\[ \nu_{c}(U_{\lambda}(\epsilon)) = \nu_{c}(f(U_{\lambda}(\epsilon)) \]
for any $f \in \mathcal{MCG}(\Sigma)$. And indeed, this follows from Lemma \ref{general criterion for invariance}, Lemma \ref{zero to boundaries}, and the fact that $f(U_{\lambda}(\epsilon))$ is open. 

\begin{remark}
Applying Proposition \ref{Portmanteau} requires the measures in our sequence to be probability measures. Note that it suffices to prove invariance for all $U_{\lambda}(\epsilon)$ for all $\epsilon$ sufficiently small. Hence we can assume that any $\mu \in U_{\lambda}(\epsilon)$ has bounded length and mass (depending on the length and mass of $\lambda$). Thus $U_{\lambda}(\epsilon)$ lies entirely in 
\[ \left\{ \nu \in \widetilde{\mathcal{HC}}_{k}(M,N): \ell_{\Sigma}(\nu) \leq T \right\}\]
for some $T, M, N$. And by the proof of Lemma \ref{accumulation points}, $\nu^{L}_{c}$ has bounded total mass on this set independent of $L$. 
\end{remark}

\subsection{Identifying the limiting measure}

At this stage, we have shown that any accumulation point $\nu_{c}$ is $\mathcal{MCG}(\Sigma)$-invariant, positive, and supported on the space of measured laminations. Lindenstrauss-Mirzakhani~\cite{LindenstraussMirzakhani} characterize the Thurston measure class as the unique $\mathcal{MCG}$-invariant class assigning $0$ to every set of laminations disjoint from a given simple closed curve. In the next lemma, we therefore verify this last property in order to conclude that $\nu_{c}$ must be a multiple of $\mu_{Th}$:

\begin{lemma} \label{zero to curve missing sets} For any $L_{n} \rightarrow \infty$ such that $\lim_{n \rightarrow \infty} \nu^{L_{n}}_{c} = \nu_{c}$ exists, one has 
\[ \nu_{c} \left( \left\{ \lambda \in \mathcal{ML}(\Sigma): i(\lambda, \gamma) = 0  \right\} \right) = 0 \]
for every simple closed curve $\gamma$. 
\end{lemma}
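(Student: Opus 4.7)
The plan is to mirror the strategy of Lemma \ref{zero to boundaries}, adapted to the fact that $\gamma$ is a simple closed curve rather than a filling lamination. First I will reduce the statement to showing, for each $T > 0$,
$$\lim_{\delta \to 0} \limsup_{L \to \infty} \nu^L_c\left(\left\{j \in \widetilde{\mathcal{HC}} : i(j, \gamma) \leq \delta,\ \ell_\Sigma(j) \leq T\right\}\right) = 0. \quad (\star)$$
Since $\nu_c$ is supported on $\mathcal{ML}(\Sigma)$ by Lemma \ref{lamination}, and both $i(\cdot, \gamma)$ and $\ell_\Sigma$ are continuous on $\mathcal{ML}$, applying the Portmanteau theorem at values of $\delta, T$ off the at-most-countable set of bad thresholds (in direct analogy with the proof of Lemma \ref{boundary zero}), combined with Radon regularity, then yields $\nu_c(\{\lambda \in \mathcal{ML} : i(\lambda, \gamma) = 0\}) = 0$ by sending $\delta\to 0$ and $T\to\infty$.

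For $(\star)$ itself I will reuse the lower-bound portion of the proof of Lemma \ref{replace with curve}. Although the \emph{statement} of that lemma requires the classical current being tested against to be filling, its proof actually establishes the following stronger intermediate fact: there is a filling multi-curve $\rho_c$ depending only on $c$, and a constant $T_c > 0$, such that $i(\rho_c, \sigma) \leq T_c \cdot h(c, \sigma)$ for \emph{every} classical current $\sigma$---filling or not---since $\rho_c$ admits a representative carried on any graph representative of $c$ that traverses each edge at most $T_c$ times. Applying this with $\sigma = \phi^{-1}(\gamma)$ and $\sigma = \phi^{-1}(\mathcal{L})$, and using $\mathcal{MCG}$-invariance of both $h$ and of the classical intersection pairing, together with $h \leq i$, yields
$$i(\phi(\rho_c), \gamma) \leq T_c \cdot i(\phi(c), \gamma), \qquad \ell_\Sigma(\phi(\rho_c)) \leq T_c \cdot \ell_\Sigma(\phi(c)).$$
Consequently the counting quantity for $c$ is dominated by the corresponding one for $\rho_c$, i.e.
$$\nu^L_c\left(\{i(\cdot, \gamma) \leq \delta,\ \ell_\Sigma \leq T\}\right) \leq \nu^L_{\rho_c}\left(\{i(\cdot, \gamma) \leq T_c\delta,\ \ell_\Sigma \leq T_cT\}\right).$$

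Since $\rho_c$ is a multi-curve, the Erlandsson-Souto convergence theorem~\cite{ErlandssonSouto}*{Proposition 4.1}, already invoked in the proof of Lemma \ref{zero to boundaries}, supplies a constant $t > 0$ with $\nu^L_{\rho_c} \to t\mu_{Th}$ weak-$^{\ast}$ on $\mathcal{ML}(\Sigma)$. Passing $L\to\infty$ along good $\delta, T$ and then sending $\delta \to 0$ reduces the right-hand side of $(\star)$ to $t\cdot\mu_{Th}\!\left(\{\lambda \in \mathcal{ML} : i(\lambda, \gamma) = 0,\ \ell_\Sigma(\lambda) \leq T_cT\}\right)$, which vanishes by the standard fact that the set of measured laminations disjoint from a simple closed curve is a proper $\mathbb{R}_{\geq 0}$-invariant subcone of $\mathcal{ML}(\Sigma)$---namely, laminations carried by the proper subsurface $\Sigma\setminus\gamma$ together with weighted multiples of $\gamma$ itself---of dimension strictly less than $6g-6$ in every train-track chart, and therefore Thurston-null.

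The main obstacle will be extracting from the proof of Lemma \ref{replace with curve} the sharper intermediate bound $i(\rho_c,\sigma)\leq T_c\, h(c,\sigma)$ for \emph{every} classical $\sigma$ (not only filling ones), and verifying the $\mathcal{MCG}$-equivariance $\rho_{\phi(c)} = \phi(\rho_c)$ needed to translate between the counts for $\phi(c)$ and for $\phi(\rho_c)$; both are visible in that argument but need to be teased out. Modulo this, the remainder of the proof is a streamlined analogue of Lemma \ref{zero to boundaries}.
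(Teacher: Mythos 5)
Your proposal is correct and follows essentially the same route as the paper's proof: dominate the $\nu^{L}_{c}$-count of the relevant sublevel set by the corresponding count for a fixed filling (multi-)curve using the lower-bound comparison from Lemma \ref{replace with curve}, invoke \cite{ErlandssonSouto}*{Proposition 4.1} together with Proposition \ref{Portmanteau}, and conclude from the fact that $\mu_{Th}$ gives zero mass to the laminations disjoint from $\gamma$. The only (cosmetic) differences are that the paper compares against a filling curve $\eta$ and applies the stated bound of Lemma \ref{replace with curve} to the non-filling current $\phi^{-1}(\gamma)$ -- which, exactly as you observe, is legitimate because the lower-bound half of that proof never uses the filling hypothesis -- whereas you compare against the carried multi-curve $\rho_{c}$ directly; and the equivariance $\rho_{\phi(c)}=\phi(\rho_{c})$ you flag as an obstacle is not actually needed, since $\mathcal{MCG}$-invariance of $h$ and of the classical pairing already gives $i(\phi(\rho_{c}),\gamma)=i(\rho_{c},\phi^{-1}(\gamma))\leq T_{c}\,h(c,\phi^{-1}(\gamma))=T_{c}\,h(\phi(c),\gamma)$.
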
 

\begin{proof} The proof is almost identical to the proof of Lemma \ref{zero to boundaries}, although perhaps technically a little simpler. We include it for the sake of completeness. 

Again, by Lemma \ref{replace with curve}, there is some $K$ and some filling curve $\eta$ on $\Sigma$ so that 
\[ \frac{1}{K} \cdot \ell_{\Sigma}(\lambda) \leq h(c, \lambda) \leq K \ell_{\Sigma}(\lambda), \]
and 
\[ \frac{1}{K} \cdot i(\lambda, \eta) \leq h(c, \lambda) \leq K \cdot i(\lambda, \eta)\]
for all classical currents $\lambda$ (simply apply Lemma \ref{replace with curve} twice-- once for the Liouville current and once for $\eta$-- and choose the larger of the two associated constants). Fixing $T>0$ and some small $\epsilon$, one has that 
\begin{align*} & \nu^{L}_{c} \left( \left\{ \lambda \in \widetilde{\mathcal{HC}} : h(\lambda, \gamma) \leq \epsilon \cdot T  \hspace{2 mm} \mbox{and} \hspace{2 mm} \ell_{\Sigma}(\lambda) \leq T \right\} \right)  
\\ &= \frac{1}{L^{6g-6}} \cdot \#\left( \phi \in \mathcal{MCG}(\Sigma) : h(\phi(c), \gamma) \leq \epsilon \cdot T \cdot L \hspace{2 mm} \mbox{and} \hspace{2 mm} \ell_{\Sigma}(\phi(c)) \leq T \cdot L \right) \\ &\leq \frac{1}{L^{6g-6}} \cdot \#\left( \phi \in \mathcal{MCG}(\Sigma) : h(\phi(\eta), \gamma) \leq \epsilon \cdot T \cdot L \cdot K \hspace{2 mm} \mbox{and} \hspace{2 mm} \ell_{\Sigma}(\phi(\eta)) \leq T \cdot L \cdot K \right) \\ &= \nu^{L}_{\eta} \left( \left\{ \lambda \in \widetilde{\mathcal{HC}}: h(\lambda, \gamma) \leq \epsilon \cdot T \cdot K \hspace{2 mm} \mbox{and} \hspace{2 mm} \ell_{\Sigma}(\lambda) \leq T \cdot K \right\} \right) \\
&= \nu^{L}_{\eta} \left( \left\{ \lambda \in \mathcal{C}(\Sigma) : i(\lambda, \gamma) \leq \epsilon \cdot T \cdot K \hspace{2 mm} \mbox{and} \hspace{2 mm} \ell_{\Sigma}(\lambda) \leq T \cdot K \right\} \right), \end{align*}

where the last equality follows from the fact that the mapping class group orbit of $\eta$ lies entirely in $\mathcal{C}(\Sigma)$. By~\cite{ErlandssonSouto}*{Proposition 4.1}$\lim_{L \rightarrow \infty}\nu^{L}_{\eta}$ is some multiple $t$ of the Thurston measure $\mu_{Th}$. Hence 
\[ \lim_{\epsilon \rightarrow 0} \limsup_{L \rightarrow \infty}  \nu^{L}_{c} \left( \left\{ \lambda \in \widetilde{\mathcal{HC}} : h(\lambda, \gamma) \leq \epsilon \cdot T  \hspace{2 mm} \mbox{and} \hspace{2 mm} \ell_{\Sigma}(\lambda) \leq T \right\} \right) \leq \lim_{\epsilon \rightarrow 0} t \cdot \mu_{Th} \left( \left\{ \lambda \in \mathcal{ML} : i(\lambda, \gamma) \leq \epsilon \cdot T \cdot K \hspace{2 mm} \mbox{and} \hspace{2 mm} \ell_{\Sigma}(\lambda) \leq T \cdot K \right\} \right), \]
for some $t>0$. Finally, this is in turn $0$ because $\mu_{Th}$ assigns measure $0$ to any set of laminations disjiont from a given simple closed curve. 
\end{proof}

It now follows that if $\nu_{c} = \lim_{n \rightarrow \infty} \nu^{L_{n}}_{c}$ for any convergent sequence $\left\{\nu^{L_{n}}_{n} \right\}_{n}$ of measures in the family, $\nu_{c} = t \cdot \mu_{Th}$ for some $t$ potentially dependent on the chosen sequence. 

It remains to show that $t$ does not depend on the particular sequence, and to determine its value. Lemma \ref{accumulation points} and Remark \ref{subsequence} imply that for any $ \left\{ L_{n} \right\}$ with $L_{n} \rightarrow \infty$, there is a subsequence $\left\{ L_{n_{i}} \right\}$ so that the limit $\lim_{i \rightarrow \infty} \nu^{L_{n_{i}}}_{c} = \nu_{c}$ exists. Thus there is some $t$ so that $\nu_{c} = t \cdot \mu_{Th}$. Following Rafi-Souto, we will show that 
\[ t=  \frac{n_{c}}{\textbf{m}}, \]
which in particular proves that it is independent of the particular sequence. We require the following lemma, following ~\cite{ErlandssonSouto}*{Corollary 4.4}: 

\begin{lemma} \label{ES combined}  Suppose $\eta \in \mathcal{C}(\Sigma)$ and $c \in \widetilde{\mathcal{HC}}$ be filling. Then for any multi-curve $\gamma$,   
\[ \lim_{L \rightarrow \infty} \frac{ \# \left( \phi \in \mathcal{MCG}(\Sigma) : i(\eta, \phi(\gamma)) \leq L \right)}{\# \left( \phi \in \mathcal{MCG}(\Sigma) : h(c, \phi(\gamma)) \leq L \right)} = \frac{ \mu_{Th} \left( \left\{ \lambda \in \mathcal{ML}(\Sigma) : i(\eta, \lambda) \leq 1 \right\} \right)}{ \mu_{Th} \left( \left\{ \lambda \in \mathcal{ML}(\Sigma) : h(c, \lambda) \leq 1 \right\} \right)} .\]
\end{lemma}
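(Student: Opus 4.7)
The plan is to reduce both the numerator and denominator to the measures of explicit sublevel sets with respect to a common family of rescaled orbit measures, and then use a unified Portmanteau argument. Define the family $\nu^L_\gamma$ on $\widetilde{\mathcal{HC}}$ exactly as $\nu^L_c$ was defined earlier, but with the multi-curve $\gamma$ in place of $c$. By homogeneity of $i$ and $h$, one has
\[
\frac{\#\{\phi \in \mathcal{MCG}(\Sigma) : i(\eta, \phi(\gamma)) \leq L\}}{L^{6g-6}} \;=\; \nu^L_\gamma\bigl(\{\rho \in \widetilde{\mathcal{HC}} : i(\eta, \rho) \leq 1\}\bigr),
\]
and the analogous identity for the denominator with $\{\rho : h(c,\rho)\leq 1\}$ in place of the above sublevel set.

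Next, running the arguments of Lemmas \ref{accumulation points}, \ref{lamination}, and the discussion in Section \ref{establishing invariance} verbatim with $\gamma$ in place of $c$ (these arguments only use that the object being rotated by $\mathcal{MCG}$ is filling), every weak-$*$ limit of $\nu^L_\gamma$ is a positive $\mathcal{MCG}$-invariant Radon measure supported on $\mathcal{ML}(\Sigma)$. By the Lindenstrauss--Mirzakhani~\cite{LindenstraussMirzakhani} characterization of the Thurston measure class, every such limit equals $t_\gamma \cdot \mu_{Th}$ for some $t_\gamma > 0$, and the constant $t_\gamma$ is pinned down by the classical Erlandsson-Souto~\cite{ErlandssonSouto}*{Proposition 4.1} counting result: the numerator of the lemma's ratio divided by $L^{6g-6}$ converges, and equals $t_\gamma \cdot \mu_{Th}(\{\lambda \in \mathcal{ML}: i(\eta,\lambda)\leq 1\})$. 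In particular $t_\gamma$ does not depend on subsequences, so $\nu^L_\gamma \to t_\gamma \mu_{Th}$ as $L \to \infty$.

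It remains to transfer this weak-$*$ convergence to convergence on the two sublevel sets. By Lemma \ref{open sets intersection lamination} the function $h(c,\cdot)$ is upper semi-continuous on $\widetilde{\mathcal{HC}}$, so the topological boundary of $\{\rho : h(c,\rho)\leq 1\}$ intersected with $\mathcal{ML}$ is contained in $\{\lambda \in \mathcal{ML} : h(c,\lambda) = 1\}$, which by Lemma \ref{boundary zero} is $\mu_{Th}$-null. The analogous classical fact $\mu_{Th}(\{\lambda : i(\eta,\lambda) = 1\}) = 0$ is standard. To legitimately apply Proposition \ref{Portmanteau} in the non-compact ambient space $\widetilde{\mathcal{HC}}$, we first intersect with a precompact length sublevel set $\{\ell_\Sigma \leq T\}$ as in Lemma \ref{accumulation points}; by Lemma \ref{replace with curve} (and Lemma \ref{harmonic to length}) the set $\{\rho : h(c,\rho)\leq 1\}$ is contained in such a length sublevel set up to an arbitrarily small $\nu^L_\gamma$-error as $T\to\infty$, and Theorem \ref{compact} (together with Remark \ref{compactness for marked k currents}) ensures these truncations are genuinely precompact. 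Portmanteau then yields
\[
\lim_{L \to \infty} \nu^L_\gamma\bigl(\{\rho : h(c,\rho) \leq 1\}\bigr) \;=\; t_\gamma \cdot \mu_{Th}\bigl(\{\lambda \in \mathcal{ML} : h(c,\lambda) \leq 1\}\bigr),
\]
and symmetrically for the $i(\eta,\cdot)$ sublevel set. Dividing the two limits, the universal constant $t_\gamma$ cancels and leaves precisely the ratio of Thurston measures claimed.

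The main obstacle is the precompactness/truncation bookkeeping needed to move from weak-$*$ convergence of the (infinite-mass) measures $\nu^L_\gamma$ to convergence on the set $\{h(c,\cdot)\leq 1\}$: this is where the discontinuity and potential non-properness of $h(c,\cdot)$ could cause trouble, and why Lemma \ref{replace with curve} and Theorem \ref{compact} are invoked to control how the set sits inside length sublevel sets. The genuinely geometric input, that the boundary of the sublevel set is Thurston-null, is already contained in Lemma \ref{boundary zero}.
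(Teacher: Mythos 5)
Your proposal is correct and follows essentially the same route as the paper: both express the two counts as $\nu^L_\gamma$-measures of the sublevel sets $\{i(\eta,\cdot)\leq 1\}$ and $\{h(c,\cdot)\leq 1\}$, invoke the classical convergence $\nu^L_\gamma \to t\,\mu_{Th}$ from Erlandsson--Souto, use Lemma \ref{boundary zero} (with Remark \ref{measurable}) to see the boundaries are $\mu_{Th}$-null, apply Proposition \ref{Portmanteau}, and cancel $t$ in the ratio. The only remark is that your detour re-running Lemmas \ref{accumulation points}, \ref{lamination} and Section \ref{establishing invariance} with $\gamma$ in place of $c$ quietly assumes $\gamma$ is filling (which the lemma does not), but this step is superfluous since the cited Erlandsson--Souto result already supplies the convergence of $\nu^L_\gamma$, exactly as in the paper.
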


\begin{proof}  It suffices to show that every sequence has a subsequence for which the claim follows. Given a sequence $\left\{ L_{n} \right\}$, choose some subsequence $\left\{L_{n_{i}} \right\}$ for which the limit $\nu = \lim_{i \rightarrow \infty} \nu^{L_{n_{i}}}_{c}$ exists. Thus there is $t$ so that $\nu_{c} = t \cdot \mu_{Th}$. 

We then compute:

\[ \lim_{i \rightarrow \infty} \frac{1}{L_{n_{i}}^{6g-6}} \cdot \# \left(\phi \in \mathcal{MCG}(\Sigma) : h(c, \phi(\gamma)) \leq L_{n_{i}} \right) \]
\[= \lim_{i \rightarrow \infty} \nu^{L_{n_{i}}}_{\gamma} \left( \left\{ \rho \in \widetilde{\mathcal{HC}}: h(c, \rho) \leq 1 \right\} \right)\]
by definition of the measure $\nu^{L_{n_{i}}}_{\gamma}$. Since the $\mathcal{MCG}$-orbit of $\gamma$ lives in $\mathcal{GC}(\Sigma)$, this is in turn equal to 
\[ \lim_{i \rightarrow \infty} \nu^{L_{n_{i}}}_{\gamma} \left( \left\{ \rho \in \mathcal{GC}(\Sigma) : h(c, \rho) \leq 1 \right\} \right). \]
Now, Lemma \ref{boundary zero} and Remark \ref{measurable} implies that 
\[ \mu_{Th}\left( \left\{ \rho \in \mathcal{GC}(\Sigma): h(c, \rho) = 1 \right\} \right) = 0,  \]
and therefore, using Proposition \ref{Portmanteau}, we can identify the above limit with 
\[ t \cdot \mu_{Th} \left( \left\{ \rho \in \mathcal{GC}(\Sigma) : h(c, \rho) \leq 1 \right\} \right) =  t \cdot \mu_{Th} \left( \left\{ \rho \in \mathcal{ML}(\Sigma) : h(c, \rho) \leq 1 \right\} \right). \]

The same argument applied to $\eta$ in place of $c$ yields 
\[  \lim_{i \rightarrow \infty} \frac{1}{L_{n_{i}}^{6g-6}} \cdot \# \left(\phi \in \mathcal{MCG}(\Sigma) : i(\eta, \phi(\gamma)) \leq L_{n_{i}} \right) = t \cdot \mu_{Th} \left( \left\{ \rho \in \mathcal{ML}(\Sigma) : i(\eta, \rho) \leq 1 \right\} \right). \]
The lemma now follows by taking the quotient of these expressions. 

\end{proof}

Applying Lemma \ref{ES combined} to the Liouville current at $\Sigma$ and to $c$, one obtains 

\[ \lim_{L \rightarrow \infty} \frac{ \# \left( \phi \in \mathcal{MCG}(\Sigma): h(c, \phi(\gamma)) \leq L \right)}{\# \left( \phi \in \mathcal{MCG}(\Sigma): \ell_{\Sigma}(\phi(\gamma)) \leq L \right)} = \frac{m_{c}}{m_{\Sigma}}. \]

Moreover, by~\cite{RafiSouto}*{Theorem 3.1}, we have
\[ \lim_{L \rightarrow \infty} \frac{ \# \left(\phi \in \mathcal{MCG}(\Sigma) : \ell_{\Sigma}(\phi(\gamma)) \leq L \right)}{L^{6g-6}} = \frac{m_{\Sigma} m_{\gamma}}{\textbf{m}}. \]

Combining this with Lemma \ref{ES combined} yields an analog of~\cite{RafiSouto}*{Corollary 3.5}: 

\[ \lim_{L \rightarrow \infty} \frac{1}{L^{6g-6}} \cdot \#(\phi \in \mathcal{MCG}(\Sigma) : h(c, \phi(\gamma)) \leq L) = \frac{m_{\gamma} m_{c}}{\textbf{m}}. \] 

We can now compute the value of $t$, as follows: 

\begin{align*} \nu_{c} \left( \left\{ \lambda \in \widetilde{\mathcal{HC}} : h(\lambda, \gamma) \leq 1 \right\} \right) &= \lim_{i \rightarrow \infty} \nu_{c}^{L_{n_{i}}} ( \left\{ \lambda \in \widetilde{\mathcal{HC}} : h(\lambda, \gamma) \leq 1 \right\}) \\
&= \lim_{i \rightarrow \infty} \frac{1}{L_{n_{i}}^{6g-6}} \cdot \#( \phi \in \mathcal{MCG}: h(\phi(c), \gamma) \leq L_{n_{i}} ) \\
&= \lim_{i \rightarrow \infty} \frac{1}{L_{n_{i}}^{6g-6}} \cdot \#( \phi \in \mathcal{MCG}: h(c, \phi(\gamma)) \leq L_{n_{i}} )\\
&= \frac{m_{\gamma}m_{c}}{\textbf{m}}. \end{align*}

On the other hand, $\nu_{c} = t \cdot \mu_{Th}$, and thus 
\[ \nu_{c} \left( \left\{ \lambda \in \widetilde{\mathcal{HC}} : h(\lambda, \gamma) \leq 1 \right\} \right) = t \cdot m_{\gamma}. \]
It follows that 
\[ t= \frac{m_{c}}{\textbf{m}}. \]

We can now complete the proof of Theorem \ref{main}, following~\cite{RafiSouto}, more-or-less verbatim: 

\begin{align*} \#(\phi \in \mathcal{MCG}(\Sigma) : \ell_{\Sigma}(\phi(c)) \leq L) &= \left( \sum_{\phi \in \mathcal{MCG}(\Sigma)} \delta_{\phi(c)} \right) \left( \left\{ \eta \in \widetilde{\mathcal{HC}}: \ell_{\Sigma}(\eta) \leq L \right\} \right) \\ &=  \left( \sum_{\phi \in \mathcal{MCG}(\Sigma)} \delta_{\frac{1}{L} \cdot \phi(c)} \right) \left( \left\{ \eta \in \widetilde{\mathcal{HC}}: \ell_{\Sigma}(\eta) \leq 1 \right\} \right) \\ &= L^{6g-6} \cdot \nu_{c}^{L} \left( \left\{ \eta \in \widetilde{\mathcal{HC}}: \ell_{\Sigma}(\eta) \leq 1 \right\} \right).\end{align*}
Above, we have shown that 
\[ \lim_{L \rightarrow \infty} \nu^{L}_{c} = \frac{m_{c}}{\textbf{m}} \cdot \mu_{Th}, \]
and therefore 
\[ \lim_{L \rightarrow \infty} \frac{ \#(\phi \in \mathcal{MCG}(\Sigma) : \ell_{\Sigma}(\phi(c)) \leq L)}{L^{6g-6}} = \frac{m_{c}}{\textbf{m}} \mu_{Th}\left( \left\{ \eta \in \widetilde{\mathcal{HC}}: \ell_{\Sigma}(\eta) \leq 1 \right\} \right) = \frac{m_{c} m_{\Sigma}}{\textbf{m}}. \]
Similarly, 
\[ \#(\phi \in \mathcal{MCG}(\Sigma) : h(\phi(c), \mathcal{L}) \leq L) = L^{6g-6} \cdot \nu^{L}_{c}\left(\left\{\eta \in \widetilde{\mathcal{HC}}: h(\eta, \mathcal{L}) \leq 1 \right\} \right), \]
and since $\ell_{\Sigma}(\eta) = h(\eta, \mathcal{L})$ for all $\eta \in \mathcal{ML}$, we again have that 
\[ \lim_{L \rightarrow \infty} \frac{ \#(\phi \in \mathcal{MCG}(\Sigma) : h(\phi(c), \mathcal{L}) \leq L)}{L^{6g-6}} =  \frac{m_{c} m_{\Sigma}}{\textbf{m}}.\]

\begin{bibdiv}
\begin{biblist}

\bib{AL}{article}{
   author={Aramayona, Javier},
   author={Leininger, Christopher J.},
   title={Hyperbolic structures on surfaces and geodesic currents},
   conference={
      title={Algorithmic and geometric topics around free groups and
      automorphisms},
   },
   book={
      series={Adv. Courses Math. CRM Barcelona},
      publisher={Birkh\"{a}user/Springer, Cham},
   },
   isbn={978-3-319-60939-3},
   isbn={978-3-319-60940-9},
   date={2017},
   pages={111--149},
   review={\MR{3752040}},
}

\bib{AH}{article}{ 
author={Arana-Herrera, Francisco}, 
title={Normalization of Thurston measures on the space of measured geodesic laminations}, 
note={In preparation.},

}

\bib{Billingsley}{book}{
   author={Billingsley, Patrick},
   title={Convergence of probability measures},
   series={Wiley Series in Probability and Statistics: Probability and
   Statistics},
   edition={2},
   note={A Wiley-Interscience Publication},
   publisher={John Wiley \& Sons, Inc., New York},
   date={1999},
   pages={x+277},
   isbn={0-471-19745-9},
   review={\MR{1700749}},
   doi={10.1002/9780470316962},
}

\bib{BirmanSeries}{article}{
   author={Birman, Joan S.},
   author={Series, Caroline},
   title={Geodesics with bounded intersection number on surfaces are
   sparsely distributed},
   journal={Topology},
   volume={24},
   date={1985},
   number={2},
   pages={217--225},
   issn={0040-9383},
   review={\MR{0793185}},
   doi={10.1016/0040-9383(85)90056-4},

}

\bib{Bonahon1}{article}{
   author={Bonahon, Francis},
   title={Bouts des vari\'{e}t\'{e}s hyperboliques de dimension $3$},
   language={French},
   journal={Ann. of Math. (2)},
   volume={124},
   date={1986},
   number={1},
   pages={71--158},
   issn={0003-486X},
   review={\MR{0847953}},
   doi={10.2307/1971388},
}
\bib{Bonahon}{article}{
   author={Bonahon, Francis},
   title={The geometry of Teichm\"{u}ller space via geodesic currents},
   journal={Invent. Math.},
   volume={92},
   date={1988},
   number={1},
   pages={139--162},
   issn={0020-9910},
   review={\MR{0931208}},
   doi={10.1007/BF01393996},
}

\bib{Chen}{article}{
   author={Chen, Jingyi},
   title={On energy minimizing mappings between and into singular spaces},
   journal={Duke Math. J.},
   volume={79},
   date={1995},
   number={1},
   pages={77--99},
   issn={0012-7094},
   review={\MR{1340295}},
   doi={10.1215/S0012-7094-95-07903-4},
}

\bib{deVerdiere}{article}{
   author={Colin de Verdi\`ere, Yves},
   title={Comment rendre g\'{e}od\'{e}sique une triangulation d'une
   surface?},
   language={French},
   journal={Enseign. Math. (2)},
   volume={37},
   date={1991},
   number={3-4},
   pages={201--212},
   issn={0013-8584},
   review={\MR{1151746}},
}

\bib{ErlandssonSouto}{article}{
   author={Erlandsson, Viveka},
   author={Souto, Juan},
   title={Counting curves in hyperbolic surfaces},
   journal={Geom. Funct. Anal.},
   volume={26},
   date={2016},
   number={3},
   pages={729--777},
   issn={1016-443X},
   review={\MR{3540452}},
   doi={10.1007/s00039-016-0374-7},
}

\bib{ErlandssonSoutobook}{book}{
   author={Erlandsson, Viveka},
   author={Souto, Juan},
   title={Mirzakhani's curve counting and geodesic currents},
   series={Progress in Mathematics},
   volume={345},
   publisher={Birkh\"{a}user/Springer, Cham},
   date={[2022] \copyright 2022},
   pages={xii+226},
   isbn={978-3-031-08704-2},
   isbn={978-3-031-08705-9},
   review={\MR{4501245}},
   doi={10.1007/978-3-031-08705-9},
}

\bib{ESTripods}{article}{
author={Erlandsson, Viveka},
   author={Souto, Juan},
   title ={Counting geodesics of given commutator length},
      journal={ArXiv preprint},
      date={2023},
doi={10.48550/arXiv.2304.10274}
}

\bib{FarbMargalit}{book}{
   author={Farb, Benson},
   author={Margalit, Dan},
   title={A primer on mapping class groups},
   series={Princeton Mathematical Series},
   volume={49},
   publisher={Princeton University Press, Princeton, NJ},
   date={2012},
   pages={xiv+472},
   isbn={978-0-691-14794-9},
   review={\MR{2850125}},
}

\bib{Folland}{book}{
   author={Folland, Gerald B.},
   title={Real analysis},
   series={Pure and Applied Mathematics (New York)},
   edition={2},
   note={Modern techniques and their applications;
   A Wiley-Interscience Publication},
   publisher={John Wiley \& Sons, Inc., New York},
   date={1999},
   pages={xvi+386},
   isbn={0-471-31716-0},
   review={\MR{1681462}},
}

\bib{Huber}{article}{
   author={Huber, Heinz},
   title={Zur analytischen Theorie hyperbolischen Raumformen und
   Bewegungsgruppen},
   language={German},
   journal={Math. Ann.},
   volume={138},
   date={1959},
   pages={1--26},
   issn={0025-5831},
   review={\MR{0109212}},
   doi={10.1007/BF01369663},
}

\bib{KajigayaTanaka}{article}{
   author={Kajigaya, Toru},
   author={Tanaka, Ryokichi},
   title={Uniformizing surfaces via discrete harmonic maps},
   language={English, with English and French summaries},
   journal={Ann. H. Lebesgue},
   volume={4},
   date={2021},
   pages={1767--1807},
   review={\MR{4353977}},
   doi={10.5802/ahl.116},
}

\bib{LindenstraussMirzakhani}{article}{
   author={Lindenstrauss, Elon},
   author={Mirzakhani, Maryam},
   title={Ergodic theory of the space of measured laminations},
   journal={Int. Math. Res. Not. IMRN},
   date={2008},
   number={4},
   pages={Art. ID rnm126, 49},
   issn={1073-7928},
   review={\MR{2424174}},
   doi={10.1093/imrn/rnm126},
}

\bib{Margulis}{article}{
   author={Margulis, G. A.},
   title={Certain applications of ergodic theory to the investigation of manifolds of negative curvature.},
   language={Russian},
   journal={Funkcional. Anal. i Prilo\v{z}en.},
   date={1969},
   number={no. 4},
   pages={89--90},
   issn={0374-1990},
   review={\MR{0257933}},
}

\bib{Masur85}{article}{
   author={Masur, Howard},
   title={Ergodic actions of the mapping class group},
   journal={Proc. Amer. Math. Soc.},
   volume={94},
   date={1985},
   number={3},
   pages={455--459},
   issn={0002-9939},
   review={\MR{0787893}},
   doi={10.2307/2045234},
}

\bib{Mirzakhani}{article}{
   author={Mirzakhani, Maryam},
   title={Growth of the number of simple closed geodesics on hyperbolic
   surfaces},
   journal={Ann. of Math. (2)},
   volume={168},
   date={2008},
   number={1},
   pages={97--125},
   issn={0003-486X},
   review={\MR{2415399}},
   doi={10.4007/annals.2008.168.97},
}

\bib{Mirzakhani2}{article}{
   author={Mirzakhani, Maryam},
   title ={Counting Mapping Class group orbits on hyperbolic surfaces},
      journal={ArXiv preprint},
date={2016},
   doi={10.48550/arXiv.1601.03342},

}

\bib{Monin}{article}{
   author={Monin, Leonid},
   author={Telpukhovskiy, Vanya},
   title={On normalizations of Thurston measure on the space of measured
   laminations},
   journal={Topology Appl.},
   volume={267},
   date={2019},
   pages={106878, 12},
   issn={0166-8641},
   review={\MR{4008568}},
   doi={10.1016/j.topol.2019.106878},
}

\bib{PennerHarer}{book}{
   author={Penner, R. C.},
   author={Harer, J. L.},
   title={Combinatorics of train tracks},
   series={Annals of Mathematics Studies},
   volume={125},
   publisher={Princeton University Press, Princeton, NJ},
   date={1992},
   pages={xii+216},
   isbn={0-691-08764-4},
   isbn={0-691-02531-2},
   review={\MR{1144770}},
   doi={10.1515/9781400882458},
}

\bib{RafiSouto}{article}{
   author={Rafi, Kasra},
   author={Souto, Juan},
   title={Geodesic currents and counting problems},
   journal={Geom. Funct. Anal.},
   volume={29},
   date={2019},
   number={3},
   pages={871--889},
   issn={1016-443X},
   review={\MR{3962881}},
   doi={10.1007/s00039-019-00502-7},
}

\bib{Sapir1}{article}{
   author={Sapir, Jenya},
   title={Bounds on the number of non-simple closed geodesics on a surface},
   journal={Int. Math. Res. Not. IMRN},
   date={2016},
   number={24},
   pages={7499--7545},
   issn={1073-7928},
   review={\MR{3632090}},
   doi={10.1093/imrn/rnw032},
}

\bib{Sapir2}{article}{
   author={Sapir, Jenya},
   title={Lower bound on the number of non-simple closed geodesics on
   surfaces},
   journal={Geom. Dedicata},
   volume={184},
   date={2016},
   pages={1--25},
   issn={0046-5755},
   review={\MR{3547779}},
   doi={10.1007/s10711-016-0155-3},
}

\bib{Sunada}{book}{
   author={Sunada, Toshikazu},
   title={Topological crystallography},
   series={Surveys and Tutorials in the Applied Mathematical Sciences},
   volume={6},
   note={With a view towards discrete geometric analysis},
   publisher={Springer, Tokyo},
   date={2013},
   pages={xii+229},
   isbn={978-4-431-54176-9},
   isbn={978-4-431-54177-6},
   review={\MR{3014418}},
   doi={10.1007/978-4-431-54177-6},
}

\bib{Taylor}{book}{
   author={Taylor, Angus E.},
   title={General theory of functions and integration},
   edition={2},
   publisher={Dover Publications, Inc., New York},
   date={1985},
   pages={x+437},
   isbn={0-486-64988-1},
   review={\MR{0824243}},
}

\bib{Tserunyan}{article}{
author={Tserunyan, Anush},
title={personal communication},
date={2023}
}

\bib{Tuzhilin}{article}{
author={Tuzhilin, Alexey},
   title ={Who Invented the Gromov-Hausdorff Distance?},
      journal={ArXiv preprint},
      date={2016},
doi={10.48550/arXiv.1612.00728}
}

\end{biblist}
\end{bibdiv}

\end{document}